\documentclass[12pt, lettersize]{amsart}

\usepackage{setspace}

\usepackage{amssymb}   

\usepackage{amsmath}

\usepackage{mathrsfs}

\usepackage{stmaryrd}

\usepackage{amsthm}
\usepackage{newlfont}
\usepackage{amscd}
\usepackage{mathtools}
\usepackage{bm}
\usepackage{tikz-cd}
\usepackage{graphicx}

\usepackage{hyperref}
\usepackage{framed}
\usepackage{comment}

\usepackage{enumitem}

\usepackage[all]{xy}

\usepackage{textcomp}

\usepackage{amsbsy}

\usepackage{lmodern}
\usepackage[T1]{fontenc}

\newtheorem{thm}{Theorem}[section]

\newtheorem{thm-defn}[thm]{Theorem/Definition}
\newtheorem{lem}[thm]{Lemma}
\newtheorem{prop}[thm]{Proposition}
\newtheorem{cor}[thm]{Corollary}

\theoremstyle{definition}
\newtheorem{defn}[thm]{Definition}

\newtheorem{eg}[thm]{Example}

\newtheorem{construction}[thm]{Construction}
\newtheorem{set-up}[thm]{Set-up}
\newtheorem{notation}[thm]{Notation}

\theoremstyle{remark}
\newtheorem{rem}[thm]{Remark}

\numberwithin{equation}{section}

\usepackage{relsize}
\usepackage[bbgreekl]{mathbbol}
\usepackage{amsfonts}

\DeclareSymbolFontAlphabet{\mathbb}{AMSb}
\DeclareSymbolFontAlphabet{\mathbbl}{bbold}

\DeclareMathOperator{\rank}{rank}

\DeclareMathOperator{\Spec}{Spec}

\DeclareMathOperator{\Spf}{Spf}

\DeclareMathOperator{\Ker}{Ker}

\DeclareMathOperator{\Hom}{Hom}

\DeclareMathOperator{\Ext}{Ext}

\DeclareMathOperator{\Sh}{Sh}

\DeclareMathOperator{\Ab}{Ab}
\DeclareMathOperator{\CA}{CA}

\newcommand{\gp}{\mathrm{gp}}

\newcommand{\id}{\mathrm{id}}
\newcommand{\CR}{\mathrm{CR}}

\newcommand{\perf}{\mathrm{perf}}

\newcommand{\aff}{\mathrm{aff}}

\newcommand{\cris}{\mathrm{cris}}
\newcommand{\CRIS}{\mathrm{CRIS}}

\newcommand{\strat}{{\text{-}\mathrm{strat}}}

\newcommand{\st}{\mathrm{st}}

\newcommand{\et}{\mathrm{\acute{e}t}}

\newcommand{\N}{\mathbf{N}}
\newcommand{\F}{\mathbf{F}}
\newcommand{\Z}{\mathbf{Z}}
\newcommand{\Q}{\mathbf{Q}}

\newcommand{\A}{\mathbf{A}}

\newcommand{\calA}{\mathcal{A}}

\newcommand{\calC}{\mathcal{C}}
\newcommand{\calD}{\mathcal{D}}
\newcommand{\calE}{\mathcal{E}}
\newcommand{\calF}{\mathcal{F}}
\newcommand{\calG}{\mathcal{G}}
\newcommand{\calH}{\mathcal{H}}
\newcommand{\calI}{\mathcal{I}}
\newcommand{\calJ}{\mathcal{J}}
\newcommand{\calK}{\mathcal{K}}
\newcommand{\calL}{\mathcal{L}}
\newcommand{\calM}{\mathcal{M}}

\newcommand{\calO}{\mathcal{O}}

\newcommand{\calS}{\mathcal{S}}
\newcommand{\calT}{\mathcal{T}}

\newcommand{\calX}{\mathcal{X}}
\newcommand{\calY}{\mathcal{Y}}

\mathchardef\mhyphen="2D

\begin{document}

\pagenumbering{arabic}

\title{On log crystalline higher direct image}

\author{Heng Du} 
\address{Yau Mathematical Sciences Center, Tsinghua University, Beijing 100084, China}
\email{hengdu@mail.tsinghua.edu.cn}
\author{Yong Suk Moon} 
\address{Beijing Institute of Mathematical Sciences and Applications, Beijing 101408, China}
\email{ysmoon@bimsa.cn}
\author{Koji Shimizu}
\address{Yau Mathematical Sciences Center, Tsinghua University, Beijing 100084, China;
Beijing Institute of Mathematical Sciences and Applications, Beijing 101408, China}
\email{shimizu@tsinghua.edu.cn}

\begin{abstract} 
We define the big crystalline site for a log scheme and prove the basic properties. In particular, we show the boundedness, base change, and perfectness theorems for the crystalline higher direct image of quasi-coherent crystals between fine log schemes. We also introduce the big absolute crystalline sites and discuss the Frobenius isogeny property of the crystalline higher direct image of $F$-isocrystals.
\end{abstract}

\maketitle

\section{Introduction}

Our main goal in this article is to prove the boundedness and perfectness theorems for the crystalline higher direct image of quasi-coherent crystals between fine log schemes, which seems to be missing in the literature in a general form.

Crystalline cohomology is a $p$-adic Weil cohomology theory for smooth projective varieties over a perfect field of characteristic $p>0$. Grothendieck proposed the definition of the crystalline topos in \cite{Grothendieck-crystals-deRhamCohomology}, and Berthelot laid the detailed foundation of crystalline theory in his monumental work \cite{Berthelot-book}. While one often needs the theory of rigid cohomology and arithmetic $\calD$-modules for a well-behaved coefficient theory or six-functor formalism, the recently-developed $A_\mathrm{inf}$-cohomology by Bhatt--Morrow--Scholze \cite{bhatt-morrow-scholze-integralpadic} and prismatic cohomology by Bhatt--Scholze \cite{bhatt-scholze-prismaticcohom} are directly related to crystalline cohomology. 

Kato \cite{Kato-log} and Hyodo--Kato \cite{hyodo-kato} initiated the generalization of crystalline theory to log schemes in the sense of Fontaine--Illusie. They were motivated by the $C_\st$-conjecture of Fontaine and Jannsen about the $p$-adic \'etale cohomology of smooth proper varieties over a $p$-adic field with semistable reduction, and some of the main results in \cite{Berthelot-book} were generalized to the log case in these articles.
Since then, further developments of log crystalline theory have been given by several authors: 
to name a few, \cite{ogus-griffiths, Tsuji-PoincareDuality, Shiho-I,faltings-almostetale, nakkajima-shiho, Beilinson-crystalline-period-map}.

However, a full generalization of Berthelot's results to the logarithmic case does not seem to be available in the literature. For example, the boundedness theorem of quasi-coherent modules \cite[Thm.~V.3.2.4]{Berthelot-book} is proved in the log context under further assumptions (see Remark~\ref{rem:remark on main theorem} below).
This article proves a general boundedness theorem in log crystalline theory. 

Another goal is to discuss the definition and basic results of the \emph{big} crystalline site for a log scheme. This is motivated by log prismatic theory developed by Koshikawa \cite{koshikawa}. In fact, a part of the current manuscript was originally written as Appendix B.1 in \cite{du-liu-moon-shimizu-purity-F-crystal}, where we studied analytic prismatic $F$-crystals on the absolute logarithmic prismatic site of a semistable $p$-adic log formal scheme and associated to them $F$-isocrystals on the big absolute logarithmic crystalline site of the mod $p$ reduction. As a continuation of the work, we were naturally led to studying log crystalline and prismatic higher direct images, and we realized that there are not enough references for the basic general theorems in the former case. Proving such theorems has resulted in this article.

Now we explain the set-up and main results of the current paper.
Let $(\Sigma,M_\Sigma)$ be a fine log scheme together with a quasi-coherent PD-ideal sheaf $(\calJ_\Sigma, \gamma_\Sigma)$, which we simply denote by $\Sigma^\sharp$. We assume that $p$ is nilpotent in $\calO_\Sigma$. Let $(X,M_X)$ be a fine log scheme over $(\Sigma,M_\Sigma)$ such that $\gamma_\Sigma$ extends to $\calJ_\Sigma\calO_X$.
The \emph{big relative crystalline site} $((X,M_X)/\Sigma^\sharp)_\CRIS$ consists of tuples $(U,T,M_T, f, i,\gamma)$ where $f\colon U\rightarrow X$ is a morphism of schemes over $\Sigma$, $(T,M_T)$ is a log $\Sigma^\sharp$-scheme with $M_T$ integral and quasi-coherent, $i$ is an exact closed immersion $(U,M_U\coloneqq f^\ast M_X)\rightarrow (T,M_T)$ over $(\Sigma,M_\Sigma)$, and $\gamma$ is a PD-structure on the ideal of $\calO_T$ defining $U$ compatible with the one on $\calJ_\Sigma\calO_T$ (Definition~\ref{def:big relative logarithmic crystalline site}). Assigning $\Gamma(T,\calO_T)$ to such a tuple defines the sheaf $\calO_{X/\Sigma}$ of rings. 
Big relative crystalline sites enjoy the functorial property, and our main results concern the higher direct images of quasi-coherent $\calO_{X/\Sigma}$-modules:

\begin{thm}\label{thm:main theorem in introduction}
Let $f\colon (X,M_X)\rightarrow (Y,M_Y)$ be a morphism of fine log $\Sigma^\sharp$-schemes.
\begin{enumerate}
 \item (Boundedness: Theorem~\ref{thm:boundedness of crystalline pushforward}) If $Y$ is quasi-compact and $f$ is quasi-separated of finite type, then there exists an integer $r$ such that $R^qf_{\CRIS,\ast}\calE=0$ for every $q\geq r$ and every quasi-coherent $\calO_{X/\Sigma}$-module $\calE$. 
 \item (Base Change: Theorem~\ref{thm:crystalline base change for crystalline pushforward})
Suppose that we have the following diagram
\[
\xymatrix{
(X',M_{X'})\ar[r]^-{g'} \ar[d]_-{f'}& (X,M_X)\ar[d]_-f\\
(Y',M_{Y'})\ar[r]^-g \ar[d]& (Y,M_Y)\ar[d]\\
\Sigma'^\sharp=(\Sigma',M_{\Sigma'},\calJ',\gamma') \ar[r]& \Sigma^\sharp=(\Sigma,M_\Sigma,\calJ,\gamma).
}
\]
If $f\colon (X,M_X)\rightarrow (Y,M_Y)$ is smooth and integral, $f\colon X\rightarrow Y$ is qcqs, and if the upper-square is Cartesian, then for every flat quasi-coherent $\calO_{X/\Sigma}$-module $\calE$ on $((X,M_X)/\Sigma^\sharp)_\CRIS$, the canonical morphism
\[
g^\ast_\CRIS Rf_{\CRIS,\ast}\calE\rightarrow Rf'_{\CRIS,\ast}g'^\ast_\CRIS\calE
\]
is a quasi-isomorphism.
 \item 
 (Perfectness: Theorem~\ref{thm:crystalline derived pushforward of perfect complexes})
If $f\colon (X,M_X)\rightarrow (Y,M_Y)$ is smooth and integral and if $f\colon X\rightarrow Y$ is proper, then for every finite locally free $\calO_{X/\Sigma}$-module $\calE$, the derived pushforward $Rf_{\CRIS,\ast}\calE$ is a perfect complex of $\calO_{Y/\Sigma}$-modules.
\end{enumerate}
The same properties hold for small crystalline site.
\end{thm}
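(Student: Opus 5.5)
The plan is to reduce all three statements to the classical strategy of Berthelot, \cite[Ch.~V]{Berthelot-book}: compute $Rf_{\CRIS,\ast}$ locally on $Y$ via log PD-envelopes and log de Rham complexes, and then use Čech descent. Boundedness and base change are statements about sheaves on $((Y,M_Y)/\Sigma^\sharp)_\CRIS$, so their values are determined on each object $(U',T',M_{T'},\ldots)$ of the site. By restricting to the localized site and using the functoriality of big crystalline sites, we may therefore take $Y=U'$ affine with a log PD-thickening $T'$. The question then becomes one about $R\Gamma$ of the crystalline site of $(X_{U'},M)$ over the PD-base $T'^\sharp$. The first step is to identify the big site cohomology with the small one (restriction to the small site is exact and preserves injectives/cohomology for quasi-coherent modules), so that we may work with the small relative log crystalline site. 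This comparison also justifies the final sentence of the theorem.

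For (1), cover $X_{U'}$ by finitely many affine opens $X_i$; this is possible because $Y$ is quasi-compact and $f$ is of finite type. Take charts and exact closed immersions $X_i\hookrightarrow Z_i$ into log-smooth $T'$-schemes, for instance log affine spaces over $T'$ built from the chart, and then make them exact via Kato's exactification. Finite intersections $X_{i_0\cdots i_n}$ embed into the fiber products $Z_{i_0}\times\cdots\times Z_{i_n}$, exactified. Quasi-separatedness ensures that these intersections are quasi-compact, so each one again needs only finitely many affines. The Čech spectral sequence therefore has finitely many nonzero columns. For a quasi-coherent crystal, the cohomology of each term is the cohomology of the sheaf $\calE_{D}$ on the log PD-envelope $D$, with no de Rham complex needed, via the Čech–Alexander complex. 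Since $D$ is affine over an affine Zariski open (the PD-envelope is affine over $Z$), these terms have no higher cohomology. The resulting bound $r$ is the number of opens plus the nerve length, and it is independent of $\calE$ and of the object $T'$. This last point needs care: the number of opens can be chosen uniformly after fixing a finite affine cover of $Y$ and a finite cover of $X$ over it, then pulling back.

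The main obstacle is that the bound must be uniform even though quasi-coherent $\calO_{X/\Sigma}$-modules that are not crystals have non-cartesian transition maps. For those, vanishing on affine $D$ has to be obtained from the fact that the site of PD-thickenings over an affine log scheme with an embedding has the envelope as a final-object cover, up to hypercover. I would first try Berthelot's argument: compare with the topos of the embedding via the linearization functor and the semi-cosimplicial Čech–Alexander resolution.

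For (2), with $f$ smooth and integral, on $T'$ we can lift $X_i$ to log-smooth integral $Z_i$ over $T'$ locally, by deformation theory of log smooth morphisms. Then $Rf_{\CRIS,\ast}\calE$ evaluated at $T'$ is computed by the Čech complex of log de Rham complexes $\calE_{Z}\otimes\omega^\bullet_{Z/T'}$. Each term is $T'$-flat when $\calE$ is flat, because integrality gives flatness of $Z\to T'$. These lifts are stable under the base change $T''\to T'$ coming from $g$; the upper square being Cartesian is what makes $Z_i\times_{T'}T''$ a lift of $X'_i$. Qcqs then lets us use the finite Čech complex, and flatness yields $L g^\ast$ of the complex computed termwise, which gives the quasi-isomorphism.

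For (3), use the same description: for proper $f$ the complex is bounded with quasi-coherent coherent-type cohomology on each $T'$. Reduce via (2) and the Tor-amplitude finiteness from flatness to the case $T'$ noetherian (the lift is locally of finite presentation, so we may pass to a limit from a finite-type $\Z/p^n$-model). There Grothendieck's coherence of $Rf_\ast$ and the finite Tor-amplitude give perfectness, and perfectness on every $T'$, compatible with base change, gives perfectness as an $\calO_{Y/\Sigma}$-module complex.
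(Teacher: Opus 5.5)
Part (2) of your plan is essentially the paper's route: local lifts on affines by log deformation theory, flatness from integrality, and base change for affine morphisms. You should reduce to $X$ affine by Berthelot's Zariski \v{C}ech descent, or use PD-envelopes of diagonal embeddings, rather than unrelated lifts on overlaps. Parts (1) and (3), however, have genuine gaps.

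\textbf{Boundedness.} In (1) there are two failures.
First, log structures here live on the \'etale site, so a fine $(X,M_X)$ has charts, and hence embeddings into log smooth schemes, only \emph{\'etale} locally. You therefore cannot in general cover $X_{U'}$ by finitely many Zariski affines embedded into log affine spaces. Once you pass to an \'etale cover $X_0\to X$, your count breaks down:
\begin{itemize}
\item $V_\CRIS$ for \'etale $V$ does not give an open subtopos;
\item the \v{C}ech nerve of $X_0\to X$ is nonzero in every degree;
\item so the \v{C}ech spectral sequence has infinitely many columns.
\end{itemize}
The key device you are missing is the Bhatt--de Jong alternating \v{C}ech complex in the \'etale topology. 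One uses the Koszul resolution $K^\bullet\to\Z$ with $K^s=\bigwedge^{s+1}g_{0,!}\Z$, which vanishes for $s\geq c$. Then $\Ext^t(K^s,-)$ is computed on the algebraic spaces $X^\circ_{U,s}/S_{s+1}$ (equivalently $D^\circ_{U,s}/S_{s+1}$), where quasi-coherent cohomology vanishes in degrees $\geq c_s$ (Lemmas~\ref{lem:explicit vanishing of quasi-coherent sheaves} and~\ref{lem:spectral sequence for alternating Cech complex}).

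Second, your claim that each \v{C}ech term is the cohomology of the single sheaf $\calE_D$ on an affine envelope, and hence vanishes in positive degrees, is false. The \v{C}ech--Alexander complex is unbounded and gives no bound by itself. The crystalline cohomology of an embedded piece is that of the log de Rham complex $E\otimes\omega^\bullet_{Z/T}$ on $D$ (Theorem~\ref{thm:cohomology of crystal in terms of log de Rham complex}, via the log Poincar\'e lemma), which lives in degrees up to $\dim(Z/T)$. For example, $H^1_{\cris}(\A^1_k/W_n(k))\neq 0$ even though here $D=\A^1_{W_n(k)}$ is affine. This is why the paper's bound is $\max_l c_l+c+(c+1)\dim(Z_0/\Sigma)$.

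You also need Proposition~\ref{prop:quasi-coherence of crystal under projection} to pass from the sheaves $R^q\pi_{X_U/T,\ast}\calE$ to cohomology over affine $T$. Finally, the ``main obstacle'' you name does not exist: every quasi-coherent $\calO_{X/\Sigma}$-module is automatically a crystal (Proposition~\ref{prop:quasi-coherent crystals}(2)).

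\textbf{Perfectness.} In (3), the reduction to a Noetherian $T'$ is not justified. PD-thickenings are typically non-Noetherian; already $\F_p\langle x\rangle$, generated as a PD-algebra by one element, is not Noetherian. There is no reason for an affine $(U',T')$ to be the base change, along a PD-morphism, of a Noetherian PD-thickening carrying a model of $X_{U'}$. Finite presentation of the \emph{local} lifts does not help with this.

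Even over a Noetherian base, Grothendieck's coherence theorem does not apply as you state it. $X_{U'}$ need not lift to a proper log smooth $T'$-scheme, and the \v{C}ech--de Rham terms live on PD-envelopes, which are not of finite type and carry differential operators as differentials. This is precisely why (3) is not in the classical literature (Remark~\ref{rem:remark on main theorem}(3)).

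The missing idea is the Frobenius twist:
\begin{enumerate}
\item Using \cite[07LU]{stacks-project} and base change, reduce to $\Sigma$ over $\F_p$.
\item Then $F_T$ factors as $T\xrightarrow{F_{(U,T)}}U\hookrightarrow T$, so $X'_T=X_U\times_{U,F_{(U,T)}}T$ is a \emph{global} flat proper $T$-scheme. Moreover, $\calO_{X_U/T}$ is an algebra over its structure sheaf (Lemma~\ref{lem:crystalline structure sheaf and Frobenius}).
\item By proper flat pushforward along $f'_T$, pseudo-coherence of $R\pi_{X_U/T,\ast}\calE$ reduces to \'etale-local pseudo-coherence of $F_{X_T/T,\ast}(E\otimes\omega^\bullet_{X_T/T})$ on $X'_T$. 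There the relative Frobenius is finite, and Noetherian approximation is harmless because no PD-structure is involved.
\end{enumerate}
Finite Tor amplitude is a separate argument. It uses the Zariski \v{C}ech--de Rham double complex, whose terms are finite locally free over PD-envelopes $D_{\underline{i}}$ that are flat over $T$.
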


Note that quasi-coherent $\calO_{X/\Sigma}$-modules automatically satisfy the crystal property (Proposition~\ref{prop:quasi-coherent crystals}) and are also referred to as (quasi-coherent) crystals in the literature.

\begin{rem}\label{rem:remark on main theorem}
\hfill
\begin{enumerate}
 \item Theorem~\ref{thm:main theorem in introduction}(1) in the small site case is proved in \cite[Thm.~V.3.2.4]{Berthelot-book} in the non-log case and in \cite[Prop.~2.3.11, Rem.~2.3.13]{nakkajima-shiho} in the case of Zariski log structures or (log) smooth morphisms. 
 \item The small site version of Theorem~\ref{thm:main theorem in introduction}(2) is proved in \cite[Thm.~V.3.5.1]{Berthelot-book} (non-log case) and stated in \cite[Thm.~6.10]{Kato-log} (log case) without proof.
 \item 
 For Theorem~\ref{thm:main theorem in introduction}(3), the perfectness of the evaluation at a \emph{Noetherian} PD-thickening is available in \cite[Thm.~7.16]{berthelot-ogus-book}. However, a statement as in (3) does not seem to be available in the classical literature: see \cite[Thm.~14]{Faltings-very-ramified} and \cite[07MX]{stacks-project} for (3) in the non-log case and \cite[1.11, Thm.~(ii)]{Beilinson-crystalline-period-map} for (3) for $\calE=\calO_{X/\Sigma}$ and $f$ being of Cartier type. In \cite{faltings-almostetale}, Faltings mentions the statement and gives a sketch of the proof.\footnote{Strictly speaking, $Y$ is the formal spectrum of a $p$-adic DVR in \textit{loc.~cit.}. This approach was explained and suggested to us by Teruhisa Koshikawa.} We provide a detailed proof.
\end{enumerate}
\end{rem}

Let us explain the outline of the proof of these main theorems.
The first step is to study the higher direct image $Ru_{Y/\Sigma}\calF$ for a quasi-coherent $\calO_{Y/\Sigma}$-module $\calF$ along the projection to the \'etale site $u_{Y/\Sigma}\colon \Sh(((Y,M_Y)/\Sigma^\sharp)_\CRIS)\rightarrow \Sh(Y_\et)$. When $Y$ can be embedded into a smooth log scheme $(Z,M_Z)$ over $\Sigma$, $Ru_{Y/\Sigma}\calF$ is computed by a de Rham complex with coefficient for $Z/\Sigma$ (Theorem~\ref{thm:cohomology of crystal in terms of log de Rham complex}). In fact, this is \cite[Thm.~6.4]{Kato-log}. While the key arguments are explained in \textit{loc.~cit.}, and some further relevant computations are given in \cite{ogus-griffiths}, the log crystalline Poincar\'e lemma is not explicitly proved in these references. For the sake of completeness, we provide it in Theorem~\ref{thm:log crystalline Poincare lemma}  by following \cite[\S 2.2]{nakkajima-shiho}, which treats the case of smooth schemes with relative SNCD.
Another subtlety lies in the fact that we work on the \emph{big} crystalline site: the crystalline pushforward along an exact closed immersion does not preserve quasi-coherent sheaves. This is already observed in the (non-log) big crystalline site case (e.g.~\cite[\S~2.1]{deJong-dieudonnemodule}) and comes from the fact that the formation of taking the PD-envelope commutes with the base change along flat morphisms but it fails in general. By providing an extra argument to take care of this issue in Example~\ref{eg:description of i-pushforward of crystal}, 
we obtain Theorem~\ref{thm:cohomology of crystal in terms of log de Rham complex}. 

Our proof of Theorem~\ref{thm:main theorem in introduction}(1) diverges from that of \cite{Berthelot-book} when reducing the general case to the above smooth embeddable case. In \textit{loc.~cit.}, one can take a finite Zariski covering of smooth embeddable affine open subschemes of $Y$ and use the alternating \v{C}ech complex. This is based on the fact that $((U,M_U)/\Sigma^\sharp)_\CRIS$ gives rise to an \emph{open subtopos} of $((Y,M_Y)/\Sigma^\sharp)_\CRIS$ when $U$ is an open subscheme of $Y$. However, it is no longer the case when $U$ is \'etale over $Y$, while one can find an embedding into a smooth log scheme only \emph{\'etale locally} on $Y$ in general. Our proof will use an alternating \v{C}ech complex argument in the \'etale topology, which is well developed in \cite{stacks-project} for boundedness of quasi-coherent sheaves on algebraic spaces (see also\cite[\S~5, Lem.~1]{faltings-almostetale}). Note that Olsson proves a boundedness result for the crystalline site of a tame Noetherian Deligne--Mumford stack in \cite[Thm.~2.5.15, Rem.~2.5.18]{Olsson-crystallinecohomology} with a different method.
The proof of (2) follows Berthelot's argument since the smoothness assumption allows us to use a cohomological descent for the Zariski covering. 

For Theorem~\ref{thm:main theorem in introduction}(3), we need to compute the higher direct image $Ru_{X_U/T}\calE$ where $(U,T)\in ((Y,M_Y)/\Sigma^\sharp)_\CRIS$ and $f_U\colon X_U=X\times_YU\rightarrow U$. The key observation is that, while there may be no morphism $f_T\colon X_T\rightarrow T$ lifting 
$f_U$ globally, the relative Frobenius twist $f_T'$ is globally defined. Namely, observe that the absolute Frobenius $F_T$ factors as $F_T\colon T\xrightarrow{F_{(U,T)}}U\hookrightarrow T$ since $U\hookrightarrow T$ is a PD-immersion. We then define $f_T'\colon X_T'\coloneqq X_U\times_{U,F_{(U,T)}}T\rightarrow T$. The proof ultimately relies on the fact that $Rf'_{T,\ast}$ preserves pseudo-coherent complexes as it is flat and proper. It is worth pointing out that no Cartier type assumption is necessary since we do not use the Cartier isomorphism. 

To relate crystalline theory to prismatic theory, one also wants to introduce the \emph{big absolute crystalline site} $(X,M_X)_\CRIS$ for a fine log scheme $(X,M_X)$ in which $p$ is nilpotent; this is simply defined as the colimit of $((X,M_X)/(\Z_p/p^n)^\sharp)_\CRIS$ (Definition~\ref{def:big absolute log crystallien site}). The crystalline higher direct image is naturally computed via the relative crystalline site, and thus we can easily obtain the big absolute crystalline version of Theorem~\ref{thm:main theorem in introduction}: see Theorems~\ref{thm:absolute boundedness of crystalline pushforward}, \ref{thm:absolute crystalline base change for crystalline pushforward}, and \ref{thm:absolute crystalline derived pushforward of perfect complexes} with $\calS^\sharp=\Z_p^\sharp$.
We then introduce the notion of $F$-isocrystals in perfect complexes on the absolute crystalline site (Definition~\ref{def:F-isocrystals}) and prove the following Frobenius isogeny property.

\begin{thm}[{Theorem~\ref{thm:Frobenius isogeny property}, Remark~\ref{rem:Frobenius isogeny property}}]
Let $f\colon (X,M_X)\rightarrow (Y,M_Y)$ be smooth of Cartier type between fine log schemes over $\F_p$ such that $f\colon X\rightarrow Y$ is proper and $Y$ is quasi-compact.
Assume moreover that $(Y,M_Y)$ is either smooth and integral over a perfect field $k$ of characteristic $p$, or the mod $p$ reduction of a semistable $p$-adic formal scheme. 
If $(\calE_\Q,\varphi_{\calE_\Q})$ is a finite locally free $F$-crystal, then $(Rf_{\CRIS,\ast}\calE_\Q,Rf_{\CRIS,\ast}\varphi_{\calE_\Q})$ is an $F$-isocrystal in perfect complexes on $(Y,M_Y)_\CRIS$.
\end{thm}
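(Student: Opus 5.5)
We need to show: $Rf_{\CRIS,\ast}\calE_\Q$ is an isocrystal in perfect complexes, and the pullback of the Frobenius, $F_Y^\ast Rf_{\CRIS,\ast}\calE_\Q\to Rf_{\CRIS,\ast}\calE_\Q$, is a quasi-isomorphism (an isogeny integrally).

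The plan is to separate the two properties. \emph{Perfectness and the crystal property} will come from the absolute versions of the main theorems. Since $f$ is smooth of Cartier type, it is in particular integral. Theorem~\ref{thm:absolute crystalline derived pushforward of perfect complexes}, applied to an integral finite locally free model $\calE$ of $\calE_\Q$, shows that $Rf_{\CRIS,\ast}\calE$ is a perfect complex of $\calO_{Y}$-modules on $(Y,M_Y)_\CRIS$. Theorem~\ref{thm:absolute crystalline base change for crystalline pushforward} then gives the crystal (transition) property, since $\calE$ is flat. Inverting $p$ gives an isocrystal in perfect complexes in the sense of Definition~\ref{def:F-isocrystals}.

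\emph{The Frobenius structure} is the map
\[
F_{Y,\CRIS}^\ast Rf_{\CRIS,\ast}\calE_\Q \to Rf_{\CRIS,\ast}F_{X,\CRIS}^\ast\calE_\Q \xrightarrow{\varphi} Rf_{\CRIS,\ast}\calE_\Q .
\]
The second arrow is an isomorphism because $\varphi_{\calE_\Q}$ is one. For the first arrow, factor $F_X$ as the relative Frobenius $F_{X/Y}\colon X\to X'=X\times_{Y,F_Y}Y$ followed by the projection $X'\to X$. The upper square for the projection is Cartesian, and the resulting $f'\colon X'\to Y$ is still smooth and integral, so Theorem~\ref{thm:absolute crystalline base change for crystalline pushforward} makes the base-change map for $F_Y$ a quasi-isomorphism integrally. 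It therefore remains to show that
\[
Rf'_{\CRIS,\ast}\calE' \to Rf'_{\CRIS,\ast}RF_{X/Y,\CRIS,\ast}F_{X/Y}^\ast\calE'
\]
is an isogeny. This is a statement about the relative Frobenius $F_{X/Y}$, which is where Cartier type enters: it makes $F_{X/Y}$ exact, so $X'$ is fine and the Cartier isomorphism is available.

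\emph{The main obstacle} is this isogeny for the relative Frobenius over the big site, i.e.\ after evaluating on an arbitrary PD-thickening $(U,T)$ of $Y$. Here the hypothesis on $Y$ is used. In either case ($Y$ smooth over $k$, or $Y$ the reduction of a semistable formal scheme), $Y$ admits étale locally a $p$-torsion-free lift $\tilde Y$ with a Frobenius lift. These are the natural test objects: any $(U,T)$ maps, after passing to PD-envelopes, to such an object. So by the crystal property and the perfectness already established, it suffices to check the isogeny on these $p$-torsion-free, Frobenius-lifted thickenings.

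On such a thickening I would compute both sides by log de Rham complexes, using Theorem~\ref{thm:cohomology of crystal in terms of log de Rham complex} and the étale-local liftings of $X$. The map is then the Frobenius pullback on de Rham complexes, and the Cartier isomorphism shows it is an isomorphism after $\otimes^L\F_p$ up to the usual divided-power filtration argument. Following Berthelot--Ogus (the mod $p$ Cartier argument plus a Mazur-type bound, or more simply Illusie's $L\eta_p$ comparison $L\eta_p F^\ast\simeq\mathrm{id}$ in the log setting, cf.\ Hyodo--Kato), this gives an inverse up to $p^{d}$ with $d$ the relative dimension. Finally I would glue over an étale hypercover, using boundedness (Theorem~\ref{thm:absolute boundedness of crystalline pushforward}) to control the bounded-amplitude exponents, and conclude after inverting $p$.
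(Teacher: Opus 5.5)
This is essentially the paper's proof: perfectness from Theorem~\ref{thm:absolute crystalline derived pushforward of perfect complexes}, the Frobenius built from base change along the Cartesian square defining $X'$ (Theorem~\ref{thm:absolute crystalline base change for crystalline pushforward}) and adjunction for $F_{X/Y}$, reduction via the crystal property to a $\Z_p$-flat weakly final lift of $Y$, and the Hyodo--Kato isogeny for $F_{X/Y}$. The paper streamlines the last step: it first uses the projection formula to replace $\calE'$ by $\calO_{X'/\Z_p}$, then cites \cite[Prop.~2.24]{hyodo-kato}, which already gives a global $\psi$ on projective systems with $\varphi\psi=\psi\varphi=p^r$, so neither a Frobenius lift on the test object nor hypercover gluing is needed.

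One correction to your sketch of that input: the relative Frobenius is \emph{not} an isomorphism after $\otimes^{\mathbf{L}}\F_p$, since on flat lifts it is divisible by $p^i$ on $i$-forms. What Cartier type gives, via the Cartier isomorphism, is that the Frobenius identifies the de Rham complex of $X'$ with $\eta_p$ of the pushed-forward de Rham complex of $X$. It is this, together with the amplitude bound $[0,d]$, that yields the inverse up to $p^d$.
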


We prove this Frobenius property by using the theorems we have established and reducing to the case $\calE=\calO_{X/\Z_p}$, which was treated by Hyodo and Kato \cite{hyodo-kato}.

This paper is organized as follows. After briefly recalling the log PD-envelope in \S~\ref{sec:log PD-envelope}, we define the big relative log crystalline site in \S~\ref{sec: Definition of big log crystalline sites}. The functoriality of crystalline sites and other relevant morphisms of topoi are discussed in \S~\ref{sec: Functoriality of crystalline topoi} and \S~\ref{sec: Projections to small etale topoi}, respectively. We discuss log crystalline Poincar\'e lemma in the smooth embeddable case in \S~\ref{sec:Embedding into a smooth log scheme}. After establishing these basic results, we prove Theorem~\ref{thm:main theorem in introduction}(1), (2), and (3) in \S~\ref{sec:Boundedness of log crystalline cohomology}, \ref{sec:Base change}, and \ref{sec:Crystalline derived pushforward of finite locally free crystals}, respectively. 

We define the big log crystalline sites in the $p$-adic base and absolute cases and discuss the basic theorems in \S~\ref{sec:Big absolute crystalline sites}. The Frobenius isogeny property is proved in \S~\ref{sec:Frobenius isogeny property}. 

\medskip
\noindent
\textbf{Conventions}.
In this article, log schemes are defined using the sheaf of monoids in \'etale topology, following \cite{Kato-log}. 
In particular, see \cite[(3.3) (resp.~Def.~4.3)]{Kato-log} for the definition of a morphism $(X,M_X)\rightarrow (Y,M_Y)$ of fine log schemes being smooth (resp.~integral); the former is also referred to as log smooth in some literature, and the latter should not be confused with integral morphisms of schemes.

\medskip
\noindent
\textbf{Acknowledgments.}

Part of this paper about the big absolute crystalline sites was initially written as an appendix to our work \cite{du-liu-moon-shimizu-purity-F-crystal} with Tong Liu. While discussing the follow-up projects, we realized that it is better to extend the content of the appendix to the relative crystalline site and also prove the basic theorems of the crystalline higher direct image. We thank Tong Liu for his permission for us to write a new article about these results and for his encouragement. We also appreciate the valuable discussions and suggestions provided by Shizhang Li. Theorem~\ref{thm:crystalline derived pushforward of perfect complexes} in a preliminary version had an issue with the formulation and proof. We thank  
Teruhisa Koshikawa for suggesting the approach in \cite{faltings-almostetale} and commenting on the literature. The first author was partially supported by the National Key R\&D Program of China (No.~2023YFA1009703) and the Beijing Natural Science Foundation (Youth Program, No. 1254044). The third author was partially supported by the NSFC Excellent Young Scientists Fund Program (Overseas).

\section{Log PD-envelopes}\label{sec:log PD-envelope}
Let us recall the following terminology and result in \cite[\S~1]{Beilinson-crystalline-period-map}. A \emph{log PD-scheme} over $\Z_p$ consists of a quadruple $\Sigma^\sharp\coloneqq (\Sigma,M_\Sigma,\calJ_\Sigma,\gamma_\Sigma)$ where $(\Sigma,M_\Sigma)$ is a log scheme, $\calJ_\Sigma\subset \calO_{\Sigma}$ is a quasi-coherent ideal sheaf (on the small \'etale site $\Sigma_\et$), and $\gamma_\Sigma$ is a PD-structure on $\calJ_{\Sigma}$. A \emph{log $\Sigma^\sharp$-scheme} is a log scheme $(Y,M_Y)$ over $(\Sigma,M_\Sigma)$ such that $\gamma_\Sigma$ extends to $\calJ_\Sigma\calO_Y$, and a morphism of $\Sigma^\sharp$-schemes is defined in an obvious way. 

\begin{thm}\label{thm:existence of log PD envelope}
Let $i\colon (Y,M_Y)\hookrightarrow (Z,M_Z)$ be an immersion of log schemes over $(\Sigma,M_\Sigma)$ such that $M_Z$ is quasi-coherent and $M_Y$ is integral and quasi-coherent. Assume one of the following conditions.
\begin{enumerate}
 \item[(a)] $M_Y$ is fine and $M_Z$ is coherent.
 \item[(b)] $i$ is a morphism of log $\Sigma^\sharp$-schemes (i.e., $\gamma_\Sigma$ extends to $Y$ and $Z$).
\end{enumerate}

Then there exists a PD-envelope $i'\colon (Y',M_{Y'})\hookrightarrow (D,M_D)$ of $i$ relative to $\Sigma^\sharp$. Namely, $i'$ is an exact closed immersion of log $\Sigma^\sharp$-schemes with integral and quasi-coherent log structure sitting in the diagram
\[
\xymatrix{
(Y',M_{Y'}) \ar@{^{(}->}[r]^-{i'}\ar[d] & (D,M_D)\ar[d] \\
(Y,M_{Y}) \ar@{^{(}->}[r]^-{i} & (Z,M_Z),
}
\]
and $i'$ is universal among such.
Moreover, the following properties hold.
\begin{enumerate}
  \item If $i$ is an exact closed immersion, then $i'$ is the PD-envelope of $Y\hookrightarrow Z$ relative to $(\Sigma,\calJ_\Sigma, \gamma_\Sigma)$ with the pullback log structure from $M_Z$.
  \item If $\gamma_\Sigma$ extends to $Y$ (which is the case in (b)), $(Y',M_{Y'})=(Y,M_Y)$. 
\end{enumerate}
\end{thm}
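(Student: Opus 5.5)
We first reduce to the case of an exact closed immersion by exactification, and then take the classical PD-envelope of the underlying schemes with the pulled-back log structure. This follows Kato \cite[Prop.~4.10, 5.3]{Kato-log} in case (a) and Beilinson \cite[\S~1]{Beilinson-crystalline-period-map} in case (b).

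\emph{Step 1 (reduction to exact closed immersions).} The question is étale local on $Z$ by the universal property, so PD-envelopes glue. Since every immersion factors as a closed immersion into an open subscheme, we may assume $i$ is a closed immersion.

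In case (a) we work étale locally and choose charts $P\to M_Z$ and $Q\to M_Y$ with $P$ finitely generated and $Q$ fine, together with a map $P\to Q$ lifting $i^\ast M_Z\to M_Y$ such that $P^{\gp}\to Q^{\gp}$ is surjective. Let $P'=\{a\in P^{\gp}:\ a\mapsto Q\}$. Then $Z'=Z\times_{\Spec\Z[P]}\Spec\Z[P']$ carries the log structure induced by $P'$, and $i$ factors as an exact closed immersion $Y\hookrightarrow Z'$ followed by the log-étale map $Z'\to Z$.

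In case (b) no finiteness is available, so we use Beilinson's construction of the exactification. Let $N\subset M_Y^{\gp}$ be the preimage of $M_Y$ under $i^{-1}M_Z^{\gp}\to M_Y^{\gp}$, and form $Z'$ as the relative spectrum over $Z$ of the sheaf $\calO_Z\otimes_{\Z[i^{-1}M_Z]}\Z[N]$, suitably interpreted on a neighborhood. Universality of this step comes from the fact that the target of any exact closed immersion $(U,M_U)\to (T,M_T)$ over $i$ must receive the monoid $N$ through $M_T$. This uses $M_T$ integral: for integral quasi-coherent log structures, an element of $M_T^{\gp}$ whose image in $M_U^{\gp}$ lies in $M_U$ lies in $M_T$, because $i^{-1}M_T\cong M_U$ and the units lift along the nilpotent (PD) thickening. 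Finally we replace $M_{Z'}$ by its integral quotient, which remains quasi-coherent.

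\emph{Step 2 (exact case, property (1)).} Let $i$ be an exact closed immersion. Take the classical PD-envelope $D$ of $Y\hookrightarrow Z$ compatible with $(\calJ_\Sigma,\gamma_\Sigma)$, as in Berthelot, and set $M_D$ to be the pullback of $M_Z$, made integral if needed. Then $Y'=Y\times_Z D$ with its pullback log structure, and $Y'\hookrightarrow D$ is exact. Universality follows from the universal property of the classical PD-envelope together with exactness. This gives property (1), and the general case follows by composing with Step 1.

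\emph{Step 3 (property (2)).} When $\gamma_\Sigma$ extends to $Y$, the classical PD-envelope $D$ of $Y\hookrightarrow Z$ satisfies $Y\times_Z D=Y$, since the PD-ideal of $D$ restricts on $Y$ to $\calJ_\Sigma\calO_Y$, on which $\gamma_\Sigma$ already exists. Hence $Y'=Y$.

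The main obstacle is the case (b) exactification. Without coherence there are no charts, so we must construct $Z'$ sheaf-theoretically and verify that the integral quotient and the PD-envelope still have quasi-coherent log structure. I would handle this by checking everything étale locally where $M_Z$ and $M_Y$ admit (possibly non-finitely generated) charts, which is exactly what quasi-coherence gives. With such charts the monoid-algebra construction of Step 1 goes through verbatim, and it is compatible with localization.
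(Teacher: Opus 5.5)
Your outline (exactify, then take the classical PD-envelope with the pulled-back log structure) is the strategy behind the results of Kato and Beilinson that the paper simply cites. However, two steps fail as written.

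The first problem is in Steps~2 and~3, where you identify $Y'$ with $Y\times_Z D=V(I\calO_D)$, $I$ being the ideal of $Y$. The correct $Y'$ is $V(\bar I)$, where $\bar I\supseteq I\calO_D$ is the PD-ideal of the envelope, and the two differ. Take $\Sigma=\Spec\F_p$ with $\calJ_\Sigma=0$, $Z=\A^1_{\F_p}$ and $Y=\{x=0\}$, all with trivial log structures. Then $\calO_D=\F_p\langle x\rangle$. Since $x\cdot x^{[n]}=(n+1)x^{[n+1]}$, every element of $x\calO_D$ has $x^{[p]}$-coefficient $0$, so $x^{[p]}\notin x\calO_D$. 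Hence $I\calO_D$ is not a sub-PD-ideal, $(Y\times_Z D,D)$ is not a PD-thickening, and your claim in Step~3 that $Y\times_Z D=Y$ when $\gamma_\Sigma$ extends to $Y$ is false. Property (2) should instead come from universality. The map $\calO_Y\to\calO_D/\bar I$ is surjective, so $Y'\to Y$ is a closed immersion. When $\gamma_\Sigma$ extends to $\calO_Y$, the identity of $(Y,M_Y)$ with zero PD-ideal is an object over $i$, and universality gives a section $Y\to Y'$ of this closed immersion; hence $Y'=Y$. This is Berthelot, Prop.~I.2.4.3(iii).

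The second, more serious, problem is that your case (b) exactification is not well defined, and the proposed repair does not work. The sheaves $i^{-1}M_Z$ and $N$ live on $Y_\et$, so $\calO_Z\otimes_{\Z[i^{-1}M_Z]}\Z[N]$ does not make sense as an $\calO_Z$-algebra, and there is no scheme $Z'\to Z$ to form. (Also, $N$ should be $i^{-1}M_Z^{\gp}\times_{M_Y^{\gp}}M_Y\subset i^{-1}M_Z^{\gp}$, not a subsheaf of $M_Y^{\gp}$.)

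Non-finitely generated charts do not let you run Step~1 ``verbatim''. That step needs a chart $h\colon P\to Q$ of the \emph{morphism} $i$ with $h^{\gp}$ surjective, so that the constant monoid $(h^{\gp})^{-1}(Q)$ can be formed. Quasi-coherence only gives separate charts of $M_Z$ and $M_Y$. Finiteness is exactly what produces a chart of the morphism in Kato's argument, and without it the stalkwise monoid $P^{\gp}\times_{M_{Y,\bar y}^{\gp}}M_{Y,\bar y}$ has no reason to come from a single monoid on an \'etale neighbourhood.

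The missing idea is to give up on a scheme-level exactification and work on $Y_\et$:
\begin{enumerate}
\item Since $\gamma_\Sigma$ extends to $Y$ and $p$ is nilpotent, the PD-envelope of the kernel of $i^{-1}\calO_Z\otimes_{\Z[i^{-1}M_Z]}\Z[N]\to\calO_Y$ has residue ring $\calO_Y$ and nil PD-ideal. So it is a sheaf of rings on $Y_\et$ that defines a nil-thickening $D$ of $Y$ with $D_\et=Y_\et$.
\item Equip it with the log structure associated to $N$, and check that this log structure is integral and quasi-coherent.
\item Deduce universality from your (correct) observation that $M_T=M_T^{\gp}\times_{M_U^{\gp}}M_U$ for exact nil-thickenings $(U,M_U)\hookrightarrow(T,M_T)$ with $M_T$ integral.
\end{enumerate}
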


\begin{proof}
First consider case (a). The existence of the PD-envelope and part (1) are deduced from \cite[Prop.~5.3]{Kato-log}; in fact, $i'$ exists and is an exact closed immersion of \emph{fine} log schemes. Part (2) is \cite[(5.5.2)]{Kato-log}.\footnote{Note that ``If $\gamma$ extends to $Y$'' therein should read ``If $\gamma$ extends to $X$''; see \cite[Prop.~I.2.4.3(iii)]{Berthelot-book}.} Case (b) is \cite[1.3, Thm.]{Beilinson-crystalline-period-map}.
\end{proof}

\begin{prop}\label{prop:etale pushforward from PD-envelope}
With the notation and assumption (a) in Theorem~\ref{thm:existence of log PD envelope}, let $q\colon D\rightarrow Z$ denote the resulting morphism of schemes.
If $p$ is locally nilpotent in $Z$, then $q_{\et,\ast}\colon \Ab(D_\et)\rightarrow \Ab(Z_\et)$ is exact.
\end{prop}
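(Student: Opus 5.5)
The plan is to show that $q\colon D\rightarrow Z$ is, étale locally on $Z$, a universal homeomorphism onto a closed subscheme of $Z$, or more precisely that it is integral. For an integral morphism the étale pushforward is exact (\cite[03QP]{stacks-project}: for finite, or more generally integral, morphisms $q_\ast$ on abelian étale sheaves is exact). So the whole task is to verify that $q$ is an integral morphism of schemes. Everything is étale local on $Z$, and the formation of the PD-envelope is compatible with étale base change on $Z$ (cf.\ \cite[Prop.~5.3]{Kato-log} and its proof). Hence we may assume that $Z=\Spec A$ is affine with $p^N=0$ in $A$, that $M_Z$ admits a chart, and that $M_Y$ admits a fine chart compatible with $i$.

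Following Kato's construction in \cite[Prop.~5.3]{Kato-log}, I would factor $i$ as an exact closed immersion $(Y,M_Y)\hookrightarrow (Z',M_{Z'})$ followed by a morphism $Z'\rightarrow Z$. Here $Z'$ is obtained by first passing to an open subscheme of $Z$ (through which $Y$ factors as a closed subscheme) and then taking $Z'=Z\times_{\Spec\Z[P]}\Spec\Z[Q]$, where $Q\subset P^{\gp}$ is the "exactification" monoid generated by the chart and the inverses of elements becoming units on $Y$. Then $D$ is the PD-envelope (compatible with $\gamma_\Sigma$) of the closed immersion $Y\hookrightarrow Z'$, with the pulled-back log structure; this is part (1) of Theorem~\ref{thm:existence of log PD envelope}.

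The morphism $D\rightarrow Z'$ is affine, and its image lies in the closed subscheme $Y\subset Z'$ topologically. More precisely, $D$ is a homeomorphism onto $Y$, since $\calO_D$ has a PD-ideal $\bar{\calJ}$ with $\calO_D/\bar{\calJ}=\calO_Y$, and $p$ nilpotent forces $x^{p^N}=p^{N}\cdots$-multiples of divided powers, so $\bar{\calJ}$ is a nil ideal. Thus $D\rightarrow Y'\coloneqq$ (the closed subscheme) is a universal homeomorphism. In particular it is integral, and étale pushforward along it is exact; indeed the étale sites of $D$ and $Y$ are equivalent by topological invariance of the étale site.

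It remains to handle $Y\hookrightarrow Z'\rightarrow Z$. A closed immersion is finite. The map $Z'\rightarrow Z$ is not integral in general, but we only need the composite $Y\rightarrow Z$ to be an immersion, which it is: it is $i$. Hence $D\rightarrow Z$ is the composite of a universal homeomorphism $D\rightarrow Y$ with the immersion $Y\rightarrow Z$. After shrinking $Z$ to the open through which $Y$ is closed, this composite is integral. Since the statement is étale local on $Z$ only after the pushforward to the open subscheme, I will also use that pushforward along an open immersion is not exact. This is the main obstacle. I would address it by noting that the exactness claim is local on $Z$ and that restricting to the open $V\subset Z$ does not suffice in general. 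I would therefore assume, as in Kato's setting, that the immersion can be taken closed after localization, or argue directly on stalks. For a geometric point $\bar z$ of $Z$, $(q_\ast\calF)_{\bar z}=\Gamma(D\times_Z\Spec\calO_{Z,\bar z}^{sh},\calF)$. The right-hand side is computed on the strictly henselian local scheme $\Spec\calO^{sh}_{Y,\bar z}$, or is empty if $\bar z\notin\overline{Y}$. Since $D$ is affine over the strictly henselian local ring, with $D_{\mathrm{red}}$ a henselian local closed subscheme or empty, global sections there are exact. This stalkwise exactness argument is what I would carry out in detail.
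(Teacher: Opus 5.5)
Your main line is the paper's argument: localize étale on $Z$, exactify so that $D$ becomes the classical PD-envelope of a closed immersion $Y\hookrightarrow Z'$, observe that the PD-ideal is nil because $p$ is nilpotent, and conclude by topological invariance of the étale site. However, two steps are misstated, and your last paragraph tries to prove something that is false.

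First, there is no morphism $D\rightarrow Y$: the ideal of $Y$ does not vanish in $\calO_D$, it only lands in the PD-ideal. So $q$ is not ``a universal homeomorphism $D\rightarrow Y$ followed by $i$''; the homeomorphism goes the other way.

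The paper's fix is as follows. Take $Y''\subset Y$ cut out by $\calJ_\Sigma\calO_Y$. Berthelot's construction shows that the PD-envelope of $Y''\hookrightarrow Z'$ is again $D$. Then $i''\colon Y''\hookrightarrow D$ is defined by a PD-ideal, so it is a nil immersion. Moreover $q\circ i''$ is the closed immersion $Y''\hookrightarrow Y\hookrightarrow Z$. Hence $q_{\et,\ast}\cong (q\circ i'')_{\et,\ast}\circ (i''_{\et,\ast})^{-1}$ is exact.

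This also repairs your claim $\calO_D/\bar\calJ=\calO_Y$. In case (a) that equality holds only when $\gamma_\Sigma$ extends to $Y$ (Theorem~\ref{thm:existence of log PD envelope}(2)), and passing to $Y''$ is exactly how the paper avoids needing it. Your assertion that $q$ is integral is true for closed $i$. The correct reason is that, étale locally, $q$ is affine and every section of $\calO_D$ is the image of a section of $\calO_Z$ plus a nilpotent section; it does not come from a factorization through $Y$.

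Second, the ``main obstacle'' in your last paragraph cannot be overcome, because the proposition is false for non-closed immersions. Take $\Sigma=\Spec\F_p$ with trivial log and PD-structures, and the open immersion $j\colon Y=\mathbf{G}_{m,\F_p}\hookrightarrow Z=\A^1_{\F_p}$. Then $D=Y$ and $q_{\et,\ast}=j_{\et,\ast}$, which is not exact. Apply it to the Kummer sequence for a prime $\ell\neq p$: on stalks at $\bar 0$ you get the $\ell$th power map $K^\times\rightarrow K^\times$ with $K=\operatorname{Frac}(\calO^{\mathrm{sh}}_{Z,\bar 0})$, which is not surjective.

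Your stalk computation breaks exactly at geometric points $\bar z$ of $\overline{Y}\smallsetminus Y$. There $D\times_Z\Spec\calO^{\mathrm{sh}}_{Z,\bar z}$ is nonempty but not local, so taking global sections over it is not exact. The statement has to be read with $i$ a closed immersion, as the paper's proof does (``$i$ is still a closed immersion''). Étale localization and exactification preserve closedness, and then no stalk argument is needed.
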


Note that $q_{\et,\ast}$ is obviously exact under assumption (b) by Theorem~\ref{thm:existence of log PD envelope}(2).

\begin{proof}
We need to recall the construction of $D$. We may check the assertion \'etale locally on $Z$. Observe that $i$ is still a closed immersion and $(Y,M_Y)$ is unchanged if $(Z,M_Z)$ is replaced with the exactification of $(Y,M_Y)\hookrightarrow (Z,M_Z)^\mathrm{int}$; under this additional assumption, $i'\colon Y'\hookrightarrow D$ is the PD-envelope of $Y\hookrightarrow Z$. Moreover, if we let $Y''\hookrightarrow Y$ denote the closed immersion defined by $\calJ_\Sigma\calO_Y$, the PD-envelope of the composite $Y''\hookrightarrow Z$ is of the form $i''\colon Y''\hookrightarrow D''$ by \cite[Cor.~I.2.3.2(iii)]{Berthelot-book}. It follows from \cite[Thm.~I.2.4.1, Pf.]{Berthelot-book} that $D''=D$. In summary, we have the commutative diagram
\[
\xymatrix{
Y'' \ar@{^{(}->}[r]^-{i''}\ar@{^{(}->}[d] & D\ar[d]\ar[d]^-q \\
Y \ar@{^{(}->}[r]^-{i} & Z.
}
\]
Note that $i'$ is a nil immersion since it is defined by the PD-ideal of $\calO_D$ generated by the image of the defining ideal of $i$ and $p$ is nilpotent in $\calO_D$. Hence $q_{\et,\ast}$ is exact.
\end{proof}

\section{Definition of big log crystalline sites}\label{sec: Definition of big log crystalline sites}

Kato \cite[\S~5, 6]{Kato-log} introduces the logarithmic crystalline theory for fine log schemes, and Beilinson \cite[\S~1]{Beilinson-crystalline-period-map} discusses a generalization of Kato's theory to the case of integral quasi-coherent log schemes. We closely follow Beilinson's formulation but work on the \emph{big} logarithmic crystalline sites.

\begin{set-up}\label{set-up:relative log-crystalline site}
Let $\Sigma^\sharp\coloneqq (\Sigma,M_\Sigma,\calJ_\Sigma,\gamma_\Sigma)$ be a log PD-scheme over $\Z_p$ such that $p$ is nilpotent in $\calO_\Sigma$ and $M_\Sigma$ is quasi-coherent, and let $(Y,M_Y)$ be a log $\Sigma^\sharp$-scheme such that $M_Y$ is integral and quasi-coherent. Write $\pi\colon (Y,M_Y)\rightarrow (\Sigma,M_\Sigma)$ for the structure map.
\end{set-up}

\begin{defn}\label{def:big relative logarithmic crystalline site}
Define \emph{the big relative logarithmic crystalline site}\footnote{Our convention is that $((Y,M_Y)/\Sigma^\sharp)_\CRIS$ denotes the crystalline site and $\Sh(((Y,M_Y)/\Sigma^\sharp)_\CRIS)$ denotes the crystalline topos.}
\[
((Y,M_Y)/\Sigma^\sharp)_\CRIS
\]
as follows:
\begin{itemize}
 \item an object is a tuple $(U,T,M_T, f, i,\gamma)$ where $f\colon U\rightarrow Y$ is a morphism of schemes over $\Sigma$, $(T,M_T)$ is a log $\Sigma^\sharp$-scheme with $M_T$ integral and quasi-coherent, $i$ is an exact closed immersion $(U,M_U\coloneqq f^\ast M_Y)\rightarrow (T,M_T)$ over $(\Sigma,M_\Sigma)$, and $\gamma$ is a PD-structure on the ideal of $\calO_T$ defining $U$ compatible with the PD-structure on $\calJ_\Sigma\calO_T$. Such a tuple is called \emph{affine} if $T$ (equivalently, $U$) is affine;
 \item a morphism from $(U,T,M_T,i,\gamma)$ to $(U',T',M_{T'},i',\gamma')$ is a pair of morphisms of log schemes $g_T\colon (T,M_T)\rightarrow (T', M_{T'})$ and $g_U\colon (U,M_U)\rightarrow (U',M_{U'})$ over $(\Sigma,M_\Sigma)$ such that $g_T\circ i=i'\circ g_U$, $f=f'\circ g_U$, and $g_T$ is compatible with the PD-structures $\gamma,\gamma'$. Note that $g_T$ is strict since $g_U$ is strict and all the log structures are integral. Such a morphism is called \emph{Cartesian} if the diagram
 \[
    \xymatrix{
    U \ar@{^(->}[r]^{i}\ar[d]_{g_U}&T\ar[d]^{g_T}\\ U'\ar@{^(->}[r]^{i'}&T'
    }
 \]
  is Cartesian, and it is called \emph{\'etale} if $g_T$ and $g_U$ are \'etale as morphisms of underlying schemes;
 \item a cover is a family of \'etale Cartesian morphisms that is jointly surjective.
\end{itemize}
Let $((Y,M_Y)/\Sigma^\sharp)_{\CRIS}^{\mathrm{aff}}\subset ((Y,M_Y)/\Sigma^\sharp)_\CRIS$ denote the full subcategory of affine objects with the induced topology.

By Lemma~\ref{lem:nonempty finite limit in abs log cristalline site}(2) below, $((Y,M_Y)/\Sigma^\sharp)_\CRIS$ and $((Y,M_Y)/\Sigma^\sharp)_{\CRIS}^{\mathrm{aff}}$ are sites. The inclusion is a special cocontinuous functor in the sense of \cite[Tag~03CG]{stacks-project} and thus induces an equivalence of topoi 
\[
\Sh(((Y,M_Y)/\Sigma^\sharp)_{\CRIS}^{\mathrm{aff}})\xrightarrow{\cong} \Sh(((Y,M_Y)/\Sigma^\sharp)_\CRIS).
\]

To simplify the notation, we often write $(U,T)$ or $(U,T,M_T)$ for $(U,T,M_T,i,\gamma)$. Write $\calJ_T$ for the nil ideal sheaf of $T$ that defines $U$. 
\end{defn}

\begin{lem}\label{lem:nonempty finite limit in abs log cristalline site}
\hfill
\begin{enumerate}
 \item The category $((Y,M_Y)/\Sigma^\sharp)_\CRIS$ has non-empty finite products. 
 \item For a diagram $(U_1,T_1)\xrightarrow{g} (V,Q) \leftarrow (U_2,T_2)$ in $((Y,M_Y)/\Sigma^\sharp)_\CRIS$, the fiber product exists. 
 Moreover, the fiber product is an affine object if $T_1$, $T_2$, and $Q$ are all affine. If $g$ is \'etale and Cartesian, so is the projection from the fiber product to $(U_2,T_2)$.
\end{enumerate}
\end{lem}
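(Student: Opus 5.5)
The strategy is to build products and fiber products with log PD-envelopes, via Theorem~\ref{thm:existence of log PD envelope} under hypothesis (b), and to check the universal property directly.

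\textbf{Part (1).} Let $(U_1,T_1)$ and $(U_2,T_2)$ be objects. Put $U\coloneqq U_1\times_Y U_2$ with log structure $M_U$ pulled back from $M_Y$. Put $(T_1,M_{T_1})\times_{(\Sigma,M_\Sigma)}(T_2,M_{T_2})$, the fiber product in the category of log schemes. Its log structure is quasi-coherent but possibly not integral, so we use this version rather than the integralized one.

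There is a natural morphism $(U,M_U)\to (T_1\times_\Sigma T_2, M)$. It is an immersion on underlying schemes: $U\to U_1\times_\Sigma U_2$ is an immersion, being a base change of the diagonal of $Y$ over $\Sigma$, and $U_1\times_\Sigma U_2\to T_1\times_\Sigma T_2$ is a closed immersion.

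All of these are log $\Sigma^\sharp$-schemes. To see that $\gamma_\Sigma$ extends to $\calJ_\Sigma\calO$ of the product, use Berthelot's result on tensor products of PD-algebras, \cite[I]{Berthelot-book}. So hypothesis (b) of Theorem~\ref{thm:existence of log PD envelope} applies and gives the log PD-envelope $(U,M_U)\hookrightarrow (D,M_D)$. By part (2) of that theorem the source is unchanged. $M_D$ is integral and quasi-coherent, and $U\hookrightarrow D$ is exact with a PD-structure compatible with $\gamma_\Sigma$. Hence $(U,D)$ is an object.

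Its universal property as the product follows from the following observations:
\begin{itemize}
\item a pair of morphisms $(U',T')\to(U_i,T_i)$ gives maps $U'\to U$ and $(T',M_{T'})\to$ the product;
\item the envelope is universal among exact closed PD-immersions over $\Sigma^\sharp$ mapping to the immersion;
\item uniqueness of the factorization holds because morphisms of log schemes into a fiber product are determined by their components.
\end{itemize}

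\textbf{Part (2).} Run the same construction with $(V,Q)$ as base. Take $U\coloneqq U_1\times_V U_2$ and form the log fiber product $(T_1,M_{T_1})\times_{(Q,M_Q)}(T_2,M_{T_2})$. The map $U\to T_1\times_Q T_2$ is a closed immersion, then apply the PD-envelope of Theorem~\ref{thm:existence of log PD envelope}(b). The universal property is checked as in (1).

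For affineness: if $T_1,T_2,Q$ are affine, then $T_1\times_Q T_2$ is affine. The PD-envelope is affine over it, since it is built Zariski-locally as the spectrum of a PD-polynomial-type quotient algebra, so $D$ is affine.

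For the last claim, suppose $g$ is étale and Cartesian, so $U_1=V\times_Q T_1$. Then $U=U_2\times_Q T_1=U_2\times_{T_2}(T_1\times_Q T_2)$. Since $T_1\to Q$ is strict étale, the log fiber product $T_1\times_Q T_2$ is the underlying scheme fiber product with the pulled-back log structure. It is étale and strict over $T_2$, hence already integral.

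Now $U\hookrightarrow T_1\times_Q T_2$ is an exact closed immersion. It is the base change of $U_2\hookrightarrow T_2$ along a flat (étale) map, so the PD-structure extends (\cite[I]{Berthelot-book}, flat base change of PD-structures). Hence the envelope is $T_1\times_Q T_2$ itself, by the universal property together with Theorem~\ref{thm:existence of log PD envelope}(1). The projection to $(U_2,T_2)$ is therefore étale and Cartesian.

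\textbf{Main obstacle.} The delicate step is verifying hypothesis (b): that $\gamma_\Sigma$ and the PD-structures extend to the relevant (possibly non-flat) fiber products, and that non-integral log structures are allowed on the target of the immersion. If extending $\gamma$ to $\calJ_\Sigma\calO$ of the product is problematic, I would instead take the PD-envelope relative to the PD-ideal generated by the $\calJ_{T_i}$ together with $\calJ_\Sigma$. This is legitimate because compatibility on each factor gives the extension, by Berthelot's result on PD-structures on tensor products.
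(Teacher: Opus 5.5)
\textbf{Gap in part (1).} The construction has a genuine gap. The log PD-envelope of $(U,M_U)\hookrightarrow T_1\times_\Sigma T_2$ relative to $\Sigma^\sharp$ only remembers $\gamma_\Sigma$. It forgets the PD-structures $\gamma_1,\gamma_2$ on $\calJ_{T_1},\calJ_{T_2}$ and freely adjoins new divided powers of their pulled-back sections.

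\emph{Consequence.} The projections $D\to T_j$ are in general not PD-morphisms. So $(U,D)$ is not even a cone over $(U_1,T_1),(U_2,T_2)$ in $((Y,M_Y)/\Sigma^\sharp)_\CRIS$. Your universal-property check only shows that test objects factor through $D$; it never verifies that the projections are morphisms of the site.

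\emph{Counterexample.} Let $\Sigma=Y=\Spec\F_p$ with trivial log structure and $\calJ_\Sigma=0$. Let $(U_1,T_1)=(\Spec\F_p,\Spec\F_p[x]/(x^2))$ with $\gamma_n(x)=0$ for $n\geq 2$, and let $(U_2,T_2)=(\Spec\F_p,\Spec\F_p)$, which is a final object. The product must be $(U_1,T_1)$. Your $D$, however, is $\F_p\langle x\rangle$ modulo the PD-ideal generated by $x^2$, in which $x^{[p]}\neq 0$, so $\calO_{T_1}\to\calO_D$ is not a PD-morphism.

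\emph{The fix.} It is the option you list only as a fallback, and it is forced rather than optional. Take the envelope relative to the PD-ideal generated by the pullbacks of $\calJ_{T_j}+\calJ_\Sigma\calO_{T_j}$, equipped with the tensor-product PD-structure of \cite[Prop.~I.1.7.1]{Berthelot-book}. This is what the paper does, following \cite[1.5, Prop.]{Beilinson-crystalline-period-map}: it uses the absolute product in integral quasi-coherent log schemes, then cuts out the largest exact closed log PD-subscheme on which the two maps to $(\Sigma,M_\Sigma)$ agree. Your fiber product over $(\Sigma,M_\Sigma)$ in all log schemes should serve equally well, since Theorem~\ref{thm:existence of log PD envelope} allows $M_Z$ to be merely quasi-coherent. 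Your stated worries (extending $\gamma_\Sigma$, non-integrality of the target) are not the real obstacle.

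\textbf{Part (2).} The same defect affects (2), where the correct picture is simpler. Because $V\hookrightarrow Q$ is a monomorphism, $U_1\times_V U_2=U_1\times_Q U_2$. Its ideal in $T_1\times_Q T_2$ is $\calJ_{T_1}\calO+\calJ_{T_2}\calO$, which already carries the tensor-product PD-structure compatible with $\gamma_\Sigma$. The log structure pulled back along the strict maps $T_j\to Q$ is integral. Hence $(U_1\times_V U_2,\,T_1\times_Q T_2)$ itself is the fiber product, and both affineness and the \'etale Cartesian assertion follow immediately.

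By contrast, your argument for the last claim is invalid. You argue that the ideal admits a compatible PD-structure, so the envelope relative to $\Sigma^\sharp$ is trivial. But such a PD-structure only yields a section of $D\to T$, and $D$ can be strictly larger: take $g=\id_{(U_1,T_1)}$ in the example above.

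The paper's written proof of (2) is also phrased with the envelope relative to $(\Sigma,\calJ_\Sigma,\gamma_\Sigma)$. Its assertion there that $T'\to T_j$ is a PD-morphism likewise requires the envelope compatible with the PD-structures on the $T_j$, which in this situation is trivial.
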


\begin{proof}
(1) 
For the existence of finite products, the construction (a) in \cite[1.5, Prop.]{Beilinson-crystalline-period-map} works, whose outline we recall now: take $(U_j,T_j)\in ((Y,M_Y)/\Sigma^\sharp)_\CRIS$ ($j=1,2$).
We set $U\coloneqq U_1\times_YU_2$ equipped with the pullback log structure along $U\rightarrow Y$. Let $(T'',M_{T''})$ denote the product $(T_1,M_{T_1})\times (T_2,M_{T_2})$ in the category of integral and quasi-coherent log schemes.
Let $(U,M_{U})\hookrightarrow (T',M_{T'})$ denote the PD-envelope of the closed immersion $i_{T''}\colon (U,M_U)\hookrightarrow (T'',M_{T''})$ relative to the log PD-scheme $(T'',M_{T''}, \calJ'',\gamma'')$ where $\calJ''\subset \calO_{T''}$ is the quasi-coherent ideal sheaf generated by the pullbacks of $\calJ_{T_j}+\calJ_\Sigma\calO_{T_j}$. Then the desired product is given by $(U,M_{U})\hookrightarrow (T,M_T)$ where $(T,M_T)$ is defined to be the largest exact closed log PD-subscheme of $(T',M_{T'})$ such that the two compositions $(T,M_T)\rightarrow (T_j,M_{T_j})\rightarrow (\Sigma,M_\Sigma)$ coincide.  

(2)
Set $W=U_1\times_VU_2$; this is a $Y$-scheme.
Consider the diagonal embedding
\[
i_T\colon (W,M_Y|_W)\hookrightarrow (T,M_T)\coloneqq (T_1,M_{T_1})\times_{(Q,M_Q)}(T_2,M_{T_2}),
\]
where the fiber product in the latter is taken in the category of integral and quasi-coherent log schemes. 
In fact, $T=T_1\times_Q T_2$, and $i_T$ is an exact closed immersion since the map $(T_j,M_{T_j})\rightarrow (Q,M_Q)$ is strict.
Let $(W',M_{W'})\hookrightarrow(T',M_{T'})$ denote the PD-envelope of $i_T\colon W\hookrightarrow T$ relative to $(\Sigma,\calJ_\Sigma, \gamma_\Sigma)$ with the pullback log structure from $M_T$. Then the morphism $T'\rightarrow T_j$ is a PD-morphism, and $(W',T')$ represents the desired fiber product.
When $T_1$, $T_2$, and $Q$ are all affine, so are $T$ and $T'$; to see that $T'$ is affine, recall the construction of PD-envelope in the non-log case. Hence $(W',T')$ is an affine object. 
Finally, if $g$ is \'etale and Cartesian, then $(T,M_T)\rightarrow (T_2,M_{T_2})$ is \'etale and $W=U_2\times_{T_2}T$. In particular, the defining ideal of $i_T$ is the restriction of the defining ideal of $(U_2,M_{U_2})\hookrightarrow (T_2,M_{T_2})$ to $T_\et$, and thus it admits a PD-structure extending the one on $\calJ_{T_2}$ and compatible with $\gamma_\Sigma$. Hence $(W',T')=(W,T)$ and the last assertion holds.
\end{proof}

\begin{rem}\label{rem:fine log structure in absolute log crystalline site}
Assume that $M_Y$ is fine. Then for every $(U,T)\in ((Y,M_Y)/\Sigma^\sharp)_\CRIS$, $(T,M_T)$ is a fine log scheme. In fact, since $(U,M_U)\hookrightarrow (T,M_T)$ is an exact closed immersion with $M_T$ integral, it is a log thickening in the sense of \cite[Def.~IV.2.1.1]{Ogus-log}. Since $M_U$ is the pullback of $M_Y$ and thus fine, we conclude that $M_T$ is also fine by \cite[Prop.~IV.2.1.3.1]{Ogus-log}.
\end{rem}

\begin{rem}\label{rem:small crystalline site}
Let $((Y,M_Y)/\Sigma^\sharp)_\cris\subset ((Y,M_Y)/\Sigma^\sharp)_\CRIS$ denote the full subcategory consisting of objects $(U,T)$ such that $U$ is \'etale over $Y$. It is easy to see from the proof of Lemma~\ref{lem:nonempty finite limit in abs log cristalline site} that non-empty finite inverse limits exist in $((Y,M_Y)/\Sigma^\sharp)_\cris$ and agree with those in $((Y,M_Y)/\Sigma^\sharp)_\CRIS$.
Hence $((Y,M_Y)/\Sigma^\sharp)_\cris$ becomes a site and admits a morphism of topoi 
\[r_{Y/\Sigma}\colon \Sh(((Y,M_Y)/\Sigma^\sharp)_\cris)\rightarrow \Sh(((Y,M_Y)/\Sigma^\sharp)_\CRIS)
\]
such that $r_{Y/\Sigma}^\ast$ is the restriction from $((Y,M_Y)/\Sigma^\sharp)_\CRIS$ to $((Y,M_Y)/\Sigma^\sharp)_\cris$.
Similar to the non-log case in \cite[III.4.1.3]{Berthelot-book}, one can also define another morphism of topoi 
\[
p_{Y/\Sigma}\colon \Sh(((Y,M_Y)/\Sigma^\sharp)_\CRIS)\rightarrow \Sh(((Y,M_Y)/\Sigma^\sharp)_\cris)
\]
such that 
\[
p_{Y/\Sigma}^\ast=r_{Y/\Sigma,!},\quad
p_{Y/\Sigma,\ast}=r_{Y/\Sigma}^\ast,\quad\text{and}\quad
p_{Y/\Sigma}^!=r_{Y/\Sigma,\ast}.
\]
Note that for every sheaf $\calF$ on $(Y,M_Y)/\Sigma^\sharp)_\CRIS$, we have
\[
\Gamma((Y,M_Y)/\Sigma^\sharp)_\CRIS, \calF)=\Gamma((Y,M_Y)/\Sigma^\sharp)_\cris, p_{Y/\Sigma,\ast}\calF)=\Gamma((Y,M_Y)/\Sigma^\sharp)_\cris,r_{Y/\Sigma}^\ast\calF).
\]
\end{rem}

Observe that for every $(U,T)\in ((Y,M_Y)/\Sigma^\sharp)_\CRIS$ and every \'etale morphism $T'\rightarrow T$, the induced closed immersion $U'\coloneqq U\times_TT'\hookrightarrow T'$ naturally defines an object $(U',T')$ of $((Y,M_Y)/\Sigma^\sharp)_\CRIS$ by \cite[Cor.~3.22]{berthelot-ogus-book}, and the induced morphism $(U',T')\rightarrow (U,T)$ is \'etale and Cartesian. 
Hence giving a sheaf $\calF$ on $((Y,M_Y)/\Sigma^\sharp)_\CRIS$ is the same as giving a sheaf $\calF_{(U,T)}$ in the small \'etale topos $\Sh(T_\et)=\Sh(U_\et)$ for every $(U,T)\in ((Y,M_Y)/\Sigma^\sharp)_\CRIS$ together with a transition morphism $g_\calF^\ast\colon g^{-1}_\et\calF_{(U,T)}\rightarrow \calF_{(U',T')}$ for every morphism $g\colon (U',T')\rightarrow (U,T)$ satisfying the standard compatibilities for the identity and composition. 

\begin{defn}\label{defn:localization of crystalline site by open}
For any open $V\subset Y$, set $V_\CRIS(U,T)=\{\ast\}$ if $U\rightarrow Y$ factors through $V$ and $V_\CRIS(U,T)=\emptyset$ otherwise. Then $V_\CRIS$ is a subsheaf of the final object of $\Sh(((Y,M_Y)/\Sigma^\sharp)_\CRIS)$. Moreover, the localization $\Sh(((Y,M_Y)/\Sigma^\sharp)_\CRIS)/V_\CRIS$ is equivalent to $\Sh(((V,M_V)/\Sigma^\sharp)_\CRIS)$ (cf.~\cite[Prop.~III.3.1.2]{Berthelot-book}).
\end{defn}

\begin{defn}
For $(U,T)\in ((Y,M_Y)/\Sigma^\sharp)_\CRIS$, set
\[
\calO_{Y/\Sigma}(U,T)=\Gamma(T,\calO_T) \quad\text{and}\quad
\calJ_{Y/\Sigma}(U,T)=\Gamma(T,\calJ_T).
\]
These presheaves are indeed sheaves on $((Y,M_Y)/\Sigma^\sharp)_\CRIS$ by the preceding discussion.
We write $\calJ_{Y/\Sigma}^{[n]}$ for the $n$th divided power of $\calJ_{Y/\Sigma}$.
\end{defn}

\begin{defn}
A \emph{crystal of $\calO_{Y/\Sigma}$-modules} on $((Y,M_Y)/\Sigma^\sharp)_\CRIS$ is a sheaf $\calF$ of $\calO_{Y/\Sigma}$-modules such that for every morphism $g\colon (U',T')\rightarrow (U,T)$ in $(Y,M_Y)_\CRIS$, the induced map
 \[
  g_\et^\ast\calF_{(U,T)}\coloneqq \calO_{T'}\otimes_{g_{\et}^{-1}\calO_T} g_{\et}^{-1}\calF_{(U,T)}\rightarrow \calF_{(U',T')}
 \]
is an isomorphism of sheaves on $T'_\et$.
Similarly, we define the notion of crystals of $\calA$-modules for any $\calO_{Y/\Sigma}$-algebra $\calA$.
\end{defn}

\begin{prop}\label{prop:quasi-coherent crystals}
Let $\calF$ be an $\calO_{Y/\Sigma}$-module $\calF$.
\begin{enumerate}
 \item $\calF$ is flat (i.e., the functor $-\otimes_{\calO_{Y/\Sigma}}\calF$ is exact on the category of $\calO_{Y/\Sigma}$-modules) if and only if $\calF_{(U,T)}$ is a flat $\calO_T$-module for every $(U,T)\in ((Y,M_Y)/\Sigma^\sharp)_\CRIS$.
 \item $\calF$ is quasi-coherent (resp.~of finite type) if and only if $\calF$ is a crystal of $\calO_{Y/\Sigma}$-module, and $\calF_{(U,T)}$ is quasi-coherent (resp.~of finite type) as an $\calO_T$-module for every $(U,T)\in ((Y,M_Y)/\Sigma^\sharp)_\CRIS$.
\end{enumerate}
\end{prop}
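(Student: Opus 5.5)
Both parts follow the pattern of \cite[Prop.~IV.1.1.3, IV.1.7.x]{Berthelot-book}. The main tool is the description of sheaves on $((Y,M_Y)/\Sigma^\sharp)_\CRIS$ as compatible families $\calF_{(U,T)}$ on $T_\et$. The guiding fact is that, for every object $(U,T)$, the restriction functor $\calF\mapsto \calF_{(U,T)}$ from $\calO_{Y/\Sigma}$-modules to $\calO_T$-modules on $T_\et$ is exact and commutes with tensor products. It has an exact-enough left adjoint $j_{(U,T)!}$, namely extension by zero from the localized topos over $h_{(U,T)}$. Throughout we may work on the affine subsite.

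For (1): suppose each $\calF_{(U,T)}$ is $\calO_T$-flat. Then for a short exact sequence of $\calO_{Y/\Sigma}$-modules, apply $-\otimes\calF$ and evaluate at each $(U,T)$. Since $(\calG\otimes\calF)_{(U,T)}=\calG_{(U,T)}\otimes_{\calO_T}\calF_{(U,T)}$ (sheafification commutes with restriction to $T_\et$), exactness follows objectwise. Conversely, suppose $\calF$ is flat. Given an injection $\calM\hookrightarrow\calN$ of $\calO_T$-modules on $T_\et$, we want $\calM\otimes\calF_{(U,T)}\to\calN\otimes\calF_{(U,T)}$ injective. Push $\calM,\calN$ forward to the crystalline site through the localization $j_{(U,T)}$ and take the restriction/extension-by-zero functor $\iota_{!}$ from $\Sh(T_\et)$ to the localized topos $\Sh(\CRIS)/h_{(U,T)}$. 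The $\calO$-linear version of $\iota_!$ assigns to $(U',T')\to(U,T)$ the module $g^\ast\calM$, which is exact on flat modules only. To avoid this, reduce to checking flatness against injections of ideals $I\hookrightarrow\calO_{T_V}$ for $V\to T$ étale affine. For these, the module $j_!(\text{$\calO$-span})$ is a submodule of $j_!\calO$, and restricting back to $(V,T_V)$ recovers $I\hookrightarrow\calO$. So flatness of $\calF$ gives $\mathrm{Tor}_1(\calO/I,\calF_{(U,T)})=0$ on $T_\et$. Concretely: for $I\subset\Gamma(T,\calO_T)$, define the subsheaf $\calI\subset\calO_{Y/\Sigma}|_{(U,T)}$ generated by the image of $I$. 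Its value at $(U,T)$ itself is $I\calO_T$, because the identity is initial among objects over $(U,T)$ evaluated at $T$.

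For (2): the ``if'' direction goes as follows. Given a crystal with quasi-coherent values, choose locally on an affine $(U,T)$ a presentation $\calO_T^{(J)}\to\calO_T^{(I)}\to\calF_{(U,T)}\to0$. Transport it via the crystal property to every $(U',T')\to(U,T)$: since $g^\ast$ is right exact, $\calO_{T'}^{(J)}\to\calO_{T'}^{(I)}\to\calF_{(U',T')}\to 0$ is exact. This gives a presentation of $\calF|_{(U,T)}$ by free $\calO_{Y/\Sigma}$-modules. Since covers of the final object by such $(U,T)$ exist, $\calF$ is quasi-coherent. For finite type, use finite $I$ only. For ``only if'': a quasi-coherent $\calF$ locally has a presentation $\calO^{(J)}\to\calO^{(I)}\to\calF\to0$ on some covering family of objects $(U_\alpha,T_\alpha)$. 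Evaluating at any $(U',T')\to(U_\alpha,T_\alpha)$ gives exact sequences of $\calO_{T'}$-modules. Hence $\calF_{(U',T')}$ is quasi-coherent, and the transition maps $g^\ast\calF_{(U,T)}\to\calF_{(U',T')}$ are isomorphisms by right exactness of $g^\ast$ and the five lemma, after étale localization on $T$.

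The main obstacle is a subtlety in the ``only if'' direction: a covering of the final object only covers objects $(U',T')$ \emph{étale locally}. For an arbitrary $(U,T)$ one must use $(U,T)\times(U_\alpha,T_\alpha)$, whose PD-envelope product from Lemma~\ref{lem:nonempty finite limit in abs log cristalline site}(1) maps to both factors. The fact that $\{(U,T)\times (U_\alpha,T_\alpha)\to (U,T)\}$ need not be a cover is the delicate point. The fix is to use that a presentation exists on a cover of $h_{(U,T)}$ itself, i.e.\ by étale Cartesian $(U_\beta,T_\beta)\to(U,T)$, which is exactly what local freeness over the ringed topos provides. Thus we get presentations on an étale cover of $T$, giving quasi-coherence on $T_\et$ and the crystal isomorphisms by étale descent.
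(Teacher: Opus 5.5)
Your arguments for (1) and for the quasi-coherent case of (2) are correct. For (2) they are essentially the argument of Berthelot's Prop.~IV.1.1.3, which the paper cites verbatim; the paper calls (1) straightforward and gives no details. Two small points. First, the ideal subsheaf generated by $I$ has value $I\calO_T$ at $(U,T)$ because evaluation at $(U,T)$ is exact and commutes with direct sums; note that $\id_{(U,T)}$ is the \emph{final} object of the slice category, not an initial one. Second, the ``delicate point'' in your last paragraph is harmless. If $\{(U_\alpha,T_\alpha)\}$ covers the final object, then $\{(U,T)\times(U_\alpha,T_\alpha)\to(U,T)\}$ is jointly epimorphic in the topos. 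Hence the sieve it generates contains an \'etale Cartesian covering of $(U,T)$, and the presentations restrict to its members.

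The real gap is the finite-type case of the ``only if'' direction of (2). Your proposal treats only quasi-coherent $\calF$ there, and the method cannot be extended. You obtain the crystal property by comparing presentations via right exactness of $g^\ast$. A module of finite type only has local surjections $\calO^n\to\calF$, and these give surjectivity of $g^\ast\calF_{(U,T)}\to\calF_{(U',T')}$ but not injectivity.

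In fact this direction is false as literally stated. The module $\calF=\calO_{Y/\Sigma}/\calJ_{Y/\Sigma}$ is generated by the global section $1$, so it is of finite type, and each $\calF_{(U,T)}=\calO_T/\calJ_T$ is of finite type. Yet for $g\colon(U',T')\to(U,T)$ the map $g^\ast\calF_{(U,T)}=\calO_{T'}/\calJ_T\calO_{T'}\to\calO_{T'}/\calJ_{T'}$ is an isomorphism only when $g$ is Cartesian. The big site does contain non-Cartesian morphisms: for example, the structure map $(Y,T)\to(Y,Y)$, where $\calO_T=\calO_Y\langle t\rangle$ is the divided power polynomial thickening of $Y$ with log structure pulled back from $Y$.

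So the finite-type clause has to be read as ``quasi-coherent of finite type'' (or ``of finite presentation''). In that reading, your quasi-coherent argument, combined with transporting finitely many local generators, gives both directions.
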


See, for example, \cite[Tag 03DL]{stacks-project} for the definition of quasi-coherent modules (resp. modules of finite type) on a ringed site.

\begin{proof}
Part (1) is straightforward; for (2), the proof of \cite[Prop.~IV.1.1.3]{Berthelot-book} works verbatim.
\end{proof}

\begin{notation}
For $(U,T)\in ((Y,M_Y)/\Sigma^\sharp)_\CRIS$, the functor $\Sh(((Y,M_Y)/\Sigma^\sharp)_\CRIS)\rightarrow \Sh(T_\et)$ sending $\calF$ to $\calF_{(U,T)}$ is exact (i.e., it commutes with any finite limits and colimits).
Hence it extends to 
\[
D(((Y,M_Y)/\Sigma^\sharp)_\CRIS,\calO_{Y/\Sigma})\rightarrow D(T_\et,\calO_T), 
\]
which we still denote by $\calK\mapsto \calK_{(U,T)}$ (cf.~\cite[V.1.1.1]{Berthelot-book}).
\end{notation}

\section{Functoriality of crystalline topoi}\label{sec: Functoriality of crystalline topoi}
Big logarithmic crystalline sites enjoy natural functoriality (cf.~\cite[III.4.2]{Berthelot-book}):
let $\phi\colon \Sigma'^\sharp=(\Sigma',M_{\Sigma'},\calJ_{\Sigma'},\gamma_{\Sigma'}) \rightarrow \Sigma^\sharp=(\Sigma,M_\Sigma,\calJ_\Sigma,\gamma_\Sigma)$ be a morphism of log PD-schemes with quasi-coherent log structures over $\Z_p$ and assume that $p$ is nilpotent in $\calO_\Sigma$. 
Suppose that we have the following commutative diagram
\begin{equation}\label{eq:functoriality diagram in crystalline topos}
\xymatrix{
(Y',M_{Y'})\ar[r]^-f \ar[d]_-{\pi'}& (Y,M_Y)\ar[d]_-\pi\\
\Sigma'^\sharp=(\Sigma',M_{\Sigma'},\calJ_{\Sigma'},\gamma_{\Sigma'}) \ar[r]^-\phi& \Sigma^\sharp=(\Sigma,M_\Sigma,\calJ_\Sigma,\gamma_\Sigma),
}
\end{equation}
where $f$ is a morphism of integral and quasi-coherent log schemes, and $(Y,M_Y)$ (resp.~$(Y',M_{Y'})$) is a log $\Sigma^\sharp$-scheme (resp.~log $\Sigma'^\sharp$-scheme).

\begin{defn}
An \emph{$f$-PD morphism} from $(U',T',M_{T'},i',\gamma')\in ((Y',M_{Y'})/\Sigma'^\sharp)_\CRIS$ to $(U,T, M_T,i,\gamma)\in ((Y,M_Y)/\Sigma^\sharp)_\CRIS$ (relative to $\phi$) consists of a commutative diagram 
\[
\xymatrix{
\Sigma'^\sharp\ar[d]_-\phi &(Y',M_{Y'})\ar[d]^-f \ar[l]^-{\pi'} & (U',M_{U'}\coloneqq g'^\ast M_{Y'})\ar[l]^-{g'}\ar[d]^-{h_{U',U}}\ar@{^{(}->}[r]_-{i'} & (T',M_{T'})\ar[d]^-{h_{T',T}}\ar@/_1.2pc/[lll]\\
\Sigma^\sharp &(Y,M_Y) \ar[l]_-{\pi}& (U,M_U\coloneqq g^\ast M_Y)\ar[l]_-{g}\ar@{^{(}->}[r]^-i & (T,M_T),\ar@/^1.2pc/[lll]
}
\]
where $h_{U',U}$ and $h_{T',T}$ are morphisms of log schemes such that $h_{T',T}$ is compatible with the PD-structures $\gamma_\Sigma, \gamma_{\Sigma'},\gamma, \gamma'$. 

For $(U,T)\in ((Y,M_Y)/\Sigma^\sharp)_\CRIS$, define the presheaf $f^\ast(U,T)$ on $((Y',M_{Y'})/\Sigma'^\sharp)_\CRIS$ by
\[
\bigl(f^\ast(U,T)\bigr)(U',T')\coloneqq \Hom_{f\text{-PD}}\bigl((U',T'),(U,T)\bigr).
\]
It is straightforward to check that $f^\ast(U,T)$ is a sheaf of sets. 
\end{defn}

\begin{construction}\label{construction:strict log structure for functoriality}
Keep the notation as above.
For $(U',T')\in ((Y',M_{Y'})/\Sigma'^\sharp)_\CRIS$, let $M_{(T',f)}$ be the \'etale sheaf of monoids on $T'$ defined as the fiber product $(f\circ g')^\ast M_Y \times_{M_{U'}}M_{T'}$ appearing in the diagram
\[
\xymatrix{
M_{(T',f)} \ar[r]\ar[d]   & M_{T'}\ar[d] \ar[r]& \calO_{T'} \ar[d]\\
(f\circ g')^\ast M_Y \ar[r]  &M_{U'} \ar[r]&\calO_{U'}
}
\]
under the equivalence $T'_\et\xrightarrow{\cong}U'_\et$ of small \'etale sites.
The pair $(T',M_{(T',f)})$ is an integral and quasi-coherent log scheme with exact closed immersion $i'\colon (U',(f\circ g')^\ast M_Y) \hookrightarrow (T',M_{(T',f)})$ and defines an object $(U',T',i',M_{(T',f)},\gamma')\in ((Y,M_Y)/\Sigma^\sharp)_\CRIS$. 
In fact, to see that $M_{(T',f)}$ is integral and quasi-coherent, we may work \'etale locally and assume that $T'=\Spec A$ and $U'=\Spec A/I$ are affine and $M_Y$ admits a chart $P\rightarrow \Gamma(Y,M_Y)$ from an integral monoid $P$. Set $\widetilde{P}\coloneqq P\times_{\Gamma(U',M_{U'})}\Gamma(T',M_{T'})$. Then $\widetilde{P}$ is integral and the map $\widetilde{P}\rightarrow P$ is a $(1+I)$-torsor (cf.~\cite[\S 1.1, Ex.~(iii)]{Beilinson-crystalline-period-map}). The induced map $\widetilde{P}^a_{T'}\rightarrow M_{(T',f)}$ of log structures on $T'_\et$ is an isomorphism since the resulting map $(\widetilde{P}^a_{T'})_{\overline{t}}/\calO_{T',\overline{t}}^\times\rightarrow (M_{(T',f)})_{\overline{t}}/\calO_{T',\overline{t}}^\times$ is an isomorphism for every geometric point $\overline{t}$ of $T'$ (cf.~\cite[Prop.~II.2.1.4]{Ogus-log}).
\end{construction}

The verification of the following lemma is straightforward.
\begin{lem}\label{lem:f-PH morphism}
The $f$-PD morphism $(U',T',i',M_{T'},\gamma')\rightarrow (U',T',i',M_{(T',f)},\gamma')$ is an initial object in the category of $f$-PD morphisms from $(U',T')$. 
\end{lem}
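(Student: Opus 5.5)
We have to show that for any $f$-PD morphism $(h_{U',U},h_{T',T})\colon(U',T',M_{T'})\rightarrow(U,T,M_T)$ with $(U,T)\in((Y,M_Y)/\Sigma^\sharp)_\CRIS$, there is a unique morphism $(U',T',M_{(T',f)})\rightarrow(U,T,M_T)$ in $((Y,M_Y)/\Sigma^\sharp)_\CRIS$ through which it factors. Here the canonical $f$-PD morphism $(U',T',M_{T'})\rightarrow(U',T',M_{(T',f)})$ is the identity on $U'$ and $T'$, and the projection $M_{(T',f)}\rightarrow M_{T'}$ on log structures. The underlying schemes and PD-structures never change. Hence the only real content is on log structures, and uniqueness is automatic on schemes.

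For existence, take $h_{U',U}$ and $h_{T',T}$ on schemes. It remains to show that the log map $h_{T',T}^\flat\colon h_{T',T}^{-1}M_T\rightarrow M_{T'}$ factors uniquely through $M_{(T',f)}=(f\circ g')^\ast M_Y\times_{M_{U'}}M_{T'}$, compatibly with the maps to $\calO_{T'}$.

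To get the first component, use that $i$ is exact. Since $M_U=g^\ast M_Y$, restricting to $U$ and pulling back along $h_{U',U}$ gives a map $h^{-1}M_T\rightarrow h_{U',U}^{-1}g^{-1}M_Y\rightarrow (f\circ g')^\ast M_Y$. Here we use the commutativity $g\circ h_{U',U}=f\circ g'$, which is part of the $f$-PD diagram. The second component is $h_{T',T}^\flat$ itself. The two components agree in $M_{U'}$ because the square formed by $i,i'$ and the $h$'s commutes, and because $h_{U',U}^\flat$ is compatible with $f^\flat\colon f^{-1}M_Y\rightarrow M_{Y'}$. The universal property of the fiber product then gives the factorization, and the composite with the projection to $M_{T'}$ is $h_{T',T}^\flat$ by construction.

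Uniqueness would follow if $M_{(T',f)}\rightarrow M_{T'}$ were a monomorphism, but it need not be. We must instead use that a morphism in $((Y,M_Y)/\Sigma^\sharp)_\CRIS$ also has to match the $U$-side log map, which is forced to be the pullback map from $g^\ast M_Y$. Thus both components of any factorization are determined: the $(f\circ g')^\ast M_Y$-component by the requirement that $(U',M)\rightarrow(U,M_U)$ be the strict map over $Y$, and the $M_{T'}$-component by compatibility with the original $f$-PD morphism. So the factorization is unique.

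One also checks that the result is a morphism over $(\Sigma,M_\Sigma)$ and is compatible with PD-structures. Both are inherited from $h_{T',T}$. The main obstacle is the bookkeeping of the log-structure maps on $T'_\et\cong U'_\et$, and I expect it to be routine.
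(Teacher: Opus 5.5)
Your argument is correct. It is exactly the direct verification the paper leaves to the reader (the paper only says the lemma is straightforward), and you rightly note that uniqueness has to use the $(f\circ g')^\ast M_Y$-component, because $M_{(T',f)}\rightarrow M_{T'}$ need not be injective.

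One small slip: the first component should be $h_{T',T}^{-1}M_T\cong h_{U',U}^{-1}i^{-1}M_T\xrightarrow{h_{U',U}^{-1}(i^\flat)} h_{U',U}^{-1}M_U=h_{U',U}^{-1}g^\ast M_Y\rightarrow (g\circ h_{U',U})^\ast M_Y=(f\circ g')^\ast M_Y$. It passes through $g^\ast M_Y$, not $g^{-1}M_Y$, and exactness of $i$ plays no role in this step.
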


\begin{prop}\label{prop:functoriality of relative crystalline topoi}
There exists a unique morphism of topoi
\[
f_\CRIS=(f_\CRIS^\ast,f_{\CRIS,\ast})\colon \Sh(((Y',M_{Y'})/\Sigma'^\sharp)_\CRIS)\rightarrow \Sh(((Y,M_Y)/\Sigma^\sharp)_\CRIS)
\]
such that
\[
f_{\CRIS,\ast}(\calF')(U,T)= \Hom(f^\ast(U,T), \calF')
\]
and
\[
(f_\CRIS^\ast\calF)(U',T',i',M_{T'},\gamma')=\calF(U',T',i',M_{(T',f)},\gamma').
\]
Moreover, $f_\CRIS^\ast$ is exact on abelian sheaves and $f_\CRIS^\ast\calO_{Y/\Sigma}=\calO_{Y'/\Sigma'}$.
\end{prop}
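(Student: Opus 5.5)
The plan is to realize $f_\CRIS$ as the morphism of topoi attached to a functor of sites that is both continuous and cocontinuous. Consider the functor
\[
v\colon ((Y',M_{Y'})/\Sigma'^\sharp)_\CRIS\rightarrow ((Y,M_Y)/\Sigma^\sharp)_\CRIS,\qquad (U',T',i',M_{T'},\gamma')\mapsto (U',T',i',M_{(T',f)},\gamma')
\]
from Construction~\ref{construction:strict log structure for functoriality}. It is functorial because $M_{(T',f)}$ is defined by a fiber product that is natural in $(U',T')$. Lemma~\ref{lem:f-PH morphism} says that $f$-PD morphisms out of $(U',T')$ factor uniquely through $(U',T')\rightarrow v(U',T')$. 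This gives
\[
f^\ast(U,T)(U',T')=\Hom_{f\text{-PD}}\bigl((U',T'),(U,T)\bigr)=\Hom\bigl(v(U',T'),(U,T)\bigr),
\]
so $f^\ast(U,T)=h_{(U,T)}\circ v$. Consequently the formula $f_{\CRIS,\ast}(\calF')(U,T)=\Hom(h_{(U,T)}\circ v,\calF')$ is exactly the right adjoint of precomposition with $v$, in the sense of \cite[Tag~00XK]{stacks-project}.

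\textbf{Step 1: $v$ is continuous.} A cover $\{(U'_i,T'_i)\rightarrow (U',T')\}$ consists of étale Cartesian morphisms, and $v$ leaves the underlying schemes unchanged. So I only need that $M_{(T'_i,f)}$ is the restriction of $M_{(T',f)}$ to $T'_{i,\et}$. This holds because the fiber product defining $M_{(T',f)}$ is formed in the étale topos and commutes with restriction to étale $T'$-schemes. Since fiber products along étale Cartesian maps are computed without passing to a PD-envelope (Lemma~\ref{lem:nonempty finite limit in abs log cristalline site}(2)), $v$ also commutes with these fiber products.

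\textbf{Step 2: $v$ is cocontinuous.} Let $\{(U_i,T_i)\rightarrow v(U',T')\}$ be a cover in the target site. Each $T_i\rightarrow T'$ is étale and $U_i=U'\times_{T'}T_i$. Equip $T_i$ with the restriction of $M_{T'}$; this yields objects of the source site, étale Cartesian over $(U',T')$, whose images under $v$ are the given $(U_i,T_i)$ by the compatibility in Step 1. The PD-structures restrict along étale maps by \cite[Cor.~3.22]{berthelot-ogus-book}. Hence every cover of $v(U',T')$ is refined by the image of a cover of $(U',T')$.

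\textbf{Step 3: conclusions.} A functor that is both continuous and cocontinuous induces a morphism of topoi with $f_\CRIS^{-1}\calF=\calF\circ v$, with no sheafification needed, and with pushforward the right adjoint described above \cite[Tags~00XO, 00XY]{stacks-project}. This gives both displayed formulas. Uniqueness holds because a morphism of topoi is determined up to unique isomorphism by its pushforward, or equivalently by the adjoint pullback.

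For exactness on abelian sheaves, precomposition with $v$ commutes with presheaf limits and colimits. By the cited Stacks results it also commutes with sheafification, so $f_\CRIS^\ast$ commutes with all colimits and finite limits. Finally,
\[
(f_\CRIS^\ast\calO_{Y/\Sigma})(U',T')=\Gamma(T',\calO_{T'})=\calO_{Y'/\Sigma'}(U',T').
\]

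I expect the main obstacle to be Step 2 together with the compatibility of $M_{(T',f)}$ with étale localization. The point to check carefully is that the lifted objects have integral quasi-coherent log structures and PD-structures compatible with both $\gamma_{\Sigma'}$ and $\gamma_\Sigma$. Everything else in the argument is formal.
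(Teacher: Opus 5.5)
Your proof is correct and is essentially the paper's argument. The paper gets the presheaf adjunction from \cite[Prop.~5.7]{berthelot-ogus-book} and then uses Lemma~\ref{lem:f-PH morphism} to identify $f_\CRIS^\ast$ with evaluation at $(U',T',M_{(T',f)})$, which is exactly your precomposition with $v$; your continuity and cocontinuity checks spell out the sheaf-preservation claims that the paper leaves as straightforward.
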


As usual, $f_\CRIS^\ast\calF$ is also written as $\calF|_{((Y',M_{Y'})/\Sigma'^\sharp)_\CRIS}$.

\begin{proof}
 Set $C=((Y,M_Y)/\Sigma^\sharp)_\CRIS$, $C'=((Y',M_{Y'})/\Sigma'^\sharp)_\CRIS$, and $\varphi(U,T)=f^\ast(U,T)$.
By applying \cite[Prop.~5.7]{berthelot-ogus-book} to this triple, we obtain the adjoint pair 
\[
f_\CRIS=(f_\CRIS^\ast,f_{\CRIS,\ast})\colon \operatorname{PSh}(((Y',M_{Y'})/\Sigma'^\sharp)_\CRIS)\rightarrow\operatorname{PSh}(((Y,M_Y)/\Sigma^\sharp)_\CRIS)
\]
given by $f_{\CRIS,\ast}(\calF')(U,T)\coloneqq \Hom(f^\ast(U,T), \calF')$ and 
$f_{\CRIS}^\ast(\calF)(U',T')=\varinjlim_{(U,T)}\calF(U,T)$ where $(U,T)$ runs over the category of $f$-PD morphisms from $(U',T')$.
It is easy to check that $f_{\CRIS,\ast}$ sends a sheaf to a sheaf. By Lemma~\ref{lem:f-PH morphism}, we see 
\begin{equation}\label{eq:pullback formula for crystalline site}
f_{\CRIS}^\ast(\calF)(U',T',i',M_{T'},\gamma')=\calF(U',T',i',M_{(T',f)},\gamma').
\end{equation}
Hence $f_{\CRIS}^\ast$ sends a sheaf to a sheaf and commutes with finite limits and colimits, making $f_\CRIS$ the desired morphism of topoi. The last assertions also follow from this.
\end{proof}

Note that the exactness of $f_\CRIS^\ast$ comes from the fact that we work on the big site. It also implies that the inverse image functor for the morphism of ringed topoi $(\Sh((Y',M_{Y'})_\CRIS),\calO_{Y'/\Sigma'})\rightarrow (\Sh((Y,M_Y)_\CRIS),\calO_{Y/\Sigma})$ is computed by the same formula \eqref{eq:pullback formula for crystalline site},  which will be used implicitly when we consider the inverse image of $\calO_{Y/\Sigma}$-modules.

\begin{prop}\label{prop:crystalline pullback along closed immersion}
Assume that $\Sigma'^\sharp=\Sigma^\sharp$, $\phi=\id$, and $f=i\colon (Y',M_{Y'})\rightarrow (Y,M_Y)$ is a closed immersion of fine log schemes. For $(U,T)\in ((Y,M_Y)/\Sigma^\sharp)_\CRIS$, let $U'\coloneqq U\times_YY'$ equipped with the pullback log structure $M_{U'}$ from $M_{Y'}$, and let $i_D\colon (U'',M_{U''})\hookrightarrow(D,M_D,\gamma_D)$ denote the PD-envelope of the closed immersion $(U',M_{U'})\hookrightarrow (T,M_T)$ relative to $\Sigma^\sharp$. Then $(U'',D,i_D,M_D,\gamma_D)$ is an object of $((Y',M_{Y'})/\Sigma^\sharp)_\CRIS$ and represents the sheaf $f^\ast(U,T)$.
\end{prop}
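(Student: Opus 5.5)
We first check that $(U'',D,i_D,M_D,\gamma_D)$ lies in $((Y',M_{Y'})/\Sigma^\sharp)_\CRIS$, and then verify the universal property by identifying $i$-PD morphisms into $(U,T)$ with morphisms into $(U'',D)$.

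\emph{Step 1: the envelope is an object.} We apply Theorem~\ref{thm:existence of log PD envelope} to the immersion $(U',M_{U'})\hookrightarrow (T,M_T)$. Here $U'\to U$ is a closed immersion and $U\hookrightarrow T$ is an exact closed immersion, so the composite is an immersion. The log structure $M_{U'}$ is the pullback of the fine $M_{Y'}$, hence fine. By Remark~\ref{rem:fine log structure in absolute log crystalline site}, $M_T$ is fine, so hypothesis (a) holds. Since $\gamma_\Sigma$ extends to $U'$ (it extends to $Y'$), part (2) of the theorem gives $U''=U'$. Thus $i_D\colon (U',M_{U'})\hookrightarrow (D,M_D)$ is an exact closed immersion of log $\Sigma^\sharp$-schemes with integral quasi-coherent log structure. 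Moreover, $M_{U'}$ is the pullback of $M_{Y'}$ along $U'\to U\to Y$ restricted to $Y'$, that is, along $U'\to Y'$. Together with the PD-structure $\gamma_D$, which is compatible with $\gamma_\Sigma$, this shows $(U',D)\in((Y',M_{Y'})/\Sigma^\sharp)_\CRIS$. The structure maps $D\to T$ and $U'\to U$ form an $i$-PD morphism $\epsilon\colon (U',D)\to (U,T)$.

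\emph{Step 2: representability.} Let $(V',T')\in ((Y',M_{Y'})/\Sigma^\sharp)_\CRIS$, and let $(h_V,h_T)$ be an $i$-PD morphism to $(U,T)$. Since $V'\to U\to Y$ agrees with $V'\to Y'\to Y$, the map $h_V$ factors uniquely through $U'=U\times_YY'$. The factorization is automatically a morphism of log schemes: $M_{V'}$ is pulled back from $M_{Y'}$, and $M_{U'}$ is pulled back from $M_{Y'}$ as well. We thus obtain a commutative square consisting of the exact closed immersion $(V',M_{V'})\hookrightarrow(T',M_{T'})$, which carries the PD-structure compatible with $\gamma_\Sigma$, together with $h_T\colon T'\to T$ and $V'\to U'$. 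By the universal property of the PD-envelope in Theorem~\ref{thm:existence of log PD envelope}, $h_T$ factors uniquely through a PD-morphism $(T',M_{T'})\to (D,M_D)$ lifting $V'\to U'$. This is exactly a morphism $(V',T')\to(U',D)$ in the crystalline site of $Y'$. Conversely, composing a morphism into $(U',D)$ with $\epsilon$ gives an $i$-PD morphism, and the two constructions are mutually inverse by uniqueness. The identification is natural in $(V',T')$, so $f^\ast(U,T)\cong h_{(U',D)}$.

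\emph{Main obstacle.} The delicate point is applying the universal property in Step 2. The envelope's universality is stated among exact closed immersions of log $\Sigma^\sharp$-schemes with integral quasi-coherent log structure, mapping compatibly to $(U',M_{U'})\hookrightarrow (T,M_T)$. We must therefore check that $M_{T'}$ is integral and quasi-coherent, which holds by definition of the site, and that the log morphism $T'\to T$ is compatible with $V'\to U'$. The latter holds since both compositions to $T$ agree. We also need to confirm that the universal property includes compatibility with the PD-structure on the ideal of $V'$, not merely with $\gamma_\Sigma$. In Beilinson's/Kato's formulation this is indeed part of the definition of PD-envelope "relative to $\Sigma^\sharp$", so we take it from Theorem~\ref{thm:existence of log PD envelope} with the universality read in this sense.
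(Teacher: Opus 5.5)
Your route is the same as the paper's. Existence of the envelope comes from Theorem~\ref{thm:existence of log PD envelope}(a) via Remark~\ref{rem:fine log structure in absolute log crystalline site}. Representability comes from the universal property of the envelope combined with the description of $f^\ast(U,T)$ by $i$-PD morphisms; the paper's proof is exactly this, in one line. Two steps of your write-up, however, do not hold as stated.

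\textbf{The $i$-PD morphism $\epsilon$.} You claim that $D\to T$ and $U'\to U$ form an $i$-PD morphism $\epsilon$, and you also use this for the converse direction of Step~2. By definition an $i$-PD morphism must be compatible with the PD-structure $\gamma$ on $\calJ_T$, not only with $\gamma_\Sigma$. An envelope that is universal only with respect to $\gamma_\Sigma$ need not be compatible with $\gamma$. Here is an example.
\begin{itemize}
\item Take $\Sigma=Y=Y'=\Spec\F_p$ with $\calJ_\Sigma=0$ and $f=\id$, and let $U=\Spec\F_p$.
\item Let $T=\Spec\F_p[x]/(x^p)$, with the PD-structure on $(x)$ induced from $\F_p\langle x\rangle/(x^{[n]})_{n\geq p}\cong\F_p[x]/(x^p)$, so that $\gamma_p(x)=0$.
\item The PD-envelope of $(x)$ that ignores $\gamma$ is $\F_p\langle x\rangle$, in which $x^{[p]}\neq 0$.
\end{itemize}
So $\calO_T\to\calO_D$ is not a PD-morphism, and $(U,D)$ cannot represent $f^\ast(U,T)=h_{(U,T)}$; it is not even isomorphic to $(U,T)$.

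The fix is to take the envelope compatibly with $\gamma$, i.e., relative to $T^\sharp=(T,M_T,\calJ_T+\calJ_\Sigma\calO_T,\tilde\gamma)$. This is what the paper does for products in Lemma~\ref{lem:nonempty finite limit in abs log cristalline site}(1). Since $M_T$ is fine, Theorem~\ref{thm:existence of log PD envelope}(a) still applies, and with this envelope both directions of your bijection go through. The phrase ``relative to $\Sigma^\sharp$'' in the statement has to be read this way, and the paper's one-line proof leaves this implicit. The compatibility you worry about in your last paragraph, with the PD-structure on the ideal of $V'$, is the harmless one: it is part of the universal property.

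\textbf{The claim $U''=U'$.} The inference ``$\gamma_\Sigma$ extends to $U'$ since it extends to $Y'$'' is not valid, because PD-structures do not extend along arbitrary morphisms such as $U'\to Y'$. The conclusion is still true, for the following reason.
\begin{itemize}
\item $\gamma_\Sigma$ extends to $U$: the ideal $\calJ_\Sigma\calO_U$ is the quotient of $(\calJ_T+\calJ_\Sigma\calO_T,\tilde\gamma)$ by the sub-PD-ideal $\calJ_T$.
\item $\gamma_\Sigma$ also extends to $Y$ and $Y'$.
\item So \cite[Prop.~I.1.7.1]{Berthelot-book}, applied to $\calO_{U'}=\calO_U\otimes_{\calO_Y}\calO_{Y'}$, gives a PD-structure on $\calJ_\Sigma\calO_{U'}$ compatible with $\gamma_\Sigma$ and with $\tilde\gamma$.
\end{itemize}
This is what Theorem~\ref{thm:existence of log PD envelope}(2) needs. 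Alternatively, keep $U''$ as the paper does; your universality argument then works verbatim and produces $V'\to U''$.
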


\begin{proof}
Note that the PD-envelope $i_D$ exists by Theorem~\ref{thm:existence of log PD envelope} and Remark~\ref{rem:fine log structure in absolute log crystalline site}; now the assertion follows from the universality of the PD-envelope and Proposition~\ref{prop:functoriality of relative crystalline topoi}.
\end{proof}

\begin{cor}\label{cor:exactness of i-pushforward}
With the set-up and assumptions as in Proposition~\ref{prop:crystalline pullback along closed immersion}, the functor $i_{\CRIS,\ast}\colon \Ab(((Y',M_{Y'})/\Sigma^\sharp)_\CRIS)\rightarrow\Ab(((Y,M_Y)/\Sigma^\sharp)_\CRIS)$ is exact.
\end{cor}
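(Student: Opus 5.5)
The plan is to compute $i_{\CRIS,\ast}$ objectwise, reduce exactness to exactness on small étale sites, and conclude with Proposition~\ref{prop:etale pushforward from PD-envelope}.

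\textbf{Objectwise formula.} By Proposition~\ref{prop:functoriality of relative crystalline topoi}, for $(U,T)\in((Y,M_Y)/\Sigma^\sharp)_\CRIS$ we have
\[
(i_{\CRIS,\ast}\calF')(U,T)=\Hom(i^\ast(U,T),\calF').
\]
By Proposition~\ref{prop:crystalline pullback along closed immersion}, $i^\ast(U,T)$ is represented by $(U'',D)$. Hence $(i_{\CRIS,\ast}\calF')(U,T)=\calF'(U'',D)=\Gamma(D,\calF'_{(U'',D)})$.

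\textbf{Compatibility with étale localization.} Let $T_1\to T$ be étale, with the induced Cartesian object $(U_1,T_1)$. The PD-envelope of $U'\times_TT_1\hookrightarrow T_1$ is $D\times_TT_1$, since the PD-envelope commutes with flat (in particular étale) base change; this holds in the log setting by the exact-closed-immersion description in Theorem~\ref{thm:existence of log PD envelope}(1) after exactification, which is étale local. So on the small étale site of $T$ we get
\[
(i_{\CRIS,\ast}\calF')_{(U,T)}=q_{\et,\ast}\bigl(\calF'_{(U'',D)}\bigr),
\]
where $q\colon D\to T$ is the structure map.

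\textbf{Reduction and conclusion.} Exactness of a sequence of abelian sheaves on $((Y,M_Y)/\Sigma^\sharp)_\CRIS$ can be checked after evaluation on every $T_\et$. The functor $\calF'\mapsto\calF'_{(U'',D)}$ is exact, as noted after Proposition~\ref{prop:quasi-coherent crystals}. So it suffices that $q_{\et,\ast}$ is exact, and this is Proposition~\ref{prop:etale pushforward from PD-envelope} under hypothesis (a). Its hypotheses hold: $p$ is nilpotent on $T$, $M_T$ is fine by Remark~\ref{rem:fine log structure in absolute log crystalline site}, and $M_{U'}$ is fine because $M_{Y'}$ is.

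\textbf{Main obstacle.} The delicate step is the base-change compatibility in the second step, namely identifying the restriction of $i^\ast(U,T)$ to $T_1$ with $(U''\times_TT_1,\,D\times_TT_1)$. I would deduce it from the universal property: an étale $T_1\to T$ with the pulled-back PD data gives a Cartesian morphism, and a PD-morphism from an object over $T_1$ to $D$ factors uniquely through $D\times_TT_1$.
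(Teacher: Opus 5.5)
Your proposal is correct and follows the paper's own argument: identify $(i_{\CRIS,\ast}\calF')_{(U,T)}$ with $q_{\et,\ast}(\calF'_{(U'',D)})$ on $T_\et$ and then invoke Proposition~\ref{prop:etale pushforward from PD-envelope} under hypothesis (a). You also spell out two things the paper leaves implicit, namely that the PD-envelope is compatible with étale localization on $T$ and that hypothesis (a) holds via Remark~\ref{rem:fine log structure in absolute log crystalline site}.
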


\begin{proof}
Take $(U,T)\in ((Y,M_Y)/\Sigma^\sharp)_\CRIS$ and let $q\colon D\rightarrow T$ denote the induced map. For any sheaf $\calF'$ on $((Y',M_{Y'})/\Sigma^\sharp)_\CRIS$, the associated \'etale sheaf $\calF'_{(U,T)}$ on $T_\et$ is $q_{\et,\ast}(\calF'_{(U',D)})$. Now the exactness of $i_{\CRIS,\ast}$ follows from Proposition~\ref{prop:etale pushforward from PD-envelope}.
\end{proof}

\begin{rem}\label{rem:functoriality of small crystalline sites}
Keep the notation as in \eqref{eq:functoriality diagram in crystalline topos}. Then we also have the morphism of small crystalline topoi
\[
f_\cris=(f_\cris^\ast,f_{\cris,\ast})\colon \Sh(((Y',M_{Y'})/\Sigma'^\sharp)_\cris)\rightarrow \Sh(((Y,M_Y)/\Sigma^\sharp)_\cris)
\]
such that $f_{\cris,\ast}(\calF')(U,T)= \Hom(f^\ast(U,T), \calF')$: see \cite[(5.9)]{Kato-log}. Moreover, one can easily check, as in \cite[III.4.2.2]{Berthelot-book}, that there is a canonical isomorphism of morphisms of topoi $f_\cris\circ p_{Y'/\Sigma'}\cong p_{Y/\Sigma}\circ f_{\CRIS}$.
\end{rem}

\section{Projections to small \'etale topoi}\label{sec: Projections to small etale topoi}
Keep the notation as in Set-up~\ref{set-up:relative log-crystalline site}.

\begin{prop}\label{prop:projection from crystallie site to etale site}
There exists a morphism of topoi (called the \emph{projection from crystalline topos to \'etale topos})
\[
u_{Y/\Sigma}\coloneqq u_{(Y,M_Y)/\Sigma^\sharp}\colon \Sh(((Y,M_Y)/\Sigma^\sharp)_\CRIS)\rightarrow \Sh(Y_\et)
\]
given by
\[
(u_{(Y,M_Y)/\Sigma^\sharp,\ast}\calF)(V)=\Gamma(((V,M_V)/\Sigma^\sharp)_\CRIS,\calF|_{((V,M_V)/\Sigma^\sharp)_\CRIS})
\]
(where $M_V$ is the pullback log structure from $M_Y$) and
\[
(u_{(Y,M_Y)/\Sigma^\sharp}^{-1}\calG)(U,T,g\colon U\rightarrow Y)=(g_\et^{-1}\calG)(U).
\]
\end{prop}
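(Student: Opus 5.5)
We will follow the classical construction of \cite[III.3.2]{Berthelot-book}. The plan is to define $u_{Y/\Sigma}^{-1}$ and $u_{Y/\Sigma,\ast}$ by the two displayed formulas, and then to check three things: both functors take sheaves to sheaves, they are adjoint, and $u_{Y/\Sigma}^{-1}$ commutes with finite limits.

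\emph{Step 1 (the inverse image).} For $\calG\in\Sh(Y_\et)$ and $(U,T,g)$, set $(u^{-1}\calG)(U,T)=(g_\et^{-1}\calG)(U)$. A morphism $(U',T')\to(U,T)$ includes $g_U\colon U'\to U$ with $g=g'$-compatibility, so $g'^{-1}_\et\calG=g_{U,\et}^{-1}g^{-1}_\et\calG$. This gives restriction maps, and $u^{-1}\calG$ is a presheaf. For the sheaf property, recall that a cover is a family of \'etale Cartesian morphisms $(U_i,T_i)\to(U,T)$ with $\{U_i\to U\}$ an \'etale cover of $U$. The value of $u^{-1}\calG$ on $(U_i,T_i)$ is the section of the \'etale sheaf $g^{-1}_\et\calG$ on $U_i$, since the pullback of $g^{-1}_\et\calG$ along the \'etale map $U_i\to U$ is its restriction. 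Hence the sheaf condition reduces to the sheaf condition of $g^{-1}_\et\calG$ on $U_\et$. Fiber products in the site are computed as in Lemma~\ref{lem:nonempty finite limit in abs log cristalline site}(2), whose $U$-component is $U_i\times_UU_j$ in the \'etale Cartesian case. Exactness of $u^{-1}$ for finite limits follows because each $g^{-1}_\et$ is exact and finite limits of sheaves are computed objectwise.

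\emph{Step 2 (the direct image).} For $V\in Y_\et$ we set $(u_\ast\calF)(V)=\Gamma(((V,M_V)/\Sigma^\sharp)_\CRIS,\calF|_V)$. Here $\calF|_V$ is the pullback $j_\CRIS^\ast\calF$ along the strict \'etale map $j\colon(V,M_V)\to(Y,M_Y)$ from Proposition~\ref{prop:functoriality of relative crystalline topoi}; since $j$ is strict, $M_{(T,j)}=M_T$ in Construction~\ref{construction:strict log structure for functoriality}. It is cleaner to write this as $\Hom(V_\CRIS,\calF)$, where $V_\CRIS$ is the sheaf on $((Y,M_Y)/\Sigma^\sharp)_\CRIS$ with $V_\CRIS(U,T)=\Hom_Y(U,V)$, generalizing Definition~\ref{defn:localization of crystalline site by open} to \'etale $V$. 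One checks that this is a sheaf: if $\{U_i\to U\}$ is a cover of $U$ and the maps $U_i\to V$ agree on overlaps, they glue to a map $U\to V$ by \'etale descent for morphisms of schemes. One then identifies $\Sh(\CRIS(Y))/V_\CRIS$ with $\Sh(\CRIS(V))$. An object over $V_\CRIS$ is $(U,T)$ together with a factorization $U\to V\to Y$, and since $M_U=g^\ast M_Y=g_V^\ast M_V$, this is exactly an object of $((V,M_V)/\Sigma^\sharp)_\CRIS$. So $(u_\ast\calF)(V)=\Hom(V_\CRIS,\calF)$. Since $V\mapsto V_\CRIS$ takes \'etale covers $\{V_k\to V\}$ to epimorphic families $\coprod V_{k,\CRIS}\to V_\CRIS$ and preserves fiber products, the presheaf $u_\ast\calF$ is a sheaf on $Y_\et$.

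\emph{Step 3 (adjunction).} The cleanest route is to observe that $u^{-1}$ is the unique colimit-preserving extension of $h_V\mapsto V_\CRIS$. Indeed, $(u^{-1}h_V)(U,T)=\Hom_Y(U,V)$ after sheafification, since $g_\et^{-1}h_V$ is represented by $U\times_YV$. With this identification, $\Hom(u^{-1}\calG,\calF)=\Hom(\calG,u_\ast\calF)$ follows first for representable $\calG$ by Yoneda, and then in general by writing $\calG$ as a colimit of representables. To justify that $u^{-1}$ preserves colimits, note that colimits of sheaves are sheafified objectwise colimits and $g^{-1}_\et$ preserves colimits, so this reduces to the compatibility of sheafification on the crystalline site with sheafification on each $U_\et$. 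This compatibility holds because covers of $(U,T)$ are exactly \'etale covers of $T_\et\simeq U_\et$, as in the discussion after Remark~\ref{rem:small crystalline site}.

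I expect the main obstacle to be this last bookkeeping. In the \emph{big} site, the presheaf $(U,T)\mapsto(g^{-1}_\et\calG)(U)$ must be shown to be a sheaf, compatible with the description of sheaves as compatible families $\calF_{(U,T)}$ on $T_\et$. Everything else is formal.
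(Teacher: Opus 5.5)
Your proposal is correct and follows essentially the same route as the paper. Both arguments show three things: that the two formulas define sheaves (using $(\calF|_{((V,M_V)/\Sigma^\sharp)_\CRIS})(U,T)=\calF(U,T)$, which holds because $V\rightarrow Y$ is strict), that $u_{Y/\Sigma}^{-1}$ commutes with finite limits, and that the two functors are adjoint, which the paper simply calls straightforward. For the adjunction you pass through the sheaf $V_\CRIS=u_{Y/\Sigma}^{-1}h_V$ and the identification $\Sh(((Y,M_Y)/\Sigma^\sharp)_\CRIS)/V_\CRIS\simeq\Sh(((V,M_V)/\Sigma^\sharp)_\CRIS)$; this anticipates what the paper records just afterwards (Lemma~\ref{lem:properties of VCRIS}) and makes that step explicit.
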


Note that $\gamma_\Sigma$ extends to $\calO_V$ since $V$ is \'etale over $Y$.

\begin{proof}
By Proposition~\ref{prop:functoriality of relative crystalline topoi}, we have $(\calF|_{((V,M_V)/\Sigma^\sharp)_\CRIS})(U,T)=\calF(U,T)$ for any $(U,T)\in ((V,M_V)/\Sigma^\sharp)_\CRIS$.
One can deduce from this that the first formula defines a sheaf.
We easily see that $(u_{(Y,M_Y)/\Sigma^\sharp}^{-1}\calG)$ is a sheaf and $u_{(Y,M_Y)/\Sigma^\sharp}^{-1}$ commutes with finite limits.
It is now straightforward to see that $u_{(Y,M_Y)/\Sigma^\sharp}^{-1}$ is left adjoint to $u_{(Y,M_Y)/\Sigma^\sharp,\ast}$.
\end{proof}

Note that $u_{Y/\Sigma}$ is not a morphism of ringed topoi for $\calO_{Y/\Sigma}$ and $\calO_Y$.

\begin{defn}
For $V\in Y_\et$, set $V_\CRIS\coloneqq u_{Y/\Sigma}^{-1}h_V$, where $h_V$ denotes the \'etale sheaf on $Y$ represented by $V$. Note that this agrees with Definition~\ref{defn:localization of crystalline site by open} when $V$ is an open subscheme of $Y$.
\end{defn}

\begin{lem}\label{lem:properties of VCRIS}
Let $V,V'\in Y_\et$.
\begin{enumerate}
 \item The localized topos $\Sh(((Y,M_Y)/\Sigma^\sharp)_\CRIS)/V_\CRIS$ is naturally identified with $\Sh(((V,M_V)/\Sigma^\sharp)_\CRIS)$.
 \item There is a natural isomorphism $V_\CRIS\times V_\CRIS'\cong (V\times_YV')_\CRIS$.
\end{enumerate}
\end{lem}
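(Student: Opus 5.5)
The plan is to compute $V_\CRIS$ explicitly as a sheaf on the crystalline site and then reduce (1) to the standard description of a localized topos as sheaves on the slice category. By definition, for $(U,T,g\colon U\rightarrow Y)$ we have
\[
V_\CRIS(U,T)=(g_\et^{-1}h_V)(U)=\Hom_Y(U,V).
\]
So a section of $V_\CRIS$ over $(U,T)$ is a $Y$-morphism $U\rightarrow V$. Part (2) is then immediate: $u_{Y/\Sigma}^{-1}$ commutes with finite limits, being the inverse image of a morphism of topoi (Proposition~\ref{prop:projection from crystallie site to etale site}). In the \'etale topos, $h_V\times h_{V'}=h_{V\times_YV'}$, and applying $u_{Y/\Sigma}^{-1}$ gives the natural isomorphism $V_\CRIS\times V'_\CRIS\cong(V\times_YV')_\CRIS$. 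One can also check this directly from the formula above via the universal property of $V\times_YV'$.

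For (1), recall that for a site $C$ and a sheaf $F$, the localized topos $\Sh(C)/F$ is equivalent to sheaves on the category $C/F$ of pairs $(c,s\in F(c))$, with the induced topology (\cite[Tag~00XZ]{stacks-project}, \cite[Tag~04GY]{stacks-project}). I would therefore construct an equivalence of sites
\[
((Y,M_Y)/\Sigma^\sharp)_\CRIS/V_\CRIS\simeq((V,M_V)/\Sigma^\sharp)_\CRIS.
\]
An object on the left is $(U,T,M_T,g,i,\gamma)$ together with a factorization $g=\pi_V\circ g_V$ with $g_V\colon U\rightarrow V$. Since $M_V=\pi_V^\ast M_Y$, we have $g_V^\ast M_V=g^\ast M_Y=M_U$, so the same tuple with $g$ replaced by $g_V$ is an object of the right-hand side. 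The needed hypotheses hold: $V$ is a log $\Sigma^\sharp$-scheme because it is \'etale over $Y$, as noted after Proposition~\ref{prop:projection from crystallie site to etale site}. Conversely, composing with $\pi_V$ gives the inverse functor.

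On morphisms, a morphism on the left must respect the chosen sections of $V_\CRIS$, which is exactly the condition $g_V=g'_V\circ g_U$. This is precisely the compatibility required of morphisms on the right. Covers are defined by the same condition (\'etale, Cartesian, jointly surjective) on both sides, so the functors are mutually inverse isomorphisms of sites, and (1) follows.

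Finally, I would check that this identification is natural. Under it, the pullback to the slice should correspond to the restriction $\calF\mapsto\calF|_{((V,M_V)/\Sigma^\sharp)_\CRIS}$ of Proposition~\ref{prop:functoriality of relative crystalline topoi}. By the formula there, restriction evaluates $\calF$ on the same tuple, with log structure $M_{(T,\pi_V)}=M_T$ because $\pi_V$ is strict. The main obstacle, mild as it is, is this bookkeeping step: confirming that $M_{(T,\pi_V)}$ equals $M_T$ (using strictness of $\pi_V$ and the fiber-product definition in Construction~\ref{construction:strict log structure for functoriality}), so that the localization morphism matches $\pi_{V,\CRIS}$. The rest is formal.
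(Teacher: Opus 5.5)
Your proposal is correct and takes the same approach as the paper, whose proof just says that both parts are straightforward from the definitions. Your steps are exactly that unwinding: the computation $V_\CRIS(U,T)=\Hom_Y(U,V)$, the identification of the slice site $((Y,M_Y)/\Sigma^\sharp)_\CRIS/V_\CRIS$ with $((V,M_V)/\Sigma^\sharp)_\CRIS$ (including the check $M_{(T,\pi_V)}=M_T$ for naturality), and the fact that $u_{Y/\Sigma}^{-1}$ preserves finite limits for (2).
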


\begin{proof}
Both are straightforward from the definition.
\end{proof}

\begin{rem}\label{rem:u-pushforward depends only on restriction to small site}
Similar to the non-log case in \cite[III, \S~3.2, (4.4.9)]{Berthelot-book}, one can also define a morphism of topoi $u_{Y/\Sigma,\mathrm{small}}\colon \Sh(((Y,M_Y)/\Sigma^\sharp)_\cris)\rightarrow \Sh(Y_\et)$ together with a natural isomorphism of morphisms of topoi $u_{Y/\Sigma,\mathrm{small}}\circ p_{Y/\Sigma}\cong u_{Y/\Sigma}$. It follows that for a morphism $a\colon \calF\rightarrow\calF'$ of abelian sheaves on $((Y,M_Y)/\Sigma^\sharp)_\CRIS$, if the restriction $p_{Y/\Sigma,\ast}a=r_{Y/\Sigma}^\ast a$ of $a$ to $((Y,M_Y)/\Sigma^\sharp)_\cris$ is an isomorphism, then $Ru_{Y/\Sigma,\ast}a\colon Ru_{Y/\Sigma,\ast}\calF\rightarrow Ru_{Y/\Sigma,\ast}\calF'$ is a quasi-isomorphism.
\end{rem}

\begin{prop}\label{prop:compatibility of u and crystalline pushforward}
Let $f\colon (Y', M_{Y'})\rightarrow (Y,M_Y)$ be a morphism  of integral and quasi-coherent log schemes over a morphism $\phi\colon \Sigma'^\sharp\rightarrow \Sigma^\sharp$ of quasi-coherent log PD-schemes as in \eqref{eq:functoriality diagram in crystalline topos}.
There is a canonical isomorphism of morphisms of topoi
\[
f_\et\circ u_{Y'/\Sigma'}\xrightarrow{\cong}u_{Y/\Sigma}\circ f_\CRIS\colon \Sh((Y',M_{Y'})/\Sigma'^\sharp)_\CRIS)\rightarrow \Sh(Y_\et).
\]
\end{prop}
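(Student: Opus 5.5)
Since two morphisms of topoi are isomorphic precisely when their inverse image functors are naturally isomorphic, I will construct a natural isomorphism of inverse images
\[
u_{Y'/\Sigma'}^{-1}\circ f_\et^{-1}\xrightarrow{\cong} f_\CRIS^\ast\circ u_{Y/\Sigma}^{-1}
\]
of functors $\Sh(Y_\et)\rightarrow \Sh(((Y',M_{Y'})/\Sigma'^\sharp)_\CRIS)$. Passing to right adjoints then gives the asserted isomorphism $f_\et\circ u_{Y'/\Sigma'}\cong u_{Y/\Sigma}\circ f_\CRIS$; the direct images are $f_{\et,\ast}u_{Y'/\Sigma',\ast}$ and $u_{Y/\Sigma,\ast}f_{\CRIS,\ast}$ respectively. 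I prefer to compute on inverse images because both crystalline inverse image functors have explicit formulas, namely Propositions~\ref{prop:functoriality of relative crystalline topoi} and \ref{prop:projection from crystallie site to etale site}, whereas $f_{\CRIS,\ast}$ is given by a $\Hom$ from the sheaf $f^\ast(U,T)$ and is less convenient.

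Let $\calG\in\Sh(Y_\et)$ and $(U',T',g')\in((Y',M_{Y'})/\Sigma'^\sharp)_\CRIS$. By Proposition~\ref{prop:functoriality of relative crystalline topoi}, $(f_\CRIS^\ast u_{Y/\Sigma}^{-1}\calG)(U',T')$ is the value of $u_{Y/\Sigma}^{-1}\calG$ on the object $(U',T',i',M_{(T',f)},\gamma')$ of $((Y,M_Y)/\Sigma^\sharp)_\CRIS$ from Construction~\ref{construction:strict log structure for functoriality}. The structure map of that object to $Y$ is $f\circ g'$, so Proposition~\ref{prop:projection from crystallie site to etale site} gives $((f\circ g')_\et^{-1}\calG)(U')$. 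On the other side, $(u_{Y'/\Sigma'}^{-1}f_\et^{-1}\calG)(U',T')=(g'^{-1}_\et f_\et^{-1}\calG)(U')$. The canonical isomorphism $g'^{-1}_\et f^{-1}_\et\cong (f\circ g')^{-1}_\et$ of inverse images on small étale topoi therefore identifies the two values.

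It remains to check that these identifications are natural in $\calG$ and compatible with the restriction maps along morphisms $(U'_1,T'_1)\rightarrow (U'_2,T'_2)$ in the crystalline site. A morphism $h$ in $((Y',M_{Y'})/\Sigma'^\sharp)_\CRIS$ induces the morphism between the corresponding objects $(U'_j,T'_j,M_{(T'_j,f)})$ in $((Y,M_Y)/\Sigma^\sharp)_\CRIS$, with the same $h_U$. This uses the functoriality of the fiber product defining $M_{(T',f)}$. On both sides the transition maps are then the base-change maps for the étale inverse images along $h_U$, and these commute with the pseudo-functoriality isomorphism $(f\circ g')^{-1}\cong g'^{-1}f^{-1}$ by the standard cocycle compatibilities.

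The step I expect to need the most care, though it is routine, is this bookkeeping: making sure that the formula for $f_\CRIS^\ast$ is applied with the correct functorial structure on the construction $(U',T')\mapsto (U',T',M_{(T',f)})$, that is, that Lemma~\ref{lem:f-PH morphism} is natural in $(U',T')$. Once that is settled, uniqueness of adjoints yields the canonical isomorphism of morphisms of topoi.
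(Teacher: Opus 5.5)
Your argument is correct, but it works on the opposite side of the adjunction from the paper. The paper compares direct images sectionwise over $V\in Y_\et$. It writes $(f_{\et,\ast}u_{Y'/\Sigma',\ast}\calF')(V)$ as global sections of $\calF'$ on the crystalline site of $f^{-1}(V)$ over $\Sigma'^\sharp$. It writes $(u_{Y/\Sigma,\ast}f_{\CRIS,\ast}\calF')(V)$ as global sections of $f_{\CRIS,\ast}\calF'$ on the crystalline site of $V$ over $\Sigma^\sharp$. It then identifies the two using Lemma~\ref{lem:f-PH morphism}.

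You instead compare inverse images objectwise on $((Y',M_{Y'})/\Sigma'^\sharp)_\CRIS$. There both $f_\CRIS^\ast$ and $u^{-1}$ have pointwise formulas, so the isomorphism reduces to $g'^{-1}_\et f^{-1}_\et\cong (f\circ g')^{-1}_\et$. The only real content is naturality, and you place it correctly: the functoriality of $(U',T')\mapsto (U',T',M_{(T',f)})$ comes from initiality in Lemma~\ref{lem:f-PH morphism}, so the transition maps on both sides are étale base-change maps along the same $h_U$.

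The two arguments are adjoint forms of one computation. By Lemma~\ref{lem:properties of VCRIS}(1), the paper's sectionwise identification is
\[
\Hom(f^{-1}(V)_\CRIS,\calF')\cong\Hom(f_\CRIS^\ast V_\CRIS,\calF'),
\]
which is your isomorphism evaluated on $\calG=h_V$, together with adjunction. Your version makes the naturality checks mechanical, since every step is an explicit formula. The paper's version gives the description of the direct images at each $V$ directly; either one yields that description by passing to right adjoints.
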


\begin{proof}
For every $V\in Y_\et$, set $f^{-1}(V)\coloneqq V\times_YY'$ and let $M_{f^{-1}(V)}$ denote the restriction of $M_{Y'}$ to $f^{-1}(V)_\et$. For $\calF'\in \Sh((Y',M_{Y'})/\Sigma'^\sharp)_\CRIS)$, we have
\begin{align*}
((f_{\et,\ast}\circ u_{Y'/\Sigma',\ast})\calF')(V)
&=(u_{Y'/\Sigma',\ast})\calF')(f^{-1}(V))\\
&=\Gamma(((f^{-1}(V),M_{f^{-1}(V)})/\Sigma'^\sharp)_\CRIS,\calF'|_{((f^{-1}(V),M_{f^{-1}(V)})/\Sigma'^\sharp)_\CRIS})
\end{align*}
and
\[
((u_{Y/\Sigma,\ast}\circ f_{\CRIS,\ast})\calF')(V)=\Gamma(((V,M_V)/\Sigma^\sharp)_\CRIS,(f_{\CRIS,\ast}\calF')|_{((V,M_V)/\Sigma^\sharp)_\CRIS}).
\]
Using Lemma~\ref{lem:f-PH morphism}, one can see that these two are canonically isomorphic, and the resulting isomorphism is functorial in $V$ and $\calF$.
\end{proof}

\begin{lem}
Let $(f\colon V\rightarrow Y)\in Y_\et$. For $\calF\in \Ab(((Y,M_Y)/\Sigma^\sharp)_\CRIS)$, the map $f_\et^{-1}Ru_{Y/\Sigma,\ast}\calF\rightarrow Ru_{V/\Sigma,\ast}(f_\CRIS^{\ast}\calF)$ given by adjunction is a quasi-isomorphism.
\end{lem}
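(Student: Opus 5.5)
The plan is to identify the localization of $\Sh(((Y,M_Y)/\Sigma^\sharp)_\CRIS)$ at $V_\CRIS$ with the crystalline topos of $V$. Under this identification, $u_{V/\Sigma}$ and $f_\CRIS$ should become the localized versions of $u_{Y/\Sigma}$, and the statement should follow from the standard fact that localization commutes with derived pushforward.

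\textbf{Step 1: identify the localization morphism.} By Lemma~\ref{lem:properties of VCRIS}(1), the localized topos $\Sh(((Y,M_Y)/\Sigma^\sharp)_\CRIS)/V_\CRIS$ is $\Sh(((V,M_V)/\Sigma^\sharp)_\CRIS)$. I claim the localization morphism is $f_\CRIS$. On the pullback side, Proposition~\ref{prop:functoriality of relative crystalline topoi} gives $(f_\CRIS^\ast\calF)(U,T)=\calF(U,T)$ for $(U,T)$ over $V$. Here $M_{(T,f)}=M_T$ because $f$ is strict. This is exactly restriction to objects over $V_\CRIS$, i.e.\ the functor $\calF\mapsto \calF\times V_\CRIS\to V_\CRIS$ under the identification.

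\textbf{Step 2: compare the two localizations.} Similarly, $f_\et^{-1}$ is localization of $\Sh(Y_\et)$ at $h_V$, and $V_\CRIS=u_{Y/\Sigma}^{-1}h_V$ by definition. A general topos-theoretic fact (SGA4 IV~5.10, or \cite[Tag 0794]{stacks-project} for the derived version) applies to a morphism of topoi $u\colon \calT\to\calT'$ and an object $h\in\calT'$. It says that $u$ induces a localized morphism $u_{/h}\colon \calT/u^{-1}h\to \calT'/h$. For abelian $K$, the base change map $j_h^{-1}Ru_\ast K\to Ru_{/h,\ast}(j_{u^{-1}h}^{-1}K)$ is a quasi-isomorphism. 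So it remains to check that $u_{Y/\Sigma,/h_V}$ agrees with $u_{V/\Sigma}$. Proposition~\ref{prop:compatibility of u and crystalline pushforward}, applied to $f$, gives $f_\et\circ u_{V/\Sigma}\cong u_{Y/\Sigma}\circ f_\CRIS$. Together with the explicit formulas for $u^{-1}$ in Proposition~\ref{prop:projection from crystallie site to etale site}, this identifies $u_{V/\Sigma}^{-1}$ with the localized pullback. We also need the resulting adjunction map to be the one in the statement.

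\textbf{Step 3: the quasi-isomorphism.} More concretely, restriction $f_\CRIS^\ast$ to a localization has an exact left adjoint $f_{\CRIS,!}$ (extension by zero). Hence $f_\CRIS^\ast$ preserves injective abelian sheaves, as does $f_\et^{-1}$. Therefore it suffices to prove the underived isomorphism $f_\et^{-1}u_{Y/\Sigma,\ast}I\cong u_{V/\Sigma,\ast}f_\CRIS^\ast I$ for injective $I$. Evaluating at $W\in V_\et$, both sides equal $\Gamma(((W,M_W)/\Sigma^\sharp)_\CRIS, I|_W)$ by the formula for $u_\ast$ in Proposition~\ref{prop:projection from crystallie site to etale site}. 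The left side gives this because $f_\et^{-1}$ of a sheaf on $Y_\et$ evaluated at $W\in V_\et$ is its value at $W\to Y$.

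The main obstacle is bookkeeping. One must check that the identifications in Lemma~\ref{lem:properties of VCRIS}(1) and Proposition~\ref{prop:compatibility of u and crystalline pushforward} are compatible with the adjunction map as stated. Everything else is formal.
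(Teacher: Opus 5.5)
Your proposal is correct and is essentially the paper's own argument: your Step~3 (the underived isomorphism $f_\et^{-1}u_{Y/\Sigma,\ast}\calF\cong u_{V/\Sigma,\ast}f_\CRIS^\ast\calF$ coming from the localization identification of Lemma~\ref{lem:properties of VCRIS}(1), the fact that $f_\CRIS^\ast$ preserves injectives because it has the exact left adjoint $f_{\CRIS,!}$, and then an injective resolution) is exactly what the paper does. Steps~1--2 are a more general topos-theoretic framing of the same point; they are consistent with Step~3, including your check that $M_{(T,f)}=M_T$ because $f$ is strict, but the argument does not need them.
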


We often write the above map as $(Ru_{Y/\Sigma,\ast}\calF)|_{V_\et}\rightarrow Ru_{V/\Sigma,\ast}(\calF|_{((V,M_V)/\Sigma^\sharp)_\CRIS})$.

\begin{proof}
We see from Lemma~\ref{lem:properties of VCRIS}(1) that the map $f_\et^{-1}u_{Y/\Sigma,\ast}\calF\rightarrow u_{V/\Sigma,\ast}(f_\CRIS^{\ast}\calF)$ is an isomorphism and $f_\CRIS^{\ast}$ sends injective sheaves to injective sheaves. So the assertion follows easily by taking an injective resolution of $\calF$.
\end{proof}

\begin{prop}\label{prop:morphism of topoi i from et to cris}
There exists a morphism of topoi
\[
i_{Y/\Sigma}\coloneqq i_{(Y,M_Y)/\Sigma^\sharp}\colon \Sh(Y_\et)\rightarrow \Sh(((Y,M_Y)/\Sigma^\sharp)_\CRIS)
\]
given by
\[
(i_{(Y,M_Y)/\Sigma^\sharp,\ast}\calG)(U,T, g\colon U \rightarrow Y)= (g_\et^{-1}\calG)(U)
\] 
and
\[
(i_{(Y,M_Y)/\Sigma^\sharp}^{-1}\calF)(V)=\calF(V,V,\id_V\colon V\rightarrow V).
\]
\end{prop}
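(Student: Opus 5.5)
The plan is to check the three things that make $i_{Y/\Sigma}=(i^{-1},i_\ast)$ a morphism of topoi: both formulas land in sheaves, $i^{-1}$ is left adjoint to $i_\ast$, and $i^{-1}$ is left exact.

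\emph{Well-definedness.} First note that for $V\in Y_\et$, the triple $(V,V,\id_V)$ is an object of $((Y,M_Y)/\Sigma^\sharp)_\CRIS$. Here $V$ carries the pullback log structure $M_V$, the immersion is the identity, the PD-ideal is zero, and $\gamma_\Sigma$ extends to $\calO_V$ because $V$ is \'etale over $Y$. An \'etale morphism $V'\to V$ in $Y_\et$ gives an \'etale Cartesian morphism $(V',V')\to(V,V)$. Moreover, \'etale Cartesian covers of $(V,V)$ are exactly of the form $\{(V_j,V_j)\}$ with $\{V_j\to V\}$ an \'etale cover. Hence $V\mapsto \calF(V,V)$ is the \'etale sheaf $\calF_{(Y,Y)}$, so $i^{-1}\calF$ is a sheaf. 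For $i_\ast\calG$, the value on $(U,T)$ is $\Gamma(U,g_\et^{-1}\calG)$. Restricting to $T_\et\cong U_\et$, this is the sheaf $g_\et^{-1}\calG$, and the transition maps come from $g_\et^{-1}=g_{U,\et}^{-1}g'^{-1}_\et$. By the description of sheaves on the crystalline site recalled after Remark~\ref{rem:small crystalline site}, $i_\ast\calG$ is therefore a sheaf.

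\emph{Exactness of $i^{-1}$.} This is immediate, since $i^{-1}$ is just evaluation at the objects $(V,V)$, and limits of sheaves are computed objectwise. In fact $i^{-1}$ commutes with all finite limits and colimits, by the exactness of $\calF\mapsto\calF_{(U,T)}$ noted in the Notation preceding \S~\ref{sec: Functoriality of crystalline topoi}.

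\emph{Adjunction.} This is the main step; we need a natural bijection $\Hom(i^{-1}\calF,\calG)\cong\Hom(\calF,i_\ast\calG)$. To construct a map in one direction, start from $\alpha\colon \calF\to i_\ast\calG$ and evaluate at $(V,V,\id)$. Since $i_\ast\calG(V,V)=\calG(V)$, this yields $\calF(V,V)\to\calG(V)$.

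For the converse, start from $\beta\colon i^{-1}\calF\to\calG$. For $(U,T,g)$, the key observation is that $g$ together with the retraction-free data gives an $\id$-PD morphism $(U,U)\to(U,T)$, namely the closed immersion $U\hookrightarrow T$, which is compatible with PD-structures. So we get a restriction map $\calF(U,T)\to\calF(U,U,\id_U)$. Here $(U,U,\id_U)$ is viewed as an object of the crystalline site via $g$; it need not lie over $Y_\et$. We would then like to compose with $\beta$ pulled back along $g$, but $U$ is not \'etale over $Y$, which is the obstacle.

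I would resolve it as follows. Both sides are sheaves on $U_\et$, and $(U,U,g)$ maps to $(Y,Y,\id)$ through the morphism $g\colon(U,U)\to(Y,Y)$ in the crystalline site; this makes sense because $M_U=g^\ast M_Y$ is strict over $M_Y$. This gives a transition map $g_\et^{-1}\calF_{(Y,Y)}\to\calF_{(U,U)}$. For the construction we instead need a map $\calF_{(U,U)}\to g_\et^{-1}\calG$, and I would obtain it differently: $i^{-1}$ has a left adjoint on representables, and I would rather verify the adjunction via the unit and counit. The counit $i^{-1}i_\ast\calG\to\calG$ is the identity, since $(i_\ast\calG)(V,V,\id)=\calG(V)$. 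The unit $\calF\to i_\ast i^{-1}\calF$ is given at $(U,T,g)$ by the composite
\[
\calF(U,T)\to\calF(U,U)\to\Gamma(U,g_\et^{-1}\calF_{(Y,Y)}).
\]
The second arrow needs a map $\calF_{(U,U)}\to g_\et^{-1}\calF_{(Y,Y)}$, which goes the wrong way. So I expect the correct unit to require a different route: interpret $(i_\ast\calG)(U,T)$ as $\Hom(\text{sheaf }h_{(U,T)}\circ i,\calG)$, i.e.\ use $i^{-1}h_{(U,T)}$. Concretely, $i^{-1}h_{(U,T)}(V)=\Hom((V,V),(U,T))$, and I would check that this sheaf is $h_U$ pushed to $Y_\et$ in the appropriate sense. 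Here the main difficulty is computing $i^{-1}$ of representables and reconciling it with $g_\et^{-1}$, since $U$ is not \'etale over $Y$. Granting that computation, the triangle identities are formal.
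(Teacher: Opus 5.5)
\textbf{The adjunction is never proved, and it cannot be proved as stated.} Your checks that both formulas give sheaves and that $i^{-1}$ is exact are fine. The identity $i^{-1}i_\ast\calG=\calG$ is also correct. But the adjunction, which is the only real content, is left as ``granting that computation'', and that step would fail. Your computation $i^{-1}h_{(U,T)}(V)=\Hom_Y(V,U)$ is right. What is false is the needed identification $\Hom_{\Sh(Y_\et)}(i^{-1}h_{(U,T)},\calG)\cong\Gamma(U,g_\et^{-1}\calG)=(i_\ast\calG)(U,T)$, so the obstacle you noticed (a general $U$ is not \'etale over $Y$) is real.

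Here is a counterexample. Take $\Sigma=Y=\Spec k$ with $k$ algebraically closed of characteristic $p$, trivial log structures, $\calJ_\Sigma=0$, and $(U,T)=(\A^1_k,\A^1_k)$. Then $h_{(U,T)}$ is $\calO_{Y/\Sigma}$ viewed as a sheaf of sets, and $\Sh(Y_\et)$ is equivalent to the category of sets. Under this equivalence $i^{-1}h_{(U,T)}$ is the set $k$, so for $\calG$ corresponding to a set $G$,
\[
\Hom(i^{-1}h_{(U,T)},\calG)=\mathrm{Map}(k,G)\neq G=\Gamma(\A^1_k,\underline{G})=\Hom(h_{(U,T)},i_\ast\calG).
\]
Worse, the proposed $i_\ast=u_{Y/\Sigma}^{-1}$ does not commute with infinite products. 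Evaluate at $U=T=\Spec\prod_{\N}k$, whose underlying space is the Stone--\v{C}ech compactification of $\N$. A locally constant map into the discrete set $\prod_{\N}\{0,1\}$ has finite image, while a family of locally constant maps into $\{0,1\}$ need not combine to one. Hence no morphism of topoi has this direct image. The proposition as literally stated, with the small \'etale site and the big crystalline site, is therefore false, so no completion of your argument exists.

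\textbf{What is true, and how your first attempt works there.} Replace $Y_\et$ by the big \'etale site of $Y$, as in the big-site construction of \cite[\S~III.4.4]{Berthelot-book}, whose target is the big Zariski topos. Set $i^{-1}\calF(U)=\calF(U,U)$ and $i_\ast\calG(U,T)=\calG(U)$ for every $Y$-scheme $U$. Then your first attempt works verbatim:
\begin{itemize}
 \item send $\beta\colon i^{-1}\calF\to\calG$ to the composite $\calF(U,T)\to\calF(U,U)\xrightarrow{\beta_U}\calG(U)$;
 \item send $\alpha\colon\calF\to i_\ast\calG$ to $(\alpha_{(U,U)})_U$.
\end{itemize}
These are mutually inverse by naturality of $\alpha$ along $(U,U)\to(U,T)$, on which $i_\ast\calG$ acts as the identity of $\calG(U)$. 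The same argument works for the small crystalline site paired with $Y_\et$.

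If instead you keep $Y_\et$ with the big crystalline site, then $V\mapsto(V,V)$ is continuous and cocontinuous. So evaluation at $(V,V)$ is the inverse image of a morphism of topoi, but its direct image is $(U,T)\mapsto\Hom(i^{-1}h_{(U,T)},\calG)$, not $u_{Y/\Sigma}^{-1}$. The paper's later use of $i_{Y/\Sigma,\ast}\calO_Y$ in the filtered Poincar\'e lemma needs $(i_{Y/\Sigma,\ast}\calO_Y)_{(U,T)}=\calO_U$, which matches the big-\'etale-site version.
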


\begin{proof}
This is straightforward (cf.~\cite[\S III.4.4]{Berthelot-book}).
\end{proof}

Note that $i_{(Y,M_Y)/\Sigma^\sharp,\ast}=u_{(Y,M_Y)/\Sigma^\sharp}^{-1}$. In other words, $i_{(Y,M_Y)/\Sigma^\sharp}^{-1}$ is the left adjoint to $u_{(Y,M_Y)/\Sigma^\sharp}^{-1}$. So we also write $u_{(Y,M_Y)/\Sigma^\sharp,!}$ for $i_{(Y,M_Y)/\Sigma^\sharp}^{-1}$. 

\begin{defn}\label{def:projection to etale site of base}
Let $\pi_{Y/\Sigma}\coloneqq \pi_{(Y,M_Y)/\Sigma^\sharp}$ denote the composite
\[
\Sh(((Y,M_Y)/\Sigma^\sharp)_\CRIS)\xrightarrow{u_{Y/\Sigma}} \Sh(Y_\et)\xrightarrow{\pi_\et}\Sh(\Sigma_\et).
\]
Since $\calO_{Y/\Sigma}$ is a $\pi_{Y/\Sigma}^{-1}\calO_{\Sigma}$-algebra, $\pi_{Y/\Sigma}$ yields a morphism of the ringed topoi
\[
(\Sh(((Y,M_Y)/\Sigma^\sharp)_\CRIS),\calO_{Y/\Sigma})\rightarrow(\Sh(\Sigma_\et),\calO_\Sigma),
\]
for which we still write $\pi_{Y/\Sigma}=(\pi_{Y/\Sigma}^\ast,\pi_{Y/\Sigma,\ast})$.
\end{defn}

The morphism $\pi_{Y/\Sigma}$ plays a primary role in computing the higher direct image of quasi-coherent $\calO_{Y/\Sigma}$-modules. To explain this, consider a morphism $f\colon (Y', M_{Y'})\rightarrow (Y,M_Y)$ of integral and quasi-coherent log schemes over the identity  $\phi=\id\colon \Sigma'^\sharp= \Sigma^\sharp\rightarrow \Sigma^\sharp$ of a quasi-coherent log PD-scheme as in \eqref{eq:functoriality diagram in crystalline topos}.

Take $(U,T)\in ((Y,M_Y)/\Sigma^\sharp)_\CRIS$. Set
\[
(f^{-1}(U),M_{f^{-1}(U)})\coloneqq (Y',M_{Y'})\times_{(Y,M_Y)}(U,M_U)
\]
where the fiber product is taken in the category of integral and quasi-coherent log schemes. Indeed, since $(U,M_U)\rightarrow(Y,M_Y)$ is strict, $f^{-1}(U)=Y'\times_Y U$ as schemes and $M_{f^{-1}(U)}$ is the pullback log structure from $M_{Y'}$. Let $T^\sharp$ denote the log PD-scheme $(T,M_T,\calJ_T+\calJ_\Sigma\calO_T,\tilde{\gamma})$ where $\tilde{\gamma}$ is the unique PD-structure on $\calJ_T+\calJ_\Sigma\calO_T$ extending $\gamma$ and $\gamma_\Sigma$.

\begin{lem}\label{lem:evalutation of crystalline pushforward via projection}
With the notation as above, take any sheaf $\calF$ on $((Y',M_{Y'})/\Sigma^\sharp)_\CRIS$
and continue to write $\calF$ for its pullback to $((f^{-1}(U),M_{f^{-1}(U)})/T^\sharp)_\CRIS$.
\begin{enumerate}
 \item We have
\[
\Gamma((U,T),f_{\CRIS,\ast}\calF)
=\Gamma(((f^{-1}(U),M_{f^{-1}(U)})/T^\sharp)_\CRIS,\calF)
\]
and
\[
(f_{\CRIS,\ast}\calF)_{(U,T)}=\pi_{f^{-1}(U)/T,\ast}\calF.
\]
 \item There are natural quasi-isomorphisms
\[
R\Gamma((U,T), Rf_{\CRIS,\ast}\calF)\cong R\Gamma(((f^{-1}(U),M_{f^{-1}(U)})/T^\sharp)_\CRIS,\calF)
\]
and
\[
(Rf_{\CRIS,\ast}\calF)_{(U,T)}=R\pi_{f^{-1}(U)/T,\ast}\calF.
\]
\end{enumerate}
\end{lem}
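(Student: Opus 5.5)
The plan is to identify the slice of the crystalline topos of $(Y',M_{Y'})/\Sigma^\sharp$ over the sheaf $f^\ast(U,T)$ with the crystalline topos of $(f^{-1}(U),M_{f^{-1}(U)})/T^\sharp$. Both parts of the lemma then follow by formal manipulations.

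\textbf{Step 1: the equivalence.} I will construct an equivalence of categories between the site $((f^{-1}(U),M_{f^{-1}(U)})/T^\sharp)_\CRIS$ and the category of pairs $((U',T'),\alpha)$. Here $(U',T')\in((Y',M_{Y'})/\Sigma^\sharp)_\CRIS$ and $\alpha$ is an $f$-PD morphism $(U',T')\rightarrow(U,T)$.
\begin{itemize}
\item Given an object $(U',T')$ of the former, the structure map $(T',M_{T'})\rightarrow(T,M_T)$ is a PD-morphism compatible with $\tilde\gamma$. The map $U'\rightarrow f^{-1}(U)=Y'\times_YU$ supplies $U'\rightarrow Y'$ together with $h_{U',U}$. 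This gives an $f$-PD morphism to $(U,T)$.
\item Conversely, an $f$-PD morphism $h$ gives a morphism $U'\rightarrow Y'\times_YU$, and $h_{T',T}$ makes $T'$ a log $T^\sharp$-scheme. The PD-structure on $\calJ_{T'}$ is compatible with $\calJ_T+\calJ_\Sigma\calO_T$, because $h_{T',T}$ is a PD-morphism compatible with $\gamma$ and $\gamma_\Sigma$.
\item The log structure on $U'$ pulled back from $M_{f^{-1}(U)}$ is the pullback of $M_{Y'}$, since $M_{f^{-1}(U)}$ is the pullback of $M_{Y'}$. So exactness is preserved.
\end{itemize}
Covers correspond, because in both settings they are étale Cartesian jointly surjective families. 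This identifies the localized topos $\Sh(((Y',M_{Y'})/\Sigma^\sharp)_\CRIS)/f^\ast(U,T)$ with $\Sh(((f^{-1}(U),M_{f^{-1}(U)})/T^\sharp)_\CRIS)$. Under this identification, restriction to the slice is the pullback along the morphism of topoi induced by the base map $T^\sharp\rightarrow\Sigma^\sharp$ in \eqref{eq:functoriality diagram in crystalline topos}. One can check this directly from the formula in Proposition~\ref{prop:functoriality of relative crystalline topoi}: the log structure $M_{(T',\id)}$ there is just $M_{T'}$.

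\textbf{Step 2: part (1).} By Proposition~\ref{prop:functoriality of relative crystalline topoi},
\[
\Gamma((U,T),f_{\CRIS,\ast}\calF)=\Hom(f^\ast(U,T),\calF),
\]
which is the global sections of $\calF$ on the slice. By Step 1 this equals the first displayed formula. For the étale sheaf version, I apply this to every étale $T_1\rightarrow T$ with $U_1=U\times_TT_1$. Then $(T_1)^\sharp$ is the restriction of $T^\sharp$, and $f^{-1}(U_1)=f^{-1}(U)\times_TT_1$. The same equality identifies $(f_{\CRIS,\ast}\calF)_{(U,T)}(T_1)$ with
\[
\Gamma(((f^{-1}(U)\times_T T_1)/T_1^\sharp)_\CRIS,\calF).
\]
This is precisely $(\pi_{f^{-1}(U)/T,\ast}\calF)(T_1)$: by Proposition~\ref{prop:projection from crystallie site to etale site}, $u_\ast$ evaluates on the étale opens $f^{-1}(U)\times_T T_1$, and $\pi_{\et,\ast}$ takes sections over preimages. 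Compatibility with restriction maps is formal.

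\textbf{Step 3: part (2).} I derive both identities using that restriction to a slice topos is exact and sends injectives to injectives, since it has the exact left adjoint $j_!$. Take an injective resolution $\calF\rightarrow\calI^\bullet$ on $((Y',M_{Y'})/\Sigma^\sharp)_\CRIS$. Its restriction to $((f^{-1}(U),M_{f^{-1}(U)})/T^\sharp)_\CRIS$ is then an injective resolution of the pullback of $\calF$. Two facts finish the argument:
\begin{itemize}
\item $f_{\CRIS,\ast}$ preserves injectives, because $f_\CRIS^\ast$ is exact. Hence the first quasi-isomorphism follows by applying part (1) to $\calI^\bullet$.
\item Part (1), applied termwise to $\calI^\bullet$, computes $(Rf_{\CRIS,\ast}\calF)_{(U,T)}$ as $\pi_{f^{-1}(U)/T,\ast}$ of the restricted injective resolution. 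This is $R\pi_{f^{-1}(U)/T,\ast}\calF$.
\end{itemize}

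\textbf{Main obstacle.} The delicate step is Step 1, particularly the PD-compatibility. An object over $T^\sharp$ requires compatibility with $\tilde\gamma$ on $\calJ_T+\calJ_\Sigma\calO_T$, whereas an $f$-PD morphism only asks compatibility with $\gamma$ and $\gamma_\Sigma$ separately. I would settle this using the uniqueness of the extension $\tilde\gamma$ (Berthelot I.1.6) together with the fact that PD-morphisms are checked on generators. A further check is that log $T^\sharp$-schemes automatically satisfy the requirement that $\gamma_\Sigma$ extends.
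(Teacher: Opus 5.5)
Your proposal is correct and follows essentially the paper's route: the paper identifies $\Hom(f^\ast(U,T),\calF)$ with $\Gamma(((f^{-1}(U),M_{f^{-1}(U)})/T^\sharp)_\CRIS,\calF)$ by referring to Berthelot's argument, which your Step~1 slice-topos equivalence makes explicit. It then varies this over $T_\et$ and gets (2) from the fact that $f_{\CRIS,\ast}$ preserves injectives, with your Step~3 spelling out the implicit point that restriction to the slice also preserves injectives; the one detail you leave tacit in Step~2 is that, for \'etale $T_1\to T$, the sites of $f^{-1}(U)\times_TT_1$ over $T^\sharp$ and over $T_1^\sharp$ coincide, which follows from unique lifting of \'etale maps along the nil-immersions $U''\hookrightarrow T''$.
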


\begin{proof}
(1) We know from Proposition~\ref{prop:functoriality of relative crystalline topoi} that $\Gamma((U,T),f_{\CRIS,\ast}\calF)=\Hom(f^\ast(U,T),\calF)$. Arguing as in \cite[p.~318]{Berthelot-book}, we can see that  the latter is canonically identified with $\Gamma(((f^{-1}(U),M_{f^{-1}(U)})/T^\sharp)_\CRIS,\calF)$. By varying this equality over objects of $T_\et$ and 
unwinding the definition of $\pi_{f^{-1}(U)/T,\ast}$, we obtain the identification $(f_{\CRIS,\ast}\calF)_{(U,T)}=\pi_{f^{-1}(U)/T,\ast}\calF$.

(2) Since $f_\CRIS^\ast$ is exact, $f_{\CRIS,\ast}$ sends injectives to injectives. So we obtain the first quasi-isomorphism. The second follows from (1). 
\end{proof}

\section{Embedding into a smooth log scheme}\label{sec:Embedding into a smooth log scheme}
Throughout this section, we consider the following setting and discuss the \v{C}ech--Alexander method, a description of crystals in terms of quasi-nilpotent connections, and the crystalline Poincar\'e lemma.

\begin{set-up}\label{set-up:description of crystals in terms of connections}
Let $\Sigma^\sharp$ be affine log PD-scheme in which $p$ is nilpotent and $i\colon (Y,M_Y)\hookrightarrow (Z,M_Z)$ a closed immersion  of \emph{fine} log $\Sigma^\sharp$-schemes (in particular, $i^\ast M_Z\rightarrow M_Y$ is surjective) such that $(Z,M_Z)\rightarrow (\Sigma,M_\Sigma)$ is smooth (as log schemes). 
Let $\omega^1_{Z/\Sigma}\coloneqq \omega^1_{(Z,M_Z)/(\Sigma,M_\Sigma)}$ be the sheaf of log differentials. 
\end{set-up}

We first introduce several PD-envelopes.
For $\nu\geq 0$, write $(Z^{\nu+1},M_{Z^{\nu+1}})$ for the $(\nu+1)$st self-fiber product $(Z,M_{Z})\times_{(\Sigma,M_{\Sigma})}\cdots\times_{(\Sigma,M_{\Sigma})}(Z,M_Z)$ (as an integral log scheme). 
Let $(Z,M_{Z})\hookrightarrow (D_Z(\nu),M_{D_Z(\nu)})$ denote the PD-envelope of $(Z,M_Z)\hookrightarrow (Z^{\nu+1},M_{Z^{\nu+1}})$ relative to $\Sigma^\sharp$, and let $(Y,M_{Y})\hookrightarrow (D_Y(Z^{\nu+1}),M_{D_Y(Z^{\nu+1})})$ denote the PD-envelope of $(Y,M_Y)\hookrightarrow (Z^{\nu+1},M_{Z^{\nu+1}})$ relative to $\Sigma^\sharp$ (see Theorem~\ref{thm:existence of log PD envelope}(2)); moreover, the construction shows that the underlying schemes are all affine over $Z$. For simplicity, set $D\coloneqq D_Y(Z^1)$, and for $1\leq l\leq \nu+1$, we let $p_l$ denote the $i$th projection from $Z^{\nu+1}$, $D_Z(\nu)$, or $D_Y(Z^{\nu+1})$ to $Z$. Following the standard convention, we write $\calD$ for $\calO_D$ and use a similar notation for $\calD_Z(\nu)$ and $\calD_Y(Z^{\nu+1})$; we often regard them as sheaves of rings on $Z_\et$ and write $p_{l,\ast}\calD_{Z}(\nu)$ or $p_{l,\ast}\calD_Y(Z^{\nu+1})$ if we specify the $\calO_Z$-algebra structure given by the $l$th projection $p_l$.

Recall the construction of $(D_Z(\nu),M_{D_Z(\nu)})$. Assume first that there is a factorization $(Z,M_Z)\hookrightarrow ((Z^{\nu+1})',M_{(Z^{\nu+1})'})\rightarrow (Z^{\nu+1},M_{Z^{\nu+1}})$ where the first morphism is an exact closed immersion and the second is \'etale as a morphism between log schemes. Then $D_Z(\nu)$ is the usual PD-envelope of the closed immersion $Z\hookrightarrow (Z^{\nu+1})'$ relative to $(\Sigma,\calJ_\Sigma,\gamma_\Sigma)$ and $M_{D_Z(\nu)}$ is the pullback of $M_{(Z^{\nu+1})'}$.
We also know from \cite[Rem.~5.8]{Kato-log}, or \cite[Rem.~IV.1.1.8, IV.3.4.5]{Ogus-log}
and \cite[Cor.I.4.4.3]{Berthelot-book}, that $\omega_{Z/\Sigma}^1\cong H/H^{[2]}$ where $H= \Ker(\calD_Z(1)\rightarrow\calO_Z)$.
In general, such a factorization exists \'etale locally by \cite[Prop.~4.10(1)]{Kato-log}, and the above construction satisfies \'etale descent. So we obtain $D_Z(\nu)$ in the general case and the above properties continue to hold.

To give a more concrete description of $(Z^{\nu+1})'$, take, \'etale locally, a chart $(Q_Z\rightarrow M_Z,P_\Sigma\rightarrow M_\Sigma, \theta\colon P\rightarrow Q)$ for $(Z,M_Z)\rightarrow (\Sigma,M_\Sigma)$ where $\theta$ is an injective map of integral monoids such that the torsion part of the cokernel of $P^\gp\rightarrow Q^\gp$ is a finite group of order invertible in $\Sigma$ and $Z\rightarrow \Sigma\times_{\Spec \Z[P]}\Spec \Z[Q]$ is \'etale (\cite[Thm.~IV.3.3.1]{Ogus-log}).
Then the summation map $\Delta(\nu)\colon Q(\nu)\coloneqq Q\oplus_P\cdots\oplus_PQ\rightarrow Q$ from the $(\nu+1)$st amalgamated sum (in the category of integral monoids) yields a chart of $(Z,M_Z)\hookrightarrow (Z^{\nu+1},M_{Z^{\nu+1}})$ (cf.~\cite[Prop.~I.1.3.4, III.2.1.5]{Ogus-log}). Set $Q(\nu)'\coloneqq Q(\nu)^\gp\times_{Q^\gp}Q=(\Delta(\nu)^\gp)^{-1}(Q)\subset Q(\nu)^\gp$. Then we have the following commutative diagram, in which the vertical maps are \'etale and the right square is Cartesian by \cite[Prop.~III.2.3.5]{Ogus-log}:
\[
\xymatrix{
& (Z^{\nu+1})'\ar[r]\ar[d] & Z^{\nu+1}\ar[d] \\
Z\ar@{^{(}->}[ur]\ar[r] & \Sigma\times_{\Spec \Z[P]}\Spec \Z[Q(\nu)'] \ar[r]&\Sigma\times_{\Spec \Z[P]}\Spec \Z[Q(\nu)].
}
\]
If $\nu=1$, then $Q^\gp\oplus_{P^\gp}Q^\gp\rightarrow Q^\gp\oplus Q^\gp/P^\gp$ sending $[(q_1,q_2)]\mapsto (q_1+q_2,[q_1])$ induces an isomorphism $Q(1)'\xrightarrow{\cong}Q\oplus Q^\gp/P^\gp$ by \cite[Prop.~I.4.2.19]{Ogus-log}. 
Similarly, one can show $Q(\nu)'\cong Q\oplus Q^\gp/P^\gp\oplus\cdots\oplus Q^\gp/P^\gp$. For $\nu$ and $\nu'$, the projections from $Z^{\nu+\nu'+1}$ to $Z^{\nu+1}$ and $Z^{\nu'+1}$ induce, by universality, an isomorphism $\calD_{Z}(\nu)\otimes_{\calO_Z}\calD_Z(\nu')\xrightarrow{\cong}\calD_{Z}(\nu+\nu')$: this can be checked \'etale locally, and the above description shows that the morphism $(Z^{\nu+\nu'+1})'\rightarrow(Z^{\nu+1})'\times_Z(Z^{\nu'+1})'$ induced similarly is an isomorphism. So the assertion follows from \cite[Lem.~II.1.3.5]{Berthelot-book}.

\begin{eg}\label{eg:basic properties of PD-envelopes}
Let us explicitly describe $\calD_Z(\nu)$ \'etale locally. For simplicity, we treat the case $\nu=1$ but the general case is similar. Since each projection $((Z^2)',M_{(Z^{2})'})\rightarrow (Z,M_Z)$ is strict and smooth, each $(Z^{2})'\rightarrow Z$ is a smooth morphism of schemes by \cite[Prop.~3.8]{Kato-log}.
Take a geometric point $\overline{z}\rightarrow Z$ and continue to write $\overline{z}$ for its images in $(Z^2)'$ and $D_Z(1)$. Take $t_1,\ldots,t_r\in M_{Z,\overline{z}}$ such that $(d\operatorname{log}(t_l))_{1\leq l\leq r}$ is a basis of $\omega^1_{Z/\Sigma,\overline{z}}$. For $1\leq l\leq r$, let $u_{l}$ denote the element of $\Ker(\calO_{(Z^2)',\overline{z}}^\ast\rightarrow \calO_{Z,\overline{z}}^\ast)\subset M_{(Z^2)',\overline{z}}$ defined by $p_2^\ast(t_l)=p_1^\ast(t_l)u_{l}$. Then the preceding discussion shows that $u_1-1,\ldots, u_r-1$ form a smooth coordinate of $p_1\colon (Z^2)'\rightarrow Z$ at $\overline{z}$ and their restrictions along $Z\hookrightarrow (Z^2)'$ are zero. Hence the morphism $\calO_{Z,\overline{z}}[T_1,\ldots, T_r]\rightarrow \calO_{(Z^2)',\overline{z}}$ given by $T_l\mapsto u_l-1$ is a well-defined isomorphism. This together with \cite[Cor.~I.2.5.3]{Berthelot-book} shows that the morphism $\calO_{Z,\overline{z}}\{T_1,\ldots, T_r\}_\mathrm{PD}\rightarrow \calD_Z(1)_{\overline{z}}$ from the PD-polynomial algebra given by $T_{l}^{[n]}\mapsto (u_{l}-1)^{[n]}$ is a well-defined isomorphism.
\end{eg}

\begin{lem}\label{lem:basic properties of PD-envelopes}
The following properties hold.
\begin{enumerate}
\item Let $(Y,M_Y)\hookrightarrow (D_Y(D_Z(\nu)),M_{D_Y(D_Z(\nu))})$ denote the PD-envelope (relative to $\Sigma^\sharp$) of the closed immersion $(Y,M_Y)\hookrightarrow (Z,M_{Z})\hookrightarrow (D_Z(\nu),M_{D_Z(\nu)})$ of log $\Sigma^\sharp$-schemes. Then $(D_Y(D_Z(\nu)),M_{D_Y(D_Z(\nu))})$ and $(D_Y(Z^{\nu+1}),M_{D_Y(Z^{\nu+1})})$ are isomorphic as log PD-schemes over $\Sigma^\sharp$. 
\item As $\calO_Z$-modules, 
the canonical map $\calD\otimes_{\calO_Z}p_{i,\ast}\calD_Z(\nu)\rightarrow p_{i,\ast}\calD_{Y}(Z^{\nu+1})$ is an isomorphism.
\item The projections induce an isomorphism $\calD_{Y}(Z^{\nu+1})\otimes_{\calD}\calD_Y(Z^{\nu'+1})\xrightarrow{\cong} \calD_{Y}(Z^{\nu+\nu'+1})$.
\end{enumerate}
 \end{lem}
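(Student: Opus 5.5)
Throughout, all three assertions are handled by universal properties of log PD-envelopes (Theorem~\ref{thm:existence of log PD envelope}), reducing to \'etale-local, essentially non-log statements about ordinary PD-envelopes.

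\textbf{Part (1).} I will show that both objects represent the same functor on exact closed immersions of log $\Sigma^\sharp$-schemes $(Y',M_{Y'})\hookrightarrow(T,M_T)$ with PD-structure. A map to $(D_Y(Z^{\nu+1}),M)$ is a morphism $(T,M_T)\to(Z^{\nu+1},M_{Z^{\nu+1}})$ compatible with $Y'\to Y$ and PD-compatible. Given such a map, the composites $(Y',M_{Y'})\to(Y,M_Y)\to(Z,M_Z)$ and $(T,M_T)\to(Z^{\nu+1},M_{Z^{\nu+1}})$ form a commutative square over the immersion $(Z,M_Z)\hookrightarrow(Z^{\nu+1},M_{Z^{\nu+1}})$. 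Since $\gamma_\Sigma$ extends to $Y'$, the ideal of $Z$ in $T$ maps into the PD-ideal of $T$, so the universal property of $D_Z(\nu)$ gives a unique PD-factorization through $D_Z(\nu)$. Strictly speaking, the composite $Y'\to T$ factors through $Z$ only after passing to the PD-envelope, so I will apply the universal property to the exact closed immersion $(Y',M_{Y'})\hookrightarrow T$ regarded over $Z\hookrightarrow Z^{\nu+1}$, with the log PD-structure on $T$ extending that of $Y'$. This factorization is then a PD-morphism $T\to D_Z(\nu)$ compatible with $Y'\to Y$, so it factors uniquely through $D_Y(D_Z(\nu))$. The reverse direction is composition with $D_Z(\nu)\to Z^{\nu+1}$. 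Checking that $T\to D_Z(\nu)$ is a PD-morphism will use the standard fact (\cite[I.2]{Berthelot-book}) that the ideal of $Z$ in $D_Z(\nu)$ generates a sub-PD-ideal; \'etale-locally everything is an ordinary PD-envelope, so \cite[Prop.~I.2.7.1/I.4.x]{Berthelot-book} can be quoted.

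\textbf{Part (2).} This is the main computation. By (1), $D_Y(Z^{\nu+1})$ is the PD-envelope of $Y$ in $D_Z(\nu)$. I will work \'etale locally and use Example~\ref{eg:basic properties of PD-envelopes}, which gives $p_{i,\ast}\calD_Z(\nu)\cong\calO_Z\{T_{l,j}\}_{\mathrm{PD}}$. This is a PD-polynomial algebra, hence flat over $\calO_Z$ via $p_i$, and the PD-structure on its augmentation ideal is generated by the $T$'s. The PD-envelope of $Y\hookrightarrow Z\hookrightarrow D_Z(\nu)$ then equals the PD-envelope of $Y\hookrightarrow Z$ base changed along the flat map $p_i$, with the PD-ideals combined. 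I will invoke \cite[Prop.~I.2.7.1]{Berthelot-book} (compatibility of PD-envelopes with flat base change), or the more concrete \cite[Lem.~I.4.3.3]{Berthelot-book}: the envelope of $\calO_Z\{T\}$ along $I+(T)$ is $\calD_Y(Z)\{T\}_{\mathrm{PD}}=\calD\otimes\calO_Z\{T\}$. The subtle point, and the step I expect to be the main obstacle, is that $D$ is a log PD-envelope with $(Z,M_Z)$ possibly non-exact at $Y$. The fix is to replace $Z$ \'etale locally by the exactification of $Y\hookrightarrow Z$; this remains smooth over $\Sigma$ with the same coordinates $u_l$, since the exactification is log \'etale over $Z$.

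\textbf{Part (3).} This follows formally from (2) and the identity $\calD_Z(\nu)\otimes_{\calO_Z}\calD_Z(\nu')\cong\calD_Z(\nu+\nu')$ established above. Using (2) twice,
\[
\calD_Y(Z^{\nu+1})\otimes_{\calD}\calD_Y(Z^{\nu'+1})\cong \calD\otimes_{\calO_Z}\calD_Z(\nu)\otimes_{\calO_Z}\calD_Z(\nu')\cong\calD\otimes_{\calO_Z}\calD_Z(\nu+\nu')\cong\calD_Y(Z^{\nu+\nu'+1}),
\]
with the tensor products taken along the matching projections. It then remains to check that this chain of isomorphisms coincides with the map induced by the projections, which is a compatibility of universal maps.
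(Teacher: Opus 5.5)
Your proposal is correct. For (1) and (3) it follows the paper: universal properties for (1), and for (3) part (2) applied with $i=\nu+1$ and $i=1$ together with $\calD_Z(\nu)\otimes_{\calO_Z}\calD_Z(\nu')\cong\calD_Z(\nu+\nu')$.

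For (2) your route differs. The paper works étale locally and quotes \cite[Prop.~6.5]{Kato-log}, which says that $\calD_Y(Z^2)$ is a PD-polynomial algebra over $\calD$ in the $u_l-1$. Comparing this with Example~\ref{eg:basic properties of PD-envelopes} gives $\nu=1$, and higher $\nu$ is declared similar. You instead let (1) do the work. Since $D_Y(Z^{\nu+1})$ is the envelope of $Y$ inside $D_Z(\nu)\cong\Spec\calO_Z\{T\}_{\mathrm{PD}}$, after exactifying $Y\hookrightarrow Z$ everything reduces to a non-log fact. That fact is: the envelope of $IA\{T\}_{\mathrm{PD}}+\bar T$ in $A\{T\}_{\mathrm{PD}}$ (with $\bar T$ the augmentation ideal), compatible with the PD-structure on $\bar T$, is $D_A(I)\otimes_A A\{T\}_{\mathrm{PD}}$, by the universal property of tensor products of PD-algebras. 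This is essentially the argument behind Kato's proposition, made self-contained and uniform in $\nu$ and $i$, and it yields the canonical map directly. The paper's version is shorter only because it outsources exactly this step.

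A few points need to be made explicit.
\begin{enumerate}
\item In (1), and in your ``PD-ideals combined'' step, $D_Y(D_Z(\nu))$ must be the envelope compatible with the PD-structure on $\Ker(\calD_Z(\nu)\to\calO_Z)$, not merely with $\gamma_\Sigma$. Otherwise the maps $T\to D_Z(\nu)$ it classifies need not be PD-morphisms, and your two constructions are not mutually inverse. Already for $Y=Z$ the envelope relative to $\Sigma^\sharp$ alone is strictly bigger than $D_Z(\nu)$. Your argument tacitly uses the compatible version.
\item Your sentences about ``the ideal of $Z$ in $T$'' and about $Y'\to T$ factoring through $Z$ only after passing to the envelope are garbled. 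All you need is that $Y'\to T\to Z^{\nu+1}$ factors through $Z$, so the universal property of $D_Z(\nu)$ applies to the PD-thickening $(Y',T)$ directly.
\item The exactification $Z^{\mathrm{ex}}\to Z$ is log étale but in general not flat. So the identification $D_Y((Z^{\mathrm{ex}})^{\nu+1})=D_Y(Z^{\nu+1})$ should come from log-étale lifting along exact nil thickenings. Likewise, $\calD_{Z^{\mathrm{ex}}}(\nu)\cong\calO_{Z^{\mathrm{ex}}}\otimes_{\calO_Z}\calD_Z(\nu)$ should come from the pulled-back $t_l$ still being log coordinates, not from flat base change.
\item Flat base change of PD-envelopes alone does not handle compatibility with the PD-structure on $\bar T$. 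The concrete tensor-product description is what you actually need.
\end{enumerate}
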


\begin{proof}
Part (1) follows from the universal properties.
For (2), the universality gives morphisms $D_{Y}(Z^{\nu+1})\rightarrow D$ and $D_{Y}(Z^{\nu+1})\rightarrow D_Z(\nu)$, which yield the canonical map. To check that it is an isomorphism, we may work \'etale locally. The case $\nu=1$ follows from \cite[Prop.~6.5]{Kato-log}, and the general case is proved similarly. 
Part (3) follows from (2) and $\calD_{Z}(\nu)\otimes_{\calO_Z}\calD_Z(\nu')\cong \calD_{Z}(\nu+\nu')$ as in \cite[IV.1.6.2]{Berthelot-book}.
\end{proof}

We turn to the \v{C}ech--Alexander method.
Let $((Y,M_Y)/\Sigma^\sharp)_\CRIS^{Z\strat}\subset((Y,M_Y)/\Sigma^\sharp)_\CRIS$ denote the full subcategory consisting of objects $(U,T)$ such that there exists a morphism $h\colon (T,M_T)\rightarrow (Z,M_Z)$ of log $\Sigma^\sharp$-schemes making the following diagram commutative
\begin{equation}\label{eq:def of stratifying site}
\xymatrix{
(U,M_{U}) \ar@{^{(}->}[r]\ar[d] & (T,M_T)\ar@{-->}[d]^-h \\
(Y,M_{Y}) \ar@{^{(}->}[r]^-{i} & (Z,M_Z).
}
\end{equation}
Obviously, $((Y,M_Y)/\Sigma^\sharp)_\CRIS^{Z\strat}$ is a site with induced topology, called the \emph{$Z$-HPD stratifying site} (cf.~\cite[Def.~III.1.2.1]{Berthelot-book}).

\begin{lem}\label{lem:weakly final object using smooth lift in crystalline site}
\hfill
\begin{enumerate}
 \item For every $(U,T)\in ((Y,M_Y)/\Sigma^\sharp)_\CRIS^\aff$, $\Hom_{i\text{-}\mathrm{PD}}((U,T),(Z,Z))$ is non-empty.
 \item The inclusion $((Y,M_Y)/\Sigma^\sharp)_\CRIS^{Z\strat}\subset((Y,M_Y)/\Sigma^\sharp)_\CRIS$ is a special cocontinuous functor and induces an equivalence of topoi 
\[
\Sh(((Y,M_Y)/\Sigma^\sharp)_{\CRIS}^{Z\strat})\xrightarrow{\cong} \Sh(((Y,M_Y)/\Sigma^\sharp)_\CRIS).
\]
 \item We have $\Hom_{((Y,M_Y)/\Sigma^\sharp)_\CRIS}((U,T),(Y,D))=\Hom_{i\text{-}\mathrm{PD}}((U,T),(Z,Z))$, 
 and the sheaf $\widetilde{D}$ represented by $(Y,D)\in ((Y,M_Y)/\Sigma^\sharp)_\CRIS$ covers the final object of $\Sh(((Y,M_Y)/\Sigma^\sharp)_\CRIS)$.
 \item The $(\nu+1)$st self-product sheaf $\widetilde{D}^{\nu+1}$ is represented by $(Y,D_Y(Z^{\nu+1}))\in ((Y,M_Y)/\Sigma^\sharp)_\CRIS$
 \end{enumerate}
\end{lem}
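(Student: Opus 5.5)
Throughout I read an $i$-PD morphism $(U,T)\to(Z,Z)$ as a morphism of log $\Sigma^\sharp$-schemes $h\colon (T,M_T)\to(Z,M_Z)$ making \eqref{eq:def of stratifying site} commute. Here $(Z,Z)$ is regarded as a trivial PD-thickening of $Z$ in the site of $Z$, and the PD-compatibility with $\gamma_\Sigma$ is automatic since $\gamma_\Sigma$ extends to $Z$.

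For (1), the plan is to use log smoothness of $(Z,M_Z)\to(\Sigma,M_\Sigma)$ together with the fact that $(U,M_U)\hookrightarrow(T,M_T)$ is an exact closed immersion defined by the nil ideal $\calJ_T$. Since $p$ is nilpotent and $\calJ_T$ carries a PD-structure, $\calJ_T$ is locally nilpotent; it is in fact nilpotent on affine pieces, since $x^{p}=p!\,\gamma_p(x)$ and $p$ is nilpotent. Thus $(U,M_U)\hookrightarrow(T,M_T)$ is a strict nilpotent thickening, and by Remark~\ref{rem:fine log structure in absolute log crystalline site} it is fine.

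By the infinitesimal lifting property of log smooth morphisms \cite[(3.3)]{Kato-log}, the composite $(U,M_U)\to(Y,M_Y)\to(Z,M_Z)$ lifts étale locally on $T$. The lifts form a torsor under $\mathcal{H}om(\omega^1_{Z/\Sigma}|_U,\calJ_T)$; I would first filter by powers $\calJ_T^n$, or equivalently factor the thickening into square-zero steps. This torsor sheaf is quasi-coherent on the affine scheme $T$, and $\omega^1_{Z/\Sigma}$ is locally free of finite rank by smoothness. Hence $H^1$ of the torsor sheaf vanishes, and a global lift $h$ exists. Working one square-zero step at a time and using vanishing of quasi-coherent $H^1$ on the affine $T$ at each step is the main technical point. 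The rest is formal.

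For (2), I first note that $((Y,M_Y)/\Sigma^\sharp)_\CRIS^{Z\strat}$ contains the affine objects by (1). It is stable under fibre products and passage to objects mapping to it, since a map to $Z$ pulls back. Every object is covered by affine objects, which lie in the subcategory. Arguing as for the affine subcategory in Definition~\ref{def:big relative logarithmic crystalline site}, the inclusion is continuous and cocontinuous, fully faithful, and every object of the big site is covered by objects of the subcategory. This gives a special cocontinuous functor in the sense of \cite[Tag~03CG]{stacks-project}, hence the equivalence of topoi.

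For (3), I would apply the universal property of the PD-envelope from Theorem~\ref{thm:existence of log PD envelope}. By Theorem~\ref{thm:existence of log PD envelope}(2), $D$ is the log PD-envelope of $(Y,M_Y)\hookrightarrow(Z,M_Z)$ with the same $Y$. A morphism $(U,T)\to(Y,D)$ in the crystalline site is a PD-morphism over $Y$. By universality it corresponds bijectively to a morphism $(T,M_T)\to(Z,M_Z)$ compatible with $U\to Y$, that is, to an $i$-PD morphism; naturality in $(U,T)$ is clear. Therefore $\widetilde{D}(U,T)\neq\emptyset$ for all affine $(U,T)$ by (1). Since affine objects cover every object, $\widetilde D\to\ast$ is an epimorphism.

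For (4), I would compute products of representables. For $\widetilde{D}^{\nu+1}(U,T)$, the same universality gives $(\nu+1)$-tuples of $i$-PD morphisms $T\to Z$. These are the same as morphisms $(T,M_T)\to(Z^{\nu+1},M_{Z^{\nu+1}})$ over $\Sigma$ compatible with $Y$, using the fibre product in integral log schemes; all log structures involved are integral, so the product in that category applies. Again by universality of the PD-envelope, these correspond to morphisms $(U,T)\to(Y,D_Y(Z^{\nu+1}))$, naturally in $(U,T)$. Hence $(Y,D_Y(Z^{\nu+1}))$ represents $\widetilde D^{\nu+1}$.
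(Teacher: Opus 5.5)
\paragraph{Verdict.} There is a genuine gap in your proof of (1); parts (2)--(4) are fine.

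\paragraph{Parts (2)--(4).} These agree with the paper. Parts (3) and (4) are the universality of the log PD-envelope, which the paper packages as Proposition~\ref{prop:crystalline pullback along closed immersion}. Part (2) follows once affine objects lie in the $Z$-stratifying subcategory.

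\paragraph{The gap in (1).} You claim that $\calJ_T$ is nilpotent on affine $T$, and this is false. From $x^p=p!\,\gamma_p(x)$ and $p^N=0$ you get $x^{pN}=0$ for each section $x$. That is only a uniform bound on the nilpotence order of individual elements, not nilpotence of the ideal.

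The object $(Y,D)$ from part (3) is already a counterexample:
\begin{itemize}
\item Take $\Sigma=Y=\Spec\F_p$ with trivial log structure and $\calJ_\Sigma=0$, and let $Z=\A^1_{\F_p}$ with $Y$ the origin.
\item Then $D=\Spec\F_p\langle x\rangle$.
\item The product $x\cdot x^{[p]}\cdot x^{[p^2]}\cdots x^{[p^m]}$ is a unit multiple of $x^{[1+p+\cdots+p^m]}\neq 0$, since there are no carries in base $p$.
\item Hence $\calJ_D^{m+1}\neq 0$ for every $m$.
\end{itemize}
So the filtration by $\calJ_T^n$ never terminates, and your induction over square-zero steps never reaches $T$. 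The compatible lifts over the $V(\calJ_T^n)$ only assemble into a map out of $\varprojlim_n A/\calJ_T^n$, and in general this is not $A=\Gamma(T,\calO_T)$.

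\paragraph{The missing ingredient.} You need the limit argument the paper invokes (``similar to \cite[07K4]{stacks-project}''):
\begin{itemize}
\item Write $\calJ_T$ as the filtered union of its finitely generated sub-ideals $I_\lambda$. Each $I_\lambda$ is nilpotent, being generated by finitely many nilpotent elements.
\item $(U,M_U)$ is the limit of the exact closed subschemes $(V(I_\lambda),M_T|_{V(I_\lambda)})$.
\item Since $(Z,M_Z)$ is of finite presentation over $(\Sigma,M_\Sigma)$ and $T$ is affine, the given morphism $(U,M_U)\to(Z,M_Z)$ over $(\Sigma,M_\Sigma)$ already factors through some $(V(I_\lambda),M_T|_{V(I_\lambda)})$.
\item Now $V(I_\lambda)\hookrightarrow T$ is an exact nilpotent thickening of an affine scheme. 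Your square-zero/torsor argument, which is the content of \cite[Prop.~IV.3.1.4.2]{Ogus-log} cited by the paper, then produces the global lift $h\colon (T,M_T)\to(Z,M_Z)$.
\end{itemize}
With this step inserted, the rest of your proof goes through.
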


\begin{proof}
The smoothness assumption and \cite[Prop.~IV.3.1.4.2]{Ogus-log} give (1). We note that the statement in \textit{loc.~cit.} assumes that the log thickening $U\hookrightarrow T$ is given by a nilpotent ideal. However, this can be relaxed to a nil ideal by a standard limit argument similar to \cite[07K4]{stacks-project}.
Hence $((Y,M_Y)/\Sigma^\sharp)_{\CRIS}^{\aff}\subset ((Y,M_Y)/\Sigma^\sharp)_{\CRIS}^{Z\strat}$, and (2) follows easily.
Finally, (3) follows from Proposition~\ref{prop:crystalline pullback along closed immersion} and (1), and part (4) is proved similarly.
\end{proof}

\begin{construction}
The simplicial log $\Sigma^\sharp$-scheme $(Z^{\bullet},M_{Z^{\bullet}})$ yields a simplicial object $(Y,D_Y(Z^\bullet))$ of $((Y,M_Y)/\Sigma^\sharp)_\CRIS$, and the latter represents the simplicial sheaf $\widetilde{D}^\bullet$ by Lemma~\ref{lem:weakly final object using smooth lift in crystalline site}(4).

Let $\calF$ be a sheaf on $((Y,M_Y)/\Sigma^\sharp)_\CRIS$. The evaluations $\calF_{(Y,D_Y(Z^{\nu+1}))}\in \Sh(Y_\et)=\Sh(D_Y(Z^{\nu+1}))$ form a cosimplicial sheaf $\calF_{(Y,D_Y(Z^{\bullet+1}))}$ on $Y_\et$. When $\calF$ is a sheaf of abelian groups, we write $\CA_Z^\bullet(\calF)$ for the associated normalized simple complex of abelian sheaves. Moreover, this construction is functorial in $\calF$, and if $\calF$ is an $\calA$-module for a sheaf $\calA$ of rings, then $\CA_Z^\bullet(\calF)$ is a complex of $u_{Y/\Sigma,\ast}\calA$-modules on $Y_\et$.
\end{construction}

\begin{thm}\label{thm:CA method in crystalline site}
With the notation and assumption as above, let $\calF^\bullet$ be a bounded below complex of $\calA$-modules on $((Y,M_Y)/\Sigma^\sharp)_\CRIS$. Then there is a canonical isomorphism
\[
Ru_{Y/\Sigma,\ast}\calF^\bullet\xrightarrow{\cong}\CA_Z^\bullet(\calF^\bullet)
\]
in $D^+(Y_\et,u_{Y/\Sigma,\ast}\calA)$, where $\CA_Z^\bullet(\calF^\bullet)$ denotes the simple complex associated to the bicomplex.
\end{thm}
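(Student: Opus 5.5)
We follow the standard \v{C}ech--Alexander argument of \cite[Thm.~V.1.2.5]{Berthelot-book}, adapted to the big site. By Lemma~\ref{lem:weakly final object using smooth lift in crystalline site}(3), the sheaf $\widetilde{D}$ represented by $(Y,D)$ covers the final object of $\Sh(((Y,M_Y)/\Sigma^\sharp)_\CRIS)$. By (4) of the same lemma, its \v{C}ech nerve $\widetilde{D}^{\bullet+1}$ is represented by the simplicial object $(Y,D_Y(Z^{\bullet+1}))$. For any abelian sheaf $\calF$, we first define a natural map $u_{Y/\Sigma,\ast}\calF\rightarrow \CA^\bullet_Z(\calF)$. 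It is the augmentation given by restricting a section over $V_\CRIS$ to its values on $(V,D_Y(Z^{\bullet+1})|_V)$ for $V\in Y_\et$. Here we use Lemma~\ref{lem:properties of VCRIS}(1) to identify the situation over $V$ with the crystalline site of $(V,M_V)$, embedded into $Z\times_Y$-\'etale neighborhoods. Since $D_Y(Z^{\nu+1})$ lives on $Y_\et$ and is affine over $Z$, the evaluation $\calF_{(Y,D_Y(Z^{\nu+1}))}(V)=\Gamma(\widetilde{D}^{\nu+1}\times V_\CRIS,\calF)$.

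The key steps, in order, are as follows.

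(i) Show that for every $\calF$ and $V\in Y_\et$, the \v{C}ech complex $\Gamma(\widetilde{D}^{\bullet+1}\times V_\CRIS,\calF)$ computes $R\Gamma(V_\CRIS,\calF)$ whenever $\calF$ is acyclic on each $\widetilde{D}^{\nu+1}\times V_\CRIS$. This is the \v{C}ech-to-derived spectral sequence for the covering $\widetilde{D}\times V_\CRIS\rightarrow V_\CRIS$ of a sheaf (not a representable object). In particular, for $\calF$ injective, $u_{Y/\Sigma,\ast}\calF\rightarrow \CA^\bullet_Z(\calF)$ is a quasi-isomorphism, because the augmented \v{C}ech complex of an injective sheaf for an epimorphism onto the final object is exact.

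(ii) Show that for arbitrary $\calF$, the functor $\calF\mapsto \CA^\nu_Z(\calF)$ is exact. This holds because evaluation at $(Y,D_Y(Z^{\nu+1}))$ is exact (see the Notation after Proposition~\ref{prop:quasi-coherent crystals}) and the normalized complex is formed termwise.

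(iii) Conclude for a bounded below complex. Choose a quasi-isomorphism $\calF^\bullet\rightarrow \calI^\bullet$ into a bounded below complex of injective $\calA$-modules, which are also injective as abelian sheaves after forgetting, or use flasque-type acyclicity. Consider the diagram $u_\ast\calI^\bullet\rightarrow \CA_Z(\calI^\bullet)\leftarrow \CA_Z(\calF^\bullet)$. The first map is a quasi-isomorphism by (i) and a bicomplex spectral argument, using boundedness below so that the total complexes converge. The second is a quasi-isomorphism by (ii). This gives the canonical isomorphism $Ru_{Y/\Sigma,\ast}\calF^\bullet\cong\CA^\bullet_Z(\calF^\bullet)$ in $D^+(Y_\et,u_{Y/\Sigma,\ast}\calA)$, and $u_{Y/\Sigma,\ast}\calA$-linearity is clear from functoriality. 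We must also check independence of the chosen resolution, which is standard.

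The main obstacle is step (i) for the big site with \'etale localization. One must verify that $\widetilde{D}\times V_\CRIS$ is represented by $(V, D_Y(Z)\times_Y V)$. One must also verify that the stalk statement on $Y_\et$ is computed correctly, since $V$ is only \'etale over $Y$ and the embedding into $Z$ lifts only \'etale locally. I would handle this by noting that $D_Y(Z^{\nu+1})\times_Y V$, with $V\rightarrow Y$ lifted to $D$ via the topological invariance of the \'etale site, is the PD-envelope for $V$. Sheafifying the \v{C}ech comparison over $Y_\et$ then identifies the cohomology sheaves of $\CA^\bullet_Z(\calI)$ with $R^qu_{Y/\Sigma,\ast}$ of an injective, which is zero for $q>0$.
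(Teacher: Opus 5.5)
Your proposal is correct and takes the same route as the paper, which just says that the \v{C}ech--Alexander argument of \cite[Thm.~V.1.2.5]{Berthelot-book} goes through thanks to Lemma~\ref{lem:weakly final object using smooth lift in crystalline site}. That argument is exactly your outline: exactness of the augmented \v{C}ech complex for the epimorphism $\widetilde{D}\times V_\CRIS\rightarrow V_\CRIS$ on injectives, exactness of $\calF\mapsto \CA^\nu_Z(\calF)$, and a bicomplex comparison for bounded below complexes.

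One small correction: injective $\calA$-modules need not be injective as abelian sheaves. This is harmless. The augmented complex equals $\Hom_{\calA}(\calA[\widetilde{D}^{\bullet+1}\times V_\CRIS]\rightarrow\calA[V_\CRIS],\calI)$, and the complex of free $\calA$-modules is exact. Equivalently, the acyclicity form of your step (i) applies directly to injective $\calA$-modules.
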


\begin{proof}
The arguments in \cite[Thm.~V.1.2.5, Pf.]{Berthelot-book} works in our setting by virtue of Lemma~\ref{lem:weakly final object using smooth lift in crystalline site}.
\end{proof}

Next we will explain that crystals can be understood in terms of modules with logarithmic connections. 
Recall that an \emph{HPD stratification} on an $\calO_Z$-module $N$ is a $\calD_Z(1)$-linear isomorphism
\[
\epsilon\colon p_2^\ast N\xrightarrow{\cong}p_1^\ast N,
\quad \text{or equivalently,} \quad
\epsilon\colon \calD_Z(1)\otimes_{\calO_Z}N\xrightarrow{\cong}N\otimes_{\calO_Z}\calD_Z(1),
\]
that satisfies the cocycle condition over $D_Z(2)$. Note that the
pullback of $\epsilon$ along the diagonal $Z\rightarrow D_Z(1)$ is necessarily $\id_N$, which can be
seen by pulling back the cocycle condition along $Z\rightarrow D_Z(2)$. Combining this remark with Lemma~\ref{lem:basic properties of PD-envelopes}(1), one obtains a log connection on $N$
\[
\nabla\colon N\rightarrow N\otimes_{\calO_Z}\omega^1_{Z/\Sigma}
\]
given by $\epsilon(1\otimes n)\equiv n\otimes 1+\nabla(n) \pmod{H^{[2]}}$ for $n\in N$ where $H= \Ker(\calD_Z(1)\rightarrow\calO_Z)$.
Moreover, one can show that $\nabla$ is integrable and quasi-nilpotent\footnote{See \cite[Thm.~6.2(iii)]{Kato-log} and \cite[p.~19]{ogus-griffiths} for the definition.} (see Proposition~\ref{prop: crystals and quasi-nilpotent connections} below).

For $\calO_Z$-modules $N$ and $N'$, an \emph{HPD differential operator} from $N$ to $N'$ is a left $\calO_Z$-linear map $u\colon \calD_Z(1)\otimes_{\calO_Z}N\rightarrow N'$. We say that $u$ is of order $\leq n$ if $u$ factors through $\calD_Z(1)\otimes_{\calO_Z}N\rightarrow \calD_Z(1)/H^{[n+1]}\otimes_{\calO_Z}N$.
If
$u'\colon \calD_Z(1)\otimes_{\calO_Z}N'\rightarrow N''$ is another HPD differential operator, define the composition $u'\circ u$ by
\[
u'\circ u\colon \calD_Z(1)\otimes N \xrightarrow{\delta\otimes \id}  \calD_Z(1)\otimes\calD_Z(1)\otimes N\xrightarrow{\id\otimes u} \calD_Z(1)\otimes N'\xrightarrow{u'} N'', 
\]
where $\delta\colon \calD_Z(1) \rightarrow  \calD_Z(2)\cong\calD_Z(1)\otimes_{\calO_Z}\calD_Z(1)$ is induced from $p_{13}\colon D_Z(2)\rightarrow D_Z(1)$.  
If $N$ and $N'$ are $\calD$-modules, then $\calD_Z(1)\otimes_{\calO_Z}N=\calD_Y(Z^2)\otimes_{\calD}N$ by Lemma~\ref{lem:basic properties of PD-envelopes}(2), and thus an HPD differential operator from $N$ to $N'$ is nothing but a left $\calD$-linear map $\calD_Y(Z^2)\otimes_{\calD}N\rightarrow N'$, which is treated in \cite[Def.~1.1.3]{ogus-griffiths}.

\begin{eg}
The identity on $\calD_Z(1)$ is an HPD stratification on $\calO_Z$ and the associated connection is the differential $d\colon \calO_Z\rightarrow \omega_{Z/\Sigma}^1$. Working \'etale locally, let us now use the notation in Example~\ref{eg:basic properties of PD-envelopes}. For $\underline{n}=(n_1,\ldots,n_r)\in\N^r$, set $\lvert \underline{n}\rvert=n_1+\cdots+n_r$ and $T^{[\underline{n}]}=T_1^{[n_1]}\cdots T_r^{[n_r]}$. The left $\calO_Z$-linear map $\partial_{\underline{n}}\colon \calD_Z(1)=\calO_Z\{T_1,\ldots,T_r\}_{\mathrm{PD}}\rightarrow\calO_Z$ defined by $\partial_{\underline{n}}(T^{[\underline{n'}]})=\delta_{\underline{n},\underline{n'}}$ (Kronecker's delta) is an HPD differential operator of order $\lvert \underline{n}\rvert$. If we write $e_1,\ldots,e_r$ for the standard basis of $\N^r$, we have
\begin{equation}\label{eq:formulate for higher HPD differential operator}
\partial_{\underline{n}}=\prod_{1\leq l\leq r}\prod_{0\leq m_l< n_l}(\partial_{e_l}-m_l),
\end{equation}
where the right-hand side is the composite of HPD differential operators (which is independent of order). This can be deduced from the formulae $\delta(T_l)=T_l\otimes T_l+1\otimes T_l+T_l\otimes 1$ and 
\begin{equation}\label{eq:formula for delta}
\delta(T_l^{[n_l]})
=\sum_{n+n'+n''=n_l} n''!
\begin{pmatrix}
n+n''\\ n''
\end{pmatrix}
\begin{pmatrix}
n'+n''\\ n''
\end{pmatrix}
T_l^{[n+n'']}\otimes T_l^{[n'+n'']}.
\end{equation}
See \cite[Lem.~1.1.5]{Ogus-log} (in the case $Y=Z$) for the proof of \eqref{eq:formulate for higher HPD differential operator} and \eqref{eq:formula for delta}.\footnote{There is a typo in \textit{loc.~cit.}: see \cite[Rem.~3.2.3]{Ogus-Higgs}.}

Lemma~\ref{lem:basic properties of PD-envelopes}(2) gives a $\calD_Z(1)$-linear isomorphism $\epsilon_\calD\colon \calD_Z(1)\otimes\calD\xrightarrow{\cong}\calD\otimes\calD_Z(1)$. One can show from the proof that $\epsilon_\calD$ is an HPD stratification on $\calD$ and that  
the map $\calO_Z\rightarrow \calD$ is horizontal with respect to these HPD stratifications.
\end{eg}

\begin{prop} \label{prop: crystals and quasi-nilpotent connections}
With the assumption as in Set-up~\ref{set-up:description of crystals in terms of connections}, there are equivalences of categories among
\begin{enumerate}
 \item[(i)] the category of quasi-coherent $\calO_{Y/\Sigma}$-modules on $((Y,M_Y)/\Sigma^\sharp)_\CRIS$;
 \item[(ii)] the category of quasi-coherent $\calD$-modules together with a quasi-nilpotent integrable log connection (as an $\calO_Z$-module) compatible with $\nabla\colon \calD\rightarrow\calD\otimes\omega_{Z/\Sigma}^1$;
 \item[(iii)] the category of quasi-coherent $\calD$-modules with an HPD stratification (as an $\calO_Z$-module) compatible with the HPD stratification on $\calD$ (see below)
\end{enumerate}
such that the quasi-coherent $\calD$-module $E$ with an HPD stratification $\epsilon$ attached to a quasi-coherent $\calO_{Y/\Sigma}$-module $\calE$ satisfies $E=\calE_{(Y,D)}$ and $\epsilon\colon p_2^\ast E\cong 
\calE_{(Y,D_Z(1))}\cong p_1^\ast E$.
\end{prop}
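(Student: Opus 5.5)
We construct the functors (i)$\to$(iii)$\to$(ii) and show each is an equivalence, following the non-log arguments of \cite[IV.1.6, IV.1.7]{Berthelot-book} and \cite[Thm.~6.2]{Kato-log}. The main inputs are Lemma~\ref{lem:weakly final object using smooth lift in crystalline site} and Lemma~\ref{lem:basic properties of PD-envelopes}, together with the local description in Example~\ref{eg:basic properties of PD-envelopes}.

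\emph{(i)$\to$(iii).} Given a quasi-coherent $\calE$, it is a crystal by Proposition~\ref{prop:quasi-coherent crystals}(2). Set $E\coloneqq\calE_{(Y,D)}$, a quasi-coherent $\calD$-module. The two projections $(Y,D_Y(Z^2))\rightrightarrows(Y,D)$ and the crystal property give isomorphisms
\[
p_1^\ast E\cong\calE_{(Y,D_Y(Z^2))}\cong p_2^\ast E
\]
of $\calD_Y(Z^2)$-modules. By Lemma~\ref{lem:basic properties of PD-envelopes}(2), $\calD_Y(Z^2)\otimes_\calD E=\calD_Z(1)\otimes_{\calO_Z}E$, so this composite is an HPD stratification $\epsilon$ of $E$ as an $\calO_Z$-module compatible with $\epsilon_\calD$. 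The cocycle condition follows from evaluating on $(Y,D_Y(Z^3))$ and using Lemma~\ref{lem:basic properties of PD-envelopes}(3).

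\emph{(iii)$\to$(i).} For $(U,T)$ affine, Lemma~\ref{lem:weakly final object using smooth lift in crystalline site}(1),(3) gives a morphism $h\colon(U,T)\to(Y,D)$. We set $\calE_{(U,T)}\coloneqq h^\ast E$. Two choices $h,h'$ induce a morphism $(U,T)\to(Y,D_Y(Z^2))$ by Lemma~\ref{lem:weakly final object using smooth lift in crystalline site}(4), and pulling back $\epsilon$ along it identifies $h^\ast E\cong h'^\ast E$. The cocycle condition makes these identifications transitive, and the identity-on-the-diagonal property makes them trivial when $h=h'$. Hence we get a quasi-coherent crystal on the affine site, and so on the whole topos. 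These two constructions are quasi-inverse by the standard descent argument along the covering $\widetilde D\to\ast$ of Lemma~\ref{lem:weakly final object using smooth lift in crystalline site}(3).

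\emph{(iii)$\leftrightarrow$(ii).} The connection is $\epsilon(1\otimes n)\equiv n\otimes1+\nabla(n)\pmod{H^{[2]}}$, using $\omega^1_{Z/\Sigma}\cong H/H^{[2]}$. Compatibility with $\nabla_\calD$ comes from compatibility with $\epsilon_\calD$. For the converse we work étale locally with the coordinates of Example~\ref{eg:basic properties of PD-envelopes}, where $\calD_Z(1)=\calO_Z\{T_1,\ldots,T_r\}_{\mathrm{PD}}$. An HPD stratification is determined by the operators $\partial_{\underline n}$ acting on $N$, namely
\[
\epsilon(1\otimes n)=\sum_{\underline n}\partial_{\underline n}(n)\otimes T^{[\underline n]}.
\]
Formula \eqref{eq:formulate for higher HPD differential operator} shows that $\partial_{\underline n}$ is determined by the $\partial_{e_l}=\nabla_l$ (the components of $\nabla$ with respect to $d\log t_l$) as $\prod_l\prod_{m<n_l}(\nabla_l-m)$. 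Thus from an integrable $\nabla$ we define $\epsilon$ by this formula. Quasi-nilpotence is exactly what makes the sum locally finite, i.e.\ well-defined on $N\otimes\calD_Z(1)$ rather than a completion. Integrability, i.e.\ commutation of the $\nabla_l$, together with \eqref{eq:formula for delta}, gives the cocycle condition. Invertibility follows by the symmetric argument. We then check independence of coordinates so that the construction glues étale locally; this holds because $\epsilon$ is recovered intrinsically from $\nabla$ via the composition law of HPD differential operators.

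\emph{Main obstacle.} The hardest step is this last one: verifying that $\prod_l\prod_{m<n_l}(\nabla_l-m)$ satisfies the cocycle identity and that quasi-nilpotence yields convergence in the log setting, where the shifts $-m_l$ replace the non-log $\partial^{n}$. I would reduce it to the case $Y=Z$, with coefficients extended to $\calD$ via Lemma~\ref{lem:basic properties of PD-envelopes}(2), and follow \cite[Thm.~6.2]{Kato-log} and \cite[\S1]{ogus-griffiths} there.
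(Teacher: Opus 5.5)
Your proposal is correct and follows essentially the same route as the paper. The paper's proof just invokes Kato's Thm.~6.2 and Ogus's Thm.~1.1.8 (\cite{Kato-log}, \cite{ogus-griffiths}), with Lemma~\ref{lem:weakly final object using smooth lift in crystalline site} supplying the lifts of affine objects of the big site to $(Y,D)$ and $(Y,D_Y(Z^{\nu+1}))$. It reads off quasi-nilpotence from the product formula \eqref{eq:formulate for higher HPD differential operator} for $\partial_{\underline n}$; these are precisely the ingredients you spell out.
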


Here the compatibility in (iii) means that an HPD stratification on a $\calD$-module $N$
\[
\epsilon\colon (\calD_Z(1)\otimes_{\calO_Z}\calD)\otimes_\calD N=\calD_Z(1)\otimes_{\calO_Z}N\xrightarrow{\cong}N\otimes_{\calO_Z}\calD_Z(1)=N\otimes_\calD(\calD\otimes_{\calO_Z}\calD_Z(1))
\]
satisfies $\epsilon((a\otimes b)\otimes n)=\epsilon_\calD(a\otimes b)\epsilon((1\otimes 1)\otimes n)$ for $a\otimes b\in \calD_Z(1)\otimes_{\calO_Z}\calD$ and $n\in N$.

Recall also from Proposition~\ref{prop:quasi-coherent crystals}(2) that quasi-coherent $\calO_{Y/\Sigma}$-modules are exactly quasi-coherent crystals of $\calO_{Y/\Sigma}$-modules.

\begin{proof}
The proof of \cite[Thm.~6.2]{Kato-log} and \cite[Thm.~1.1.8]{ogus-griffiths} for the small crystalline site also works in our set-up thanks to Lemma~\ref{lem:weakly final object using smooth lift in crystalline site}. Note that the quasi-nilpotence is a consequence of \eqref{eq:formulate for higher HPD differential operator}.
\end{proof}

\begin{rem}\label{rem: crystals and quasi-nilpotent connections}
Keep the notation as in Proposition~\ref{prop: crystals and quasi-nilpotent connections}.
\begin{enumerate}
 \item Contrary to the convention in \cite{Berthelot-book,berthelot-ogus-book}, we usually use the calligraphic font $\calE$ to denote a quasi-coherent $\calO_{Y/\Sigma}$-module and the roman font $E$ to denote the corresponding quasi-coherent $\calD$-module.
 \item Each of the categories (i)-(iii) has a natural notion of tensor products, and the above equivalences are compatible with tensor products.
 \item  We also apply the above proposition to the case $((Z,M_Z)/\Sigma^\sharp)_\CRIS$, in which $\calD$ is replaced by $\calO_Z$ in (ii) and (iii).
 \item Let $E_Z$ be a quasi-coherent $\calO_Z$-module with an HPD stratification and write $\calE_Z$ for the corresponding quasi-coherent $\calO_{Z/\Sigma}$-module on $((Z,M_Z)/\Sigma^\sharp)_\CRIS$. Then the quasi-coherent $\calO_{Y/\Sigma}$-module $\calE\coloneqq i_{\CRIS}^\ast\calE_Z$ corresponds to $\calD\otimes_{\calO_Z}E_Z$ with the tensor product HPD stratification: we compute $\calE_{(Y,D)}=(\calE_Z)_{(Y,D)}=\calD\otimes_{\calO_Z}E_Z$, and the description of the HPD stratification is deduced similarly.
\end{enumerate}
\end{rem}

\begin{eg}\label{eg:description of i-pushforward of crystal}
It is easy to see that the multiplication map $\calD\otimes_{\calO_Z}\calD\rightarrow \calD$ is compatible with HPD stratifications. Hence $\calD$ defines a quasi-coherent sheaf $\calA_i$ of $\calO_{Z/\Sigma}$-algebras on $((Z,M_Z)/\Sigma^\sharp)_\CRIS$. Let $(U,T)\in ((Z,M_Z)/\Sigma^\sharp)_\CRIS^{Z\strat}$. Set $U_Y\coloneqq U\times_ZY$ with the pullback log structure from $M_{Y}$, and let $i_T\colon (U_Y',M_{U_Y'})\hookrightarrow(D_T,M_{D_T})$ denote the PD-envelope of $(U_Y,M_{U_Y})\hookrightarrow (T,M_T)$ relative to $\Sigma^\sharp$. By construction and Proposition~\ref{prop:crystalline pullback along closed immersion}, we have an algebra homomorphism
\[
(\calA_i)_{(U,T)}=\calO_T\otimes_{\calO_Z}\calD\rightarrow \calO_{D_T}=(i_{\CRIS,\ast}\calO_{Y/\Sigma})_{(U,T)},
\]
which defines an $\calO_{Z/\Sigma}$-algebra sheaf morphism $\calA_i\rightarrow i_{\CRIS,\ast}\calO_{Y/\Sigma}$. If $U\rightarrow Z$ is \'etale, then we see that $D_T\rightarrow D\times_ZU$ is an isomorphism by checking the universal property, and we also have $U_Y'=U_Y$. Hence the above algebra homomorphism is an isomorphism for such $(U,T)$.

Let $\calE$ be a quasi-coherent $\calO_{Y/\Sigma}$-module. Then $i_{\CRIS,\ast}\calE$ is a quasi-coherent crystal of $i_{\CRIS,\ast}\calO_{Y/\Sigma}$-modules.\footnote{However, it is not a quasi-coherent $\calO_{Z/\Sigma}$-module since we work on the big site. See also \cite[\S~2.1]{deJong-dieudonnemodule}.}
Let $(E,\epsilon)$ be the corresponding quasi-coherent $\calD$-module with an HPD stratification compatible with $(\calD,\epsilon_\calD)$. Regarding $E$ as an $\calO_Z$-module defines a quasi-coherent $\calO_{Z/\Sigma}$-module $\calE_{\calA_i}$ together with an $\calA_i$-module structure such that $(\calE_{\calA_i})_{(Z,Z)}=E$, and it comes with a canonical isomorphism $\calE_{\calA_i}\otimes_{\calA_i}i_{\CRIS,\ast}\calO_{Y/\Sigma}\xrightarrow{\cong}i_{\CRIS,\ast}\calE$.  Finally, the map $\calE_{\calA_i}\rightarrow i_{\CRIS,\ast}\calE$ becomes an isomorphism after restricting it to the small crystalline site, namely, $r_{Z/\Sigma}^\ast\calE_{\calA_i}\xrightarrow{\cong} r_{Z/\Sigma}^\ast i_{\CRIS,\ast}\calE$. In particular, we deduce from Remark~\ref{rem:u-pushforward depends only on restriction to small site} a quasi-isomorphism $Ru_{Z/\Sigma,\ast}\calE_{\calA_i}\xrightarrow{\cong}Ru_{Z/\Sigma,\ast}i_{\CRIS,\ast}\calE$.
\end{eg}

Let us now discuss the linearization and the log crystalline Poincar\'e lemma. 
For an $\calO_Z$-module $N$, define a left $\calO_Z$-module $L(N)_Z$ by
\[
L(N)_Z\coloneqq \calD_Z(1)\otimes_{\calO_Z}N.
\]
For an HPD differential operator $u\colon \calD_Z(1)\otimes_{\calO_Z}N\rightarrow N'$, define an $\calO_Z$-linear map $L(u)_Z\colon L(N)_Z\rightarrow L(N')_Z$ by
\[
L(u)_Z\colon D_Z(1)\otimes N\xrightarrow{\delta\otimes\id}D_Z(1)\otimes D_Z(1)\otimes N\xrightarrow{\id\otimes u}D_Z(1)\otimes N'.
\]
Exactly as in \cite[Lem.~IV.3.1.2]{Berthelot-book}, the construction $L(-)_Z$ defines a functor from the category of (quasi-coherent) $\calO_Z$-modules with HPD differential operators to the category of (quasi-coherent) $\calO_Z$-modules with an HPD stratification whose morphisms are $\calO_Z$-linear maps compatible with HPD stratifications: the HPD stratification on $L(N)_Z$ is defined as
\[
\calD_Z(1)\otimes_{\calO_Z} (\calD_Z(1)\otimes_{\calO_Z} N) \xrightarrow{\alpha} \calD_Z(1)\otimes_{\calO_Z} \calD_Z(1)\otimes_{\calO_Z} \calD_Z(1)\otimes_{\calO_Z} N \xrightarrow{\beta} (\calD_Z(1)\otimes_{\calO_Z} N) \otimes_{\calO_Z} \calD_Z(1)
\]
where $\alpha=\id\otimes \delta\otimes\id$ and $\beta$ is the map induced by universality from the map 
\[
(\calO_Z\otimes\calO_Z)\otimes_{\calO_Z} (\calO_Z\otimes\calO_Z)\otimes_{\calO_Z} ((\calO_Z\otimes\calO_Z)\otimes_{\calO_Z} N) \xrightarrow{\beta'} ((\calO_Z\otimes\calO_Z)\otimes_{\calO_Z} N) \otimes_{\calO_Z} (\calO_Z\otimes\calO_Z)
\]
given by $\beta'((a_1\otimes a_2)\otimes (b_1\otimes b_2)\otimes ((c_1\otimes c_2)\otimes n))=((c_1\otimes c_2)\otimes n)\otimes(a_1b_2\otimes a_2b_1)$ with the left-most $\calO_Z$-module structure on $((\calO_Z\otimes\calO_Z)\otimes_{\calO_Z} N)$ when taking tensor products (cf.~\cite[Prop.~6.15, Pf.]{berthelot-ogus-book}).

\begin{eg}\label{eg:linearization of log de Rham complex}
The log de Rham complex $(\omega_{Z/\Sigma}^\bullet,d)$ is a complex of quasi-coherent $\calO_Z$-modules and HPD differential operators. We describe $L(d)_Z\colon \calD_Z(1)\otimes\omega_{Z/\Sigma}^i\rightarrow \calD_Z(1)\otimes\omega_{Z/\Sigma}^{i+1}$ as follows. Keep the notation as in Example~\ref{eg:basic properties of PD-envelopes}. Replacing $Z$ by an \'etale neighborhood of $\overline{z}$, we assume that $(u_l-1)_{1\leq l\leq r}$ is a smooth coordinate of $(Z^2)'\rightarrow Z$ and $\calO_Z\{T_1,\ldots,T_r\}_\mathrm{PD}\cong \calD_Z(1)$ via $T_l^{[n]}\mapsto (u_l-1)^{[n]}$. 

We claim that for local sections $a\in\calO_Z$ and $\omega\in\omega_{Z/\Sigma}^i$,
\begin{equation}\label{eq:linearization of differential}
\begin{aligned}
&L(d)_Z(aT_1^{[n_1]}\cdots T_r^{[n_r]}\otimes\omega)\\
&=a\bigl(\sum_{l=1}^r T_1^{[n_1]}\cdots T_l^{[n_l-1]}\cdots T_r^{[n_r]}(T_l+1)\,d\operatorname{log}(t_l)\wedge \omega+T_1^{[n_1]}\cdots T_r^{[n_r]}\otimes d\omega \bigr).  
\end{aligned}
\end{equation}
To see this, observe that the HPD differential operator $\widetilde{d}\colon \calD_Z(1)\otimes \omega_{Z/\Sigma}^i\rightarrow \omega_{Z/\Sigma}^{i+1}$ corresponding to $d$ factors through $\calD_Z(1)\otimes\omega_{Z/\Sigma}^i\rightarrow \calD_Z(1)/H^{[2]}\otimes\omega_{Z/\Sigma}^i$ where $H=\Ker(\calD_Z(1)\rightarrow\calO_Z)$, and it satisfies $\widetilde{d}(a\otimes\omega)=a\otimes d\omega$ and $\widetilde{d}(aT_l\otimes\omega)=a\,d\operatorname{log}T_l\wedge \omega$.
Moreover, \eqref{eq:formula for delta} gives
\[
\delta(T_l^{[n_l]})
\equiv 
T_l^{[n_l]}\otimes 1+ T_l^{[n_l-1]}\otimes T_l+n_lT_l^{[n_l]}\otimes T_l \pmod{\calD_Z(1)\otimes H^{[2]}}.
\]
Now \eqref{eq:linearization of differential} is deduced from the definition $L(d)_Z=(\id\otimes d)\circ (\delta\otimes\id)$ and the fact that $\delta$ is a PD-algebra map (cf.~\cite[Lem.~IV.3.2.5, Pf]{Berthelot-book}, \cite[Lem.~6.11, Pf.]{berthelot-ogus-book}, or \cite[(2.2.7.2)]{nakkajima-shiho}).

A direction computation using \eqref{eq:linearization of differential} shows that $(L(\omega_{Z/\Sigma}^\bullet)_Z, L(d)_Z)$ is a complex of quasi-coherent $\calO_Z$-modules with an HPD stratification, which is a priori not obvious from the definition of the composition of two HPD differential operators.

Finally, $p_1^\ast\colon \calO_Z\rightarrow L(\calO_Z)_Z=\calD_Z(1)$ is horizontal with respect to the canonical HPD stratifications on them: this is proved exactly as in \cite[Thm.~6.12, Pf.]{berthelot-ogus-book}. Moreover, we know from \eqref{eq:linearization of differential} that the composite $\calO_Z\rightarrow L(\calO_Z)_Z\rightarrow L(\omega_{Z/\Sigma}^1)_Z$ is zero.
\end{eg}

\begin{lem}\label{lem:linearization and tensor product}
Let $(E_Z,\epsilon)$ be an $\calO_Z$-module with an HPD stratification.
For any $\calO_Z$-module $N$, the $\calD_Z(1)$-linear isomorphism
 \[
 \epsilon\otimes\id_N\colon L(E_Z\otimes N)_Z=\calD_Z(1)\otimes E_Z\otimes N\xrightarrow{\cong} E_Z\otimes \calD_Z(1)\otimes N=E_Z\otimes L(N)_Z
 \]
 is compatible with HPD stratifications (where the target is equipped with the tensor product one).
Moreover, if $N'$ is another $\calO_Z$-module and $u$ is an HPD differential operator from $N$ to $N'$, then the induced map $\id\otimes L(u)\colon E_Z\otimes L(N)_Z\rightarrow E_Z\otimes L(N')_Z$ is compatible with the map $L(E_Z\otimes N)_Z\rightarrow L(E_Z\otimes N')_Z$ induced from an HPD differential operator from $E_Z\otimes_{\calO_Z}N$ to $E_Z\otimes_{\calO_Z}N'$ given by $\epsilon$ and $u$ (defined exactly as in \cite[Prop.~IV.3.1.4(ii)]{Berthelot-book}).
\end{lem}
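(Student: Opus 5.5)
The plan is to reduce both assertions to the corresponding non-log statements, \cite[Prop.~IV.3.1.4]{Berthelot-book} and \cite[Prop.~6.15, Pf.]{berthelot-ogus-book}. Those proofs use only three formal ingredients: the comultiplication $\delta\colon\calD_Z(1)\to\calD_Z(1)\otimes_{\calO_Z}\calD_Z(1)$ induced by $p_{13}$; the isomorphism $\calD_Z(1)\otimes_{\calO_Z}\calD_Z(1)\cong\calD_Z(2)$ (and its analogue for $\calD_Z(3)$); and the cocycle condition for $\epsilon$ over $D_Z(2)$. All three are available here, from the discussion before Example~\ref{eg:basic properties of PD-envelopes} and the definition of an HPD stratification. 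I will therefore phrase everything in terms of these maps and check the identities of $\calD_Z(2)$- or $\calD_Z(3)$-modules that result, without ever using local coordinates.

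For the first assertion, I first note that $\epsilon\otimes\id_N$ is $\calD_Z(1)$-linear and an isomorphism simply because $\epsilon$ is. Compatibility with HPD stratifications means commutativity of a square of $\calD_Z(1)$-linear maps out of $\calD_Z(1)\otimes\calD_Z(1)\otimes E_Z\otimes N$. One route goes through the stratification $\beta\circ\alpha$ of $L(E_Z\otimes N)_Z$ followed by $(\epsilon\otimes\id)\otimes\id$. The other goes through $\id\otimes\epsilon\otimes\id_N$ followed by the tensor product stratification on $E_Z\otimes L(N)_Z$, which is $\epsilon$ on $E_Z$ tensored with $\beta\circ\alpha$ on $L(N)_Z$. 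After identifying $\calD_Z(1)^{\otimes 2}$ with $\calD_Z(2)$, both composites become explicit maps involving pullbacks of $\epsilon$ along the projections $p_{12},p_{13},p_{23}\colon D_Z(2)\to D_Z(1)$ and the permutation $\beta'$. Their equality then reduces to the cocycle identity $p_{12}^\ast\epsilon\circ p_{23}^\ast\epsilon=p_{13}^\ast\epsilon$, together with the observation that $\beta$ is obtained by universality from the symmetric description $\beta'$. I will verify the identity on elements of the form $(a_1\otimes a_2)\otimes(b_1\otimes b_2)\otimes e\otimes n$ and then pass to PD-envelopes using universality, as in the definition of $\beta$.

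For the second assertion, I define the HPD differential operator $u_E\colon\calD_Z(1)\otimes E_Z\otimes N\to E_Z\otimes N'$ as the composite $(\id_{E_Z}\otimes u)\circ(\epsilon\otimes\id_N)$. Here $\epsilon\otimes\id_N$ lands in $E_Z\otimes\calD_Z(1)\otimes N$, and the map $\id_{E_Z}\otimes u$ is well defined because $u$ is left $\calO_Z$-linear. By definition, $L(u_E)_Z=(\id\otimes u_E)\circ(\delta\otimes\id)$. I then have to show $(\epsilon\otimes\id_{N'})\circ L(u_E)_Z=(\id_{E_Z}\otimes L(u)_Z)\circ(\epsilon\otimes\id_N)$. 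Expanding both sides and moving $u$ to the end, this becomes an identity between two maps $\calD_Z(1)\otimes E_Z\to E_Z\otimes\calD_Z(1)\otimes\calD_Z(1)$. The first is $(\epsilon\otimes\id)\circ(\id\otimes\epsilon)\circ(\delta\otimes\id)$, suitably arranged; the second is $(\id\otimes\delta)\circ\epsilon$. After identifying $\calD_Z(1)^{\otimes2}\cong\calD_Z(2)$ and $\delta$ with $p_{13}^\ast$, this is exactly the cocycle condition again.

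The main obstacle is bookkeeping rather than any new idea. One has to keep track of which $\calO_Z$-structure (left or right) is used in each tensor product and to confirm that the identifications with $\calD_Z(2)$ and $\calD_Z(3)$ match the conventions in $\beta'$. I will handle this by fixing once and for all that, in $\calD_Z(1)\otimes\calD_Z(1)$, the right structure of the first factor is tensored with the left structure of the second, corresponding to $p_{12}$ and $p_{23}$. Under that convention each needed identity is checked on the algebraic tensor products $\calO_Z^{\otimes 3}$ and $\calO_Z^{\otimes 4}$ and then extended by the universality of PD-envelopes.
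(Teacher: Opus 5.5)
Your proposal is correct and follows the paper's route: the paper's proof simply says Berthelot's proof of Prop.~IV.3.1.4 carries over, and you spell out why, namely that it uses only $\delta$, the identifications $\calD_Z(\nu)\otimes_{\calO_Z}\calD_Z(\nu')\cong\calD_Z(\nu+\nu')$, and the cocycle condition, all of which are available in the log setting here. As you say, both assertions then reduce to $p_{12}^\ast\epsilon\circ p_{23}^\ast\epsilon=p_{13}^\ast\epsilon$, once you use that all the maps involved are semilinear over the relevant PD-algebras, so checking on generators suffices.
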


\begin{proof}
The proof of \cite[Prop.~IV.3.1.4]{Berthelot-book} works in our set-up.
\end{proof}

\begin{construction}
For a quasi-coherent $\calO_Z$-module $N$, let $\calL(N)_Z$ denote the quasi-coherent $\calO_{Z/\Sigma}$-module on $((Z,M_Z)/\Sigma^\sharp)_\CRIS$ corresponding to the quasi-coherent $\calO_Z$-module $L(N)_Z$ with the canonical HPD stratification via Proposition~\ref{prop: crystals and quasi-nilpotent connections}. We set
\[
\calL(N)\coloneqq \calL_{i}(N)\coloneqq i_\CRIS^\ast \calL(N)_Z.
\]
Since $i_\CRIS^\ast$ is exact, $\calL(N)$ is a quasi-coherent $\calO_{Y/\Sigma}$-module (namely, a quasi-coherent crystal of $\calO_{Y/\Sigma}$-modules) on $((Y,M_Y)/\Sigma^\sharp)_\CRIS$.
By unwinding the constructions, one can check that for $(U,T)\in ((Y,M_Y)/\Sigma^\sharp)_\CRIS^{Z\strat}$, the quasi-coherent $\calO_T$-module $\calL(N)_{(U,T)}$ on $T_\et$ is given by $h_\et^\ast L(N)_Z=h_\et^\ast (\calD_Z(1)\otimes_{\calO_Z}N)$ for any $h\colon (T,M_T)\rightarrow (Z,M_Z)$ fitting in \eqref{eq:def of stratifying site} (and is independent of $h$ up to the canonical identification given by the HPD stratification on $L(N)_Z$).
\end{construction}

Example~\ref{eg:linearization of log de Rham complex} gives a complex of quasi-coherent $\calO_{Y/\Sigma}$-modules
\begin{equation}\label{eq:log crystalline Poincare lemma}
\calO_{Y/\Sigma}=i_\CRIS^\ast \calO_{Z/\Sigma}\rightarrow \calL(\calO_Z)\rightarrow \calL(\omega_{Z/\Sigma}^1)\rightarrow\cdots.
\end{equation}

\begin{thm}[Log crystalline Poincar\'e Lemma]\label{thm:log crystalline Poincare lemma}
For every $\calO_{Y/\Sigma}$-module $\calM$, \eqref{eq:log crystalline Poincare lemma} induces a quasi-isomorphism
\[
\calM\rightarrow \calM\otimes_{\calO_{Y/\Sigma}}\calL(\omega_{Z/\Sigma}^\bullet).
\]
\end{thm}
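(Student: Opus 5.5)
We check the quasi-isomorphism after evaluating at each object; it suffices to do this on a cofinal family. By Lemma~\ref{lem:weakly final object using smooth lift in crystalline site}, every affine object $(U,T)$ lies in $((Y,M_Y)/\Sigma^\sharp)_\CRIS^{Z\strat}$. Moreover, evaluation $\calK\mapsto\calK_{(U,T)}$ is exact. So we fix an affine $(U,T)$ together with a morphism $h\colon (T,M_T)\rightarrow (Z,M_Z)$ fitting into \eqref{eq:def of stratifying site}, and we show that
\[
\calM_{(U,T)}\rightarrow \calM_{(U,T)}\otimes_{\calO_T}h_\et^\ast L(\omega^\bullet_{Z/\Sigma})_Z
\]
is a quasi-isomorphism of complexes on $T_\et$. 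This uses the description $\calL(N)_{(U,T)}=h^\ast_\et(\calD_Z(1)\otimes_{\calO_Z}N)$ from the construction. Note that $(\calM\otimes_{\calO_{Y/\Sigma}}\calL(N))_{(U,T)}=\calM_{(U,T)}\otimes_{\calO_T}\calL(N)_{(U,T)}$, since evaluation commutes with the sheafified tensor product. The statement is étale local on $T$, so we may shrink $T$ so that $Z$ admits coordinates as in Example~\ref{eg:basic properties of PD-envelopes}.

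In these coordinates, $h^\ast_\et L(\omega^i_{Z/\Sigma})_Z\cong \calO_T\{T_1,\dots,T_r\}_{\mathrm{PD}}\otimes\bigwedge^i(\bigoplus_l \calO_T\, d\log t_l)$. By \eqref{eq:linearization of differential}, the differential is $\calO_T$-linear in the coefficient $a$ coming from the first factor. The term $T^{[\underline n]}\otimes d\omega$ is absorbed using the HPD stratification of Lemma~\ref{lem:linearization and tensor product}, i.e.\ by choosing the $\calO_T$-structure via the left factor $p_1$ so that $h^\ast$ is applied only through $p_1$. After this, the complex $\calO_T\rightarrow h^\ast L(\omega^\bullet)_Z$ becomes $\calO_T$ tensored with the Koszul-type complex
\[
\calO_T\rightarrow \Z\{T_1,\dots,T_r\}_{\mathrm{PD}}\otimes\textstyle\bigwedge^\bullet(\Z^r),\qquad T^{[\underline n]}\mapsto \sum_l T^{[\underline n-e_l]}(1+T_l)\,e_l\wedge -.
\]
Using the substitution $T_l\mapsto T_l$ together with the unit $1+T_l=u_l$, this is the tensor product over $l$ of the one-variable complexes
\[
K_l:\ \Z\rightarrow \Z\{T_l\}_{\mathrm{PD}}\xrightarrow{\ \partial_l\ }\Z\{T_l\}_{\mathrm{PD}},\qquad \partial_l(T_l^{[n]})=(1+T_l)T_l^{[n-1]}=T_l^{[n-1]}+nT_l^{[n]}.
\]
I plan to show each $K_l$ is a split exact complex of free $\Z$-modules, with an explicit $\Z$-linear contracting homotopy. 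For this, note that $\partial_l$ is triangular with respect to the basis $T_l^{[n]}$, with leading term $T_l^{[n-1]}$, so it is surjective with kernel spanned by $1$. This is the computation of \cite[\S~2.2]{nakkajima-shiho}.

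By the Künneth formula, the tensor product of the $K_l$ is then split exact, hence remains exact after applying $\calM_{(U,T)}\otimes_\Z-$; that is, $\calM_{(U,T)}\otimes_{\calO_T}\calO_T\otimes_\Z(\cdots)$. This requires no flatness of $\calM$, and it is the reason the theorem holds for arbitrary $\calO_{Y/\Sigma}$-modules $\calM$.

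The main obstacle is the middle step: making the identification of $h^\ast L(\omega^\bullet_{Z/\Sigma})_Z$, with its $L(d)_Z$ differential, with a complex defined over $\Z$ in the PD variables. The differential \eqref{eq:linearization of differential} involves $d\omega$ and $d a$ for sections of $\calO_Z$, and these must be killed by the correct choice of linear structure. The homotopy must also be $\calO_T$-linear for the module structure that is used in the tensor product with $\calM_{(U,T)}$. I would first attempt this by following \cite[Thm.~6.12, Pf.]{berthelot-ogus-book}: use the filtration by degree in the $T$'s, and check that the homotopy on the $\calO_Z$-coefficients is compatible via Lemma~\ref{lem:linearization and tensor product}.
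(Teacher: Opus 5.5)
Your key computation is wrong: the complex $K_l$ is \emph{not} exact over $\Z$. Write $\partial(\sum_n a_nT^{[n]})=\sum_m (a_{m+1}+m a_m)T^{[m]}$. Then the equation $\partial x=1$ forces $a_1=1$ and $a_{m+1}=-m a_m$, i.e.\ $a_{m+1}=(-1)^m m!$ for every $m\geq 1$. This never terminates in $\Z$, so $1\notin\operatorname{im}\partial$ and $K_l$ has nonzero cohomology in its top degree. Rationally you see the same thing: $\partial=(1+T)\frac{d}{dT}$ on $\Q[T]$ has cokernel $\Q[T]/(1+T)\cong\Q$.

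Your triangularity argument fails. In the degree ordering the diagonal entries of $\partial$ are the non-units $n$. Treating the shift $T^{[n]}\mapsto T^{[n-1]}$ as the leading term only lets you solve $\partial x=T^{[m]}$ by the series $\sum_{l\geq 0}(-1)^l\frac{(m+l)!}{m!}T^{[m+l+1]}$, which is infinite over $\Z$. For the same reason $1+T_l$ is not a unit in $\Z\{T_l\}_{\mathrm{PD}}$. Your argument never uses that $p$ is nilpotent on $\Sigma$. That hypothesis is exactly what makes the lemma true, and the paper invokes it at precisely this point ("since $p$ is nilpotent in $A$, the sequence is exact").

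The repair is to replace $\Z$ by $\Z/p^N$, where $p^N\calO_\Sigma=0$.
\begin{itemize}
\item The coefficient $\frac{(m+l)!}{m!}$ is divisible by $l!$, so it vanishes in $\Z/p^N$ for $l\gg 0$. Hence $s(T^{[m]})=\sum_{l\geq0}(-1)^l\frac{(m+l)!}{m!}T^{[m+l+1]}$ is a finite sum and a $\Z/p^N$-linear section of $\partial$. This is the section the paper uses for the filtered version.
\item Equivalently, $T^n=n!\,T^{[n]}$ vanishes for $n\gg0$, so $1+T$ is a unit and $\partial=(1+T)\partial_0$, where $\partial_0(T^{[n]})=T^{[n-1]}$ is the standard PD-derivation.
\item Then each $K_l\otimes\Z/p^N$ is split exact, the K\"unneth step goes through, and tensoring with $\calM_{(U,T)}$ over $\Z/p^N$ preserves exactness.
\end{itemize}

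With this correction your route is valid and differs slightly from the paper's. The paper first reduces to $\calM=\calO_{Y/\Sigma}$, using that each $\calL(\omega^i_{Z/\Sigma})$ is a flat $\calO_{Y/\Sigma}$-module (via flatness of $\calD_Z(1)$ over $\calO_Z$), and then checks exactness over $A$. Your split exactness over $\Z/p^N$ handles every $\calM$ at once, with no flatness reduction.

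The step you flagged as the main obstacle is not one. $L(d)_Z$ is left $\calO_Z$-linear, and $L(\omega^i_{Z/\Sigma})_Z$ is free over $\calD_Z(1)$ on the basis $1\otimes d\log t_I$. Since $d(d\log t_I)=0$, formula \eqref{eq:linearization of differential} already exhibits the complex as the base change of the Koszul-type complex defined over $\Z$ in the PD variables. No appeal to Lemma~\ref{lem:linearization and tensor product} is needed.
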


\begin{proof}
Take $(U,T)\in ((Y,M_Y)/\Sigma^\sharp)_\CRIS^{Z\strat}$. Then $\calL(\omega_{Z/\Sigma}^i)_{(U,T)}=\calO_T\otimes_{\calO_Z}\calD_Z(1)\otimes_{\calO_Z}\omega_{Z/\Sigma}^i$. Here $\omega_{Z/\Sigma}^i$ is a locally free $\calO_Z$-module, and $\calD_Z(1)$ is a flat $\calO_Z$ by the explicit description in Example~\ref{eg:basic properties of PD-envelopes}. Hence $\calL(\omega_{Z/\Sigma}^i)_{(U,T)}$ is flat over $\calO_Z$. We know from Proposition~\ref{prop:quasi-coherent crystals}(1) that $\calL(\omega_{Z/\Sigma}^\bullet)$ is a complex of flat $\calO_{Y/\Sigma}$-modules. So it suffices to prove the case $\calM=\calO_{Y/\Sigma}$, namely, 
that $\calO_{Y/\Sigma}\rightarrow \calL(\omega_{Z/\Sigma}^\bullet)$ is a quasi-isomorphism, for which we will follow the last two paragraphs of \cite[Prop.~2.2.7, Pf.]{nakkajima-shiho}. We need to show for each affine $(U,T=\Spec A)$, the evaluation $\calO_T\rightarrow \calL(\omega_{Z/\Sigma}^\bullet)_{(U,T)}$ is a quasi-isomorphism. \'Etale locally, this is associated to
\[
A\rightarrow K^\bullet(T_1,\ldots,T_r)\coloneqq\bigl(A\{T_1,\ldots,T_r\}_{\mathrm{PD}}\otimes_A \bigwedge^\bullet \bigoplus_{l=1}^rA\,d\operatorname{log}t_l,d\bigr)
\]
with differential $d$ given by the same formula as \eqref{eq:linearization of differential}. Observe that $K^\bullet(T_1,\ldots,T_r)$ is a complex of flat $A$-modules and there is a natural isomorphism $K^\bullet(T_1,\ldots,T_r)\cong K^\bullet(T_1,\ldots,T_{r-1})\otimes_A K^\bullet(T_r)$ of complexes. Hence to show that $A\rightarrow K^\bullet(T_1,\ldots,T_r)$ is a quasi-isomorphism, we may assume $r=1$, in which the assertion is equivalent to the exactness of 
\begin{equation}\label{eq:proof in log crystalline Poincare lemma}
0\rightarrow A\rightarrow A\{T\}_{\mathrm{PD}}\xrightarrow{d}A\{T\}_{\mathrm{PD}}\rightarrow 0
\end{equation}
with $d(aT^{[n]})=aT^{[n-1]}(T+1)=anT^{[n]}+aT^{[n-1]}$. Since $p$ is nilpotent in $A$, the sequence \eqref{eq:proof in log crystalline Poincare lemma} is exact.
\end{proof}

Next we upgrade Theorem~\ref{thm:log crystalline Poincare lemma} to the filtered one, following \cite[V.2.1.4]{Berthelot-book}. Recall the morphism $i_{Y/\Sigma}\colon \Sh(Y_\et)\rightarrow \Sh(((Y,M_Y)/\Sigma^\sharp)_\CRIS)$ of topoi in Proposition~\ref{prop:morphism of topoi i from et to cris}.
The pullback of the $\calO_Z$-linear map $\calD_Z(1)\rightarrow \calO_Z$ along $V\rightarrow Y\rightarrow Z$ for each $V\in Y_\et$ defines a map $i_{Y/\Sigma}^{-1}\calL(\calO_Z)\rightarrow \calO_Y$, and by adjunction, we obtain a map $\calL(\calO_Z)\rightarrow i_{Y/\Sigma,\ast}\calO_Y$. We can see that this is a surjection of sheaves on $((Y,M_Y)/\Sigma^\sharp)_\CRIS$ and set 
\[
\calK\coloneqq \Ker(\calL(\calO_Z)\rightarrow i_{Y/\Sigma,\ast}\calO_Y).
\]
Recall the $\calO_Z$-module decomposition $\calD_Z(1)=\calO_Z\oplus H$ with the augmented ideal $H=\Ker(\calD_Z(1)\rightarrow \calO_Z)$. For $(U,T)\in ((Y,M_Y)/\Sigma^\sharp)_\CRIS^{Z\strat}$ with any $h\colon (T,M_T)\rightarrow (Y,M_Y)$ as in \eqref{eq:def of stratifying site}, one obtains an $\calO_T$-module decomposition $\calK_{(U,T)}=(\calJ_{Y/\Sigma})_{(U,T)}\oplus h_\et^\ast H$. Hence we deduce from \cite[Prop.~I.1.6.5, I.1.7.1]{Berthelot-book} that there exists a unique PD-ideal sheaf structure on $\calK_{(U,T)}$ such that $(\calO_T,(\calJ_{Y/\Sigma})_{(U,T)})\rightarrow (\calL(\calO_Z)_{(U,T)},\calK_{(U,T)})\leftarrow(\calD_Z(1),H)$ are PD-morphisms. Using the HPD-stratification on $\calD_Z(1)$, one can check, exactly as in \cite[pp.~298-299]{Berthelot-book}, that this PD-ideal structure on $\calK_{(U,T)}$ is independent of $h$.
The $\calK$-PD-adic filtration $\calK^{[n]}$ of $\calL(\calO_Z)$ induces a filtered map $(\calO_{Y/\Sigma},\calJ_{Y/\Sigma}^{[n]})\rightarrow (\calL(\calO_Z),\calK^{[n]})$ and, by \eqref{eq:linearization of differential}, satisfies
\[
\calL(d)\bigl(\calK^{[n]}\calL(\omega_{Z/\Sigma}^i)\bigr)\subset \calK^{[n-1]}\calL(\omega_{Z/\Sigma}^{i+1}).
\]

\begin{defn}
For each $n$, define the subcomplex $F^n\calL(\omega_{Z/\Sigma}^\bullet)\subset \calL(\omega_{Z/\Sigma}^\bullet)$ by
\[
F^n\calL(\omega_{Z/\Sigma}^i)\coloneqq \calK^{[n-i]}\calL(\omega_{Z/\Sigma}^i).
\]
More generally, for an $\calO_{Y/\Sigma}$-module $\calM$, define $F^n(\calM\otimes_{\calO_{Y/\Sigma}}\calL(\omega_{Z/\Sigma}^\bullet))$ to be the image of $\calM\otimes_{\calO_{Y/\Sigma}}F^n\calL(\omega_{Z/\Sigma}^\bullet)\rightarrow \calM\otimes_{\calO_{Y/\Sigma}}\calL(\omega_{Z/\Sigma}^\bullet)$.
By construction, we have a morphism of complexes
\begin{equation}\label{eq:filtered log crystalline Poincare lemma}
\calJ_{Y/\Sigma}^{[n]}\calM\rightarrow F^n(\calM\otimes_{\calO_{Y/\Sigma}}\calL(\omega_{Z/\Sigma}^\bullet)).
\end{equation}
\end{defn}

\begin{thm}[Filtered log crystalline Poincar\'e Lemma]\label{thm:filtered log crystalline Poincare lemma}
For every $\calO_{Y/\Sigma}$-module $\calM$ and every $n$, the map in \eqref{eq:filtered log crystalline Poincare lemma} is a quasi-isomorphism.
\end{thm}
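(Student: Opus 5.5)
We follow the proof of \cite[Thm.~V.2.1.4]{Berthelot-book}, using the unfiltered argument of Theorem~\ref{thm:log crystalline Poincare lemma} as a model. The first step is to reduce to $\calM=\calO_{Y/\Sigma}$. Each $F^n\calL(\omega^i_{Z/\Sigma})$ and each graded piece $F^n/F^{n+1}$ should be flat over $\calO_{Y/\Sigma}$. To see this, take $(U,T)\in((Y,M_Y)/\Sigma^\sharp)_\CRIS^{Z\strat}$ and $h$ as in \eqref{eq:def of stratifying site}. Then
\[
\calK^{[m]}_{(U,T)}=\sum_{a+b=m}\calJ_T^{[a]}\cdot h_\et^\ast H^{[b]},
\]
and in the local coordinates of Example~\ref{eg:basic properties of PD-envelopes} this is $\bigoplus_{\underline{k}}\calJ_T^{[m-\lvert\underline{k}\rvert]}T^{[\underline{k}]}$, with the convention $\calJ_T^{[j]}=\calO_T$ for $j\leq 0$. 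This direct-sum description is where one has to be careful. Flatness of the pieces fails, because $\calJ_T^{[j]}$ need not be flat. So the reduction cannot be done by naive flatness. Instead I would argue directly with the explicit decomposition, for arbitrary $\calM$.

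Fix $(U,T=\Spec A)$ affine with a chosen $h$, and work \'etale locally so that the coordinates $T_1,\dots,T_r$ of Example~\ref{eg:basic properties of PD-envelopes} are available. Put $J=\calJ_T$ and $M=\calM_{(U,T)}$. The image of $M\otimes F^n$ in $M\otimes\calL(\omega^\bullet)$ is then the complex
\[
F^nK^\bullet_M\colon\quad F^nK^i_M=\bigoplus_{\underline{k},\,I}J^{[n-i-\lvert\underline{k}\rvert]}M\cdot T^{[\underline{k}]}\,d\log t_I,\qquad \lvert I\rvert=i,
\]
where $d\log t_I$ runs over the basis of $\bigwedge^i$, and the differential is given by \eqref{eq:linearization of differential}. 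The claim to prove is that $J^{[n]}M\to F^nK^\bullet_M$ is a quasi-isomorphism.

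I would decompose $F^nK^\bullet_M$ as a tensor product of one-variable filtered complexes, in the filtered sense used in \cite[V.2.1.4]{Berthelot-book}. For one variable $T$, the complex $F^nK^\bullet(T)$ is
\[
\bigoplus_k J^{[n-k]}T^{[k]}\longrightarrow\bigoplus_k J^{[n-1-k]}T^{[k]},\qquad T^{[k]}\mapsto kT^{[k]}+T^{[k-1]}.
\]
We need its cohomology to be $J^{[n]}$ in degree $0$ and zero in degree $1$.
\begin{enumerate}
\item For surjectivity, given $b\,T^{[k-1]}$ with $b\in J^{[n-k]}$, take $bT^{[k]}$ as a preimage. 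This lies in the correct filtration step, since $n-k=n-1-(k-1)$. The error term $kbT^{[k]}$ has the same filtration index and a higher power of $T$. Because $p$ is nilpotent, iterating this correction kills the error after finitely many steps, after accounting for the $p$-adic valuation of $k$. Alternatively, the error terms $k!\,b\,T^{[k]}$ vanish once $k$ is large.
\item For the kernel, a cocycle $\sum a_kT^{[k]}$ satisfies $a_{k+1}=-ka_k$. Hence $a_k=(-1)^k(k-1)!\,a_1$ and $a_1=-a_0\cdot 0$, which gives $a_k=0$ for $k\geq1$. The kernel is therefore $J^{[n]}M$, as required.
\end{enumerate}

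The main obstacle is making the filtered tensor-product decomposition rigorous without any flatness of $J^{[j]}$. Concretely, one has to check that the image-filtration of $M\otimes\calL$ is computed termwise by $J^{[\cdot]}M$ in each $T^{[\underline{k}]}$-summand. It is also not clear that the Künneth reduction from $r$ variables to one variable holds without flatness. If the Künneth step fails, I would avoid it. Instead I would prove acyclicity of the multivariable complex directly by a homotopy in the last variable $T_r$, built from the explicit one-variable inverse above. This homotopy maps each filtration step $J^{[j]}$ into itself, so it works for any coefficient $M$.
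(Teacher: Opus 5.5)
Your proposal is correct and takes essentially the paper's route: your iterative correction produces exactly the paper's splitting $s(aT^{[m]})=\sum_{l\geq 0}(-1)^l\frac{(m+l)!}{m!}\,aT^{[m+l+1]}$ of the one-variable sequence $0\to J^{[n]}\to K^{[n]}\xrightarrow{d}K^{[n-1]}\to 0$. Your filtration-preserving homotopy $\id\otimes s$ in the last variable, applied inductively on the number of variables, is precisely the step the paper delegates to the last paragraph of Berthelot's proof, and it handles arbitrary $\calM$ without the flatness reduction, which you were right to abandon.
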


\begin{proof}
We follow the proof of \cite[Thm.~V.2.1.4]{Berthelot-book}. With the notation the notation of the proof of Theorem~\ref{thm:log crystalline Poincare lemma}, consider the case $r=1$: \eqref{eq:proof in log crystalline Poincare lemma} yields
\begin{equation}\label{eq:proof in filtered log crystalline Poincare lemma}
0\rightarrow J^{[n]}\rightarrow K^{[n]}\xrightarrow{d} K^{[n-1]}\rightarrow 0,
\end{equation}
where $J^{[n]}$ and $K^{[n]}$ are the $A$-modules corresponding to the quasi-coherent $\calO_T$-modules $(\calJ_{Y/\Sigma}^{[n]})_{(U,T)}$ and $\calK_{(U,T)}^{[n]}$ and $d$ is given by $d(aT^{[m]})=aT^{[m-1]}(T+1)$.
The sequence \eqref{eq:proof in filtered log crystalline Poincare lemma} is exact and split with a section $s$ of $d$ given by 
\[
s(aT^{[m]})=\sum_{l=0}^\infty (-1)^l\frac{(m+l)!}{m!}a T^{m+l+1}, 
\]
which is a finite sum since $p$ is nilpotent in $A$.
Once we know that \eqref{eq:proof in filtered log crystalline Poincare lemma} is split exact, the last paragraph of \cite[p.~300]{Berthelot-book} works verbatim in our situation and completes the proof.
\end{proof}

\begin{prop}\label{prop:CA of linearization}
Let $N$ be a quasi-coherent $\calO_Z$-module. The left $\calD$-linear map
\[
\calD\otimes_{\calO_Z}N\xrightarrow{p_2^\ast\otimes\id} \calD_Y(Z^2)\otimes_{\calO_Z}N\cong\calD\otimes_{\calO_Z}\calD_Z(1)\otimes_{\calO_Z}N=\calL(N)_{(U,D)}=\CA_Z^0(\calL(N)),
\]
regarded as a morphism in $\Sh(Y_\et)=\Sh(D_\et)$,
induces a quasi-isomorphism
\[
(\calJ_{Y/\Sigma}^{[n]})_{(Y,D)}(\calD\otimes_{\calO_Z}N)\xrightarrow{\cong}\CA_Z^\bullet(\calK^{[n]}\calL(N)).
\]
Moreover, if $N'$ is another quasi-coherent $\calO_Z$-module and $u$ is an HPD differential operator from $N$ to $N'$, then the induced map $\CA_Z^\bullet(\calL(u))\colon \CA_Z^\bullet(\calL(N))\rightarrow \CA_Z^\bullet(\calL(N))$ is compatible with the map $\calD\otimes_{\calO_Z}N\rightarrow\calD\otimes_{\calO_Z}N'$ induced from an HPD differential operator from $\calD\otimes_{\calO_Z}N$ to $\calD\otimes_{\calO_Z}N'$ given by $u$ (defined exactly as in \cite[Prop.~V.2.2.2(ii)]{Berthelot-book}).
\end{prop}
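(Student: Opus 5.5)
This is the log analogue of \cite[Prop.~V.2.2.2]{Berthelot-book}, and I would follow Berthelot's argument. The plan is to compute the cosimplicial module $\calL(N)_{(Y,D_Y(Z^{\bullet+1}))}$ explicitly, recognize it as the standard cosimplicial object attached to a module tensored with $\calD_Z(1)$, and then use the usual contracting homotopy for such objects.

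\textbf{Step 1: identify the terms.} Using the projection $p_{\nu+1}\colon D_Y(Z^{\nu+1})\rightarrow Z$ as the map $h$ in \eqref{eq:def of stratifying site}, the construction of $\calL(N)$ gives
\[
\calL(N)_{(Y,D_Y(Z^{\nu+1}))}=\calD_Y(Z^{\nu+1})\otimes_{p_{\nu+1},\calO_Z}\calD_Z(1)\otimes_{\calO_Z}N.
\]
By Lemma~\ref{lem:basic properties of PD-envelopes}(2),(3) and $\calD_Z(\nu)\otimes\calD_Z(\nu')\cong\calD_Z(\nu+\nu')$, this is identified with $\calD_Y(Z^{\nu+2})\otimes_{p_{\nu+2},\calO_Z}N$, and also with $\calD\otimes_{\calO_Z}\calD_Z(\nu+1)\otimes_{\calO_Z}N$. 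Under these identifications the coface maps are induced by the maps $D_Y(Z^{\nu+2})\rightarrow D_Y(Z^{\nu+1})$ omitting one of the first $\nu+1$ factors, while the last factor, which carries $N$, stays fixed. To verify this, one checks that the HPD stratification on $L(N)_Z$ is the one built from $\delta$, so that the transition maps become the maps omitting a factor.

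\textbf{Step 2: the resolution.} Regard $\calD_Y(Z^{\bullet+2})\otimes N$ as a $\calD\otimes N$-module via the last projection and the extra factor, so that $N$ sits on $p_{\bullet+2}$. This cosimplicial object is then the \v{C}ech-type object $\calD_Y(Z^{\bullet+1})\otimes_{\calD}\calD_Y(Z^2)\otimes_{\calO_Z}N$, augmented by $p_2^\ast\otimes\id$. The extra degeneracy, given by multiplying the first two factors, or equivalently by the diagonal $D_Y(Z^{\nu+1})\rightarrow D_Y(Z^{\nu+2})$ pulling back along the first two coordinates, yields a contracting homotopy. This proves that $\calD\otimes N\rightarrow \CA_Z^\bullet(\calL(N))$ is a quasi-isomorphism; this is the case $n=0$.

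\textbf{Step 3: filtrations.} For general $n$, I would compute $(\calK^{[n]}\calL(N))_{(Y,D_Y(Z^{\nu+1}))}$. Using the decomposition $\calK_{(U,T)}=(\calJ_{Y/\Sigma})_{(U,T)}\oplus h^\ast_\et H$, this term should be the $n$th divided power of the PD-ideal $\Ker(\calD_Y(Z^{\nu+2})\rightarrow\calO_Y)$, or at least of the ideal obtained by adjoining the extra diagonal, tensored with $N$. I then need to check that the homotopy from Step 2 respects these PD filtrations. It is induced by PD-morphisms compatible with the relevant ideals, so it preserves $\calK^{[n]}$. The augmentation then lands in $(\calJ_{Y/\Sigma}^{[n]})_{(Y,D)}(\calD\otimes N)$. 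To get this, I use flatness of $\calD_Z(1)$ over $\calO_Z$ (Example~\ref{eg:basic properties of PD-envelopes}) and the local description $\calD\{T_1,\ldots,T_r\}_{\mathrm{PD}}$, in which the filtration is $\sum_{a+b=n}J^{[a]}T^{[b]}$.

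\textbf{Step 4: compatibility with $u$.} The final compatibility with an HPD differential operator $u$ is formal. It follows from the definitions of $L(u)_Z=(\id\otimes u)\circ(\delta\otimes\id)$ and of the induced operator $\calD\otimes N\rightarrow\calD\otimes N'$, by tracking the degree-$0$ term through the identification in Step 1, as in Berthelot.

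\textbf{Main obstacle.} I expect the hardest part to be the filtered statement in Step 3: matching $\calK^{[n]}$ with the divided-power filtration on $\calD_Y(Z^{\nu+2})$, and making sure the homotopy preserves it, in the log setting where $\calD$ is not explicitly a PD-polynomial algebra over $\calO_Y$. To handle this, I would work \'etale locally with the coordinates $u_l-1$ and the formula \eqref{eq:formula for delta}.
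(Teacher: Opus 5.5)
Your overall route is the paper's. Its proof says only that the argument of \cite[Prop.~V.2.2.2]{Berthelot-book} carries over and that the first assertion is a formal fact about cosimplicial complexes. Your Steps 1, 3 and 4 are exactly that argument: identify $\CA_Z^\nu(\calL(N))$ with $\calD_Y(Z^{\nu+2})\otimes_{p_{\nu+2}}N$, with cofaces omitting one of the first $\nu+1$ coordinates; identify $\calK^{[n]}\calL(N)$ there via the $n$th divided power of $\Ker(\calD_Y(Z^{\nu+2})\to\calO_Y)$; and check compatibility with $u$ formally.

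\textbf{The homotopy in Step 2 is wrong.} Write points of $Z^{\nu+2}$ as $(x_0,\ldots,x_{\nu+1})$. In your conventions, $N$ sits on the last coordinate, the cofaces into degree $\nu$ omit coordinates $0,\ldots,\nu$, and the augmentation $p_2^\ast$ is the coface omitting coordinate $0$. With these conventions, the diagonal $(x_0,x_1,\ldots)\mapsto(x_0,x_0,x_1,\ldots)$ is just the ordinary codegeneracy $s^0$ of the cosimplicial object. It satisfies $s^0d^0=s^0d^1=\id$ and $s^0d^i=d^{i-1}s^0$ for $i\geq 2$, hence $s^0d+ds^0=d^0s^0\neq\id$. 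Concretely, in degree $0$ one has $s^0d=s^0d^0-s^0d^1=0$. Meanwhile $ds^0$ is pullback along $D_Y(Z^2)\xrightarrow{p_2}D\to D_Y(Z^2)$ (the second map being the diagonal), which is not the identity. In tensor language, multiplying the first two factors is a codegeneracy, not an extra one.

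\textbf{The fix.} Duplicate the coordinate that the cofaces never touch. For example, let $s^{-1}$ be pullback along $D_Y(Z^{\nu+2})\to D_Y(Z^{\nu+3})$, $(x_0,\ldots,x_{\nu+1})\mapsto(x_{\nu+1},x_0,\ldots,x_{\nu+1})$. This map is obtained from the corresponding morphism $Z^{\nu+2}\to Z^{\nu+3}$ by functoriality of log PD-envelopes. In the bottom degree it is the diagonal $D\to D_Y(Z^2)$, i.e.\ the retraction of $p_2^\ast$.
\begin{itemize}
\item Because the last coordinate is fixed by all cofaces, $s^{-1}d^0=\id$ and $s^{-1}d^{i+1}=d^is^{-1}$. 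Hence $s^{-1}d+ds^{-1}=\id$ on the augmented complex.
\item The diagonal on the last two coordinates, with alternating signs, also works.
\item These maps commute with the last projection, so they are compatible with $N$.
\item They are PD-morphisms over $Y$, so they preserve the $n$th divided powers of the kernels of $\calD_Y(Z^{\bullet+2})\to\calO_Y$, down to $(\calJ_{Y/\Sigma}^{[n]})_{(Y,D)}$ in degree $-1$.
\end{itemize}
With this correction your Step 3 goes through as written. The local PD-polynomial description is not needed: the PD-structure on $\calK_{(Y,D_Y(Z^{\nu+1}))}$ coincides with the canonical one on $\Ker(\calD_Y(Z^{\nu+2})\to\calO_Y)$ by the uniqueness in its definition.
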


\begin{proof}
Recall that $(\calJ_{Y/\Sigma}^{[n]})_{(Y,D)}$ denotes the $n$th divided power of $(\calJ_{Y/\Sigma})_{(Y,D)}=\Ker(\calD\rightarrow \calO_Y)$.
The proof of \cite[Prop.~V.2.2.2]{Berthelot-book} works in our set-up. Note that the first assertion is a simple consequence of cosimplicial complexes.
\end{proof}

\begin{defn}\label{def:cohomology of crystal in terms of log de Rham complex}
Keep the assumption and notation as in Set-up~\ref{set-up:description of crystals in terms of connections}. Let $\calE$ be a quasi-coherent $\calO_{Y/\Sigma}$-module on $((Y,M_Y)/\Sigma^\sharp)_\CRIS$ and write $(E,\nabla)$ for the corresponding quasi-coherent $\calD$-module with log connection. Regard $(E,\nabla)$ as an object in $\Sh(Y_\et)=\Sh(D_\et)$ and let $(E\otimes_{\calO_Z}\omega^\bullet_{Z/\Sigma},\nabla)$ denote the induced log de Rham complex.
Define the subcomplex $F^n(E\otimes_{\calO_Z}\omega^\bullet_{Z/\Sigma},\nabla)$ by 
\[
F^n(E\otimes_{\calO_Z}\omega^i_{Z/\Sigma},\nabla)\coloneqq (\calJ_{Y/\Sigma}^{[n-i]})_{(Y,D)}(E\otimes_{\calO_Z}\omega^i_{Z/\Sigma},\nabla).
\]
\end{defn}

\begin{thm}\label{thm:cohomology of crystal in terms of log de Rham complex filtered}
Keep the assumption and notation as in Definition~\ref{def:cohomology of crystal in terms of log de Rham complex}.
Assume further that the quasi-coherent $\calO_{Y/\Sigma}$-module $\calE$ on $((Y,M_Y)/\Sigma^\sharp)_\CRIS$ satisfies $\calE\cong  i_{\CRIS}^\ast\calE_Z$ for a quasi-coherent $\calO_{Z/\Sigma}$-module $\calE_Z$ on $((Z,M_Z)/\Sigma^\sharp)_\CRIS$. If we let $E_Z$ denote the quasi-coherent $\calO_Z$-module with an HPD stratification corresponding to $\calE_Z$, then $E\cong \calD\otimes_{\calO_Z}E_Z$ with $\nabla$ corresponding to the tensor product HPD stratification on $\calD\otimes_{\calO_Z}E_Z$. Moreover, for each $n\geq 0$,  there are natural quasi-isomorphisms
\[
Ru_{Y/\Sigma,\ast}(\calJ_{Y/\Sigma}^{[n]}\calE)\cong (\calJ_{Y/\Sigma}^{[n-\bullet]})_{(Y,D)}(\calD\otimes_{\calO_Z}E_Z\otimes_{\calO_Z}\omega^\bullet_{Z/\Sigma},\nabla)\cong F^n(E\otimes_{\calO_Z}\omega^\bullet_{Z/\Sigma},\nabla).
\]
 in $D^+(Y_\et,\pi^{-1}_\et\calO_\Sigma)$, where $\pi$ denotes $Y\rightarrow \Sigma$.
\end{thm}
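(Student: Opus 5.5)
The plan follows Berthelot's proof of \cite[Thm.~V.2.3.2]{Berthelot-book}, combining the filtered Poincar\'e lemma, the \v{C}ech--Alexander computation, and Proposition~\ref{prop:CA of linearization}.

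\textbf{Step 1: identify $E$.} The identification $E\cong\calD\otimes_{\calO_Z}E_Z$, with $\nabla$ coming from the tensor product HPD stratification, is exactly Remark~\ref{rem: crystals and quasi-nilpotent connections}(4). It therefore remains to produce the two quasi-isomorphisms.

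\textbf{Step 2: resolve $\calJ_{Y/\Sigma}^{[n]}\calE$.} Apply Theorem~\ref{thm:filtered log crystalline Poincare lemma} with $\calM=\calE$. This gives a quasi-isomorphism
\[
\calJ_{Y/\Sigma}^{[n]}\calE\to F^n(\calE\otimes_{\calO_{Y/\Sigma}}\calL(\omega^\bullet_{Z/\Sigma})).
\]
Since $\calE=i_\CRIS^\ast\calE_Z$ and $i_\CRIS^\ast$ is exact and compatible with tensor products, we have
\[
\calE\otimes\calL(\omega^i_{Z/\Sigma})=i_\CRIS^\ast(\calE_Z\otimes\calL(\omega^i_{Z/\Sigma})_Z).
\]
By Lemma~\ref{lem:linearization and tensor product}, $\calE_Z\otimes\calL(\omega^i)_Z\cong\calL(E_Z\otimes\omega^i_{Z/\Sigma})_Z$, compatibly with the differentials. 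Here the differentials on the right are the linearizations of the HPD differential operators $\nabla\colon E_Z\otimes\omega^i\to E_Z\otimes\omega^{i+1}$ built from $\epsilon$ and $d$. Hence the target of the Poincar\'e lemma is the complex $\calK^{[n-\bullet]}\calL(E_Z\otimes\omega^\bullet_{Z/\Sigma})$.

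One point must be checked here: the filtration $F^n$, defined as an image, agrees with $\calK^{[n-i]}\calL(E_Z\otimes\omega^i)$. I expect this to follow because $\calL(\omega^i)$ is flat and the isomorphism $\epsilon$ respects the $\calK$-adic filtrations. That compatibility holds because $\epsilon$ is $\calD_Z(1)$-linear and the PD-structure on $\calK$ is independent of $h$.

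\textbf{Step 3: compute $Ru_{Y/\Sigma,\ast}$ term by term.} The complex from Step 2 is bounded below, so Theorem~\ref{thm:CA method in crystalline site} gives
\[
Ru_{Y/\Sigma,\ast}(\calJ^{[n]}\calE)\cong \CA_Z^\bullet\bigl(\calK^{[n-\bullet]}\calL(E_Z\otimes\omega^\bullet)\bigr),
\]
computed as the total complex of a double complex. For each fixed $i$, Proposition~\ref{prop:CA of linearization} with $N=E_Z\otimes\omega^i$ gives a quasi-isomorphism
\[
(\calJ^{[n-i]}_{Y/\Sigma})_{(Y,D)}(\calD\otimes E_Z\otimes\omega^i)\to\CA_Z^\bullet(\calK^{[n-i]}\calL(E_Z\otimes\omega^i)).
\]
Its compatibility statement for HPD differential operators makes these maps commute with the horizontal differentials. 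The horizontal differential on the source is $\nabla$ on $\calD\otimes E_Z\otimes\omega^\bullet$, i.e.\ the connection attached to the tensor stratification.

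A standard spectral sequence (or filtration by columns) argument for bounded-below double complexes then yields the first quasi-isomorphism. The second isomorphism is simply the rewriting $E=\calD\otimes_{\calO_Z}E_Z$ via Definition~\ref{def:cohomology of crystal in terms of log de Rham complex}. All maps are $\pi_\et^{-1}\calO_\Sigma$-linear and natural, so the isomorphisms live in $D^+(Y_\et,\pi^{-1}_\et\calO_\Sigma)$.

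\textbf{Main obstacle.} I expect the main difficulty to be the compatibility in Steps 2--3. One must check that the connection induced on $\calD\otimes E_Z\otimes\omega^\bullet$ by Proposition~\ref{prop:CA of linearization} is the tensor-product connection $\nabla$. One must also check that the filtration identification in Step 2 holds for general quasi-coherent (non-flat) $E_Z$. For the latter, I would first try to reduce to $\calK^{[m]}\calL(N)=\calL(N)\otimes\calK^{[m]}$, using the local PD-polynomial description of $\calD_Z(1)$ from Example~\ref{eg:basic properties of PD-envelopes}.
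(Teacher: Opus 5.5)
Your proposal is correct and is essentially the paper's proof: identify $E$ via Remark~\ref{rem: crystals and quasi-nilpotent connections}, then run the chain filtered Poincar\'e lemma, Lemma~\ref{lem:linearization and tensor product}, the \v{C}ech--Alexander theorem, and Proposition~\ref{prop:CA of linearization}, tracking the differentials as in \cite[Cor.~V.2.2.4]{Berthelot-book}. The filtration identification you flag needs no flatness argument: $F^n$ is defined as the image of $\calE\otimes\calK^{[n-i]}\calL(\omega^i_{Z/\Sigma})$, so it equals $\calK^{[n-i]}(\calE\otimes\calL(\omega^i_{Z/\Sigma}))$ by construction, which is how the paper disposes of it.
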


\begin{proof}
The first part follows from Remark~\ref{rem: crystals and quasi-nilpotent connections}(2) and (4). The desired quasi-isomorphism is obtained as
\begin{align*}
Ru_{Y/\Sigma,\ast}(\calJ_{Y/\Sigma}^{[n]}\calE)
&\underset{\cong}{\xrightarrow{\text{Thm.~\ref{thm:filtered log crystalline Poincare lemma}}}} Ru_{Y/\Sigma,\ast}(F^n(\calE\otimes \calL(\omega_{Z/Y}^\bullet)))\\
&\underset{\cong}{\xleftarrow{\text{Lem.~\ref{lem:linearization and tensor product}}}} Ru_{Y/\Sigma,\ast}(\calK^{[n-\bullet]}\calL(E_Z\otimes_{\calO_Z}\omega_{Z/Y}^\bullet))\\
&\underset{\cong}{\xrightarrow{\text{Thm.~\ref{thm:CA method in crystalline site}}}}\CA_Z^\bullet(\calK^{[n-\bullet]}\calL(E_Z\otimes_{\calO_Z}\omega_{Z/Y}^\bullet))\\
&\underset{\cong}{\xleftarrow{\text{Prop.~\ref{prop:CA of linearization}}}}(\calJ_{Y/\Sigma}^{[n-\bullet]})_{(Y,D)}(\calD\otimes_{\calO_Z}E_Z\otimes_{\calO_Z}\omega_{Z/Y}^\bullet,\nabla).
\end{align*}
More precisely, for the second isomorphism, we use 
\begin{align*}
\calK^{[n-\bullet]}\calL(E_Z\otimes&\omega_{Z/Y}^\bullet)=\calK^{[n-\bullet]}i_{\CRIS}^\ast\calL(E_Z\otimes\omega_{Z/Y}^\bullet)_Z\\
&\underset{\cong}{\xrightarrow{\text{Lem.~\ref{lem:linearization and tensor product}}}}\calK^{[n-\bullet]}i_{\CRIS}^\ast(\calE_Z\otimes_{\calO_{Z/\Sigma}}\calL(\omega_{Z/Y}^\bullet)_Z)\cong \calK^{[n-\bullet]}(\calE\otimes_{\calO_{Y/\Sigma}} \calL(\omega_{Z/Y}^\bullet)).
\end{align*}
Here we also need to keep track of differentials, and this can be deduced from the second assertions of Lemma~\ref{lem:linearization and tensor product} and Proposition~\ref{prop:CA of linearization} exactly as in \cite[Cor.~V.2.2.4]{Berthelot-book}.
Finally, observe that $\calK^{[n-\bullet]}(\calE\otimes_{\calO_{Y/\Sigma}} \calL(\omega_{Z/Y}^\bullet))$ is identified with $F^n(\calE\otimes \calL(\omega_{Z/Y}^\bullet))$ by construction.
\end{proof}

\begin{rem}
If $Y$ is defined by a quasi-coherent ideal $\calJ_{\Sigma,0}\calO_Z$ for a sub PD-ideal $\calJ_{\Sigma,0}\subset\calJ_\Sigma$, then $D=Z$, and the assumption on $\calE$ in the above theorem always holds.
\end{rem}

\begin{thm}\label{thm:cohomology of crystal in terms of log de Rham complex}
Keep the assumption and notation as in Set-up~\ref{set-up:description of crystals in terms of connections} and Definition~\ref{def:cohomology of crystal in terms of log de Rham complex}. For every quasi-coherent $\calO_{Y/\Sigma}$-module $\calE$ on $((Y,M_Y)/\Sigma^\sharp)_\CRIS$, there is a natural quasi-isomorphism 
\[
Ru_{Y/\Sigma,\ast}\calE\cong (E\otimes_{\calO_Z}\omega^\bullet_{Z/\Sigma},\nabla)
\]
 in $D^+(Y_\et,\pi^{-1}_\et\calO_\Sigma)$, where $\pi$ denotes $Y\rightarrow \Sigma$.
\end{thm}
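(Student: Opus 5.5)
The plan is to reduce to Theorem~\ref{thm:cohomology of crystal in terms of log de Rham complex filtered} with $n=0$. The only gap is that a general quasi-coherent $\calE$ need not be of the form $i_\CRIS^\ast\calE_Z$. To bridge this, I would replace $Y$ by $Z$ via the pushforward $i_{\CRIS,\ast}$ and use Example~\ref{eg:description of i-pushforward of crystal}.

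First, by Corollary~\ref{cor:exactness of i-pushforward}, $i_{\CRIS,\ast}$ is exact. Since $i_\et$ is a closed immersion, $i_{\et,\ast}$ is exact as well. Proposition~\ref{prop:compatibility of u and crystalline pushforward} gives $i_{\et,\ast}u_{Y/\Sigma,\ast}\cong u_{Z/\Sigma,\ast}i_{\CRIS,\ast}$. Hence
\[
i_{\et,\ast}Ru_{Y/\Sigma,\ast}\calE\cong Ru_{Z/\Sigma,\ast}(i_{\CRIS,\ast}\calE),
\]
because $i_{\CRIS,\ast}$ preserves injectives: its left adjoint $i_\CRIS^\ast$ is exact. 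Next, Example~\ref{eg:description of i-pushforward of crystal} provides the quasi-coherent $\calO_{Z/\Sigma}$-module $\calE_{\calA_i}$, whose value on $(Z,Z)$ is $E$ regarded as an $\calO_Z$-module with its HPD stratification. It also gives a quasi-isomorphism $Ru_{Z/\Sigma,\ast}\calE_{\calA_i}\cong Ru_{Z/\Sigma,\ast}i_{\CRIS,\ast}\calE$.

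Now apply Theorem~\ref{thm:cohomology of crystal in terms of log de Rham complex filtered} (with $n=0$) to the identity embedding $Z\hookrightarrow Z$, using Remark~\ref{rem: crystals and quasi-nilpotent connections}(3). Here $D=Z$ and the hypothesis holds trivially with $\calE_Z=\calE_{\calA_i}$. This gives
\[
Ru_{Z/\Sigma,\ast}\calE_{\calA_i}\cong (E\otimes_{\calO_Z}\omega^\bullet_{Z/\Sigma},\nabla),
\]
where the connection is the one attached to the stratification of $E$ as an $\calO_Z$-module. By Proposition~\ref{prop: crystals and quasi-nilpotent connections}(ii), this is exactly the connection in the statement. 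The right-hand side is supported on $Y$, since $E$ is a $\calD$-module and $D$ is homeomorphic to $Y$. It is therefore $i_{\et,\ast}$ of the complex regarded on $Y_\et=D_\et$. Because $i_{\et,\ast}$ is fully faithful and exact with $i_\et^{-1}i_{\et,\ast}=\id$, applying $i_\et^{-1}$ yields the desired quasi-isomorphism in $D^+(Y_\et,\pi_\et^{-1}\calO_\Sigma)$.

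The main obstacle is checking the compatibilities carefully. First, the quasi-isomorphisms must be linear over $\pi^{-1}_\et\calO_\Sigma$. Second, the de Rham complexes must be identified with matching differentials: the connection $\nabla$ from $(E,\epsilon)$ as a $\calD$-module must agree with the connection from $E$ as an $\calO_Z$-module. This agreement is essentially the definition in Proposition~\ref{prop: crystals and quasi-nilpotent connections}(iii), where the stratification is taken as an $\calO_Z$-module. Finally, naturality in $\calE$ must be verified, which follows from the functoriality of the construction $\calE\mapsto\calE_{\calA_i}$.
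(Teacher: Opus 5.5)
Your proposal is correct and follows the paper's own argument. You use the exactness of $i_{\CRIS,\ast}$ (Corollary~\ref{cor:exactness of i-pushforward}) together with Proposition~\ref{prop:compatibility of u and crystalline pushforward} to get $i_{\et,\ast}Ru_{Y/\Sigma,\ast}\calE\cong Ru_{Z/\Sigma,\ast}i_{\CRIS,\ast}\calE\cong Ru_{Z/\Sigma,\ast}\calE_{\calA_i}$, then apply Theorem~\ref{thm:cohomology of crystal in terms of log de Rham complex filtered} on $Z$ with $n=0$, and finish with $i_\et^{-1}$. Your observation that the de Rham complex on $Z_\et$ is $i_{\et,\ast}$ of the complex on $Y_\et=D_\et$ (because $Y\hookrightarrow D$ is a nil immersion) makes explicit a step the paper leaves implicit.
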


\begin{proof}
We use the notation of Example~\ref{eg:description of i-pushforward of crystal}. From Corollary~\ref{cor:exactness of i-pushforward} and Proposition~\ref{prop:compatibility of u and crystalline pushforward}, we deduce quasi-isomorphisms
\[
i_{\et,\ast}Ru_{Y/\Sigma,\ast}\calE\xrightarrow{\cong}Ru_{Z/\Sigma,\ast}i_{\CRIS,\ast}\calE\xleftarrow{\cong}Ru_{Z/\Sigma,\ast}\calE_{\calA_i}.
\]
Recall that $\calE_{\calA_i}$ is a quasi-coherent $\calO_{Z/\Sigma}$-module corresponding to $(E,\nabla)$ (regarded as a quasi-coherent $\calO_Z$-module with log connection). So from Theorem~\ref{thm:cohomology of crystal in terms of log de Rham complex filtered} for $((Z,M_Z)/\Sigma^\sharp)_\CRIS$ and $n=0$, we obtain a quasi-isomorphism $Ru_{Z/\Sigma,\ast}\calE_{\calA_i}\cong (E\otimes_{\calO_Z}\omega^\bullet_{Z/\Sigma},\nabla)$ in $D^+(Z_\et,\pi_{Z,\et}^{-1}\calO_\Sigma)$ where $\pi_Z$ denotes $Z\rightarrow \Sigma$.
The theorem follows from these quasi-isomorphisms by applying $i_\et^{-1}$ since $i_\et^{-1}$ is exact and satisfies $i_{\et}^{-1}\circ i_{\et,\ast}=\id$.
\end{proof}

For a later use, let us check the compatibility of the quasi-isomorphism in the theorem. Suppose that we have the following commutative diagram
\[
\xymatrix{
&(Z',M_{Z'})\ar[rr]^-(.4){f_Z} \ar[dd]^-(.7){\pi_{Z'}}|\hole && (Z,M_Z)\ar[dd]^-(.7){\pi_Z}\\
(Y',M_{Y'})\ar@{^{(}->}[ur]^{i'}\ar[rr]^-(.4)f \ar[rd]_-{\pi'}&& (Y,M_Y)\ar[rd]_-\pi\ar@{^{(}->}[ur]^{i}&\\
&\Sigma' \ar[rr]^-\phi&& \Sigma^\sharp.
}
\]
where the bottom parallelogram is as in \eqref{eq:functoriality diagram in crystalline topos} and the two triangles are as in Set-up~\ref{set-up:description of crystals in terms of connections}: $i$ and $i'$ are closed immersions, and $\pi_Z$ and $\pi_{Z'}$ are smooth.

\begin{thm}\label{thm:compatibility of cohomology of crystal in terms of log de Rham complex and pushforward}
The following diagram
\begin{equation}\label{eq:commutativity of Ru and pushforward diagram}
 \xymatrix{
Ru_{Y/\Sigma,\ast}\calE \ar[r]\ar[d]
& (E\otimes_{\calO_Z}\omega^\bullet_{Z/\Sigma},\nabla) \ar[d]\\
Rf_{\et,\ast}Ru_{Y'/\Sigma',\ast}f_\CRIS^\ast\calE \ar[r]
& Rf_{\et,\ast}(\calD'\otimes_{f_{Z,\et}^\ast\calD}f_{Z,\et}^\ast E\otimes_{\calO_{Z'}}\omega^\bullet_{Z'/\Sigma'},\nabla)
}
\end{equation}
is commutative, where the horizontal maps are given in Theorem~\ref{thm:cohomology of crystal in terms of log de Rham complex} and the vertical maps are given by adjunctions.
If, moreover, $\calE$ satisfies the assumption in Theorem~\ref{thm:cohomology of crystal in terms of log de Rham complex filtered}, the same holds for the filtered version (see \eqref{eq:commutativity of Ru and pushforward diagram-special case} below).
\end{thm}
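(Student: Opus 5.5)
The plan is to check commutativity of \eqref{eq:commutativity of Ru and pushforward diagram} one step at a time along the chain of quasi-isomorphisms that defines the horizontal maps in Theorem~\ref{thm:cohomology of crystal in terms of log de Rham complex}. That chain has two parts. The first is the reduction $i_{\et,\ast}Ru_{Y/\Sigma,\ast}\calE\cong Ru_{Z/\Sigma,\ast}i_{\CRIS,\ast}\calE\cong Ru_{Z/\Sigma,\ast}\calE_{\calA_i}$. The second is the chain in the proof of Theorem~\ref{thm:cohomology of crystal in terms of log de Rham complex filtered}: Poincar\'e lemma, linearization, \v{C}ech--Alexander, and Proposition~\ref{prop:CA of linearization}. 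We first treat the filtered statement, where $\calE\cong i_\CRIS^\ast\calE_Z$, and deduce the general one from it via $\calE_{\calA_i}$ on $Z$.

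The first step is to construct the comparison maps between the two towers. Since $i\circ f=f_Z\circ i'$ and $f_Z$ is compatible with the PD-structures, universality of PD-envelopes (Theorem~\ref{thm:existence of log PD envelope}) gives maps $D'\rightarrow D$, $D_{Y'}(Z'^{\nu+1})\rightarrow D_Y(Z^{\nu+1})$, and $D_{Z'}(\nu)\rightarrow D_Z(\nu)$. These are compatible with the simplicial structures and with the HPD stratifications. They yield the following.
\begin{enumerate}
\item A morphism of linearized complexes $f_\CRIS^\ast\calL(\omega^\bullet_{Z/\Sigma})\rightarrow \calL'(\omega^\bullet_{Z'/\Sigma'})$. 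It is compatible with the augmentations from $\calO$, sends $\calK$ to $\calK'$ and hence respects the filtrations $F^n$, and is compatible with the PD-structures on $\calK$ (checked on $Z$-stratifying objects via Lemma~\ref{lem:weakly final object using smooth lift in crystalline site}).
\item A morphism of \v{C}ech--Alexander complexes $\CA^\bullet_Z(\calF)\rightarrow f_{\et,\ast}\CA^\bullet_{Z'}(f_\CRIS^\ast\calF)$. It comes from evaluating on the simplicial map $(Y',D_{Y'}(Z'^{\bullet+1}))\rightarrow (Y,D_Y(Z^{\bullet+1}))$, which is an $f$-PD morphism.
\end{enumerate}

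The second step is to show that each square formed by these maps commutes. The Poincar\'e-lemma square commutes because the maps in (1) are morphisms of resolutions under $\calO$. The linearization/tensor square (Lemma~\ref{lem:linearization and tensor product}) commutes because $\epsilon$ pulls back to $\epsilon'$. The square for Proposition~\ref{prop:CA of linearization} commutes because $p_2^\ast$ is functorial. The compatibility of Theorem~\ref{thm:CA method in crystalline site} with pullback is the most delicate point. Following \cite[V.1.2]{Berthelot-book}, the \v{C}ech--Alexander isomorphism arises from the covering $\widetilde D\rightarrow \ast$ and a resolution by sheaves acyclic for $u_\ast$ on that covering. I would take an injective resolution $\calF\rightarrow I^\bullet$ and a compatible map $f_\CRIS^\ast I^\bullet\rightarrow I'^\bullet$, which exists since $f_\CRIS^\ast$ is exact. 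Both sides then become double complexes related by the natural maps, and the base-change map of Proposition~\ref{prop:compatibility of u and crystalline pushforward} factors through $f_{\et,\ast}$ of the restriction to $\widetilde D'$. I expect this to be the main obstacle, because one must verify that the map induced on Čech nerves by $\widetilde D'\rightarrow f_\CRIS^\ast\widetilde D$ agrees with the adjunction map used in the left vertical arrow.

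For general $\calE$, the reduction to $Z$ is handled by the square relating $i_{\et,\ast}$, $i'_{\et,\ast}$, $f_{Z,\CRIS,\ast}$ and $f_{\CRIS,\ast}$, which commutes by Proposition~\ref{prop:compatibility of u and crystalline pushforward} applied to $f$, $f_Z$, $i$ and $i'$. We also need the map $\calE_{\calA_i}\rightarrow f_{Z,\CRIS,\ast}\calE'_{\calA_{i'}}$ induced by $\calD\rightarrow\calD'$, which is compatible with $\calE_{\calA_i}\rightarrow i_{\CRIS,\ast}\calE$. This can be checked on the small site by Remark~\ref{rem:u-pushforward depends only on restriction to small site}. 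Applying the filtered case with $n=0$ on $Z$ and $Z'$, and then applying $i_\et^{-1}$, gives commutativity of \eqref{eq:commutativity of Ru and pushforward diagram}.
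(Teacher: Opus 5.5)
Your proposal follows essentially the same route as the paper. The filtered special case is handled by checking, as in \cite[Cor.~V.2.3.4]{Berthelot-book}, that the Poincar\'e lemma, linearization, \v{C}ech--Alexander and Proposition~\ref{prop:CA of linearization} steps are compatible with $f_\CRIS^\ast$; the general case is reduced, via $\calE_{\calA_i}$, to the special case for $f_Z\colon Z'\rightarrow Z$ followed by $i_\et^{-1}$. The paper spells out the bookkeeping you leave implicit in three auxiliary diagrams: the base change arrow $i_\et^{-1}Rf_{Z,\et,\ast}\rightarrow Rf_{\et,\ast}i_\et'^{-1}$, naturality of the de Rham comparison along $f_{Z,\CRIS}^\ast\calE_{\calA_i}\rightarrow\calE'_{\calA_{i'}}$, and the compatibility of adjunction units with $f_{Z,\CRIS}^\ast i_{\CRIS,\ast}\rightarrow i'_{\CRIS,\ast}f_\CRIS^\ast$ (the dual of \cite[Cor.~V.3.3.2(ii)]{Berthelot-book}).
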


\begin{proof}
We first work in the set-up of Theorem~\ref{thm:cohomology of crystal in terms of log de Rham complex filtered}: assume that there exists a quasi-coherent $\calO_Z$-module $E_Z$ with an HPD stratification such that $\calE\cong i_{\CRIS}^\ast\calE_Z$ where $\calE_Z$ is a quasi-coherent $\calO_{Z/\Sigma}$-module corresponding to $E_Z$. Then $E\cong \calD\otimes_{\calO_Z}E_Z$, $\calE'_{Z'}\coloneqq f_{Z,\CRIS}^\ast\calE_Z$ corresponds to $E'_{Z'}\coloneqq f_{Z,\et}^\ast E_Z$, and $\calE'\coloneqq f_\CRIS^\ast\calE=i_\CRIS'^\ast\calE'_{Z'}$ corresponds to $\calD'\otimes_{\calO_{Z'}}E'_{Z'}$, where we consider the tensor HPD stratifications. In this case, we need to check the commutativity of 
\begin{equation}\label{eq:commutativity of Ru and pushforward diagram-special case}
 \xymatrix{
Ru_{Y/\Sigma,\ast}(\calJ_{Y/\Sigma}^{[n]}\calE) \ar[r]\ar[d]
& F^n(\calD\otimes_{\calO_Z} E_Z\otimes_{\calO_Z}\omega^\bullet_{Z/\Sigma},\nabla) \ar[d]\\
Rf_{\et,\ast}Ru_{Y'/\Sigma',\ast}(\calJ_{Y'/\Sigma'}^{[n]}\calE') \ar[r]
& Rf_{\et,\ast}(F^n(\calD'\otimes_{\calO_{Z'}}E'_{Z'}\otimes_{\calO_{Z'}}\omega^\bullet_{Z'/\Sigma'},\nabla)),
}
\end{equation}
which is proved as exactly as in \cite[Cor.~V.2.3.4]{Berthelot-book} by verifying the compatibility of $\calL_i(-)$ and $\calL_{i'}(-)$ via $f_\CRIS^\ast$.

The general case will be obtained by decomposing the desired commutative diagram into three diagrams \eqref{eq:commutativity of Ru and pushforward diagram-proof1}, \eqref{eq:commutativity of Ru and pushforward diagram-proof2}, and \eqref{eq:commutativity of Ru and pushforward diagram-proof3} below and showing the commutativity of each diagram separately. 
For this, we follow the notations in Example~\ref{eg:description of i-pushforward of crystal}.
We have a crystal $\calE_{\calA_i}$ of $\calA_i$-module with $\calE_{\calA_i}\otimes_{\calA_i}i_{\CRIS,\ast}\calO_{Y/\Sigma}\xrightarrow{\cong}i_{\CRIS,\ast}\calE$. The $\calO_{Z'}$-module $f_{Z,\et}^\ast E$ comes with the pullback HPD stratification, and the associated crystal of $\calO_{Z'/\Sigma'}$-modules is $f_{Z,\CRIS}^\ast\calE_{\calA_i}$. Similarly, for $\calE'=f_\CRIS^\ast\calE$, we have a crystal $\calE'_{\calA_{i'}}$ of $\calA_{i'}$-module with $\calE'_{\calA_{i'}}\otimes_{\calA_{i'}}i_{\CRIS,\ast}'\calO_{Y'/\Sigma'}\xrightarrow{\cong}i_{\CRIS,\ast}'\calE'$. Note that $\calE'_{\calA_{i'}}$ corresponds to the $\calO_{Z'}$-module $\calD'\otimes_{\calO_{Z'}}f_{Z,\et}^\ast E$ with the tensor product HPD stratification. In particularity, we have an $\calO_{Z'/\Sigma'}$-module map $f_{Z,\CRIS}^\ast\calE_{\calA_i}$ inducing $f_{Z,\CRIS}^\ast\calE_{\calA_i}\otimes_{\calO_{Z'/\Sigma'}}\calA_{i'}\xrightarrow{\cong}\calE'_{\calA_{i'}}$.
The above special case \eqref{eq:commutativity of Ru and pushforward diagram-special case} for $Y=Z$ and $\calE_{\calA_i}$ together with adjunction gives the following commutative diagram
\begin{equation}\label{eq:commutativity of Ru and pushforward diagram-proof1}
\xymatrix{
i_\et^{-1}Ru_{Z,\et,\ast}\calE_{\calA_i}\ar[d]_*+[o][F-]{1}\ar[r]^-\cong
&i_\et^{-1}(E\otimes_{\calO_Z}\omega_{Z/\Sigma}^\bullet,\nabla)\ar[d]
\\
i_\et^{-1}Rf_{Z,\et,\ast}Ru_{Z'/\Sigma',\ast}f_{Z,\CRIS}^\ast\calE_{\calA_i}\ar[d]_*+[o][F-]{2}\ar[r]^-\cong
&i_\et^{-1}Rf_{Z,\et,\ast}(f_{Z,\et}^\ast E\otimes_{\calO_{Z'}} \omega_{Z'/\Sigma'}^\bullet,\nabla)\ar[d]
\\
Rf_{\et,\ast}i_\et'^{-1}Ru_{Z'/\Sigma',\ast}f_{Z,\CRIS}^\ast\calE_{\calA_i}\ar[d]_*+[o][F-]{3}\ar[r]^\cong
&Rf_{\et,\ast}i_\et'^{-1}(f_{Z,\et}^\ast E\otimes \omega_{Z'/\Sigma'}^\bullet,\nabla)\ar[d]
\\
Rf_{\et,\ast}i_\et'^{-1}Ru_{Z',\ast}\calE'_{\calA_{i'}}\ar[r]^-\cong
&Rf_{\et,\ast}i_\et'^{-1}(\calD'\otimes_{\calO_{Z'}}f_{Z,\et}^\ast E\otimes_{\calO_{Z'}} \omega_{Z'/\Sigma'}^\bullet,\nabla).
} 
\end{equation}

Next the following diagram with obvious abbreviation
\begin{equation}
\label{eq:commutativity of Ru and pushforward diagram-proof2}
\xymatrix{
&
i^{-1}Ru_{Z,\ast}i_{\CR,\ast}\calE\ar[d]\ar[ld]_*+[o][F-]{4}
&
i^{-1}Ru_{Z,\ast}\calE_{\calA_i}\ar[d]^*+[o][F-]{1}\ar[l]_-\cong
\\
i^{-1}Ru_{Z,\ast}i_{\CR,\ast}Rf_{\CR,\ast}f_{\CR}^\ast\calE \ar[d]^-\cong_*+[o][F-]{5}
&
i^{-1}Ru_{Z,\ast}Rf_{Z,\CR,\ast}f_{Z,\CR}^\ast i_{\CR,\ast}\calE\ar[d]\ar[l]
&
i^{-1}Rf_{Z,\ast}Ru_{Z',\ast}f_{Z,\CR}^\ast\calE_{\calA_i}\ar[d]^*+[o][F-]{2}\ar[l]
\\
i^{-1}Rf_{Z,\ast} Ru_{Z',\ast}i'_{\CR,\ast}f_{\CR}^\ast\calE \ar[rd]
&
Rf_\ast i'^{-1}Ru_{Z',\ast}f_{Z,\CR}^\ast i_{\CR,\ast}\calE\ar[d]
&
Rf_{\ast}i'^{-1}Ru_{Z'/\Sigma',\ast}f_{Z,\CR}^\ast\calE_{\calA_i}\ar[d]^*+[o][F-]{3}\ar[l]
\\
&
Rf_\ast i'^{-1}Ru_{Z',\ast}i'_{\CR,\ast}f_{\CR}^\ast\calE
&
Rf_{\ast}i'^{-1}Ru_{Z',\ast}\calE'_{\calA_{i'}}\ar[l]_-\cong
} 
\end{equation}
is commutative: the commutativity of the left top triangle follows from the dual statement of \cite[Cor.~V.3.3.2(ii)]{Berthelot-book}, and that of the left bottom trapezoid follows from the functoriality of adjunction.

We conclude the commutativity of \eqref{eq:commutativity of Ru and pushforward diagram} by combining \eqref{eq:commutativity of Ru and pushforward diagram-proof1} and \eqref{eq:commutativity of Ru and pushforward diagram-proof2} with the following obvious commutative diagram
\begin{equation}
\label{eq:commutativity of Ru and pushforward diagram-proof3}
\xymatrix{
i^{-1}i_{\ast}Ru_{Y,\ast}\calE\ar[r]^\cong \ar[d]
&
i^{-1}Ru_{Z,\ast}i_{\CR,\ast}\calE \ar[d]^*+[o][F-]{4}
\\
i^{-1}i_{\ast}Ru_{Y,\ast}Rf_{\CR,\ast}f_{\CR}^\ast\calE \ar[r]^-\cong\ar[dr]_-\cong
&
i^{-1}Ru_{Z,\ast}i_{\CR,\ast}Rf_{\CR,\ast}f_{\CR}^\ast\calE\ar[d]_-\cong^*+[o][F-]{5}
\\
&
i^{-1}Rf_{Z,\ast} Ru_{Z',\ast}i'_{\CR,\ast}f_{\CR}^\ast\calE.
}
\end{equation}
\end{proof}

For a later purpose, let us briefly discuss a slightly more general situation.

\begin{rem}\label{rem:locally closed embedding case}
Keep the notation as in Set-up~\ref{set-up:relative log-crystalline site} and assume that there exists a \emph{locally closed} immersion $\overline{\imath}\colon (Y,M_Y)\hookrightarrow (\overline{Z},M_{\overline{Z}})$ of fine log schemes such that $(\overline{Z},M_{\overline{Z}})\rightarrow (\Sigma,M_\Sigma)$ is smooth and $\gamma_\Sigma$ extends to $\overline{Z}$. Then $\overline{\imath}$ factors as $(Y,M_Y)\overset{i}{\hookrightarrow} (Z,M_Z)\overset{j}{\hookrightarrow} (\overline{Z},M_{\overline{Z}})$ where $i$ is a closed immersion and $j$ is an open immersion.
Apply the notation of Set-up~\ref{set-up:description of crystals in terms of connections} and the following paragraph to $i$. Similarly, write $(\overline{Z}^{\nu+1},M_{\overline{Z}^{\nu+1}})$ for the $(\nu+1)$st self-fiber product of $(\overline{Z},M_{\overline{Z}})$ over $(\Sigma,M_\Sigma)$.
Then $(Y,M_{Y})\hookrightarrow (D_Y(Z^{\nu+1}),M_{D_Y(Z^{\nu+1})})$ coincides with the PD-envelope of $(Y,M_Y)\hookrightarrow (\overline{Z}^{\nu+1},M_{\overline{Z}^{\nu+1}})$ relative to $\Sigma^\sharp$, and the canonical map $\calO_D\otimes_{\calO_{\overline{Z}}}\omega^l_{\overline{Z}/\Sigma}\rightarrow \calO_D\otimes_{\calO_Z}\omega^l_{Z/\Sigma}$ (as a morphism in $\Sh(D_\et)$) is an isomorphism.
Given these remarks, it is tedious but straightforward to see that Theorems~\ref{thm:cohomology of crystal in terms of log de Rham complex filtered}, \ref{thm:cohomology of crystal in terms of log de Rham complex}, and \ref{thm:compatibility of cohomology of crystal in terms of log de Rham complex and pushforward} continue to hold (with obvious modifications replacing $Z$ with $\overline{Z}$ and so on).
\end{rem}

\section{Boundedness of log crystalline cohomology}\label{sec:Boundedness of log crystalline cohomology}

In this section, we prove a boundedness result on higher direct image for log crystalline sites. Let $\Sigma^\sharp$ be a fine log PD-scheme in which $p$ is nilpotent.

\begin{thm}\label{thm:boundedness of crystalline pushforward}
Let $f\colon (X,M_X)\rightarrow (Y,M_Y)$ be a morphism of fine log $\Sigma^\sharp$-schemes.
Assume that $Y$ is quasi-compact and $f$ is quasi-separated of finite type. Then there exists an integer $r$ such that $R^qf_{\CRIS,\ast}\calE=0$ for every $q\geq r$ and every quasi-coherent $\calO_{X/\Sigma}$-module $\calE$. The same holds for $Rf_{\cris,\ast}$.
\end{thm}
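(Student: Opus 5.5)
The plan is to reduce, via Lemma~\ref{lem:evalutation of crystalline pushforward via projection}, to a uniform bound on $R\pi_{f^{-1}(U)/T,\ast}\calE$. Then compute it étale locally by Theorem~\ref{thm:cohomology of crystal in terms of log de Rham complex}, and globalize with an alternating \v{C}ech argument in the étale topology.

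Since $(R^qf_{\CRIS,\ast}\calE)_{(U,T)}=R^q\pi_{f^{-1}(U)/T,\ast}\calE$, it suffices to find $r$ such that $R^q\pi_{X_U/T,\ast}\calE=0$ for $q\ge r$. This must hold for every affine $(U,T)\in((Y,M_Y)/\Sigma^\sharp)_\CRIS$, every quasi-coherent $\calE$ on $((X_U,M_{X_U})/T^\sharp)_\CRIS$, and $X_U\coloneqq X\times_YU$. We factor $\pi_{X_U/T}=\pi_{U,\et}\circ u_{X_U/T}$. Because $T$ is affine and $\pi_{U}$ is affine (identity on underlying spaces, $U_\et=T_\et$), the problem becomes bounding $R\Gamma(W,Ru_{X_U/T,\ast}\calE)$ for affine $W$ étale over $X_U$, together with a \v{C}ech-type assembly.

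\textbf{Local step.} First cover $Y$ by finitely many affines $Y_\alpha$; these give open subtopoi $Y_{\alpha,\CRIS}$, so by a Mayer--Vietoris/\v{C}ech argument over the finite open cover we may assume $Y$ affine, at the cost of adding a constant to $r$. Since $f$ is quasi-separated of finite type, $X$ is quasi-compact and quasi-separated. Choose a finite étale cover $\{V_j\to X\}$ by affines such that each $(V_j,M_X|_{V_j})$ admits a chart. Then, using \cite[Prop.~4.10]{Kato-log}-type arguments and the finite type hypothesis, each $V_j$ admits a closed immersion into an affine fine log scheme $(Z_j,M_{Z_j})$ smooth over $(Y,M_Y)$ — e.g.\ $\A^N_Y$ with a monoid chart $\Z[P]$ adjoined, after exactification — with relative dimension bounded by some $d$ independent of $(U,T)$. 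Base changing to $T$ (lifting $Z_j$ as a smooth log scheme over $T$, which is possible because affine spaces with toric charts are defined over $\Z$) gives embeddings of $V_j\times_YU$ into $Z_{j,T}$, which is smooth of relative dimension $\le d$ over $T$. Theorem~\ref{thm:cohomology of crystal in terms of log de Rham complex}, together with Remark~\ref{rem:locally closed embedding case} if needed, then computes $Ru_{V_{j,U}/T,\ast}\calE$ by a de Rham complex of quasi-coherent $\calO_D$-modules of length $\le d$ on the affine $D$. Hence $R\Gamma$ over any affine étale over $V_{j,U}$ is concentrated in degrees $[0,d]$.

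\textbf{Globalizing step, the main obstacle.} Because $\{V_{j,\CRIS}\}$ is not an open cover of the topos, Berthelot's open \v{C}ech argument does not apply. Instead I would use Lemma~\ref{lem:properties of VCRIS}: the objects $V_{j,\CRIS}$ cover the final object, and their products are $(V_{j_0}\times_X\cdots\times_XV_{j_k})_\CRIS$. This yields a \v{C}ech-to-derived spectral sequence for $R\Gamma$, or for $Ru_{X_U/T,\ast}$ composed with $\pi_\et$. To avoid unbounded \v{C}ech length, I would follow the alternating \v{C}ech complex technique for quasi-coherent sheaves on qcqs algebraic spaces in \cite{stacks-project}. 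The fiber products $V_{j_0}\times_X\cdots$ are no longer affine but are quasi-compact and separated; and one can use the alternating complex, which vanishes beyond the number of opens, because the cover can be arranged so that $V_{j}\times_X V_{j}$ splits off the diagonal (as for algebraic spaces with étale covers that are "separated" enough), or one can induct on the number of affines as in Stacks' proof. Each term is then controlled by the local step plus an inductive bound for separated quasi-compact pieces, which are covered by finitely many affines via Zariski open subtopoi. The resulting $r$ depends only on $d$, the number of pieces, and the cover of $Y$, and not on $(U,T)$ or $\calE$.

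Finally, the statement for $Rf_{\cris,\ast}$ follows from the same computation, since $u$ factors through the small topos (Remark~\ref{rem:u-pushforward depends only on restriction to small site} and Remark~\ref{rem:functoriality of small crystalline sites}).
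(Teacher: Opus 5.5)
The reduction to a bound on $R\pi_{X_U/T,\ast}\calE$ that is uniform in the affine object $(U,T)$, and the local de Rham computation, match the paper. The gap is in the globalizing step, which is where the real work lies.

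The alternating complex you describe, indexed by $j_0<\cdots<j_k$ and vanishing beyond the number of pieces, is the Zariski one. It does not compute cohomology for an \'etale cover: splitting off the diagonal of $V_j\times_XV_j$ leaves an off-diagonal open and closed part that cannot be discarded. For a single finite \'etale Galois cover $V\to X$, your complex would just return the cohomology of $V$.

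The correct \'etale version (Bhatt--de Jong, \cite[072C, 0725, 0728]{stacks-project}) uses one separated \'etale surjection $g_0\colon X_{U,0}=\bigsqcup_jV_j\times_YU\to X_U$. It also uses the complements $X_{U,s}^\circ$ of all subdiagonals in the $(s+1)$-fold fibre products $X_{U,s}$, and the Koszul resolution $\bigwedge^{\bullet+1}g_{0,!}\Z\to\Z$. This gives a spectral sequence with $E_1^{s,t}=H^t\bigl(\calX_{U,s}^\circ,(\bar g_s^\circ)^{-1}Ru_{X_U/T,\ast}\calE\otimes\Z(\chi_s)\bigr)$. Here $\calX_{U,s}^\circ=X_{U,s}^\circ/S_{s+1}$, $\bar g_s^\circ\colon\calX_{U,s}^\circ\to X_U$ is the structure map, and $\chi_s$ is the sign character. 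Moreover $E_1^{s,t}=0$ for $s\geq c$, where $c$ is the maximal number of geometric points in a fibre of $X_0\to X$, not the number of pieces; uniformity in $(U,T)$ comes from the invariance of such counts under base change.

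So for $s\geq 1$ the $E_1$-terms live on quotient \emph{algebraic spaces}, with sign-twisted coefficients that are not quasi-coherent. Your plan of covering ``separated quasi-compact pieces'' by finitely many affines and using Zariski open subtopoi does not apply to them. Nor can you take sign-isotypic parts of cohomology on $X_{U,s}^\circ$ instead, since $p$ may divide $(s+1)!$.

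The missing idea is how to bound these terms. The paper embeds $X_0$ into a log scheme $Z_0$ smooth over $\Sigma$, and lets $Z_{T,s}$ be the $(s+1)$-fold self-product of $Z_0\times_\Sigma T$ over $T$. The immersions $X_{U,s}\hookrightarrow Z_{T,s}$ are then $S_{s+1}$-equivariant and defined uniformly in $(U,T)$. This also sidesteps your lifting of $Z_j$ from $Y$ to $T$, which as stated would need a chart of $M_U$ lifted to $M_T$.

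By Theorem~\ref{thm:cohomology of crystal in terms of log de Rham complex} together with the functoriality in Theorem~\ref{thm:compatibility of cohomology of crystal in terms of log de Rham complex and pushforward}, $Ru_{X_{U,s}/T,\ast}\calE$ is identified $S_{s+1}$-equivariantly with the log de Rham complex on the PD-envelope $D_{U,s}$. Restrict to the open and closed part $D_{U,s}^\circ$ lying over $X_{U,s}^\circ$ and descend along the free $S_{s+1}$-action. This yields a complex of quasi-coherent modules, concentrated in degrees $[0,(s+1)\dim(Z_0/\Sigma)]$, on the algebraic space $D_{U,s}^\circ/S_{s+1}$, which is a nil-thickening of $\calX_{U,s}^\circ$.

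Quasi-coherent cohomology there vanishes in degrees $\geq c_s$, where $c_s$ bounds the geometric fibres of an affine \'etale cover of $X_s^\circ/S_{s+1}$. This is \cite[072B]{stacks-project}, itself a second, purely quasi-coherent, alternating \v{C}ech argument. The result is $r=\max_{l\leq c}c_l+c+(c+1)\dim(Z_0/\Sigma)$. Without this equivariant descent of the de Rham complex to a bounded quasi-coherent complex on the quotient spaces, your spectral sequence gives no uniform bound.
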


\begin{proof}
By Lemma~\ref{lem:evalutation of crystalline pushforward via projection}(2), the assertion for $R^qf_{\CRIS,\ast}\calE$ is reduced to Proposition~\ref{prop:vanishing of crystal under projection} below. 
It is tedious but straightforward to check that the proof therein also works for the small crystalline cite with Theorem~\ref{thm:cohomology of crystal in terms of log de Rham complex} replaced by \cite[Thm.~6.4]{Kato-log}, and thus we obtain the second assertion.
\end{proof}

\begin{prop}\label{prop:vanishing of crystal under projection}
Keep the assumption and notation as in Theorem~\ref{thm:boundedness of crystalline pushforward}.
Then there exists an integer $r$ satisfying the following property: Let $U\rightarrow Y$ be an \'etale morphism, $T^\sharp=(T,M_T,\calJ_T,\gamma_T)$ an affine fine log PD-scheme over $\Sigma^\sharp$, and $(U,M_U)\hookrightarrow (T,M_T)$ an exact closed immersion defined by a sub PD-ideal of $\calJ_T$. Set $X_U\coloneqq X\times_YU$ and let $\pi_{X_U/T}$ denote the composite 
\[
\pi_{X_U/T}\colon \Sh(((X_U,M_{X_U})/T^\sharp)_\CRIS)\xrightarrow{u_{X_U/T}} \Sh((X_U)_\et)\rightarrow\Sh(T_\et). 
\]
Then $R^q\pi_{X_U/T,\ast}\calE=0$ for every $q\geq r$ and every quasi-coherent $\calO_{X_U/T}$-module $\calE$ on $((X_U,M_{X_U})/T^\sharp)_\CRIS$.
\end{prop}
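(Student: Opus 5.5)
We have $R\pi_{X_U/T,\ast}=R\pi_{T,\et,\ast}\circ Ru_{X_U/T,\ast}$ where $\pi_T\colon X_U\to T$ (equivalently $X_U\to U$, since $T_\et=U_\et$). $T$ is affine, so the plan is to bound the cohomological amplitude of $Ru_{X_U/T,\ast}\calE$ étale locally on $X_U$ by the de Rham description, and then glue with an alternating Čech argument in the étale topology. The bound must be uniform in $(U,T)$ and $\calE$. First reduce to $Y$ affine: $Y$ is quasi-compact, so cover it by finitely many affines $Y_j$ and take $r$ to be the maximum; for $U\to Y$ étale, the question is étale local on $T$, so we may assume $U\to Y$ factors through some $Y_j$. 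Then $X_j=X\times_YY_j$ is quasi-separated of finite type over an affine, hence quasi-compact and quasi-separated.

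\textbf{Step 1 (local bound).} Choose a finite étale covering $\{V_\alpha\to X_j\}$ by affines such that each $(V_\alpha,M)$ admits a chart. Each $V_\alpha$ then has a closed immersion $(V_\alpha,M)\hookrightarrow (Z_\alpha,M_{Z_\alpha})$ into a log scheme smooth over $Y_j\times\Spec\Z[P]$, hence (after base change) smooth over $(T,M_T)$. Concretely, for $(U,T)$ with $U\to Y_j$ étale and $T$ affine, lift $U\to Y_j$ and the chart data to $T$ using a polynomial ring over $\calO_T$ tensored with a monoid algebra of the chart monoid of $M_{X}$, exactified as in \cite[Prop.~4.10]{Kato-log}. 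The relative dimension is bounded by the number $N_\alpha$ of generators, independent of $T$ and $\calE$. By Theorem~\ref{thm:cohomology of crystal in terms of log de Rham complex} (in the locally closed form of Remark~\ref{rem:locally closed embedding case}), $Ru_{V_{\alpha,U}/T,\ast}\calE$ is computed by a de Rham complex of quasi-coherent $\calO_{D}$-modules of length $\le N_\alpha$ on the affine scheme $D$, which is affine over $Z_\alpha$. The same holds for finite fiber products $V_{\alpha_0}\times_{X}\cdots\times_X V_{\alpha_k}$: embed diagonally into the product of the $Z_\alpha$, with bound $\sum N_\alpha$. These fiber products are quasi-compact (by quasi-separatedness) but possibly not affine, so I would first reduce to the case of $X$ separated. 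For separated $X$ they are affine, their quasi-coherent cohomology vanishes in positive degrees, and $R\Gamma$ of each $V$-term lives in degrees $\le\sum N_\alpha$.

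\textbf{Step 2 (gluing).} The $V_{\alpha,\CRIS}$ cover the final object of the crystalline topos (Lemma~\ref{lem:properties of VCRIS}), so cohomological descent gives a Čech spectral sequence for $R\Gamma$ over the étale covering. As in \cite[Tag 0E8I / algebraic spaces boundedness]{stacks-project}, use the alternating Čech complex, which is valid for étale coverings after replacing ordered fiber products by the complements of the diagonals. This bounds the length of the complex by the number $n$ of members of the covering. The resulting bound is $r=n+\sum_\alpha N_\alpha$, independent of $\calE$ and $(U,T)$. The quasi-separated case is handled by a second induction: cover $X_j$ by separated pieces whose fiber products are separated, and again use Čech with a uniform bound.

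\textbf{Main obstacle.} The hard part is the étale (rather than Zariski) Čech step. Crystalline localization at $V_\CRIS$ for étale $V$ is not an open subtopos, so the naive finite alternating Čech complex fails. I would follow the Stacks Project argument for algebraic spaces: use the complements of diagonals in $V^{\times k}$ and the sign-twisted (alternating) sheaves. The complements of diagonals are étale over $X$, hence $V_\CRIS$-type objects, and the fiber products vanish for $k>n$ after replacing the cover by a stratified one. I will also need to check that these constructions are compatible with evaluation on $T$.
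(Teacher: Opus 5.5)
Your strategy is the paper's: a local log de Rham computation (Theorem~\ref{thm:cohomology of crystal in terms of log de Rham complex} with Remark~\ref{rem:locally closed embedding case}) whose length is independent of $(U,T)$ and $\calE$, glued by the Bhatt--de Jong alternating \v{C}ech complex on $X_{U,\et}$. Several of your variations are fine:
\begin{itemize}
\item You reduce to $Y$ affine by localizing on $T$.
\item You embed $V_\alpha\times_{Y_j}U$ directly into $\A^{a+b}_T$ with log structure generated by $M_T$ and $\N^b$. Your sentence ``smooth over $Y_j\times\Spec\Z[P]$, hence after base change smooth over $(T,M_T)$'' does not make sense, since only $U$, not $T$, lies over $Y_j$. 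The paper instead embeds $X_0\hookrightarrow Z_0$ with $Z_0$ smooth over $\Sigma$ and uses the locally closed immersion $X_{U,0}\hookrightarrow Z_0\times_\Sigma T$.
\item You first reduce to separated $X$ by a Zariski \v{C}ech step, so that all relevant quotients are affine. The paper instead keeps $X$ quasi-separated and bounds quasi-coherent cohomology on the algebraic spaces $\calX_l^\circ$ by the constants $c_l$ via \cite[072B]{stacks-project}.
\end{itemize}

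However, your gluing step has two errors and one unstated step.

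(1) For an \'etale covering, the alternating complex is not bounded by the number $n$ of members. With $V=\coprod_\alpha V_\alpha$, the degree-$s$ term lives on $V^{\times_X(s+1)}$ minus all diagonals. This is nonempty as long as $s+1\le c$, where $c$ is the maximal size of a geometric fibre of $V\to X$. Already a single cover of degree $3$ gives $K^0,K^1,K^2\neq 0$, and no ``stratified'' refinement changes this. So the length is governed by $c$, and the de Rham length in degree $s$ can reach $(s+1)\max_\alpha N_\alpha$, not $\sum_\alpha N_\alpha$. Your formula for $r$ is therefore wrong. A uniform $r$ still exists, because $V\times_YU\to X_U$ has no larger geometric fibres.

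(2) The $E_1$-terms are $\Ext^t(K^s,L)\cong H^t(X_s^\circ/S_{s+1},(\bar g_s^\circ)^{-1}L\otimes\Z(\chi_s))$ (Lemma~\ref{lem:spectral sequence for alternating Cech complex}). This is cohomology of a sign-twisted sheaf on the quotient, not on the fibre product, so affineness of the fibre products is not enough. To bound these terms you need an $S_{s+1}$-equivariant model of $Ru_\ast\calE|_{X_s^\circ}$. Concretely, the de Rham complex on $D^\circ_{U,s}$ must be $S_{s+1}$-equivariant, with its quasi-isomorphism to $Ru_\ast\calE$ compatible with the action; the paper gets this from Theorem~\ref{thm:compatibility of cohomology of crystal in terms of log de Rham complex and pushforward}. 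It then descends to a bounded complex of quasi-coherent modules on $\calD_{U,s}^\circ=D_{U,s}^\circ/S_{s+1}$. In your separated setting this quotient is affine, so the bound follows, but your plan never supplies this equivariance and descent.

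Finally, bounding $R\Gamma(X_{U,\et},Ru_\ast\calE)$ for one affine $T$ does not by itself kill the sheaf $R^q\pi_{X_U/T,\ast}\calE$. The paper handles this by first proving quasi-coherence (Proposition~\ref{prop:quasi-coherence of crystal under projection}). Your uniform bound would also suffice after sheafifying over affine \'etale $T'\to T$, since each $(U\times_TT',T')$ is again admissible, but you need to say so.
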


The proof of Proposition~\ref{prop:vanishing of crystal under projection} occupies the rest of this section.
First of all, we may assume that $\Sigma$ and $Y$ are affine: since $Y$ is quasi-compact, it admits a finite Zariski covering of affine opens $\{Y_\lambda\}$. With the notation as in Definition~\ref{defn:localization of crystalline site by open}, the family $\{V_{\lambda,\CRIS}\}$ forms a finite open covering of the topos $\Sh(((Y,M_Y)/\Sigma^\sharp)_\CRIS)$ and thus satisfies the assumption of \cite[V.3.1.3]{Berthelot-book}. If Proposition~\ref{prop:vanishing of crystal under projection} holds for each $(X_{Y_\lambda},M_{X_{Y_\lambda}})\rightarrow (Y_\lambda,M_{Y_\lambda})$, then it also holds for $(X,M_X)\rightarrow (Y,M_Y)$ by \cite[Prop.~V.3.1.4; pp.~320-322]{Berthelot-book}. By refining the Zariski covering if necessary, we may and do assume that $\Sigma$ and $Y$ are affine.

\begin{construction}[{cf.~\cite[Prop.~2.2.11]{Shiho-II}}]\label{construction:embedding system}
 \'Etale locally, choose a closed immersion $X\hookrightarrow \A_\Sigma^a$ of schemes and a fine chart $(P_X\rightarrow M_X,Q_\Sigma\rightarrow M_\Sigma, Q\rightarrow P)$ of $f$. Take also a monoid surjection $\N^b\rightarrow P$. Set $Z\coloneqq \A_\Sigma^{a+b}$ with coordinates $t_1,\ldots,t_{a+b}$ and equip $Z$ with log structure $M_Z$ induced by the map $Q\oplus \N^b\rightarrow \Gamma(Z,\calO_Z)$ sending $(n_i)\in \N^b$ to $\prod_i t_{a+i}^{n_i}$. Then $X\hookrightarrow \A_\Sigma^a$ and $Q\oplus \N^b \rightarrow P$ induce a closed immersion $(X,M_X)\hookrightarrow (Z,M_Z)$ of fine log schemes over $(\Sigma,M_\Sigma)$.

Since $X$ is quasi-compact, applying the construction in the preceding paragraph \'etale locally and taking disjoint union gives the following objects:
\begin{itemize}
 \item an \'etale surjection $X_0\rightarrow X$ from an affine scheme $X_0$;
 \item a smooth morphism $(Z_0,M_{Z_0})\rightarrow (\Sigma,M_\Sigma)$ of log schemes with $Z_0$ being affine;
 \item a closed immersion $(X_0,M_{X_0})\hookrightarrow (Z_0,M_{Z_0})$ of log schemes over $(\Sigma,M_\Sigma)$.
\end{itemize}
Moreover, they satisfy the following properties:
\begin{enumerate}
 \item $X_0\rightarrow X$ is separable (this is automatic as $X_0$ is affine);
 \item $Z_0\rightarrow \Sigma$ is flat;
 \item the underlying scheme of any higher self-fiber product of $(Z_0,M_{Z_0})$ over $(\Sigma,M_\Sigma)$ in the category of fine log schemes coincides with the higher self-fiber product of $Z_0$ over $\Sigma$.
\end{enumerate}

 For $l\geq 1$, let $X_l$ denote the $(l+1)$-st self-fiber product of $X_0$ over $X$ and let $(Z_l,M_{Z_l})$ denote the $(l+1)$-st self fiber product of $(Z_0,M_{Z_0})$ over $(\Sigma,M_\Sigma)$. Then we have a locally closed immersion $(X_l,M_{X_l})\hookrightarrow(Z_l,M_{Z_l})$; let $(X_l,M_{X_l})\hookrightarrow (D_l,M_{D_l})$ be its PD-envelope. Note that $X_l\hookrightarrow Z_l$ is a closed immersion if $X\rightarrow \Sigma$ is separated and they are affine if $X\rightarrow \Spec \Z$ is separated. 

By functoriality, we obtain morphisms of simplicial log schemes
\[
(X_\bullet,M_{X_\bullet})\hookrightarrow (D_\bullet,M_{D_\bullet})\rightarrow (Z_\bullet,M_{Z_\bullet}).
\]
By construction, $X_\bullet$ is the $0$-coskeleton of the \'etale covering $X_0\rightarrow X$. 
\end{construction}

Take $U\rightarrow Y$, $T^\sharp$, and $(U,M_U)\hookrightarrow (T,M_T)$ as in Proposition~\ref{prop:vanishing of crystal under projection}.
Consider the \'etale surjection $X_{U,0}\coloneqq X_0\times_YU\rightarrow X_U$; since $X_0$, $Y$, and $U$ are all affine, so is $X_{U,0}$. Let $X_{U,\bullet}$ denote the $0$-coskeleton of $X_{U,0}\rightarrow X_U$, which is the base change of $X_\bullet$ along $U\rightarrow Y$. Similarly, set $(Z_{T,0},M_{Z_{T,0}})\coloneqq (Z_0,M_{Z_0})\times_{(\Sigma,M_\Sigma)}(T,M_T)$, and for $l\geq 1$, define 
$(Z_{T,l},M_{Z_{T,l}})$ to be the $(l+1)$-st self fiber product of $(Z_{T,0},M_{Z_{T,0}})$ over $(T,M_T)$. We have a locally closed immersion $(X_{U,l},M_{X_{U,l}})\hookrightarrow (Z_{T,l},M_{Z_{T,l}})$ and let $(X_{U,l},M_{X_{U,l}})\hookrightarrow (D_{U,l},M_{D_{U,l}})$ denote the PD-envelope with respect to $T^\sharp$.
Consider the commutative diagram of topoi
\[
\xymatrix{
\Sh(((X_{U,\bullet},M_{X_{U,\bullet}})/T^\sharp)_\CRIS)\ar[d]_-{a_\CRIS}\ar[rrd] & &\\
\Sh(((X_U,M_{X_U})/T^\sharp)_\CRIS)\ar[r]^-{u_{X_U/T}}\ar@/_15pt/[rr]_-{\pi_{X_U/T}} & \Sh(X_{U,\et})\ar[r]&\Sh(T_\et),
}
\]
where $\Sh(((X_{U,\bullet},M_{X_{U,\bullet}})/T^\sharp)_\CRIS)$ is the obvious simplicial topos defined by the simplicial log scheme $(X_{U,\bullet},M_{X_{U,\bullet}})$.
We start with the quasi-coherence of the higher direct image.

\begin{prop}\label{prop:quasi-coherence of crystal under projection}
Keep the assumption and notation as above. For every quasi-coherent $\calO_{X_U/T}$-module $\calE$ on $((X_U,M_{X_U})/T^\sharp)_\CRIS$ and every $q$, the higher direct image $R^q\pi_{X_U/T,\ast}\calE$ is a quasi-coherent $\calO_T$-module.
\end{prop}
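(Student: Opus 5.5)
The plan is to compute $R\pi_{X_U/T,\ast}\calE$ by cohomological descent along the simplicial étale hypercovering $X_{U,\bullet}\rightarrow X_U$, and then to identify each term with a log de Rham complex of quasi-coherent modules.

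First, since $X_{U,\bullet}$ is the $0$-coskeleton of the étale surjection $X_{U,0}\rightarrow X_U$, the objects $(X_{U,l})_\CRIS$ of Lemma~\ref{lem:properties of VCRIS} form a hypercovering of the final object of $\Sh(((X_U,M_{X_U})/T^\sharp)_\CRIS)$. Lemma~\ref{lem:properties of VCRIS}(1),(2) identifies the localized topoi with $\Sh(((X_{U,l},M_{X_{U,l}})/T^\sharp)_\CRIS)$. Cohomological descent then gives
\[
R\pi_{X_U/T,\ast}\calE\cong R\pi_{X_{U,\bullet}/T,\ast}(\calE|_{X_{U,\bullet}}),
\]
and this is computed by a spectral sequence $E_1^{l,m}=R^m\pi_{X_{U,l}/T,\ast}(\calE|_{X_{U,l}})\Rightarrow R^{l+m}\pi_{X_U/T,\ast}\calE$. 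Since quasi-coherent $\calO_T$-modules form a weak Serre subcategory of $\calO_T$-modules on $T_\et$, it suffices to show that each $R^m\pi_{X_{U,l}/T,\ast}(\calE|_{X_{U,l}})$ is quasi-coherent.

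Next, for fixed $l$, the locally closed immersion $(X_{U,l},M_{X_{U,l}})\hookrightarrow(Z_{T,l},M_{Z_{T,l}})$ into a log scheme smooth over $T$ puts us in the setting of Remark~\ref{rem:locally closed embedding case}. Theorem~\ref{thm:cohomology of crystal in terms of log de Rham complex} then gives
\[
Ru_{X_{U,l}/T,\ast}\calE\cong (E_l\otimes_{\calO_{Z_{T,l}}}\omega^\bullet_{Z_{T,l}/T},\nabla),
\]
where $E_l=\calE_{(X_{U,l},D_{U,l})}$ is a quasi-coherent $\calO_{D_{U,l}}$-module. Pushing forward to $T_\et$, the terms are quasi-coherent modules on $D_{U,l}$ and the differentials are differential operators, hence only $\pi^{-1}\calO_T$-linear. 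The key point is therefore to show that $R\rho_{l,\et,\ast}$ of each term is quasi-coherent on $T$, where $\rho_l\colon D_{U,l}\rightarrow T$. Since $X_{U,l}\rightarrow X_{U,l}\times\cdots$ and $D_{U,l}$ has the same étale site as $X_{U,l}$, and $X_{U,l}$ is quasi-compact and quasi-separated over the affine $T$ (being a fibre product of affines over the quasi-separated $X_U$), the higher direct images of quasi-coherent sheaves along $D_{U,l}\rightarrow T$ are quasi-coherent by the standard result for qcqs morphisms. In fact $D_{U,l}$ is affine when $X_{U,l}$ is affine.

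Finally, a complex of $\pi^{-1}\calO_T$-modules whose terms have quasi-coherent $R\rho_\ast$ and whose differentials are $\calO_T$-linear after pushforward has quasi-coherent cohomology sheaves. To see this, use the spectral sequence for the stupid filtration, again with the weak Serre property. The main obstacle is this last compatibility: the de Rham differentials are not $\calO_{D}$-linear, but they are $\rho^{-1}\calO_T$-linear because $\omega_{Z_{T,l}/T}$ is relative to $T$. One must check that $R\rho_{\et,\ast}$ computed in $\pi^{-1}\calO_T$-modules agrees with the quasi-coherent pushforward, so that the maps between the quasi-coherent terms are $\calO_T$-linear morphisms of quasi-coherent sheaves.
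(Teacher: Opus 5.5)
Your proposal is correct and follows the paper's proof essentially step for step. The steps are the same: cohomological descent along the Čech hypercovering $X_{U,\bullet}\rightarrow X_U$ with its $E_1$ spectral sequence, then Theorem~\ref{thm:cohomology of crystal in terms of log de Rham complex} together with Remark~\ref{rem:locally closed embedding case} on each $X_{U,l}$, and finally the stupid-filtration spectral sequence combined with quasi-coherence of higher direct images along the qcqs map $D_{U,l}\rightarrow T$ (the compatibility you flag at the end is standard and is used implicitly in the paper as well).
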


\begin{proof}
We know from Lemma~\ref{lem:properties of VCRIS}(1)(2) and \cite[Lem.~1.4.24]{Olsson-crystallinecohomology} that the unit map of the adjunction $\calE\rightarrow Ra_{\CRIS,\ast}a_\CRIS^\ast\calE$ is a quasi-isomorphism. As in \cite[09WJ]{stacks-project}\footnote{Our set-up is slightly more general than the one considered therein, but the same proof works.}, we have a spectral sequence
\[
E_1^{s,t}=R^t\pi_{X_{U,s}/T,\ast}(\calE|_{X_{U,s}})\implies R^{s+t}\pi_{X_U/T,\ast}\calE,
\]
where $\calE|_{X_{U,s}}$ denotes the restriction of $\calE$ to $\Sh(((X_{U,s},M_{X_{U,s}})/T^\sharp)_\CRIS)$ and each differential map is $\calO_T$-linear. So it remains to show that each $R^t\pi_{X_{U,s}/T,\ast}(\calE|_{X_{U,s}})$ is a quasi-coherent $\calO_T$-module. Let $(E,\nabla)$ denote the quasi-coherent $\calO_{D_{U,s}}$-module with log connection on $D_{U,s}$ corresponding to $\calE|_{X_{U,s}}$ and write $h$ for the map $D_{U,s}\rightarrow T$.
By Theorem~\ref{thm:cohomology of crystal in terms of log de Rham complex} and Remark~\ref{rem:locally closed embedding case}, we have a quasi-isomorphism
\[
R\pi_{X_{U,s}/T,\ast}(\calE|_{X_{U,s}})\cong Rh_{\et,\ast}(E\otimes_{\calO_{Z_{T,s}}}\omega_{Z_{T,s}/T}^\bullet,\nabla).
\]
Here $(E\otimes_{\calO_{Z_{T,s}}}\omega_{Z_{T,s}/T}^\bullet,\nabla)$ is a bounded complex of quasi-coherent $\calO_{D_{U,s}}$-modules with $h_\et^{-1}\calO_T$-linear differentials, and $h$ is quasi-compact and quasi-separated. Hence the assertion now follows from another spectral sequence and the standard result on the higher direct image of a quasi-coherent sheaf along a qcqs morphism (for the small \'etale sites). For example, see \cite[01XJ, 03OJ, 03P2, 075A]{stacks-project}.
\end{proof}

By Proposition~\ref{prop:quasi-coherence of crystal under projection}, the proof of Proposition~\ref{prop:vanishing of crystal under projection} is reduced to showing that there exists an integer $r$ such that for any $U\rightarrow Y$, $T^\sharp$, and $(U,M_U)\hookrightarrow (T,M_T)$ and any quasi-coherent $\calO_{X_U/T}$-module $\calE$, we have $R^q\Gamma(T_\et,R\pi_{X_U/T,\ast}\calE)=0$ for $q\geq r$ (note that $T$ is affine); equivalently, $R^q\Gamma(X_{U,\et},Ru_{X_U/T,\ast}\calE)=0$ for $q\geq r$.
We use the alternating \v{C}ech complex on the \'etale site by Bhatt and de Jong.

We need a construction as in \cite[072C]{stacks-project}. Recall that $X_0\rightarrow X$ is a separated \'etale surjection. Let $c$ denote the maximum number of points of the geometric fibers of $X_0\rightarrow X$. For each $l\geq 0$, let $X_l^\circ\subset X_l$ be the complement of all the subdiagonals (so $X_0^\circ=X_0$); then $X_l^\circ$ is open and closed in $X_l$, and the symmetric group $S_{l+1}$ in $(l+1)$ letters acts freely on $X_l^\circ$. Let $\calX_l^\circ$ denote the (\'etale sheaf) quotient $X_l^\circ/S_{l+1}$, which is a quasi-compact algebraic space, \'etale over $X$ and separated over $\Spec \Z$, by \cite[071S, 02Z4]{stacks-project}. Finally, take an \'etale surjection $V_l\rightarrow \calX_l^\circ$ from an affine scheme $V_l$. Let $c_l$ denote the maximum number of points of the geometric fibers of $V_l\rightarrow \calX_l^\circ$. We are going to show that $r\coloneqq \max_{0\leq l\leq c}c_l+c+(c+1)\dim(Z_0/\Sigma)$ works, where $\dim(Z_0/\Sigma)$ denotes the relative dimension of $Z_0\rightarrow\Sigma$.

Keep the notation as in the paragraph before Proposition~\ref{prop:quasi-coherence of crystal under projection}. Since $X_{U,0}=X_0\times_YU$ is affine, the morphism $X_{U,0}\rightarrow X_U$ is a separated \'etale surjection. Define $X_{U,l}\supset X_{U,l}^\circ \rightarrow \calX_{U,l}^\circ\coloneqq X_{U,l}^\circ/S_{l+1}$ in the same way. Let $D_{U,l}^\circ\subset D_{U,l}$ denote the open and closed subscheme whose underlying topological space is given by $X_{U,l}^\circ$. The action of $S_{l+1}$ on $Z_{T,l}$ defines a free action of $S_{l+1}$ on $D_{U,l}^\circ$, and we let $\calD_{U,l}^\circ$ denote the algebraic space $D_{U,l}^\circ/S_{l+1}$. One can check that the induced map $\calX_{U,l}^\circ\rightarrow\calD_{U,l}^\circ$ is a nil immersion, and $X_{U,l}^\circ\rightarrow\calX_{U,l}^\circ\times_{\calD_{U,l}^\circ}D_{U,l}^\circ$ is an isomorphism.

The following Lemmas~\ref{lem:explicit vanishing of quasi-coherent sheaves} and \ref{lem:spectral sequence for alternating Cech complex} are taken from \cite{stacks-project}. Combining them with Theorem~\ref{thm:cohomology of crystal in terms of log de Rham complex}, we will deduce Proposition~\ref{prop:vanishing of crystal under projection}.

\begin{lem}\label{lem:explicit vanishing of quasi-coherent sheaves}
For every quasi-coherent sheaf $E$ on $\calX_{U,l,\et}^\circ$, we have $H^q(\calX_{U,l,\et}^\circ,E)=0$ for $q\geq c_l$. A similar statement holds for $\calD_{U,l}^\circ$. 
\end{lem}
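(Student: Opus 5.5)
The plan is to reduce both assertions to the vanishing result for quasi-coherent cohomology on quasi-compact separated algebraic spaces proved via the alternating \v{C}ech complex \cite[072B, 0729]{stacks-project}. That result reads as follows. Let $W$ be a quasi-compact algebraic space that is separated over $\Spec\Z$, and let $V\rightarrow W$ be an \'etale surjection from an affine scheme whose geometric fibers have at most $d$ points. Then $H^q(W_\et,E)=0$ for every quasi-coherent $E$ and every $q\geq d$.

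Recall the mechanism, in case we need to rerun it. Let $V_n$ be the $(n+1)$-fold fiber product of $V$ over $W$ and $V_n^\circ$ the complement of the subdiagonals. Then $V_n^\circ=\emptyset$ for $n\geq d$. Moreover, $V_n^\circ/S_{n+1}$ is an affine scheme: $V_n$ is affine because $W$ is separated, and a quotient of an affine scheme by a free finite group action is affine. The alternating \v{C}ech spectral sequence then has $E_1^{s,t}=H^t$ of quasi-coherent sheaves on these affine quotients, which vanishes for $t>0$. It also vanishes for $s\geq d$, so the abutment vanishes in degrees $\geq d$.

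Step 1 applies this to $W=\calX_{U,l}^\circ$. Since $X_{U,l}^\circ=X_l^\circ\times_YU$ equivariantly for $S_{l+1}$ (the group acts only on the $X$-factors), we get $\calX_{U,l}^\circ\cong\calX_l^\circ\times_YU$. Set $V\coloneqq V_l\times_YU$. This is affine because $V_l$, $Y$, and $U$ are affine. The map $V\rightarrow\calX_{U,l}^\circ$ is an \'etale surjection, being a base change of $V_l\rightarrow\calX_l^\circ$. Its geometric fibers are geometric fibers of $V_l\rightarrow\calX_l^\circ$, so they have at most $c_l$ points. Finally, $\calX_{U,l}^\circ$ is quasi-compact and separated over $\Spec\Z$: $\calX_l^\circ$ has both properties, and $U\rightarrow Y$ is a morphism of affine schemes. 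The cited result therefore gives the first statement with $d=c_l$.

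Step 2 treats $\calD_{U,l}^\circ$. The map $\calX_{U,l}^\circ\hookrightarrow\calD_{U,l}^\circ$ is a nil immersion, so by topological invariance of the \'etale site for algebraic spaces \cite[05ZH]{stacks-project} the \'etale topoi coincide. Hence $V$ lifts uniquely to an \'etale $\widetilde V\rightarrow\calD_{U,l}^\circ$ with $\widetilde V\times_{\calD_{U,l}^\circ}\calX_{U,l}^\circ=V$. This lift is still surjective and has the same geometric fibers. The scheme $\widetilde V$ is affine: it is \'etale over a base change of $D_{U,l}^\circ$, its reduction is affine, and affineness is insensitive to nil thickenings for algebraic spaces (\cite[07VT]{stacks-project}, applied after noting that $\widetilde V$ is a scheme with affine reduction). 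Likewise, $\calD_{U,l}^\circ$ is separated and quasi-compact because $\calX_{U,l}^\circ$ is; both properties are topological or valuative and are unchanged by a thickening. We then apply the same cited result.

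The main obstacle is Step 2: making sure that affineness and separatedness really transfer along the nil immersion $\calX_{U,l}^\circ\hookrightarrow\calD_{U,l}^\circ$ when the ideal is only locally nil rather than nilpotent. If the cited affineness criterion demands nilpotence, I would instead argue directly. One can check that $\widetilde V=D_{U,l}^\circ\times_{\calD_{U,l}^\circ}\widetilde V$ modulo the free group action, and that $D_{U,l}^\circ$ is affine over the affine scheme $Z_{T,l}$ (being a PD-envelope). Affineness of $\widetilde V$ should then follow from affineness of $V$, using that the \'etale cover is finite-group-quotient-affine.
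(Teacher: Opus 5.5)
Your proposal is correct and follows the paper's proof. Step 1 is exactly the paper's argument: identify $\calX_{U,l}^\circ\cong\calX_l^\circ\times_YU$, use the affine \'etale cover $V_l\times_YU$ whose geometric fibers have at most $c_l$ points, and apply \cite[072B]{stacks-project}. Your Step 2 spells out what the paper compresses into ``the fact that $\calX_{U,l}^\circ\hookrightarrow\calD_{U,l}^\circ$ is a nil immersion.'' The worry and fallback in your last paragraph are unnecessary. The Stacks project's results on thickenings (equivalence of \'etale sites, and transfer of affineness, of being a scheme, and of separatedness) already apply to locally nil ideals, so the lifted cover $\widetilde V$ is affine directly.
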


\begin{proof}
Observe $\calX_{U,l}^\circ\cong \calX_{l}^\circ\times_YU$. In particular, $V_l\times_YU$ is an affine scheme admitting an \'etale surjection to $\calX_{U,l}^\circ$, and $c_l$ is at least the maximum number of points of the geometric fibers of $V_l\times_YU\rightarrow \calX_{U,l}^\circ$. Hence the assertion follows from \cite[072B]{stacks-project} and the fact that $\calX_{U,l}^\circ\hookrightarrow\calD_{U,l}^\circ$ is a nil immersion. 
\end{proof}

We name the maps as follows:
\[
\xymatrix{
D_{U,l}^\circ \ar@{^{(}->}[r]\ar[d] & D_{U,l}\ar[r] & Z_{U,l}\ar[r] & T\\
\calD_{U,l}^\circ&X_{U,l}^\circ \ar@{^{(}->}[r]\ar[d]\ar[dr]^-{g_l^\circ}\ar@{_{(}->}[ul]_-{i_l^\circ} & X_{U,l} \ar[d]^-{g_l}\ar@{_{(}->}[ul]_-{i_l}\ar@{_{(}->}[u]&\\
&\calX_{U,l}^\circ \ar[r]^-{\bar{g}_l^\circ}\ar@{_{(}->}[ul]_-{\bar{\imath}_l^\circ} & X_U\ar[r]&U.\ar@{^{(}->}[uu]
}
\]
Let $\Z(\chi_l)$ denote a rank one locally free $\Z$-module on $\calX_{U,l,\et}^\circ$ given by the sign character $S_{l+1}\rightarrow \{\pm1\}$. In what follows, we write $g_{0,!}$ for $g_{0,\et,!}$ and so on for simplicity. Set $K^l\coloneqq \bigwedge^{l+1}g_{0,!}\Z\in \Ab(X_{U,\et})$, which defines a Koszul complex $K^\bullet=(\cdots\rightarrow K^2\rightarrow K^1\rightarrow K^0)$. Note $K^l=0$ if $l\geq c$. Moreover, the adjoint $K^0\rightarrow \Z$ to the natural map $\Z\rightarrow g_0^{-1}\Z$ gives a quasi-isomorphism $K^\bullet \xrightarrow{\cong}\Z[0]$ in $D^b(X_{U,\et})\coloneqq D^b(\Ab(X_{U,\et}))$ (see \cite[0723]{stacks-project}).

\begin{lem}\label{lem:spectral sequence for alternating Cech complex}
For every $L\in D^+(X_{U,\et})$, there is a spectral sequence
\[
E_1^{s,t}=\Ext^t(K^s,L)\implies R^{s+t}\Gamma(X_{U,\et},L).
\]
Moreover, there is an isomorphism
\[
\Ext^t(K^s,L)\cong H^t(\calX_{U,s,\et}^\circ,(\bar{g}_s^\circ)^{-1}L\otimes_\Z\Z(\chi_s)).
\]
\end{lem}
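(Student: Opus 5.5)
The plan is to follow the argument of \cite[0723, 0729, 072B]{stacks-project} for the alternating \v{C}ech complex, adapted to the \'etale site of $X_U$. The spectral sequence comes from the resolution $K^\bullet\xrightarrow{\cong}\Z[0]$. Since $\Z[0]$ is quasi-isomorphic to the bounded complex $K^\bullet$ (with $K^l=0$ for $l\geq c$), we get
\[
R\Gamma(X_{U,\et},L)=R\Hom(\Z,L)\cong R\Hom(K^\bullet,L).
\]
I then filter $K^\bullet$ by its stupid truncations $\sigma_{\geq s}K^\bullet$. This yields a finite filtration on $R\Hom(K^\bullet,L)$ whose graded pieces are $R\Hom(K^s,L)[-s]$. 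The resulting spectral sequence has $E_1^{s,t}=\Ext^t(K^s,L)$ and abuts to $R^{s+t}\Gamma(X_{U,\et},L)$. Convergence is automatic because the filtration is finite ($0\leq s<c$) and $L$ is bounded below. This settles the first assertion.

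For the identification of the $E_1$-terms, I first describe $K^s=\bigwedge^{s+1}g_{0,!}\Z$ geometrically. The $(s+1)$-fold tensor power of $g_{0,!}\Z$ is $g_{s,!}\Z$, using proper base change for $!$-pushforward along \'etale separated maps and $X_{U,s}=X_{U,0}\times_{X_U}\cdots\times_{X_U}X_{U,0}$. The $S_{s+1}$-action on it permutes the factors. Taking the sign-isotypic quotient kills the contribution of the subdiagonals, since transpositions fix them pointwise and $2\cdot(\text{stuff})$ pairs up appropriately, exactly as in \cite[0726]{stacks-project}. This gives
\[
K^s\cong \bar g^\circ_{s,!}\,\Z(\chi_s),
\]
where I use that $X^\circ_{U,s}\to\calX^\circ_{U,s}$ is an $S_{s+1}$-torsor, so that the sign-twisted descent of $\Z$ is the rank one local system $\Z(\chi_s)$. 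This identification is the step I expect to be the main obstacle: one has to check stalkwise at geometric points $\bar x$ of $X_U$ that $(\bigwedge^{s+1}\Z^{F})$, with $F$ the fiber of $g_0$, matches $\bigoplus_{\text{$(s+1)$-subsets of }F}\Z$ with the sign orientation, compatibly with the torsor description. I would simply cite \cite[0726]{stacks-project}, after noting that only the separated \'etale surjection $X_{U,0}\to X_U$ enters there.

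Given the identification, $\bar g^\circ_s$ is \'etale and separated, so $\bar g^\circ_{s,!}$ is left adjoint to $(\bar g^\circ_s)^{-1}$ and both functors are exact. Therefore
\[
\Ext^t(\bar g^\circ_{s,!}\Z(\chi_s),L)\cong \Ext^t_{\calX^\circ_{U,s}}(\Z(\chi_s),(\bar g_s^\circ)^{-1}L).
\]
Since $\Z(\chi_s)$ is invertible with inverse itself, the latter equals
\[
H^t\bigl(\calX^\circ_{U,s,\et},(\bar g_s^\circ)^{-1}L\otimes_\Z\Z(\chi_s)\bigr),
\]
which is the second assertion.
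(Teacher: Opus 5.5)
Your proposal is correct and is essentially the paper's argument: the paper just says the proof of \cite[0725, 0728]{stacks-project} goes through after replacing $L$ by a bounded below complex of injectives. Your $R\Hom(K^\bullet,L)$ with the stupid filtration, followed by $K^s\cong\bar g^\circ_{s,!}\Z(\chi_s)$ and the adjunction $\bar g^\circ_{s,!}\dashv(\bar g^\circ_s)^{-1}$, is the same argument phrased in the derived category.

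One small correction to your heuristic: over $\Z$, the sign-coinvariant quotient of $(g_{0,!}\Z)^{\otimes(s+1)}$ still contains $2$-torsion coming from the subdiagonals. Those terms are killed by the defining relations of $\bigwedge^{s+1}$, not by passing to the sign-isotypic quotient. What actually proves the identification is your stalkwise comparison of $\bigwedge^{s+1}\Z^F$ with $(s+1)$-element subsets of $F$, together with the natural global map $\bar g^\circ_{s,!}\Z(\chi_s)\to K^s$.
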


\begin{proof}
The proof of \cite[0725, 0728]{stacks-project} works verbatim by taking a quasi-isomorphism $L\rightarrow I$ into a bounded below complex $I$ of injective sheaves of abelian groups on $X_{U,\et}$. 
\end{proof}

Take any quasi-coherent $\calO_{X_U/T}$-module $\mathcal{E}$ and apply the above discussions to $L=Ru_{X_U/T}\calE\in D^+(X_{U,\et})$. We obtain a spectral sequence
\[
E_1^{s,t}=R^t\Gamma(\calX_{U,s,\et}^\circ,(\bar{g}_s^\circ)^{-1}(Ru_{X_U/T}\calE)\otimes_\Z\Z(\chi_s))\implies R^{s+t}\Gamma(X_{U,\et},Ru_{X_U/T}\calE).
\]
Since $\bar{\imath}_s^\circ$ gives an equivalence $\Sh(\calX_{U,s,\et}^\circ)\cong \Sh(\calD_{U,s,\et}^\circ)$, we are reduced to studying 
\[
R^t\Gamma(\calD_{U,s,\et}^\circ,\bar{\imath}_{s,\ast}^\circ(\bar{g}_s^\circ)^{-1}(Ru_{X_U/T}\calE)\otimes\bar{\imath}_{s,\ast}^\circ\Z(\chi_s)).
\]

Let $(E,\nabla)$ denote the quasi-coherent $\calO_{D_{U,s}}$-module with log connection on $D_{U,s}$ corresponding to $\calE|_{X_{U,s}}$. It yields an $S_{s+1}$-equivariant complex $(E\otimes_{\calO_{Z_{T,s}}}\omega_{Z_{T,s}/T}^\bullet,\nabla)$. The restriction $(E\otimes_{\calO_{Z_{T,s}}}\omega_{Z_{T,s}/T}^\bullet,\nabla)|_{D_{U,s,\et}^\circ}$ is $S_{s+1}$-equivariant. Hence it descends to a complex $\bar{E}^\bullet$ of abelian sheaves on $\calD_{U,s,\et}^\circ$, each $\bar{E}^m$ admits a structure of a quasi-coherent $\calO_{\calD_{U,s}^\circ}$-module, and $\overline{E}^m=0$ if $m<0$ or $m>\dim(Z_{T,s}/T)=(s+1)\dim(Z_0/\Sigma)$: first descend each quasi-coherent module using \cite[03M3]{stacks-project} and then check that abelian sheaf morphisms between them also descend.

By Theorem~\ref{thm:cohomology of crystal in terms of log de Rham complex} and Remark~\ref{rem:locally closed embedding case}, we have a quasi-isomorphism
\[
Ru_{X_{U,s}/T,\ast}(\calE|_{X_{U,s}})\cong i_s^{-1}(E\otimes_{\calO_{Z_{T,s}}}\omega_{Z_{T,s}/T}^\bullet,\nabla)
\]
in $D^+(X_{U,s,\et})$.
Observe that the source also admits a natural action of $S_{s+1}$ and the above quasi-isomorphism is $S_{s+1}$-equivariant by Theorem~\ref{thm:compatibility of cohomology of crystal in terms of log de Rham complex and pushforward}.
We obtain a quasi-isomorphism
\[
\bar{\imath}_{s,\ast}^\circ(\bar{g}_s^\circ)^{-1}(Ru_{X_U/T}\calE)\otimes\bar{\imath}_{s,\ast}^\circ\Z(\chi_s)\cong \bar{E}^\bullet\otimes \bar{\imath}_{s,\ast}^\circ\Z(\chi_s)
\]
and a spectral sequence
\[
E_1^{m,n}=R^n\Gamma(\calD_{U,s,\et}^\circ, \bar{E}^m\otimes \bar{\imath}_{s,\ast}^\circ\Z(\chi_s))\implies 
R^{m+n}\Gamma(\calD_{U,s,\et}^\circ,\bar{\imath}_{s,\ast}^\circ(\bar{g}_s^\circ)^{-1}(Ru_{X_U/T}\calE)\otimes\bar{\imath}_{s,\ast}^\circ\Z(\chi_s)).
\]
Here each $\bar{E}^m\otimes \bar{\imath}_{s,\ast}^\circ\Z(\chi_s)$ is a quasi-coherent $\calO_{\calD_{U,s}^\circ}$-module. So $E_1^{m,n}=0$ unless $m\in [0,(s+1)\dim(Z_0/\Sigma)]$ and $n\in [0, c_s)$ by Lemma~\ref{lem:explicit vanishing of quasi-coherent sheaves}.
In particular, $R^{t}\Gamma(\calD_{U,s,\et}^\circ,\bar{\imath}_{s,\ast}^\circ(\bar{g}_s^\circ)^{-1}(Ru_{X_U/T}\calE)\otimes\bar{\imath}_{s,\ast}^\circ\Z(\chi_s))=0$ if $t\geq c_s+(s+1)\dim(Z_0/\Sigma)$.

Recall the spectral sequence in Lemma~\ref{lem:spectral sequence for alternating Cech complex}:
\[
E_1^{s,t}=R^t\Gamma(\calX_{U,s,\et}^\circ,(\bar{g}_s^\circ)^{-1}(Ru_{X_U/T}\calE)\otimes_\Z\Z(\chi_s))\implies R^{s+t}\Gamma(X_{U,\et},Ru_{X_U/T}\calE).
\]
We now know $E_1^{s,t}=0$ unless $s\in [0,c]$ and $t\in [0,c_s+(s+1)\dim(Z_0/\Sigma))$ (note $E_1^{s,t}=\Ext^t(K^s, Ru_{X_U/T}\calE)$ and $K^s=0$ if $s\geq c$).
Hence we conclude that if $q\geq r\coloneqq \max_{0\leq l\leq c}c_l+c+(c+1)\dim(Z_0/\Sigma)$, then $R^{q}\Gamma(X_{U,\et},Ru_{X_U/T}\calE)=0$.
This completes the proof of Proposition~\ref{prop:vanishing of crystal under projection}.

\section{Base change}\label{sec:Base change}

\begin{thm}\label{thm:crystalline base change for projection}
Let $g\colon \Sigma'^\sharp=(\Sigma',M_{\Sigma'},\calJ',\gamma')\rightarrow \Sigma^\sharp=(\Sigma,M_\Sigma,\calJ,\gamma)$ be a morphism of integral and quasi-coherent log PD-schemes in which $p$ is nilpotent.
Suppose that we have the following diagram
\[
\xymatrix{
(X',M_{X'})\ar[r]^-{g'} \ar[d]_-{f_0'} \ar@/_30pt/[dd]_-{\pi'}& (X,M_X)\ar[d]^-{f_0}\ar@/^30pt/[dd]^-{\pi}\\
(\Sigma_0',M_{\Sigma_0'})\ar[r]^-{g_0} \ar@{^{(}->}[d]_-{i'}& (\Sigma_0,M_{\Sigma_0})\ar@{^{(}->}[d]^-{i}\\
(\Sigma',M_{\Sigma'}) \ar[r]^-g& (\Sigma,M_\Sigma),
}
\]
where $i$ and $i'$ are exact closed immersions given by sub PD-ideals of $\calJ$ and $\calJ'$, respectively.
Assume that the upper-square is Cartesian, $\Sigma$ is quasi-compact, $f_0$ is an integral and smooth morphism of fine log schemes with the underlying morphism of schemes being qcqs.
Then for every flat quasi-coherent $\calO_{X/\Sigma}$-module $\calE$ on $((X,M_X)/\Sigma^\sharp)_\CRIS$, the canonical morphism 
\begin{equation}\label{eq:crystalline base change}
Lg^\ast_\et R\pi_{X/\Sigma,\ast}\calE\rightarrow R\pi'_{X'/\Sigma',\ast}g'^\ast_\CRIS\calE.    
\end{equation}
is a quasi-isomorphism (see below).
\end{thm}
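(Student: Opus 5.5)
We follow Berthelot's argument \cite[Thm.~V.3.5.1]{Berthelot-book}: reduce to the case where $X$ admits an embedding into a smooth log scheme, compute both sides by log de Rham complexes via Theorem~\ref{thm:cohomology of crystal in terms of log de Rham complex}, and compare these complexes directly.

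First we construct the morphism \eqref{eq:crystalline base change}. Proposition~\ref{prop:compatibility of u and crystalline pushforward} and adjunction give $g_\et^{-1}R\pi_{X/\Sigma,\ast}\calE\rightarrow R\pi'_{X'/\Sigma',\ast}g'^{-1}_\CRIS\calE$. Since $g'^\ast_\CRIS\calO_{X/\Sigma}=\calO_{X'/\Sigma'}$, this is linear over $g_\et^{-1}\calO_\Sigma\rightarrow\calO_{\Sigma'}$, and we extend scalars to obtain \eqref{eq:crystalline base change}.

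To prove it is a quasi-isomorphism, the question is local on $\Sigma'$ and on $\Sigma$, so we may take both affine. Next we cover $X$ by a finite Zariski covering of affine opens $X_\lambda$, which is possible because $X$ is quasi-compact over $\Sigma$. Shrinking the $X_\lambda$ if needed, we require each to admit a closed immersion of fine log schemes $(X_\lambda,M_{X_\lambda})\hookrightarrow (Z_\lambda,M_{Z_\lambda})$ into a log scheme smooth over $(\Sigma,M_\Sigma)$, as in Construction~\ref{construction:embedding system}. Since $f_0$ is smooth, we may moreover choose $Z_\lambda$ to be a smooth lift of $X_\lambda$ over $\Sigma$, meaning $Z_\lambda\times_\Sigma\Sigma_0=X_\lambda$. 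Such a lift exists Zariski locally because $f_0$ is smooth and integral and $X_\lambda$ is affine. The finite intersections $X_{\lambda_0\cdots\lambda_s}$ embed into the fiber products $Z_{\lambda_0}\times_\Sigma\cdots\times_\Sigma Z_{\lambda_s}$, and these embeddings are locally closed once separatedness is checked. Here the key point is that Zariski opens $V\subset X$ give open subtopoi $V_\CRIS$ (Definition~\ref{defn:localization of crystalline site by open}), so the \v{C}ech spectral sequence of \cite[V.3.1]{Berthelot-book} applies on both sides. Its terms are compatible with base change because $X'_\lambda=X_\lambda\times_{\Sigma_0}\Sigma'_0$ is still a Zariski cover. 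By the five lemma, we are reduced to the case where $X$ is affine and embeds into a smooth $(Z,M_Z)$. Using Remark~\ref{rem:locally closed embedding case}, the embedding may also be locally closed.

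In this situation, Theorem~\ref{thm:cohomology of crystal in terms of log de Rham complex} together with Remark~\ref{rem:locally closed embedding case} gives
\[
R\pi_{X/\Sigma,\ast}\calE\cong R\pi_{\et,\ast}(E\otimes_{\calO_Z}\omega^\bullet_{Z/\Sigma},\nabla).
\]
The same holds for $X'$ with $Z'=Z\times_\Sigma\Sigma'$ and the module $E'$ corresponding to $g'^\ast_\CRIS\calE$. Theorem~\ref{thm:compatibility of cohomology of crystal in terms of log de Rham complex and pushforward} identifies the base change map with the natural map of de Rham complexes. Because $f_0$ is smooth and integral, we expect the PD-envelope $D$ of $X\hookrightarrow Z$ to satisfy $D'=D\times_\Sigma\Sigma'$, which also gives $\omega^1_{Z'/\Sigma'}=\omega^1_{Z/\Sigma}\otimes\calO_{\Sigma'}$. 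In the lifting case this is the statement that $D$ is the PD-envelope of $\Sigma_0\subset\Sigma$ pulled back along the flat morphism $Z\rightarrow\Sigma$, together with compatibility of PD-envelopes with base change of this form. Granting this, $E'=E\otimes_{\calO_\Sigma}\calO_{\Sigma'}$. Since $\calE$ is flat, each term $E\otimes\omega^i_{Z/\Sigma}$ is a flat $\calO_D$-module, and $D$ is flat over $\Sigma$. These are quasi-coherent modules on affine schemes, so the derived pullback $Lg^\ast$ of the complex is computed termwise and equals the de Rham complex of $E'$. This proves the claim.

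The main obstacle is the base change property $D'\cong D\times_\Sigma\Sigma'$ for the log PD-envelopes, and the flatness of $D$ over $\Sigma$. This is exactly where integrality and smoothness of $f_0$ are used, since integrality makes the fiber products $Z\times_\Sigma\Sigma'$ computed as log schemes agree with the scheme-theoretic ones. To handle it, I would work \'etale locally with charts, so that $D$ becomes the PD-polynomial description of Example~\ref{eg:basic properties of PD-envelopes} over the PD-envelope of $\Sigma_0$ in $\Sigma$. Then I would use that the PD-envelope of $\Sigma_0\hookrightarrow\Sigma$ is $\Sigma$ itself, because the ideal defining $\Sigma_0$ is a sub PD-ideal of $\calJ$.
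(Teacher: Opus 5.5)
The lifting case of your proposal is the paper's argument. Suppose $(X,M_X)\cong (Z,M_Z)\times_{(\Sigma,M_\Sigma)}(\Sigma_0,M_{\Sigma_0})$ with $Z$ smooth and integral, hence flat, over $\Sigma$. Then $\gamma$ extends to $Z$, and the ideal of $X$ is generated by a sub PD-ideal of $\calJ$, so the PD-envelope is $Z$ itself. For the same reason it is $Z'=Z\times_\Sigma\Sigma'$ for $X'$. Base change of a bounded complex of flat quasi-coherent modules along the affine map $Z\to\Sigma$ then finishes. You should also say why the lift is integral, since that is what gives flatness; the paper checks it on $M_Z/\calO_Z^\times$ via \cite[Prop.~4.1(2), Cor.~4.5]{Kato-log}.

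The gap is in how you reach this case. The \v{C}ech argument over a finite affine cover $\{X_\lambda\}$ reduces the claim to the intersections $X_{\lambda_0\cdots\lambda_s}$. These are affine only if $X$ is separated, so \emph{reduced to the case where $X$ is affine} does not follow. For the intersections you use locally closed embeddings into $Z_{\lambda_0}\times_\Sigma\cdots\times_\Sigma Z_{\lambda_s}$. These are not lifts, so $D$ is a genuine log PD-envelope. The two facts your computation then needs are flatness of $D$ over $\Sigma$ and $D'\cong D\times_\Sigma\Sigma'$, and these are exactly what you leave as an expectation.

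Both facts are true, but proving them takes real work:
\begin{itemize}
\item \'etale-locally lift $X$ to some $\tilde{X}$ smooth over $\Sigma$;
\item extend $X\to Z$ to $\tilde{X}\to Z$ by log smoothness;
\item identify $D$ with a PD-polynomial algebra over $\calO_{\tilde{X}}$, as in the argument behind \cite[Lem.~2.22]{hyodo-kato}.
\end{itemize}
Example~\ref{eg:basic properties of PD-envelopes} only describes the diagonal envelope $D_Z(1)$ and does not supply this. Also, when an intersection is not affine, neither is $D$. Your last step (computing $Lg^\ast$ termwise on affine schemes) must then be replaced by base change along the qcqs morphism $D\to\Sigma$.

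The missing observation is that no compatible embeddings are needed. As in \cite[p.~347]{Berthelot-book}, the base change map for $X$ is a quasi-isomorphism if and only if it is one for each intersection separately. The paper therefore runs the \v{C}ech reduction twice:
\begin{itemize}
\item For quasi-separated $X$ over the affine $\Sigma_0$ with a finite affine cover, each intersection is quasi-compact and separated, being open in an affine. This reduces to $X$ separated.
\item For separated $X$, the intersections of an affine cover are affine. This reduces to $X$ affine.
\end{itemize}
An affine $X$ then has its own smooth integral lift by \cite[Lem.~2.3.14]{nakkajima-shiho}, and your lifting-case argument applies verbatim.
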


In the above theorem, note 
\[
g_{\et}\circ \pi'_{X'/\Sigma'}\cong \pi_{X/\Sigma}\circ g_\CRIS'\colon (\Sh(((X',M_{X'})/\Sigma'^\sharp)_\CRIS),\calO_{X'/\Sigma'})\rightarrow (\Sh(\Sigma_\et),\calO_\Sigma). 
\]
So we have $R\pi_{X/\Sigma,\ast}\calE\rightarrow R\pi_{X/\Sigma,\ast}Rg'_{\CRIS,\ast}g'^\ast_\CRIS\calE \cong Rg_{\et,\ast}R\pi'_{X'/\Sigma',\ast}g'^\ast_\CRIS\calE$, which yields the morphism $Lg^\ast_\et R\pi_{X/\Sigma,\ast}\calE\rightarrow R\pi'_{X'/\Sigma',\ast}g'^\ast_\CRIS\calE$ by adjunction and Proposition~\ref{prop:vanishing of crystal under projection} (recall $g'^\ast_\CRIS=Lg'^\ast_\CRIS$ as we work on the big crystalline sites). Note that the same statement for small crystalline sites is stated in \cite[Thm.~6.10]{Kato-log}.

\begin{proof}
We will follow \cite[Prop.~V.3.5.2, Pf.]{Berthelot-book}.
Since the assertion is local on $\Sigma'$, we may assume that $\Sigma$ and $\Sigma'$ are both affine. We will first reduce to the case where $X$ is affine. Take any finite open covering $X=U_0\cup\cdots \cup U_n$.

Let $\Delta_n$ denote the opposite category of the category consisting of non-empty subsets of $\{0,\ldots,n\}$ with morphisms being inclusions.
For each $\{i_0<\cdots<i_l\}\in \Delta_n$, set $U_{i_0\cdots i_l}\coloneqq U_{i_0}\cap \cdots\cap U_{i_l}$. With the notation as in Definition~\ref{defn:localization of crystalline site by open}, we obtain the topos $\Sh(((U_\bullet,M_{U_\bullet})/\Sigma^\sharp)_\CRIS)$ associated to the diagram of topoi indexed by $\Delta_n$ associating to $\{i_0<\cdots<i_l\}$ the topos $\Sh(((X,M_{X})/\Sigma^\sharp)_\CRIS)/U_{i_0\cdots i_l,\CRIS}\cong \Sh((U_{i_0\cdots i_l},M_{U_{i_0\cdots i_l}})/\Sigma^\sharp)_\CRIS)$.
Set $U_l'\coloneqq U_l\times_{\Sigma_0}\Sigma_0'$ and define $\Sh(((U_\bullet',M_{U_\bullet'})/\Sigma^\sharp)_\CRIS)$ in the same way.

We have a commutative diagram of topoi
\[
\xymatrix{
\Sh(((U_\bullet,M_{U_\bullet})/\Sigma^\sharp)_\CRIS)\ar[r]^-{\pi_{U_\bullet/\Sigma_\bullet}}\ar[d]_-{a_\CRIS}\ar[dr]^-{\pi_{U_\bullet/\Sigma}} & \Sh(\Sigma_\et)\times\Delta_n\ar[d]^-b\\
\Sh(((X,M_X)/\Sigma^\sharp)_\CRIS)\ar[r]^-{\pi_{X/\Sigma}} &\Sh(\Sigma_\et),
}
\]
where $\Sh(\Sigma)\times\Delta_n$ denotes the constant topos indexed by $\Delta_n$. We have a similar diagram for $(X',M_{X'})\rightarrow \Sigma'^\sharp$ and name the morphisms in the same way. We also have a commutative diagram of topoi
\[
\xymatrix{
\Sh(((U_\bullet',M_{U_\bullet'})/\Sigma'^\sharp)_\CRIS)\ar[r]^-{\pi'_{U'_\bullet/\Sigma'_\bullet}}\ar@/^20pt/[rr]^-{\pi'_{U'_\bullet/\Sigma'}}\ar[d]_-{g'_{\bullet,\CRIS}} & \Sh(\Sigma'_\et)\times\Delta_n\ar[d]^-{g_{\bullet,\et}}\ar[r]^-{b'} & \Sh(\Sigma'_\et)\ar[d]^-{g_\et}\\
\Sh(((U_\bullet,M_{U_\bullet})/\Sigma^\sharp)_\CRIS)\ar[r]^-{\pi_{U_\bullet/\Sigma_\bullet}} \ar@/_15pt/[rr]_-{\pi_{U_\bullet/\Sigma}}& \Sh(\Sigma_\et)\times\Delta_n\ar[r]^-b &\Sh(\Sigma_\et).
}
\]
As in \cite[pp.~344-347]{Berthelot-book}, the functoriality of adjunction 
for the relevant ringed topoi gives the following commutative diagram
\[
\xymatrix{
Lg^\ast_\et R\pi_{X/\Sigma,\ast}\calE\ar[r]^-{\text{\eqref{eq:crystalline base change}}}\ar[d]_-\cong & R\pi'_{X'/\Sigma',\ast}g'^\ast_\CRIS\calE\ar[d]^-\cong\\
Lg^\ast_\et R\pi_{U_\bullet/\Sigma,\ast}a_\CRIS^\ast\calE\ar[r]\ar[d]_-\cong & R\pi'_{U'_\bullet/\Sigma',\ast}g'^\ast_{\bullet,\CRIS} a_\CRIS^\ast\calE\ar[d]^-\cong\\
Lg^\ast_\et Rb_\ast R\pi_{U_\bullet/\Sigma_\bullet,\ast}a_\CRIS^\ast\calE\ar[r]\ar[d]_-\cong & Rb'_\ast R\pi'_{U'_\bullet/\Sigma'_\bullet,\ast}g'^\ast_{\bullet,\CRIS} a_\CRIS^\ast\calE \ar@{=}[d]\\
Rb'_\ast 
Lg^\ast_{\bullet,\et} R\pi_{U_\bullet/\Sigma_\bullet,\ast}a_\CRIS^\ast\calE\ar[r] & Rb'_\ast R\pi'_{U'_\bullet/\Sigma'_\bullet,\ast}g'^\ast_{\bullet,\CRIS} a_\CRIS^\ast\calE.
}
\]
Here the horizontal maps are given by adjunction; the top vertical maps are quasi-isomorphisms by \cite[Prop.~V.3.4.8]{Berthelot-book}; the middle vertical quasi-isomorphisms are given by the composition of right derived functors; the bottom left vertical map is given by adjunction and is a quasi-isomorphism by \cite[Prop.~V.3.4.9]{Berthelot-book}.

In particular, to show that \eqref{eq:crystalline base change} is a quasi-isomorphism, it suffices to show that the map $
Lg^\ast_{\bullet,\et} R\pi_{U_\bullet/\Sigma_\bullet,\ast}a_\CRIS^\ast\calE\rightarrow R\pi'_{U'_\bullet/\Sigma'_\bullet,\ast}g'^\ast_{\bullet,\CRIS} a_\CRIS^\ast\calE$ is a quasi-isomorphism. 
Arguing as in \cite[p.~347]{Berthelot-book}, we see that the latter map is a quasi-isomorphism if and only if \eqref{eq:crystalline base change} is a quasi-isomorphism for each $(U_{i_0\cdots i_l}',M_{U_{i_0\cdots i_l}'})\rightarrow (U_{i_0\cdots i_l},M_{U_{i_0\cdots i_l}})$ in place of $g'\colon (X',M_{X'})\rightarrow (X,M_X)$. 
Recall that $\Sigma_0$ is affine and $X$ is quasi-separated. 
If we take a finite affine open covering $X=\bigcup U_l$, then each $U_{i_0\cdots i_l}$ becomes separated (see \cite[pp.~347-348]{Berthelot-book}). So we may reduce to the case where $X$ is separated. In this case, taking a finite affine open covering makes each $U_{i_0\cdots i_l}$ affine. Hence we may assume that $X$ is affine.

Since $f_0$ is a smooth morphism between fine log schemes and $\calJ$ is a nil ideal, it follows from \cite[Lem.~2.3.14]{nakkajima-shiho} (a generalization of \cite[Prop.~3.14(1)]{Kato-log} in the case of $\calJ$ being not necessarily nilpotent) that there exist a smooth morphism $f\colon (Z,M_Z)\rightarrow (\Sigma,M_\Sigma)$ and an isomorphism $(X,M_X)\cong (Z,M_Z)\times_{(\Sigma,M_\Sigma)}(\Sigma_0,M_{\Sigma_0})$ over $(\Sigma_0,M_{\Sigma_0})$.
Moreover, $f$ is integral since $f_0$ is integral: recall from \cite[Prop.~4.1(2)]{Kato-log} that the integrality of $f$ is equivalent to the condition that the monoid homomorphism $f^{-1}_\et (M_\Sigma/\calO_\Sigma^\ast)_{\overline{x}}\rightarrow (M_Z/\calO_Z^\ast)_{\overline{x}}$ is integral for each geometric point $\overline{x}$ of $Z$. In our case, the latter map is naturally identified with $f^{-1}_{0,\et} (M_{\Sigma_0}/\calO_{\Sigma_0}^\ast)_{\overline{x}}\rightarrow (M_X/\calO_X^\ast)_{\overline{x}}$ by regarding $\overline{x}$ as a geometric point of $X$. In particular, the underlying morphism $Z\rightarrow \Sigma$ is flat by \cite[Cor.~4.5]{Kato-log}. It follows that the PD-structure $\gamma$ on $\calJ$ extends to $Z$. Since $i$ is an exact closed immersion defined by a sub PD-ideal of $\calJ$, we conclude that the PD-envelope of $(X,M_X)\hookrightarrow (Z,M_Z)$ relative to $\Sigma^\sharp$ is $(Z,M_Z)$.
We set $(Z',M_{Z'})\coloneqq (Z,M_Z)\times_{(\Sigma,M_\Sigma)}(\Sigma',M_{\Sigma'})$. Then the PD-envelope of $(X',M_{X'})\hookrightarrow (Z',M_{Z'})$ relative to $\Sigma'^\sharp$ is $(Z',M_{Z'})$.

Let $(E,\nabla)$ denote the integrable log connection on $Z_\et$ corresponding to $\calE$. 
By Theorem~\ref{thm:cohomology of crystal in terms of log de Rham complex}, we have
\[
R\pi_{X/\Sigma,\ast}\calE\cong Rf_{\et,\ast}(E\otimes\omega^\bullet_{(Z,M_Z)/(\Sigma,M_\Sigma)},\nabla)\cong f_{\et,\ast}(E\otimes\omega^\bullet_{(Z,M_Z)/(\Sigma,M_\Sigma)},\nabla),
\]
where the second quasi-isomorphism follows from the the fact that $f$ is affine.
Write $g'_Z\colon (Z', M_{Z'}) \rightarrow (Z, M_Z)$ for the projection. Then the crystal $g_{\CRIS}'^\ast\calE$ corresponds to $(E',\nabla')\coloneqq g_{Z,\et}'^\ast(E,\nabla)$. Hence $R\pi_{X'/\Sigma',\ast}g_{\CRIS}'^\ast\calE\cong f'_{\et,\ast}( E'\otimes\omega^\bullet_{(Z',M_{Z'})/(\Sigma',M_{\Sigma'})},\nabla')$ by Theorem~\ref{thm:cohomology of crystal in terms of log de Rham complex}; strictly speaking, the base log PD-scheme in Set-up~\ref{set-up:description of crystals in terms of connections} is assumed to be fine, whereas $M_{\Sigma'}$ is only assumed to be integral and quasi-coherent. However, $f'$ is the base change of $f$ with $M_{\Sigma}$ being fine, so one can check that the results in \S~\ref{sec:Embedding into a smooth log scheme} continue to hold for $f'$.

Consider the diagram
\[
\xymatrix{
Lg^\ast_\et R\pi_{X/\Sigma,\ast}\calE\ar[d]_-{\text{\eqref{eq:crystalline base change}}}\ar[r]^-\cong &Lg^\ast_\et f_{\et,\ast}(E\otimes\omega^\bullet_{(Z,M_Z)/(\Sigma,M_\Sigma)},\nabla)\ar[d] \\
R\pi'_{X'/\Sigma',\ast}g'^\ast_\CRIS\calE\ar[r]^-\cong & f'_{\et,\ast}( E'\otimes\omega^\bullet_{(Z',M_{Z'})/(\Sigma',M_{\Sigma'})},\nabla')\\
}
\]
where the right vertical map is given by adjunction.
We deduce from Theorem~\ref{thm:compatibility of cohomology of crystal in terms of log de Rham complex and pushforward} via adjunction that this is commutative. Since $\calE$ is flat, $E$ is a flat $\calO_Z$-module and thus a flat $f^{-1}_\et\calO_\Sigma$-module. Moreover, $f$ is a smooth morphism of fine log schemes. Hence $f_{\et,\ast}(E\otimes\omega^\bullet_{(Z,M_Z)/(\Sigma,M_\Sigma)},\nabla)$ is a complex of flat quasi-coherent $\calO_\Sigma$-modules.
We conclude from the base change theorem of quasi-coherent sheaves for affine morphisms (in the small \'etale sites) that the right vertical map in the above diagram is a quasi-isomorphism, and thus so is \eqref{eq:crystalline base change}.
\end{proof}

\begin{thm}[{cf.~\cite[Thm.~6.10]{Kato-log}}]\label{thm:crystalline base change for crystalline pushforward}
Suppose that we have the following diagram
\[
\xymatrix{
(X',M_{X'})\ar[r]^-{g'} \ar[d]_-{f'}& (X,M_X)\ar[d]_-f\\
(Y',M_{Y'})\ar[r]^-g \ar[d]& (Y,M_Y)\ar[d]\\
\Sigma'^\sharp=(\Sigma',M_{\Sigma'},\calJ',\gamma') \ar[r]& \Sigma^\sharp=(\Sigma,M_\Sigma,\calJ,\gamma),
}
\]
where the upper-square is Cartesian, $M_\Sigma$ is fine, and $M_{\Sigma'}$ is integral and quasi-coherent.
Assume that $f\colon (X,M_X)\rightarrow (Y,M_Y)$ is smooth and integral, $f\colon X\rightarrow Y$ is qcqs, and $p$ is nilpotent in $\calO_\Sigma$.
Then for every flat quasi-coherent $\calO_{X/\Sigma}$-module $\calE$ on $((X,M_X)/\Sigma^\sharp)_\CRIS$, the canonical morphism
\[
g^\ast_\CRIS Rf_{\CRIS,\ast}\calE\rightarrow Rf'_{\CRIS,\ast}g'^\ast_\CRIS\calE
\]
is a quasi-isomorphism.
\end{thm}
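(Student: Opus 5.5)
We check the quasi-isomorphism after evaluating at every object of $((Y',M_{Y'})/\Sigma'^\sharp)_\CRIS$. Both sides are complexes of $\calO_{Y'/\Sigma'}$-modules. By the exactness of evaluation, it suffices to show that for every affine $(U',T')\in ((Y',M_{Y'})/\Sigma'^\sharp)_\CRIS$ the map
\[
(g^\ast_\CRIS Rf_{\CRIS,\ast}\calE)_{(U',T')}\rightarrow (Rf'_{\CRIS,\ast}g'^\ast_\CRIS\calE)_{(U',T')}
\]
is a quasi-isomorphism in $D(T'_\et,\calO_{T'})$.

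By Proposition~\ref{prop:functoriality of relative crystalline topoi}, the left side is $(Rf_{\CRIS,\ast}\calE)_{(U',T',M_{(T',g)})}$. Here $(U',T',M_{(T',g)})$ is an object of $((Y,M_Y)/\Sigma^\sharp)_\CRIS$, whose structure map $U'\rightarrow Y$ is $U'\rightarrow Y'\rightarrow Y$. Lemma~\ref{lem:evalutation of crystalline pushforward via projection}(2) then identifies this with $R\pi_{f^{-1}(U')/T',\ast}\calE$. Here $f^{-1}(U')=X\times_YU'$, and the computation takes place on $((X\times_YU',\ldots)/(T',M_{(T',g)})^\sharp)_\CRIS$. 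Applying the same lemma to $f'$, the right side is $R\pi_{f'^{-1}(U')/T',\ast}g'^\ast_\CRIS\calE$ with $f'^{-1}(U')=X'\times_{Y'}U'$ and base $T'^\sharp$ carrying $M_{T'}$. Since the upper square is Cartesian, $X'\times_{Y'}U'=X\times_YU'$ as schemes. As log schemes, the two fiber products differ only in that the log structure on $T'$ is $M_{T'}$ versus $M_{(T',g)}$.

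The next step is to compare these two crystalline sites. There is a canonical morphism of log PD-schemes $(T',M_{T'})\rightarrow (T',M_{(T',g)})$. I would apply Theorem~\ref{thm:crystalline base change for projection} with the following data:
\begin{itemize}
\item base $\Sigma^\sharp$ replaced by $(T',M_{(T',g)},\calJ_{T'}+\ldots)$, and $\Sigma'^\sharp$ replaced by $T'^\sharp$, with the morphism $g$ of that theorem being the identity on underlying schemes;
\item $\Sigma_0=U'$ with log structure pulled back from $M_Y$, and $\Sigma_0'=U'$ with log structure pulled back from $M_{Y'}$;
\item $X_0=X\times_YU'$ with $f_0$ the base change of $f$, so that $f_0$ is smooth, integral and qcqs.
\end{itemize}
The Cartesian hypothesis of that theorem is exactly the Cartesian upper square. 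Since $g$ is the identity on underlying schemes, $Lg^\ast_\et$ is the identity, and the theorem yields the desired quasi-isomorphism. Its hypotheses also need checking: $i$ is exact and defined by a sub PD-ideal, $T'$ is affine hence quasi-compact, and the base log structure $M_{(T',g)}$ is fine because $M_Y$ is fine (Remark~\ref{rem:fine log structure in absolute log crystalline site}, and the fineness needed in \S\ref{sec:Embedding into a smooth log scheme}), while $M_{T'}$ is only integral and quasi-coherent. That last situation was handled in the proof of Theorem~\ref{thm:crystalline base change for projection}.

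The main obstacle I expect is checking that the map produced by adjunction in the statement (through $g^\ast_\CRIS$, $Rf_{\CRIS,\ast}$ and the base change morphism) agrees, after evaluation, with the map \eqref{eq:crystalline base change} attached to the local diagram. This is a compatibility of adjunctions for the commutative square of topoi formed by $g'_\CRIS$, $f_\CRIS$, $f'_\CRIS$ and $g_\CRIS$, together with the identifications of Lemma~\ref{lem:evalutation of crystalline pushforward via projection}. I would verify it following \cite[Thm.~V.3.5.1, Pf.]{Berthelot-book}, by first checking the identification at the level of $\Hom(f^\ast(U,T),-)$ on underived sheaves. I would then pass to injective resolutions, using that $g'^\ast_\CRIS$ and the localization functors are exact. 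The remaining step, which is a minor point, is that flatness of $\calE$ is preserved under restriction to $X\times_YU'$ (Proposition~\ref{prop:quasi-coherent crystals}(1)). The small-site version would follow by the same argument combined with Remark~\ref{rem:functoriality of small crystalline sites}.
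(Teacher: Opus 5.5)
Your proposal is correct and follows the paper's proof essentially step for step. You evaluate at $(U',T')$, identify $g^\ast_\CRIS$ through the object $(U',T',M_{(T',g)})$, rewrite both sides via Lemma~\ref{lem:evalutation of crystalline pushforward via projection}, and apply Theorem~\ref{thm:crystalline base change for projection} to $T'^\sharp\rightarrow T^\sharp$, which is the identity on underlying schemes; your explicit check that the adjunction maps match is something the paper leaves to the phrase ``naturally identified''.

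Your closing aside about the small site is off, though it is not part of this statement: there $(U',T',M_{(T',g)})$ need not lie in the small site of $Y$ and one must use $Lg^\ast_\cris$, which is why the paper notes that the big-site proof is simpler than Berthelot's.
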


Since we work on the big sites, we have $Lg^\ast_\CRIS=g^\ast_\CRIS$, and the proof of this theorem is simpler than that of \cite[Thm.~V.3.5.1]{Berthelot-book}. 

\begin{proof}
Take any $(U',T', M_{T'})\in ((Y',M_{Y'})/\Sigma'^\sharp)_\CRIS$ and let $(U,T,M_T)\coloneqq (U',T',M_{T',g})\in ((Y,M_Y)/\Sigma^\sharp)_\CRIS$ denote the object as in Construction~\ref{construction:strict log structure for functoriality} (relative to $\Sigma'^\sharp\rightarrow \Sigma^\sharp$). We see from Lemma~\ref{lem:evalutation of crystalline pushforward via projection} that the evaluation
\[
(g^\ast_\CRIS Rf_{\CRIS,\ast}\calE)_{(U',T')}\rightarrow (Rf'_{\CRIS,\ast}g'^\ast_\CRIS\calE)_{(U',T')}
\]
is naturally identified with
\begin{equation}\label{eq:base change map in proof}
R\pi_{f^{-1}(U)/T,\ast}(\calE|_{((f^{-1}(U),M_{f^{-1}(U)})/T^\sharp)_\CRIS})\rightarrow R\pi_{f^{-1}(U')/T',\ast}((g_\CRIS'^\ast\calE)|_{((f^{-1}(U'),M_{f^{-1}(U')})/T'^\sharp)_\CRIS}).
\end{equation}

It suffices to show that \eqref{eq:base change map in proof} is a quasi-isomorphism under the additional assumption that $T'$ is quasi-compact (by taking an affine covering).
Consider the diagram
\[
\xymatrix{
(f^{-1}(U'),M_{f^{-1}(U')})\ar[r] \ar[d]& ((f^{-1}(U),M_{f^{-1}(U)})\ar[d]\\
(U',M_{U'})\ar[r] \ar@{^{(}->}[d]& (U,M_U)\ar@{^{(}->}[d]\\
T'^\sharp \ar[r]^-g& T^\sharp.
}
\]
Here the upper diagram is Cartesian, and $g$ induces an isomorphism of the underlying schemes by construction. Hence we deduce from Theorem~\ref{thm:crystalline base change for projection} that \eqref{eq:base change map in proof} is a quasi-isomorphism.
\end{proof}

\section{Crystalline derived pushforward of finite locally free modules}\label{sec:Crystalline derived pushforward of finite locally free crystals}

\begin{thm}\label{thm:crystalline derived pushforward of perfect complexes}
Let $\Sigma^\sharp$ be a fine log PD-scheme in which $p$ is nilpotent, and let $f\colon (X,M_X)\rightarrow (Y,M_Y)$ be a smooth and integral morphism of log $\Sigma^\sharp$-schemes such that $f\colon X\rightarrow Y$ is proper.
For every finite locally free $\calO_{X/\Sigma}$-module $\calE$, the derived pushforward $Rf_{\CRIS,\ast}\calE$ is a perfect complex of $\calO_{Y/\Sigma}$-modules.
The same holds for the small crystalline sites.
\end{thm}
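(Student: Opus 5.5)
Being a perfect complex of $\calO_{Y/\Sigma}$-modules is a local property, so we work locally on $((Y,M_Y)/\Sigma^\sharp)_\CRIS$. The plan has three steps: (a) show that each evaluation is a perfect complex on $T$; (b) show that the family of evaluations satisfies the crystal property in the derived sense; (c) deduce perfectness globally from (a) and (b).

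For (a), take an affine object $(U,T)$ with $T=\Spec A$. By Lemma~\ref{lem:evalutation of crystalline pushforward via projection}(2), $(Rf_{\CRIS,\ast}\calE)_{(U,T)}\cong R\pi_{X_U/T,\ast}\calE$, where $X_U=X\times_YU$. We must show this is a perfect complex on $T_\et$. For (b), take a morphism $g\colon (U',T')\rightarrow (U,T)$. Theorem~\ref{thm:crystalline base change for projection} gives a base change quasi-isomorphism
\[
Lg_\et^\ast(Rf_{\CRIS,\ast}\calE)_{(U,T)}\cong (Rf_{\CRIS,\ast}\calE)_{(U',T')},
\]
because a finite locally free $\calE$ is flat and $f$ is smooth and integral. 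For (c), we combine this crystal property with the perfectness of each evaluation. Any object has an affine \'etale cover, and over an affine $(U,T)$ the perfect complex on $T$ has a finite locally free resolution after localization. Tensoring with $\calO_{Y/\Sigma}$, using the crystal property, yields a local quasi-isomorphism with a strictly perfect complex, exactly as in the non-log argument of \cite[07MX]{stacks-project}. Boundedness (Theorem~\ref{thm:boundedness of crystalline pushforward}) guarantees that all complexes involved lie in $D^b$.

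The core is step (a), and we follow the Frobenius-twist idea from the introduction. Since $p$ is nilpotent and $U\hookrightarrow T$ is a PD-immersion, some power $F_T^N$ of the absolute Frobenius factors through $U$. To arrange this, first reduce modulo a PD-nilpotent situation: write $A\rightarrow A/I$ with $I$ a PD-ideal, so that $x^{p}=p!\,x^{[p]}$ and hence $x^{p^N}\in pA$-multiples. Iterating, $x^{p^{N}}=0$ for $N$ large relative to the nilpotence order of $p$. This gives a map $F_{(U,T)}\colon T\rightarrow U$, and we form $X'_T\coloneqq X_U\times_{U,F_{(U,T)}}T\rightarrow T$, a proper, smooth, and integral log morphism, hence flat.

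Smooth log lifts exist \'etale locally on $X_U$. Using these lifts and Theorem~\ref{thm:cohomology of crystal in terms of log de Rham complex} with the \v{C}ech method of Construction~\ref{construction:embedding system}, we compare $R\pi_{X_U/T,\ast}\calE$ with $Rf'_{T,\ast}$ of a complex of coherent modules on $X'_T$ with $\calO_T$-linear differentials. Concretely, the relative Frobenius $X_T\rightarrow X'_T$ of local lifts makes the log de Rham differentials linear over the twisted structure, in the spirit of Faltings. The pushforward $Rf'_{T,\ast}$ preserves pseudo-coherence because $f'_T$ is proper and flat and Noetherian approximation applies. Pseudo-coherence combined with finite Tor-amplitude, which follows from flatness of $\calE$ and the base change of (b) to residue fields, gives perfectness.

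I expect the main obstacle to be making this comparison rigorous without a global lift $X_T\rightarrow T$. The key is to show that the crystal $\calE$, restricted to the big site of $X_U$ over $T$, is computed by a complex that descends, via the PD-structure, to coherent data on the globally defined $X'_T$. I would first attempt this by realizing $F_{(U,T)}^\ast$ of the divided-power neighborhoods as independent of the choice of lift, and gluing via the \v{C}ech--Alexander complex.
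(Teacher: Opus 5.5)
The gap is in step (a). The Frobenius twist only makes sense when $pA=0$. If $p$ is nilpotent but nonzero on $T=\Spec A$, then $x\mapsto x^{p^N}$ is not additive. So there is no absolute Frobenius $F_T$ (nor any power of it), no morphism $F_{(U,T)}\colon T\to U$, and hence no $X'_T$. The nilpotence of elements of the PD-ideal that you check does not address this, and $U$ itself need not be an $\F_p$-scheme.

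What is missing is the reduction to characteristic $p$, which the paper makes before any Frobenius appears. The cohomology sheaves of $K\coloneqq(Rf_{\CRIS,\ast}\calE)_{(U,T)}$ are quasi-coherent and vanish in large degrees (Propositions~\ref{prop:vanishing of crystal under projection} and~\ref{prop:quasi-coherence of crystal under projection}). Since $p$ is nilpotent, $K$ is perfect once $K\otimes^{\mathbf{L}}_{\Z}\F_p$ is (\cite[07LU]{stacks-project}). By Theorem~\ref{thm:crystalline base change for projection}, the latter is the evaluation at $(U,T)\times_{\Z}\F_p$. Only then may one assume $\Sigma$ is an $\F_p$-scheme, where $x^p=p!\,x^{[p]}=0$ on the PD-ideal gives $F_T=(U\hookrightarrow T)\circ F_{(U,T)}$.

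Your steps (b) and (c) agree with the paper's initial reduction. Your Tor-amplitude argument is a valid substitute for the paper's \v{C}ech bicomplex of $T$-flat modules: it combines pseudo-coherence with uniformly bounded log de Rham cohomology of the proper fibres $X_{\kappa(\mathfrak p)}$, computed via the crystal property. But it needs the pseudo-coherence from step (a).

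Even in characteristic $p$, the comparison you flag as the main obstacle needs an idea your proposal does not supply. Gluing \v{C}ech--Alexander complexes or proving independence of lifts is unnecessary. The key point is Lemma~\ref{lem:crystalline structure sheaf and Frobenius}: for every $(V,Q)$ in $((X_U,M_{X_U})/T^\sharp)_\CRIS$, the structure map $Q\to T$ factors uniquely through $f'_T\colon X'_T\to T$. This holds because the Frobenius of $Q$ factors through $V\to X_U$, compatibly with $F_{(U,T)}\colon T\to U$.

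Hence $\calO_{X_U/T}$ is an algebra over $(F_{X_U/(U,T)}\circ u_{X_U/T})^{-1}\calO_{X'_T}$. So $RF_{X_U/(U,T),\et,\ast}Ru_{X_U/T,\ast}\calE$ is globally an object of $D^+(X'_{T,\et},\calO_{X'_T})$, and its pseudo-coherence can be checked \'etale locally on $X'_T$. Locally a smooth integral lift $X_T$ exists, and the object is represented by $F_{X_T/T,\ast}(E\otimes\omega^\bullet_{X_T/T},\nabla)$.

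That this is a bounded complex of finite $B'$-modules does not by itself give pseudo-coherence: $B'$ need not be Noetherian, and $F_{X_T/T}$ need not be flat in the log case. Noetherian approximation is needed exactly here:
\begin{enumerate}
\item descend $B$, $E$, $\omega^1$ and the differentials to a finite type $\Z$-algebra $A_\lambda$;
\item use finiteness of relative Frobenius over the Noetherian ring $B'_\lambda$;
\item use flatness of the terms over $A_\lambda$ to identify the derived base change with the original complex.
\end{enumerate}
For $Rf'_{T,\ast}$, by contrast, it suffices to cite that a flat proper morphism of finite presentation preserves pseudo-coherence.
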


See \cite[Thm.~13]{Faltings-very-ramified} and \cite[07MX]{stacks-project} for relevant results in the non-log case; Theorem~\ref{thm:crystalline derived pushforward of perfect complexes} for $\calE=\calO_{X/\Sigma}$ is proved in \cite[1.11, Thm.~(ii)]{Beilinson-crystalline-period-map} under the additional assumption that $f$ is of Cartier type via the Cartier isomorphism.
The theorem in the above generality is essentially due to Faltings;  \cite{faltings-almostetale} sketches the proof when $Y$ is the formal spectrum of a $p$-adic DVR (in the framework of \S~\ref{sec:Big absolute crystalline sites}). Below, we give a detailed proof, following \cite[pp.~247-248]{faltings-almostetale}.

\begin{proof}
We treat the big crystalline site case, as the small crystalline site case is proved in a similar way.
We see from Lemma~\ref{lem:evalutation of crystalline pushforward via projection}(2) and Theorem~\ref{thm:crystalline base change for projection} that it suffices to show that, for each affine $(U,T)\in ((Y,M_Y)/\Sigma^\sharp)_\CRIS$, the object $(Rf_{\CRIS,\ast}\calE)_{(U,T)}\in D^b(T_\et,\calO_T)$ is perfect.

We can also deduce from Propositions~\ref{prop:vanishing of crystal under projection} and \ref{prop:quasi-coherence of crystal under projection} that $(R^if_{\CRIS,\ast}\calE)_{(U,T)}$ is a quasi-coherent $\calO_T$-module for each $i$ and vanishes $i\gg 0$.
By \cite[07LU]{stacks-project}, it suffices to prove that $(Rf_{\CRIS,\ast}\calE)_{(U,T)}\otimes_{\Z}^\mathbf{L}\F_p$ is perfect, and the latter is identified with $(Rf_{\CRIS,\ast}\calE)_{(U_1,T_1)}$, where $(U_1,T_1)\coloneqq (U,T)\times_\Z \F_p$, again by Theorem~\ref{thm:crystalline base change for projection}. This means that we may assume that $\Sigma$ is an $\F_p$-scheme. 
Hence Lemma~\ref{lem:evalutation of crystalline pushforward via projection}(2) reduces the theorem to Proposition~\ref{prop:crystalline derived pushforward of perfect complex in characteristic p} below.
\end{proof}

We now assume that $\Sigma$ is an $\F_p$-scheme.
Fix an affine $(U,T)\in ((Y,M_Y)/\Sigma^\sharp)_\CRIS$ and set $f_U\colon (X_U,M_{X_U})\coloneqq (X,M_X)\times_{(Y,M_Y)}(U,M_U)\rightarrow (U,M_U)$. Note that $X_U= X\times_YU$ as schemes. Write $\pi$ for the morphism $X_U\rightarrow T$.
Recall the morphisms of topoi
\[
\xymatrix{
\Sh(((X_U,M_{X_U})/T^\sharp)_\CRIS)\ar[r]^-{u_{X_U/T}}\ar@/_15pt/[rr]_-{\pi_{X_U/T}} & \Sh(X_{U,\et})\ar[r]^-{\pi_\et}&\Sh(T_\et).
}
\]

\begin{prop}\label{prop:crystalline derived pushforward of perfect complex in characteristic p}
For every finite locally free $\calO_{X_U/T}$-module $\calE$ on $((X_U,M_{X_U})/T^\sharp)_\CRIS$, $R\pi_{X_U/T,\ast}\calE\in D^+(T_\et,\calO_T)$ is a perfect complex. The same holds for the small crystalline site.
\end{prop}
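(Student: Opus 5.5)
The plan follows the strategy announced in the introduction: replace the possibly non-existent global lift of $f_U$ over $T$ by its Frobenius twist, which does exist globally. Since $\Sigma$ is an $\F_p$-scheme and $U\hookrightarrow T$ is a PD-thickening, every local section $x$ of $\calJ_T$ satisfies $x^p=p!\,x^{[p]}=0$. Hence the absolute Frobenius $F_T$ kills $\calJ_T$ and factors as $T\xrightarrow{F_{(U,T)}}U\hookrightarrow T$; the log structure factors compatibly, because the Frobenius of $M_T$ (multiplication by $p$) factors through $M_U$ using the $(1+\calJ_T)$-torsor description of Construction~\ref{construction:strict log structure for functoriality}.

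Set $(X'_T,M_{X'_T})\coloneqq(X_U,M_{X_U})\times_{(U,M_U),F_{(U,T)}}(T,M_T)$, taking the fiber product in fine log schemes, with $f'_T\colon X'_T\to T$. Since $f$ is smooth and integral, $f'_T$ is again smooth and integral, so it is flat by \cite[Cor.~4.5]{Kato-log} and proper. There is a relative Frobenius $F_{X_U/T}\colon (X_U,M_{X_U})\to(X'_T,M_{X'_T})$ over $T$, namely the pair of the absolute Frobenius of $X_U$ and the map $X_U\to U\hookrightarrow T$.

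The heart of the argument is to show that $R\pi_{X_U/T,\ast}\calE\cong Rf'_{T,\et,\ast}\calK$ for some complex $\calK$ on $X'_{T,\et}$ that is bounded and has $\calO_{X'_T}$-linear differentials. I would obtain this by computing $Ru_{X_U/T,\ast}\calE$ étale locally on $X_U$. Locally, choose a smooth lift $(Z,M_Z)$ of $X_U$ over $T$, which exists by \cite[Lem.~2.3.14]{nakkajima-shiho}. By Theorem~\ref{thm:cohomology of crystal in terms of log de Rham complex}, $Ru_{X_U/T,\ast}\calE$ is then computed by $(E\otimes\omega^\bullet_{Z/T},\nabla)$. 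On $Z$ the Frobenius twist $Z'\coloneqq Z\times_{T,F_T}T$ is identified with the local piece of $X'_T$, and the de Rham differentials are $\calO_{Z'}$-linear via $F_{Z/T}$, since $d(a^p)=0$ and $\nabla$ kills $p$-th powers. Consequently $F_{X_U/T,\et,\ast}Ru_{X_U/T,\ast}\calE$ is locally a bounded complex of quasi-coherent $\calO_{X'_T}$-modules, and each term is of finite type because $F_{Z/T}$ is finite; note that $Z\to T$ is of finite type and the Frobenius of a smooth morphism is finite.

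To globalize, I would use the Čech--Alexander/simplicial embedding of Construction~\ref{construction:embedding system}, together with the compatibility of Theorem~\ref{thm:compatibility of cohomology of crystal in terms of log de Rham complex and pushforward}, which shows that the local identifications glue in the derived category of $\calO_{X'_T}$-modules. The output is that $F_{X_U/T,\et,\ast}Ru_{X_U/T,\ast}\calE$ lies in $D^b_{\mathrm{coh}}$, or more precisely is pseudo-coherent as an object of $D(X'_{T,\et},\calO_{X'_T})$. This gluing is the main obstacle. The alternative route of embedding $X'_T$ globally fails because the lifts are only local, so the care must go into showing that the local lifts differ by HPD isomorphisms that act $\calO_{X'_T}$-linearly. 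My first attempt would use the $\calO_{X'_T}$-linearity of the stratification isomorphisms modulo Frobenius, showing that they act trivially on $\calO_{Z'}$.

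Granting this, $Rf'_{T,\ast}$ preserves pseudo-coherence because $f'_T$ is proper, so $R\pi_{X_U/T,\ast}\calE$ is pseudo-coherent on $T$. It also has bounded Tor-amplitude: $\calE$ is locally free, the terms $E\otimes\omega^i$ are flat over $T$, and by Theorem~\ref{thm:crystalline base change for projection} its derived base change to any $T$-scheme is again such a pushforward, which vanishes in high degrees uniformly by Proposition~\ref{prop:vanishing of crystal under projection}. Pseudo-coherence together with finite Tor-amplitude gives perfectness by \cite[0658]{stacks-project}. The small-site case follows identically, using \cite[Thm.~6.4]{Kato-log}.
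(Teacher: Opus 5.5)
Your overall architecture is the paper's: the twist $f'_T\colon X'_T\to T$ (flat and proper), local smooth lifts, and perfectness as pseudo-coherence plus finite Tor-amplitude. However, the step you yourself call the main obstacle is left open, and that is where the key idea lies. Globally you only have an object of $D^+(X'_{T,\et},f_T'^{-1}\calO_T)$. The $\calO_{X'_T}$-linear structure exists only on local pieces, and such local structures on an object of a derived category cannot be glued. Your suggested comparison of HPD stratifications is not carried out.

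The paper never glues. By Lemma~\ref{lem:crystalline structure sheaf and Frobenius}, for every object $(V,Q)$ of $((X_U,M_{X_U})/T^\sharp)_\CRIS$, the Frobenius $F_Q$ factors through the PD-thickening $V\hookrightarrow Q$, just as $F_T$ factors through $U$. Hence the structure map $Q\to T$ factors uniquely through $f'_T$. Consequently $\calO_{X_U/T}$ is \emph{globally} an algebra over $(F_{X_U/(U,T),\et}\circ u_{X_U/T})^{-1}\calO_{X'_T}$, and $RF_{X_U/(U,T),\et,\ast}Ru_{X_U/T,\ast}\calE$ is an object of $D^+(X'_{T,\et},\calO_{X'_T})$ from the outset. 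Pseudo-coherence is \'etale local on $X'_T$, so it then suffices to identify this object with $F_{X_T/T,\et,\ast}(E\otimes\omega^\bullet,\nabla)$ on one local lift at a time.

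Even locally, the pseudo-coherence you assert does not follow from finiteness of $F_{Z/T}$. The thickening $T=\Spec A$ is arbitrary and typically non-Noetherian (PD-polynomial algebras), so ``$D^b_{\mathrm{coh}}$'' is not available. Over such a base, a bounded complex of finite, even finitely presented, $\calO_{X'_T}$-modules need not be pseudo-coherent. In the non-log case $F_{Z/T}$ is finite flat and the terms are finite locally free over $\calO_{X'_T}$, so the issue does not arise; in the log case $F_{Z/T}$ need not be flat.

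The paper closes this with a Noetherian approximation:
\begin{itemize}
\item write $A=\bigcup_\lambda A_\lambda$ with each $A_\lambda$ of finite type over $\Z$;
\item descend $B$, $M$, $\omega^1_{B/A}$ to $B_\lambda$, flat of finite presentation over $A_\lambda$;
\item note that the descended de Rham terms are finite over the Noetherian ring $B'_\lambda$, hence pseudo-coherent;
\item use their flatness over $A_\lambda$ to identify $M'^\bullet_\lambda\otimes^{\mathbf{L}}_{B'_\lambda}B'$ with $M\otimes_B\omega^\bullet_{B/A}$.
\end{itemize}
You need an argument of this kind.

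Your Tor-amplitude argument takes a different route from the paper. The paper writes $R\pi_{X_U/T,\ast}\calE$ as the total complex of a Zariski \v{C}ech double complex whose terms are finite locally free over PD-envelopes $D_{\underline{i}}$ that are flat over $T$. As stated, your version misuses Theorem~\ref{thm:crystalline base change for projection} in two ways. First, that theorem only gives base change along morphisms of log PD-schemes, not to an arbitrary $T$-scheme. Second, finite Tor-amplitude requires control of $-\otimes^{\mathbf{L}}_{\calO_T}N$ for all modules $N$, not just structure sheaves.

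Your route can be repaired once pseudo-coherence is known. In characteristic $p$ every PD-ideal is nil, so for each prime $\mathfrak{p}$ of $A$ the map $\Spec\kappa(\mathfrak{p})\to T$ is a PD-morphism with zero PD-ideal, and base change applies. Theorem~\ref{thm:boundedness of crystalline pushforward}, applied to $f_U$ over $T^\sharp$, gives a bound independent of $\mathfrak{p}$; Proposition~\ref{prop:vanishing of crystal under projection} as stated only covers $U\to Y$ \'etale. The fiberwise criterion for perfectness of pseudo-coherent complexes then finishes the argument.
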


For a fine log scheme $(Z,M_Z)$ over $\F_p$, let $F_Z\coloneqq F_{(Z,M_Z)}\colon (Z,M_Z)\rightarrow (Z,M_Z)$ denote the absolute $p$th Frobenius. Since $F_U\circ f_U=f_U\circ F_{X_U}$, the map $f_U$ factors through the relative Frobenius as $f_U\colon (X_U,M_{X_U})\xrightarrow{F_{X_U/U}}(X_U',M_{X_U'})\xrightarrow{f_U'}(U,M_U)$. Moreover, since $(U,M_U)\hookrightarrow (T,M_T)$ is a PD-immersion, the Frobenius $F_T$ factors as $(T,M_T)\xrightarrow{F_{(U,T)}}(U,M_U)\hookrightarrow (T,M_T)$.
Define $(X_T',M_{X_T'})$ to be the fiber product $(X_U,M_{X_U})\times_{(U,M_U),F_{(U,T)}}(T,M_T)$. We name the morphisms as in the following commutative diagram of fine log schemes (with log structures omitted for simplicity), where the squares are Cartesian (even as schemes since $f$ is integral),
\[
\xymatrix{
X_U \ar[r]_-{F_{X_U/U}}\ar@/_10pt/[rd]_-{f_U}\ar@/^30pt/[rrr]_(.7){F_{X_U}}\ar@/^20pt/[rr]_-{F_{X_U/(U,T)}} &X_U'\ar[r]\ar[d]^-{f_U'}&X_T'\ar[r]\ar[d]^-{f_T'}& X_U\ar[d]^-{f_U}\ar[rd]^-{\pi} & \\
&U\ar[r]\ar@/_20pt/[rr]^-{F_{U}}\ar@{^{(}->}[r]&T\ar[r]^-{F_{(U,T)}}\ar[r]\ar@/_20pt/[rr]^-{F_{T}} & U \ar@{^{(}->}[r] & T.
}
\]

\begin{lem}\label{lem:crystalline structure sheaf and Frobenius}
For every $(V,Q)\in ((X_U,M_{X_U})/T^\sharp)_\CRIS$, the structure morphism $Q\rightarrow T$ uniquely factors through $f_T'\colon X_T'\rightarrow T$. In particular, the structure sheaf $\calO_{X_U/T}$ of $((X_U,M_{X_U})/T^\sharp)_\CRIS$ is an algebra over $(F_{X_U/(U,T),\et}\circ u_{X_U/T})^{-1}\calO_{X_T'}$.
\end{lem}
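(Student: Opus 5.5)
The strategy is to build the morphism to $X_T'$ through the universal property of the fiber product $X_T'=X_U\times_{U,F_{(U,T)}}T$, taken in fine log schemes; since $f$ is integral, this is also the fiber product of the underlying schemes. Fix $(V,Q)\in((X_U,M_{X_U})/T^\sharp)_\CRIS$, with exact closed immersion $(V,M_V)\hookrightarrow(Q,M_Q)$, structure map $g\colon V\to X_U$, and PD-morphism $(Q,M_Q)\to(T,M_T)$. To give $(Q,M_Q)\to(X_T',M_{X_T'})$ over $T$, we need a morphism $(Q,M_Q)\to(X_U,M_{X_U})$ whose composite with $f_U$ equals the composite $(Q,M_Q)\to(T,M_T)\xrightarrow{F_{(U,T)}}(U,M_U)$.

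The natural candidate uses the same trick as for $T$. The ideal $\calJ_Q$ defining $V$ carries a PD-structure, and $\Sigma$ is an $\F_p$-scheme, so $x^p=p!\,x^{[p]}=0$ for every $x\in\calJ_Q$. Hence $F_Q$ kills $\calJ_Q$, and the absolute Frobenius of $(Q,M_Q)$ factors as $(Q,M_Q)\xrightarrow{F_{(V,Q)}}(V,M_V)\hookrightarrow(Q,M_Q)$. On log structures, $F_Q$ is multiplication by $p$ on $M_Q$. Because the immersion is exact, $M_V=M_Q/(1+\calJ_Q)$, and $(1+x)^p=1$ for $x\in\calJ_Q$, so $p$ factors through $M_V$.

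Now set $\alpha\coloneqq g\circ F_{(V,Q)}\colon(Q,M_Q)\to(X_U,M_{X_U})$, where $g$ is strict. Functoriality of Frobenius gives $F_{(U,T)}\circ h=h_V\circ F_{(V,Q)}$, with $h\colon Q\to T$ and $h_V\colon V\to U$: both sides are the unique factorization of $F$ through the respective closed subschemes, and $T\to U$ is determined after composing with the monomorphism $U\hookrightarrow T$. Then $f_U\circ\alpha=f_U\circ g\circ F_{(V,Q)}=h_V\circ F_{(V,Q)}=F_{(U,T)}\circ h$, so $(\alpha,h)$ induces the desired morphism $Q\to X_T'$. This is the step I expect to need the most care, in particular the compatibility of the log parts, which I would check using the description of $F_{(U,T)}$ on monoids.

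For uniqueness, the plan is to argue that any factorization of $Q\to T$ through $X_T'$ is forced to be this one. Ideally the factorization should be characterized as the unique map that is compatible with the Frobenius structure; concretely, I would try to show that its composite $Q\to X_T'\to X_U$ must equal $g\circ F_{(V,Q)}$. I do not see a clean argument for this from the statement alone, so I would first check the paper's intended meaning of ``uniquely'', namely whether the factorization is canonical given the data of $(V,Q)$. Canonicity is in any case enough for naturality: for a morphism $(V',Q')\to(V,Q)$, the constructed maps are compatible because Frobenius and the factorization $F_{(V,Q)}$ are natural.

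For the algebra statement, on each $(V,Q)$ the map $Q\to X_T'$ induces $\calO_{X_T'}\to\calO_Q$, that is, a map from the pullback along $Q_\et=V_\et\to X_{U,\et}\to X'_{T,\et}$. The underlying map of étale topoi is $F_{X_U/(U,T),\et}$, since the composite on $V$ is $F_{X_U/(U,T)}\circ g$ up to the nil immersion $V\hookrightarrow Q$. Naturality in $(V,Q)$ then assembles these maps into a ring map $(F_{X_U/(U,T),\et}\circ u_{X_U/T})^{-1}\calO_{X_T'}\to\calO_{X_U/T}$.
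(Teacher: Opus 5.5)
Your proposal is correct and follows the paper's argument, with more detail. The paper also obtains $Q\to X_T'$ from the Frobenius compatibility of $V\to X_U\to U$ together with the factorizations of $F_Q$ through $V\hookrightarrow Q$ and of $F_T$ through $U\hookrightarrow T$, i.e.\ via the pair $(g\circ F_{(V,Q)},h)$; your checks of $f_U\circ g\circ F_{(V,Q)}=F_{(U,T)}\circ h$ and of the log part fill in what the paper leaves implicit.

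Your caution about uniqueness is justified. The paper's ``uniquely'' can only mean ``canonically determined by the unique maps $F_{(V,Q)}$ and $F_{(U,T)}$'', since literal uniqueness among all factorizations fails. For example, take $X=\mathbb{P}^1_{\F_p}$ over $Y=\Sigma=U=T=\Spec\F_p$, and $V=Q=\Spec\F_p$ mapping to a rational point; then every $\F_p$-point of $X_T'\cong\mathbb{P}^1_{\F_p}$ gives a factorization. The canonical choice, being natural in $(V,Q)$, is exactly what the algebra statement needs.
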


\begin{proof}
The structure morphism $V\rightarrow X_U\rightarrow U$ is compatible with the absolute Frobenii $F_V$ and $F_U$. Since $F_V$ and $F_U$ factor uniquely as $F_V\colon V\hookrightarrow Q\rightarrow V$ and $F_U\colon U\hookrightarrow T\rightarrow U$, we conclude that $Q\rightarrow T$ uniquely factors as $Q\rightarrow X_T'\rightarrow T$.
\end{proof}

\begin{proof}[Proof of Proposition~\ref{prop:crystalline derived pushforward of perfect complex in characteristic p}]
We first work on the big crystalline site case. We are going to show that $R\pi_{X_U/T,\ast}\calE$ is both pseudo-coherent and has finite Tor-dimension (see \cite[08G8]{stacks-project}).

For the pseudo-coherence, note that  $R\pi_{X_U/T,\ast}\calE=Rf_{T,\et,\ast}'RF_{X_U/(U,T),
\et,\ast}Ru_{X_U/T,\ast}\calE$ and the complex $RF_{X_U/(U,T),\ast}Ru_{X_U/T,\ast}\calE$ is an object of $D^+((X_T')_\et,\calO_{X_T'})$ by Lemma~\ref{lem:crystalline structure sheaf and Frobenius}.
Since $f_T'$ is flat and proper, it is enough to show that $RF_{X_U/(U,T),\ast}Ru_{X_U/T,\ast}\calE$ is pseudo-coherent (see \cite[0CSD]{stacks-project}\footnote{Or, more precisely, see \cite[0CTN]{stacks-project} as we work on the \'etale site.}). The latter property can be checked \'etale locally on $X_T'$, and the above formulation is compatible with \'etale localization.
So by \cite[Lem.~2.3.14]{nakkajima-shiho}, we replace $X_U$ with an \'etale cover and assume that $X_U$ is affine and there exists a smooth and integral morphism $f_T\colon (X_T,M_{X_T})\rightarrow (T,M_T)$ together with an isomorphism $(X_U,M_{X_U})\cong (X_T,M_{X_T})\times_{(T,M_T)}(U,M_U)$.
Note that the morphism $F_{X_U/(U,T)}$ agrees with the composite of $X_U\hookrightarrow X_T$ and the relative Frobenius $F_{X_T/T}\colon X_T\rightarrow X_T'$.

Let $(E,\nabla\colon E\rightarrow E\otimes\omega_{X_T/T}^1)$ denote a finite locally free $\calO_{X_T}$-module with quasi-nilpotent integrable log connection corresponding to $\calE$ on $((X_U,M_{X_U})/T^\sharp)_\CRIS$ by Proposition~\ref{prop: crystals and quasi-nilpotent connections}. 
Consider the log de Rham complex $(E\otimes_{\calO_Z}\omega^\bullet_{Z/\Sigma},\nabla)$ on $X_{T,\et}$. Since $F_{X_T/T}$ is affine, the derived pushforward $RF_{X_T/T,\et,\ast}(E\otimes_{\calO_Z}\omega^\bullet_{Z/\Sigma},\nabla)$ is represented by the complex $F_{X_T/T,\et,\ast}(E\otimes_{\calO_Z}\omega^\bullet_{Z/\Sigma},\nabla)$, and the local computation shows that the latter is a complex of $\calO_{X_T'}$-modules. We know from Theorem~\ref{thm:cohomology of crystal in terms of log de Rham complex} that there is a natural quasi-isomorphism 
$Ru_{X_U/T,\ast}\calE\cong (E\otimes_{\calO_Z}\omega^\bullet_{Z/\Sigma},\nabla)$
 in $D^+(X_{U,\et},\pi^{-1}_\et\calO_T)$ under the equivalence $\Sh(X_{U,\et})\cong \Sh(X_{T,\et})$. Moreover, we can check from the proof that $RF_{X_U/(U,T),\ast}Ru_{X_U/T,\ast}\calE\in D^+((X_T')_\et,\calO_{X_T'})$ is represented by $F_{X_T/T,\et,\ast}(E\otimes_{\calO_Z}\omega^\bullet_{Z/\Sigma},\nabla)$.
We are going to use a limit argument to show the pseudo-coherence of the latter complex.

Write $T=\Spec A$ and $X_T=\Spec B$. Set $B'=B\otimes_{A,F_A}A$ so that $X_T'=\Spec B'$. By assumption, $B$ and $B'$ are both flat of finite presentation over $A$. Hence the relative Frobenius $B'\rightarrow B$ is finite. Let $M$ (resp.~$\omega_{B/A}^i$) denote the finite locally free $B$-module corresponding to $E$ (resp.~$\omega_{X_T/T}^i$). 
We need to show that the complex $M\otimes_B\omega_{B/A}^\bullet$ of finitely generated $B'$-modules is pseudo-coherent by \cite[08E7]{stacks-project}.\footnote{In fact, $F_{X_T/T}$ is finite and flat in the non-log case, in which case $F_{X_T/T,\et,\ast}(E\otimes_{\calO_Z}\omega^\bullet_{Z/\Sigma},\nabla)$ is perfect and the proof ends here.}

Write $A=\bigcup_\lambda A_\lambda$ where each $A_\lambda$ is a sub $\Z$-algebra of finite type (and thus Noetherian). It follows from the standard limit argument that for a sufficiently large $\lambda$, there exists an $A_\lambda$-algebra $B_\lambda$ flat of finite presentation together with an isomorphism $B_\lambda\otimes_{A_\lambda}A\cong B$. If we set $B_\lambda'\coloneqq B_\lambda\otimes_{A_\lambda,F_{A_\lambda}}A_\lambda$, we have the relative Frobenius $B_\lambda'\rightarrow B_\lambda$ and the canonical identification $B_\lambda'\otimes_{A_\lambda}A\cong B'$.
By increasing $\lambda$ if necessary, we may also assume that $M$ and $\omega_{B/A}^1$ descend to finite locally free $B$-modules $M_\lambda$ and $\omega_\lambda$, and the complex of finitely generated $B'$-modules $M\otimes_B\omega_{B/A}^i$ descends to a complex of finitely generated $B_\lambda'$-modules $M_\lambda'^\bullet\coloneqq M_\lambda\otimes_{B_\lambda}\bigwedge^\bullet_{B_\lambda}\omega_{\lambda}$. Since $B_\lambda'$ is Noetherian and $B_\lambda'\rightarrow B_\lambda$ is finite, we know from \cite[066E]{stacks-project} that $M_\lambda'^\bullet$ is a pseudo-coherent complex of $B_\lambda'$-modules. 
The natural map of complex of $B_\lambda'$-modules $M_\lambda'^\bullet\rightarrow M\otimes_B\omega_{B/A}^\bullet$ induces a map $M_\lambda'^\bullet\otimes_{B_\lambda'}^L B'\rightarrow M\otimes_B\omega_{B/A}^\bullet$ in $D(B')$, and it suffices to show that the latter map is an isomorphism by \cite[0650]{stacks-project}. Since $B_\lambda'$ is flat over $A_\lambda$ and $B'=B_\lambda'\otimes_{A_\lambda}A$, one can check that the map in question is identified with $M_\lambda'^\bullet\otimes_{A_\lambda}^L A\rightarrow M\otimes_B\omega_{B/A}^\bullet$ (e.g.~by taking a $K$-flat resolution of $M_\lambda'^\bullet$ as $B_\lambda'$-modules). As each term $M_\lambda'^i=M_\lambda\otimes_{B_\lambda}\bigwedge^i_{B_\lambda}\omega_{\lambda}$ is a finite locally free $B_\lambda$-module and thus flat over $A_\lambda$ and satisfies $M_\lambda'^i\otimes_{A_\lambda}A\cong M\otimes_B\omega_{B/A}^i$, we conclude that the latter map is an isomorphism. This completes the proof of the pseudo-coherence of $R\pi_{X_U/T,\ast}\calE$.

We turn to the finite Tor-dimensionality of $R\pi_{X_U/T,\ast}\calE$.
It is enough to represent $R\pi_{X_U/T,\ast}\calE$ as a finite complex of flat $\calO_T$-modules by \cite[08G1]{stacks-project}.
Take an open covering $X_U=V_0\cup\cdots \cup V_n$ by finitely many affines. Then each $(V_i,M_{V_i})$ lifts to a fine log scheme $(Z_i,M_{Z_i})$ smooth and integral over $(T,M_T)$ by \cite[Lem.~2.3.14]{nakkajima-shiho}.

Let $\Delta_n$ denote the opposite category of the category consisting of non-empty subsets of $\{0,\ldots,n\}$ with morphisms being inclusions.
For each $\underline{i}=\{i_0<\cdots<i_l\}\in \Delta_n$, set $V_{\underline{i}}\coloneqq V_{i_0}\cap \cdots\cap V_{i_l}$ and $(Z_{\underline{i}},M_{Z_{\underline{i}}})\coloneqq (Z_{i_0},M_{Z_{i_0}})\times_{(T,M_T)} \cdots \times_{(T,M_T)} (Z_{i_l},M_{Z_{i_l}})$. Define $(D_{\underline{i}},M_{D_{\underline{i}}})$ to be the PD-envelope of the closed immersion $(V_{\underline{i}},M_{V_{\underline{i}}})\hookrightarrow(Z_{\underline{i}},M_{Z_{\underline{i}}})$ relative to $T^\sharp$. We note that $D_{\underline{i}}$ is flat over $T$; see \cite[Lem.~2.22, Pf.]{hyodo-kato}.

With the notation as in Definition~\ref{defn:localization of crystalline site by open}, we obtain the topos $\Sh(((V_\bullet,M_{V_\bullet})/T^\sharp)_\CRIS)$ associated to the diagram of topoi indexed by $\Delta_n$ associating to $\underline{i}$ the topos 
\[
\Sh(((X_U,M_{X_U})/T^\sharp)_\CRIS)/V_{\underline{i},\CRIS}\cong \Sh((V_{\underline{i}},M_{V_{\underline{i}}})/T^\sharp)_\CRIS).
\]
We have a commutative diagram of ringed topoi
\[
\xymatrix{
(\Sh(((V_\bullet,M_{V_\bullet})/T^\sharp)_\CRIS),\calO_{V_\bullet/T})\ar[r]^-{\pi_{V_\bullet/T_\bullet}}\ar[d]_-{a_\CRIS} & (\Sh(T_\et),\calO_T)\times\Delta_n\ar[d]^-b\\
(\Sh(((X_U,M_{X_U})/T^\sharp)_\CRIS),\calO_{X_U/T})\ar[r]^-{\pi_{X_U/T}} &(\Sh(T_\et),\calO_T),
}
\]
where $(\Sh(T_\et),\calO_T)\times\Delta_n$ denotes the constant ringed topos indexed by $\Delta_n$.
We know from \cite[Prop.~V.3.4.8]{Berthelot-book} that the adjunction map $\calE\rightarrow Ra_{\CRIS,\ast}a_{\CRIS}^\ast \calE$ is a quasi-isomorphism. Hence it is enough to show that $Rb_\ast R\pi_{V_\bullet/T_\bullet,\ast}a_{\CRIS}^\ast \calE$ has finite Tor-dimension.

Let $(E_{\underline{i}},\nabla\colon E_{\underline{i}}\rightarrow E_{\underline{i}}\otimes \omega_{Z_{\underline{i}}/T}^1)$ denote a finite locally free $\calO_{D_{\underline{i}}}$-module with quasi-nilpotent integrable log connection corresponding to $\calE$ on $((V_{\underline{i}},M_{V_{\underline{i}}})/T^\sharp)_\CRIS$ under Proposition~\ref{prop: crystals and quasi-nilpotent connections}. 
We know from Theorem~\ref{thm:cohomology of crystal in terms of log de Rham complex} that $R\pi_{V_{\underline{i}}/T,\ast}\calE\in D(T_\et,\calO_T)$ is represented by $(E_{\underline{i}}\otimes \omega_{Z_{\underline{i}}/T}^\bullet,\nabla)$ regarded as a complex  of $\calO_T$-modules (note that $T$, $Z_{\underline{i}}$, $D_{\underline{i}}$ are all affine). For each $\underline{i},\underline{j}\in \Delta_n$ with $\underline{i}\subset \underline{j}$, the corresponding projection $p_{\underline{j},\underline{i}}\colon D_{\underline{j}}\rightarrow D_{\underline{i}}$ yields a morphism of connections $p_{\underline{j},\underline{i}}^\ast(E_{\underline{i}},\nabla)\rightarrow (E_{\underline{j}},\nabla)$ on $D_{\underline{j}}$. We deduce from Theorem~\ref{thm:compatibility of cohomology of crystal in terms of log de Rham complex and pushforward} that the map $R\pi_{V_{\underline{i}}/T,\ast}\calE\rightarrow R\pi_{V_{\underline{j}}/T,\ast}\calE$ in $D(T_\et,\calO_T)$ is represented by the induced morphism of complexes of $\calO_T$-modules 
\[
q_{\underline{j},\underline{i}}\colon(E_{\underline{i}}\otimes \omega_{Z_{\underline{i}}/T}^\bullet,\nabla)\rightarrow (E_{\underline{j}}\otimes \omega_{Z_{\underline{j}}/T}^\bullet,\nabla).
\]
Combining this with \cite[Exp.~Vbis, Cor.~1.3.11]{SGA4-2},  we see  that $R\pi_{V_\bullet/T_\bullet,\ast}a_{\CRIS}^\ast \calE\in D((\Sh(T_\et),\calO_T)\times\Delta_n)$ is represented by the system $((E_{\underline{i}}\otimes \omega_{Z_{\underline{i}}/T}^\bullet,\nabla)_{\underline{i}\in \Delta_n}, (q_{\underline{j},\underline{i}})_{\underline{i}\subset \underline{j}})$.

Define a bounded double complex $C^{\star,\bullet}$ of $\calO_T$-modules by
\[
C^{l,m}=\prod_{\underline{i}=\{i_0<\cdots<i_l\}}E_{\underline{i}}\otimes \omega_{Z_{\underline{i}}/T}^m
\]
with differentials given by
\[
d_1^{l,m}\colon C^{l,m}\rightarrow C^{l+1,m},\quad d_1^{l,m}(x)_{\underline{j}}=\sum_{k=0}^{l+1}(-1)^k q_{\underline{j},\underline{j}\smallsetminus\{j_k\}}\quad \text{for}\quad \underline{j}=\{j_0<\cdots<j_{l+1}\}
\]
and $d_2^{l,m}=\prod_{\underline{i}}(-1)^m\nabla$ where $\nabla$ denotes $E_{\underline{i}}\otimes \omega_{Z_{\underline{i}}/T}^m\rightarrow E_{\underline{i}}\otimes \omega_{Z_{\underline{i}}/T}^{m+1}$. We claim that $Rb_\ast R\pi_{V_\bullet/T_\bullet,\ast}a_{\CRIS}^\ast \calE\in D(T_\et,\calO_T)$ is represented by the associated total complex $\operatorname{Tot}(C^{\star,\bullet})$. This follows from \cite[Prop.~V.3.4.9 (i)]{Berthelot-book} via the finite stupid filtration in the $\bullet$-direction; note that for a fixed $m$, the complex $C^{\star,m}$ agrees with the one defined in \cite[V.3.4.6]{Berthelot-book}. Finally, we conclude that $R\pi_{X_U/T,\ast}\calE\cong Rb_\ast R\pi_{V_\bullet/T_\bullet,\ast}a_{\CRIS}^\ast \calE$ has finite Tor-dimension because each $E_{\underline{i}}\otimes \omega_{Z_{\underline{i}}/T}^m$ is a finite locally free $\calO_{D_{\underline{i}}}$-module and $D_{\underline{i}}$ is flat over $T$.

The proof in the small crystalline case is similar with Theorem~\ref{thm:cohomology of crystal in terms of log de Rham complex} replaced by \cite[Thm.~6.4]{Kato-log}. 
\end{proof}

\section{Big crystalline sites in the \texorpdfstring{$p$}{p}-adic base and absolute cases}\label{sec:Big absolute crystalline sites}

We generalize the theory of big crystalline sites to allow a $p$-adic base as in \cite[\S~1.12]{Beilinson-crystalline-period-map}.
A \emph{$p$-adic log PD-formal scheme} consists of a quadruple $\calS^\sharp\coloneqq (\calS,M_\calS,\calJ_\calS,\gamma_\calS)$ where $(\calS,M_\calS)$ is a $p$-adic formal log scheme, $\calJ_\calS\subset \calO_\calS$ is a quasi-coherent ideal sheaf containing $p$, and $\gamma_\calS$ is a PD-structure on $\calJ_\calS$. For each $n\geq 1$, write $S_n^\sharp$ for the log PD-scheme $\calS^\sharp\times_{\Spf \Z_p}\Spec \Z_p/p^n$ (in the obvious sense).

Fix a $p$-adic log PD-formal scheme $\calS^\sharp$ such that $M_\calS$ is quasi-coherent.

\begin{defn}\label{def:big absolute log crystallien site}
Let $(Y,M_Y)$ be a log $S_{n_0}^\sharp$-scheme for some $n_0\geq 1$ and $M_Y$ is integral and quasi-coherent.
Define \emph{the big relative logarithmic crystalline site} 
\[
((Y,M_Y)/\calS^\sharp)_\CRIS
\]
to be the colimit over $n\geq n_0$ of the sites $((Y,M_{Y})/S_n^\sharp)_\CRIS$ (cf.~ \cite[1.12]{Beilinson-crystalline-period-map} and \cite[0EXI]{stacks-project}): concretely, an object of $((Y,M_Y)/\calS^\sharp)_\CRIS$ is an object of $((Y,M_Y)/S_n^\sharp)_\CRIS$ for some $n\geq n_0$. 
When $\calS^\sharp=S_n^\sharp$ for some $n$, this is consistent with Definition~\ref{def:big relative logarithmic crystalline site}.

As in the relative case, it has the structure sheaf $\calO_{Y/\calS}$ and an ideal sheaf $\calJ_{Y/\calS}$, and an obvious analogue of Proposition~\ref{prop:quasi-coherent crystals} holds.
We also have a canonical morphism of topoi
\[
u_{Y/\calS}\colon \Sh(((Y,M_Y)/\calS^\sharp)_\CRIS)\rightarrow \Sh(Y_\et).
\]

When $\calS^\sharp=\Z_p^\sharp$, namely, $\Spf \Z_p$ together with the trivial log structure and the canonical PD-structure on the ideal $(p)$, we simply write $(Y,M_Y)_\CRIS$, $\calO_{Y/\Z_p}$ and so on. We call $(Y,M_Y)_\CRIS$
\emph{the big absolute logarithmic crystalline site} . In this case, $(Y,M_Y)$ is any log scheme over $\Z_p$ such that $p^{n_0}\calO_Y=0$ for some $n_0\geq 1$ and $M_Y$ is integral and quasi-coherent.
\end{defn}

\begin{eg}\label{eg:absolute crystalline site over k}
Let $k$ be a perfect field of characteristic $p$ and assume that $(Y,M_Y)$ is a log scheme over $k$. Then for every $(U,T)\in (Y,M_Y)_\CRIS$, the sheaf $\calO_T$ has the canonical $W(k)$-algebra structure. In particular, $(Y,M_Y)_\CRIS$ coincides with $((Y,M_Y)/W(k)^\sharp)$, where $W(k)^\sharp$ is $\Spf W(k)$ equipped with the trivial log structure and the canonical PD-structure on $(p)$.
\end{eg}

\begin{rem}\label{rem:absolute small crystalline site}
Let $((Y,M_Y)/\calS^\sharp)_\cris\subset ((Y,M_Y)/\calS^\sharp)_\CRIS$ denote the full subcategory consisting of objects $(U,T)$ such that $U$ is \'etale over $Y$. Then $((Y,M_Y)/\calS^\sharp)_\cris$ agrees with the crystalline site introduced by Beilinson in \cite[\S~1.12]{Beilinson-crystalline-period-map}.
As in Remark~\ref{rem:small crystalline site}, there are morphisms of topoi
\[
r_{Y/\calS}\colon \Sh(((Y,M_Y)/\calS^\sharp)_\cris)\rightleftarrows \Sh(((Y,M_Y)/\calS^\sharp)_\CRIS)\colon p_{Y/\calS}
\]
such that $r_{Y/\calS}^\ast=p_{Y/\calS,\ast}$ is the restriction from $((Y,M_Y)/\calS^\sharp)_\CRIS$ to $((Y,M_Y)/\calS^\sharp)_\cris$. If we continue to write $\calO_{Y/\calS}$ for the restriction $r_Y^\ast\calO_{Y/\calS}$ to $((Y,M_Y)/\calS^\sharp)_\cris$, then the above morphisms define morphisms of ringed topoi for $\calO_{Y/\calS}$, similar to \cite[Cor.~III.2.2.4]{Berthelot-book}. It is straightforward to check that the restriction functor
\[
r_{Y/\calS}^\ast=Lr_{Y/\calS}^\ast\colon D_\perf(((Y,M_Y)/\calS^\sharp)_\CRIS) \rightarrow D_\perf(((Y,M_Y)/\calS^\sharp)_\cris)
\]
between the categories of perfect complexes of $\calO_{Y/\calS}$-modules (see \cite[08H6]{stacks-project}) is an equivalence and the quasi-inverse is given by $Lp_{Y/\calS^\sharp}^\ast$. One can also check that the category $D_\perf(((Y,M_Y)/\calS^\sharp)_\cris)$ defined as in \cite[08G5]{stacks-project} agrees with the category of perfect crystals defined in \cite[\S~1.11]{Beilinson-crystalline-period-map} if $Y$ is quasi-compact (as each object in the latter category is bounded by definition).
\end{rem}

The cohomology $R\Gamma(((Y,M_Y)/\calS^\sharp)_\CRIS,\calF)$ and the projection $Ru_{Y/\calS,\ast}\calF$ can be computed by the derived limits. 
This is explained for quasi-coherent modules in \cite[Prop.~7.22]{berthelot-ogus-book} and mentioned in \cite[\S~1.12]{Beilinson-crystalline-period-map}. Since we cannot find a good reference, let us spell out the arguments. 

We first introduce an auxiliary (fibered) site $((Y,M_Y)/S_\bullet^\sharp)_\CRIS$ as in \cite[7-22, 7-23]{berthelot-ogus-book}: the objects are the pairs $(n,T)$ where $n\in\N$\footnote{Strictly speaking, we only consider $n\geq n_0$ with $n_0$ as in Definition~\ref{def:big absolute log crystallien site}.} and $T\in ((Y,M_Y)/S_n^\sharp)_\CRIS$; the morphisms $(n',T')\rightarrow (n,T)$ are the obvious ones if $n'\leq n$ and void if $n'>n$; the covering families of $(n,T)$ are $\{(n,T_i)\rightarrow (n,T)\}$ for a cover $\{T_i\rightarrow T\}$ in $((Y,M_Y)/S_n^\sharp)_\CRIS$. A sheaf on $((Y,M_Y)/S_\bullet^\sharp)_\CRIS$ can be described as a collection $(\calF_n)_n$ of sheaves with $\calF_n\in \Sh(((Y,M_Y)/S_n^\sharp)_\CRIS)$ together with the natural compatibility maps.

We have a commutative diagram of topoi
\[
\xymatrix{
\Sh(((Y,M_Y)/S_n^\sharp)_\CRIS)\ar[r]^-{j_n}\ar[rd]_-{i_n} & \Sh(((Y,M_Y)/S_\bullet^\sharp)_\CRIS)\ar[d]^-j\\ &\Sh(((Y,M_Y)/\calS^\sharp)_\CRIS)
}
\]
defined as follows. 
The functor $i_n^\ast$ is the restriction.
For $\calF\in \Sh(((Y,M_Y)/\calS^\sharp)_\CRIS)$, $(i_{n,\ast}\calF)(U,T)=\calF(U,T_n)$, where $(T_n,M_{T_n})\coloneqq (T,M_T)\times_{\Spec \Z_p}\Spec \Z_p/p^n$ (obviously, $(U,T_n)$ defines an object of $((Y,M_Y)/S_n^\sharp)_\CRIS$). The functors $j_n^\ast$ and $j_{n,\ast}$ are defined similarly. 
We set $j^\ast(\calF)=(i_n^\ast\calF)_n$ for $\calF\in \Sh(((Y,M_Y)/\calS^\sharp)_\CRIS)$ and define $j_\ast(\calF_n)_n$ for $(\calF_n)_n\in \Sh(((Y,M_Y)/S_\bullet^\sharp)_\CRIS)$ by $j_\ast(\calF_n)_n(U,T)=\varprojlim_n \calF_n(n,T_n)$. It is straightforward to check that they define morphisms of topoi and $j\circ j_n=i_n$.

For a site $\calC$, let $\calC\times \N$ be the site defined as in \cite[0940]{stacks-project} so that the sheaves on $\calC\times\N$ are the inverse systems of sheaves on $\calC$ and there is a morphism of topoi $\pi\colon \Sh(\calC\times\N)\rightarrow \Sh(\calC)$ where $\pi_\ast$ takes the inverse limit and $\pi^\ast$ assigns the constant inverse system.

\begin{prop}\label{prop:j functor for colimit of sites}
\hfill
\begin{enumerate}
 \item If $\calI\in \Ab(((Y,M_Y)/\calS^\sharp)_\CRIS)$ is injective, then $j^\ast\calI$ is $j_\ast$-acyclic, and the adjunction map $\calI\rightarrow j_\ast j^\ast \calI$ is an isomorphism.
 \item For every $\calF\in \Ab(((Y,M_Y)/\calS^\sharp)_\CRIS)$, the adjunction map $\calF\rightarrow Rj_\ast j^\ast\calF$ is a quasi-isomorphism.
\end{enumerate}
\end{prop}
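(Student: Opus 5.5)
The plan is to compute $Rj_\ast$ objectwise, through evaluation on each object $(U,T)$ of $((Y,M_Y)/\calS^\sharp)_\CRIS$, and to reduce everything to a derived inverse limit along a \emph{constant} (hence Mittag-Leffler) system.

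First we identify $j_\ast$ concretely. By definition $j_\ast(\calF_n)_n(U,T)=\varprojlim_n\calF_n(n,T_n)$. Fix $(U,T)$ with $T$ an $S_{n_1}^\sharp$-scheme. For $n\geq n_1$ we have $T_n=T$, so the system $(\calF_n(T_n))_n$ is eventually just $(\calF_n(T))_{n\geq n_1}$ with the transition maps. For $\calF_n=i_n^\ast\calI$ these transition maps are identities, because $i_n^\ast$ is restriction and $(U,T)$ is the same object for every $n\geq n_1$. Hence $j_\ast j^\ast\calI(U,T)=\calI(U,T)$, and the adjunction $\calI\to j_\ast j^\ast\calI$ is an isomorphism for every sheaf, not only for injective ones.

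Next we prove the acyclicity claimed in (1). Write $j_\ast$ as the composite of the restriction-type functor $\Sh(((Y,M_Y)/S_\bullet^\sharp)_\CRIS)\to\Sh(((Y,M_Y)/\calS^\sharp)_\CRIS\times\N)$, sending $(\calF_n)$ to the inverse system $n\mapsto((U,T)\mapsto\calF_n(U,T_n))$, followed by $\pi_\ast=\varprojlim$. The first functor is like the pushforward $j_{n,\ast}$ levelwise. Its value $(U,T)\mapsto \calF_n(U,T_n)$ is the pushforward along the morphism $(U,T_n)\to(U,T)$, so it is exact on each level by the same nil-immersion argument as Proposition~\ref{prop:etale pushforward from PD-envelope} (namely $T_n\hookrightarrow T$ is a homeomorphism on \'etale sites), and it preserves injectives since it has an exact left adjoint.

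To compute $R^qj_\ast j^\ast\calI$, take an injective resolution $j^\ast\calI\to\calJ^\bullet$. Each $i_n^\ast\calI=j_n^\ast j^\ast\calI$ is injective, because $i_n^\ast$ is restriction to an open-type localization with exact left adjoint $i_{n,!}$. Then $R^qj_\ast j^\ast\calI$ is the sheafification of $(U,T)\mapsto H^q$ of the derived limit of $R\Gamma((U,T_n),i_n^\ast\calI)$. These complexes vanish in positive degrees, and the system $(\calI(U,T_n))_n$ is eventually constant, so $\varprojlim^1=0$ (Mittag-Leffler, cf.\ \cite[0940]{stacks-project} and the derived limit spectral sequence $R^p\varprojlim H^q$). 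Hence $R^qj_\ast j^\ast\calI=0$ for $q>0$.

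For (2), resolve $\calF\to\calI^\bullet$ by injectives. The functor $j^\ast$ is exact, so $j^\ast\calI^\bullet$ resolves $j^\ast\calF$ by $j_\ast$-acyclics. Then $Rj_\ast j^\ast\calF\simeq j_\ast j^\ast\calI^\bullet\cong\calI^\bullet\simeq\calF$ by (1).

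The main obstacle is the acyclicity step: making the derived limit description rigorous on the fibered site, i.e.\ checking that injectives on the total site restrict to injectives (or at least $\Gamma$-acyclics) on each level, and that higher cohomology over $(U,T)$ in $((Y,M_Y)/S_\bullet^\sharp)_\CRIS$ is computed by $R\varprojlim_n R\Gamma((U,T_n),-)$. For this I would first try imitating \cite[7.22]{berthelot-ogus-book}, using that the object $(n,T)$ has no maps from higher levels.
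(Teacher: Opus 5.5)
Your argument is correct. The step you single out as the main obstacle is already settled by the factorization in your third paragraph, so you do not need to analyze localizations of the fibered site as in \cite[7.22]{berthelot-ogus-book}. The functor $\Phi\colon(\calF_n)_n\mapsto(i_{n,\ast}\calF_n)_n$ (levelwise it is $i_{n,\ast}$, not $j_{n,\ast}$) is exact and preserves injectives, so $Rj_\ast=R\pi_\ast\circ\Phi$. The terms $\calG_n\coloneqq i_{n,\ast}i_n^\ast\calI$ of $\Phi(j^\ast\calI)$ are injective, because $i_n^\ast$ and $i_{n,\ast}$ both have exact left adjoints. Since $R\pi_\ast=R\varprojlim$ \cite[0941]{stacks-project} and $R\Gamma((U,T),-)$ commutes with $R\varprojlim$, you get $R\Gamma((U,T),Rj_\ast j^\ast\calI)\cong R\varprojlim_n\calI(U,T_n)$. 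This is $\calI(U,T)$ in degree $0$ by Mittag-Leffler.

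The paper organizes this step differently. Instead of passing through $\Sh(((Y,M_Y)/\calS^\sharp)_\CRIS\times\N)$, it evaluates at a fixed $(U,T)$ using exact, injective-preserving functors $\alpha\colon(\calF_n)_n\mapsto((\calF_n)_{(U,T_n)})_n\in\Ab(T_\et\times\N)$ and $\beta\colon\calF\mapsto\calF_{(U,T)}$, which satisfy $\pi_{T,\ast}\circ\alpha=\beta\circ j_\ast$. It then kills $R^q\pi_{T,\ast}$ of the system of \'etale sheaves $(\calI_{(U,T_n)})_n$ by showing the transition maps are surjective on each $T'\in T_\et$. This uses that $(U',T'_n)\rightarrow(U',T'_{n+1})$ is a monomorphism and $\calI$ is injective.

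Your eventual-constancy observation is a simpler substitute for that surjectivity argument; the paper uses constancy only for $\calI\cong j_\ast j^\ast\calI$. One caveat: on the whole site, the index at which $(\calI(U,T_n))_n$ stabilizes depends on $(U,T)$. So $(\calG_n)_n$ is not an eventually constant system of sheaves, and the Mittag-Leffler argument must be run object by object, as you do. After the paper's localization to $T_\et$, the stabilization is uniform.
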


\begin{proof}
For the first statement of (1), we need to show that for every $q\geq 1$ and $(U,T)\in ((Y,M_Y)/\calS^\sharp)_\CRIS$, the sheaf $(R^qj_\ast j^\ast\calI)_{(U,T)}$ on $T_\et$ is zero. 
Consider a diagram 
\[
\xymatrix{
\Ab(((Y,M_Y)/S_\bullet^\sharp)_\CRIS)\ar[d]_-{j_\ast}\ar[r]^-\alpha & \Ab(T_\et\times \N)\ar[d]^-{\pi_{T,\ast}}\\
\Ab(((Y,M_Y)/\calS^\sharp)_\CRIS)\ar[r]^-\beta & \Ab(T_\et)
}
\]
where $j_\ast$ and $\pi_{T,\ast}$ are the pushforwards of the corresponding morphisms of topoi; $\alpha((\calF_n)_n)\coloneqq ((\calF_n)_{(U,T_n)})_n$ and $\beta(\calF)\coloneqq \calF_{(U,T)}$ under the natural identification $(T_n)_\et=T_\et$. It is straightforward to see $\pi_{T,\ast}\circ \alpha=\beta\circ j_\ast$. We also claim that $\alpha$ and $\beta$ are exact and send injectives to injectives. For $\beta$, observe that it is obtained as $\beta=\varphi_{T,\ast}\circ j_T^\ast$ from the morphisms of topoi
\[
\Sh(((Y,M_Y)/\calS^\sharp)_\CRIS)\xleftarrow{j_T} \Sh(((Y,M_Y)/\calS^\sharp)_\CRIS/(U,T))\xrightarrow{\varphi_T} \Sh(T_\et)
\]
where $j_T$ is the localization by $(U,T)$ and $\varphi_T$ is defined by $\varphi_{T,\ast}(\calH)=\calH_{\id_{(U,T)}}$ for $\calH=(\calH_u)_u$ and $\varphi_T^\ast(\calK)=(u^\ast\calK)_u$ (here $(u\colon (U',T')\rightarrow (U,T))$ runs over objects of the localized site $((Y,M_Y)/\calS^\sharp)_\CRIS/(U,T)$ and $u^\ast$ denotes $\Sh(T_\et)\rightarrow \Sh(T_\et')$): see also \cite[Prop.~5.26]{berthelot-ogus-book}. Since $j_T^\ast$ has an exact left adjoint $j_{T,!}$ by a general nonsense and $\varphi_{T,\ast}$ is easily seen to be exact, $\beta$ is exact and sends injectives to injectives. One can run a similar argument for $\alpha$ by using the localized topos $\Sh(((Y,M_Y)/S_\bullet^\sharp)_\CRIS)/j^\ast h_{(U,T)}$ where $h_{(U,T)}$ denotes the sheaf represented by $(U,T)$.

By the claim, we only need to show $R^q\pi_{T,\ast}(\alpha(j^\ast\calI))=0$ for $q\geq 1$. By definition, $\alpha(j^\ast\calI)$ is the inverse system $(\calI_{(U,T_n)})_n$. We show that each transition map is a surjection of \'etale presheaves. To see this, take any $T'\in T_\et$ and set $U'\coloneqq U\times_TT'$.
Then $(U',T')$ with the induced log and PD structures from $(U,T)$ defines an object of $((Y,M_Y)/\calS^\sharp)_\CRIS$. The morphism $(U',T'_{n})\rightarrow  (U',T'_{n+1})$ is a monomorphism in $((Y,M_Y)/\calS^\sharp)_\CRIS$. Hence
\[
\calI_{(U,T_{n+1})}(T')=\calI(U',T'_{n+1})\rightarrow \calI_{(U,T_n)}(T')=\calI(U',T'_n)
\]
is surjective by the injectivity of $\calI$ and \cite[093X]{stacks-project}. Hence $R^q\pi_{T,\ast}(\alpha(j^\ast\calI))=0$ for $q\geq 1$ by \cite[08TC, 0941]{stacks-project}. We also see
\[
(j_\ast j^\ast\calI)(U,T)=\Gamma(T,\pi_{T,\ast}(\alpha(j^\ast\calI)))=\varprojlim_n \calI_{(U,T_n)}(T_n)=\calI(U,T)
\]
as $(U,T)=(U,T_n)$ for $n\gg 0$, which proves the second statement.

Part (2) follows from (1) by taking an injective resolution $\calF\rightarrow \calI^\bullet$.
\end{proof}

\begin{prop}\label{prop:limit formula for crystalline cohomology}
For every $\calF\in\Ab(((Y,M_Y)/\calS^\sharp)_\CRIS)$, we have quasi-isomorphisms
\[
R\Gamma(((Y,M_Y)/\calS^\sharp)_\CRIS,\calF)\cong R\varprojlim R\Gamma(((Y,M_Y)/S_n^\sharp)_\CRIS, i_n^\ast\calF)
\]
and
\[
Ru_{Y/\calS,\ast}\calF\cong R\varprojlim Ru_{Y/S_n,\ast} i_n^\ast\calF.
\]
Here $R\varprojlim$ denotes the homotopy limit in $D(Y_\et)$ or $D(\Z)$ (cf.~\cite[0940]{stacks-project}).
\end{prop}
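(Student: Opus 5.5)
The plan is to factor both functors through the fibered topos $\Sh(((Y,M_Y)/S_\bullet^\sharp)_\CRIS)$ and use Proposition~\ref{prop:j functor for colimit of sites}(2), which gives $\calF\xrightarrow{\cong}Rj_\ast j^\ast\calF$. Applying $R\Gamma$ (resp.\ $Ru_{Y/\calS,\ast}$) gives
\[
R\Gamma(((Y,M_Y)/\calS^\sharp)_\CRIS,\calF)\cong R\Gamma(((Y,M_Y)/S_\bullet^\sharp)_\CRIS,j^\ast\calF),
\]
and similarly $Ru_{Y/\calS,\ast}\calF\cong R(u_{Y/\calS}\circ j)_\ast j^\ast\calF$. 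It then remains to identify the right-hand sides with homotopy limits over $n$.

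\textbf{Factoring through the projection to $\N$.} I will build a morphism of topoi $u_\bullet\colon \Sh(((Y,M_Y)/S_\bullet^\sharp)_\CRIS)\rightarrow \Sh(Y_\et\times\N)$ with $u_{\bullet,\ast}(\calF_n)_n=(u_{Y/S_n,\ast}\calF_n)_n$. The transition maps come from the functoriality of $u$ along $S_n^\sharp\hookrightarrow S_{n+1}^\sharp$ (Proposition~\ref{prop:compatibility of u and crystalline pushforward}), and the pullback is levelwise $u_{Y/S_n}^{-1}$. I will check directly from the formulas that $\pi\circ u_\bullet\cong u_{Y/\calS}\circ j$, where $\pi\colon\Sh(Y_\et\times\N)\rightarrow\Sh(Y_\et)$. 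Then
\[
R(u_{Y/\calS}\circ j)_\ast=R\pi_\ast\circ Ru_{\bullet,\ast},
\]
and by \cite[0940]{stacks-project} $R\pi_\ast$ is exactly the homotopy limit $R\varprojlim$.

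\textbf{Computing $Ru_{\bullet,\ast}$ levelwise.} The key claim is $(Ru_{\bullet,\ast}\calG)_n\cong Ru_{Y/S_n,\ast}(j_n^\ast\calG)$ for $\calG=(\calG_n)_n$. I will argue that $j_n^\ast$ is exact and preserves injectives: it has an exact left adjoint $j_{n,!}$, given by extension by zero to levels $\leq n$, since morphisms only go from level $n'$ to level $n$ with $n'\leq n$. Evaluation at level $n$ also commutes with $u_{\bullet,\ast}$. Taking an injective resolution of $j^\ast\calF$ and noting $j_n^\ast j^\ast=i_n^\ast$, we get $Ru_{\bullet,\ast}j^\ast\calF\cong(Ru_{Y/S_n,\ast}i_n^\ast\calF)_n$. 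This gives the second isomorphism. The first follows by the same argument with $Y_\et$ replaced by the point, or by applying $R\Gamma(Y_\et,-)$, which commutes with $R\varprojlim$.

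\textbf{Main obstacle.} I expect the delicate point to be the levelwise identification. Here one must check that $j_n^\ast$ really preserves injectives, i.e.\ that $j_{n,!}$ is exact given the shape of the index category. One must also check that the evaluation of $u_\bullet$ at level $n$ matches $u_{Y/S_n}$, including the transition maps. If the direct adjoint argument is awkward, I would instead mimic the proof of Proposition~\ref{prop:j functor for colimit of sites}: use the localization at the object $(n,\,\cdot\,)$, and verify that the restriction of an injective on $((Y,M_Y)/S_\bullet^\sharp)_\CRIS$ to level $n$ is injective.
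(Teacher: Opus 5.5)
Your proposal is correct and is essentially the paper's proof. The paper also passes through $\Sh(((Y,M_Y)/S_\bullet^\sharp)_\CRIS)\rightarrow\Sh(Y_\et\times\N)$ and combines Proposition~\ref{prop:j functor for colimit of sites}(2) with the fact that $j_n^\ast$ is exact and preserves injectives via the exact adjoint $j_{n,!}$, described exactly as you do. It then invokes \cite[0941]{stacks-project} for $R\varprojlim$, and treats the first isomorphism the same way with $Y_\et$ replaced by the point.
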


\begin{proof}
We follow \cite[Prop.~7.22, Pf.]{berthelot-ogus-book}. Consider the commutative diagram of morphisms of topoi
\[
\xymatrix{
\Sh(((Y,M_Y)/S_n^\sharp)_\CRIS)\ar[d]^-{j_n}\ar@/_75pt/[dd]_-{i_n}\ar[r]^-{u_{Y/S_n}} &  \Sh(Y_\et)\ar[r]\ar[d]^-{l_n} & \Sh(\mathrm{pt})\ar[d]^-{k_n}\\ 
\Sh(((Y,M_Y)/S_\bullet^\sharp)_\CRIS)\ar[d]^-j\ar[r]^-{v_{Y/S_\bullet}}\ar[rd]^-{u_{Y/S_\bullet}} & \Sh(Y_\et\times\N)\ar[r]\ar[d] & \Sh(\N)\ar[d] \\ 
\Sh(((Y,M_Y)/\calS^\sharp)_\CRIS)\ar[r]^-{u_{Y/\calS}} & \Sh(Y_\et)\ar[r] & \Sh(\mathrm{pt})
}
\]
where $u_{Y/S_\bullet}$, $v_{Y/S_\bullet}$, $l_n$, $k_n$ and unspecified ones are defined in an obvious way.
Observe that $j_n^\ast\colon \Ab(((Y,M_Y)/S_\bullet^\sharp)_\CRIS)\rightarrow \Ab(((Y,M_Y)/S_n^\sharp)_\CRIS)$ has an exact right adjoint $j_{n,!}$ given by
$(j_{n,!}\calG)_{n'}=\calG|_{((Y,M_Y)/S_{n'}^\sharp)_\CRIS}$ if $n'\leq n$ and $0$ if $n'>n$. Hence $j_n^\ast$ is exact and sends injectives to injectives. The similar properties hold for $l_n^\ast$ and $k_n^\ast$. Hence 
\[
l_n^\ast Rv_{Y/S_\bullet,\ast}j^\ast\calF\cong Ru_{Y/S_n,\ast}j_n^\ast j^\ast\calF.
\]
Combining this with Proposition~\ref{prop:j functor for colimit of sites}(2) and \cite[0941]{stacks-project}, we conclude
\[
Ru_{Y/\calS,\ast}\calF\cong Ru_{Y/S_\bullet,\ast}j^\ast\calF \cong R\varprojlim l_n^\ast Rv_{Y/S_\bullet,\ast}j^\ast\calF\cong R\varprojlim Ru_{Y/S_n,\ast} i_n^\ast\calF.
\]
The first statement is proved similarly by using $k_n^\ast$.
\end{proof}

It is often convenient to regard certain $p$-adic log PD-formal schemes as ind-objects of $(Y,M_Y)_\CRIS$.

\begin{construction}\label{construction: eval of isocrystal at log PD-formal scheme}
Let $(\calT,M_\calT)\rightarrow (\calS,M_\calS)$ be a morphism of $p$-adic log formal schemes such that $M_\calT$ is integral and quasi-coherent.
Assume that $\gamma_\calS$ extends to $\calJ_\calS\calO_\calT$, and let $\calJ_\calT\subset \calO_\calT$ be a quasi-coherent PD-ideal compatible with the PD-structure on $\calJ_\calS\calO_\calT$. Set $(\overline{T},M_{\overline{T}})\coloneqq (V(\calJ_{\calT}),M_{\calT}|_{\overline{T}})$ and write $(T_n,M_{T_n},\calJ_{T_n})$ for the base change of $(\calT,M_{\calT}, \calJ_{\calT})$ to $\Z_p/p^n$.

Consider the big logarithmic crystalline site $((Y,M_Y)/\calS^\sharp)_\CRIS$ as before and let $f\colon (\overline{T},M_{\overline{T}})\rightarrow (Y,M_Y)$ be a morphism of log schemes such that $M_{\overline{T}}\cong f^\ast M_Y$. Then $(\overline{T},T_n)\in ((Y,M_Y)/\calS^\sharp)_\CRIS$ and write $h_{(\overline{T},T_n)}$ for the sheaf represented by it. It is easy to see that the colimit $\varinjlim_n h_{(\overline{T},T_n)}$ as presheaf is indeed a sheaf. By abuse of notation, we write $(\overline{T},\calT)$ (or $h_{(\overline{T},\calT)}$) for the colimit
and say that $\calT$ is a \emph{$p$-adic log PD-formal scheme in $((Y,M_Y)/\calS^\sharp)_\CRIS$}.
By definition, for $\calF\in \Sh(((Y,M_Y)/\calS^\sharp)_\CRIS)$, we have
\[
\calF(\overline{T},\calT)=\Hom_{\Sh(((Y,M_Y)/\calS^\sharp)_\CRIS)}\bigl(\varinjlim_n h_{(\overline{T},T_n)},\calF\bigr)=\varprojlim_n \calF(\overline{T},T_n).
\]
\end{construction}

\begin{prop}\label{prop:limit formula for evaluation at log PD-formal scheme}
Keep the set-up as in Construction~\ref{construction: eval of isocrystal at log PD-formal scheme}.
For every abelian sheaf $\calF$ on $((Y,M_Y)/\calS^\sharp)_\CRIS$, we have a quasi-isomorphism
\[
R\Gamma((\overline{T},\calT),\calF)\cong R\varprojlim R\Gamma((\overline{T},T_n), \calF)
\]
\end{prop}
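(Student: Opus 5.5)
The plan is to mimic the proof of Proposition~\ref{prop:limit formula for crystalline cohomology}, replacing the final object by the sheaf $h_{(\overline{T},\calT)}=\varinjlim_n h_{(\overline{T},T_n)}$. First, since $\varinjlim_n h_{(\overline{T},T_n)}$ is a colimit of sheaves along the monomorphisms $h_{(\overline{T},T_n)}\hookrightarrow h_{(\overline{T},T_{n+1})}$, we have
\[
\Gamma((\overline{T},\calT),\calF)=\Hom(\varinjlim_n h_{(\overline{T},T_n)},\calF)=\varprojlim_n \calF(\overline{T},T_n).
\]
This says that the functor $\Gamma((\overline{T},\calT),-)$ factors as the composite of two functors. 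The first is
\[
\calF\mapsto (\calF(\overline{T},T_n))_n\in \Ab(\N),
\]
that is, inverse systems of abelian groups. The second is $\varprojlim\colon \Ab(\N)\rightarrow \Ab$.

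Next, I derive this factorization. Take an injective resolution $\calF\rightarrow \calI^\bullet$ in $\Ab(((Y,M_Y)/\calS^\sharp)_\CRIS)$. The left side is then computed by $\varprojlim_n \calI^\bullet(\overline{T},T_n)$. For each fixed $n$, $\calI^\bullet(\overline{T},T_n)$ computes $R\Gamma((\overline{T},T_n),\calF)$, because evaluation at an object is exact on injectives: it is $\Gamma$ on the localized topos, and localization preserves injectives. So it suffices to show that the inverse system $(\calI(\overline{T},T_n))_n$ is $\varprojlim$-acyclic for each injective $\calI$. Granting that, $\varprojlim$ of the system of complexes computes $R\varprojlim$, giving the quasi-isomorphism $R\Gamma((\overline{T},\calT),\calF)\cong R\varprojlim R\Gamma((\overline{T},T_n),\calF)$ via \cite[0941]{stacks-project}.

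The main step is the acyclicity, and I would argue it exactly as in Proposition~\ref{prop:j functor for colimit of sites}(1). The morphism $(\overline{T},T_n)\rightarrow(\overline{T},T_{n+1})$ is a monomorphism in the site, since $T_n\hookrightarrow T_{n+1}$ is a closed immersion compatible with all structures. Hence $h_{(\overline{T},T_n)}\rightarrow h_{(\overline{T},T_{n+1})}$ is injective. By injectivity of $\calI$ and \cite[093X]{stacks-project}, the restriction $\calI(\overline{T},T_{n+1})\rightarrow \calI(\overline{T},T_n)$ is surjective. Thus the system is Mittag-Leffler, so $R^1\varprojlim$ vanishes and higher $R^q\varprojlim$ vanish in $\Ab(\N)$ \cite[08TC]{stacks-project}.

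The only point needing care is that the colimit identification of $\Hom$ uses that the presheaf colimit is already a sheaf, which is stated in Construction~\ref{construction: eval of isocrystal at log PD-formal scheme}. If one prefers, $\Hom(\varinjlim h_n,-)$ is derived via $R\Hom(\varinjlim h_n,-)$, and the same Mittag-Leffler argument applies.
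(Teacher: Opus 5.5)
Your argument is correct, but it is organized differently from the paper's proof. The paper does not check $\varprojlim$-acyclicity directly at $(\overline{T},\calT)$. It first notes that the localized sites $((Y,M_Y)/S_n^\sharp)_\CRIS/(\overline{T},T_n)$ and $((Y,M_Y)/\calS^\sharp)_\CRIS/(\overline{T},T_n)$ coincide, so that $R\Gamma((\overline{T},T_n),\calF)=R\Gamma((\overline{T},T_n),i_n^\ast\calF)$. It then passes through the auxiliary fibered site $((Y,M_Y)/S_\bullet^\sharp)_\CRIS$ and the quasi-isomorphism $\calF\xrightarrow{\cong}Rj_\ast j^\ast\calF$ of Proposition~\ref{prop:j functor for colimit of sites}(2), exactly as in the proof of Proposition~\ref{prop:limit formula for crystalline cohomology}.

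The mathematical core is the same in both routes: sections of an injective sheaf surject along the monomorphisms $(\overline{T},T_n)\hookrightarrow(\overline{T},T_{n+1})$ by \cite[093X]{stacks-project}. This is precisely the key step inside the proof of Proposition~\ref{prop:j functor for colimit of sites}(1).

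Your version inlines that step. It is therefore shorter and self-contained, and needs neither $j$ nor the fibered site. The paper's version reuses machinery already in place, and at the same time identifies each term with cohomology on the relative site $((Y,M_Y)/S_n^\sharp)_\CRIS$. That is the form in which the relative-site results (e.g.\ Lemma~\ref{lem:log-cris-higher-direct-image}) are applied later.

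A small point: your remark that the presheaf colimit must already be a sheaf is unnecessary. The identity $\Hom(\varinjlim_n h_{(\overline{T},T_n)},-)=\varprojlim_n\Hom(h_{(\overline{T},T_n)},-)$ holds for the colimit taken in sheaves regardless.
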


\begin{proof}
Since the localized sites $((Y,M_Y)/S_n^\sharp)_\CRIS/(\overline{T},T_n)$ and  $((Y,M_Y)/\calS^\sharp)_\CRIS/(\overline{T},T_n)$ are identical, we have $R\Gamma((\overline{T},T_n), \calF)=R\Gamma((\overline{T},T_n), i_n^\ast\calF)$ by locality of cohomology.
Now the assertion is deduced from Proposition~\ref{prop:j functor for colimit of sites}(2) as in the proof of Proposition~\ref{prop:limit formula for crystalline cohomology}.
\end{proof}

We turn to the functoriality of the crystalline sites relative to $\calS^\sharp$. Let $f\colon (X,M_X)\rightarrow (Y,M_Y)$ be a morphism of integral and quasi-coherent log $S_{n_0}^\sharp$-schemes  for some $n_0$.

\begin{prop}\label{prop:functoriality of absolute crystalline topoi}
There exists a unique morphism of topoi
\[
f_\CRIS=(f_\CRIS^\ast,f_{\CRIS,\ast})\colon \Sh(((X,M_{X})/\calS^\sharp)_\CRIS)\rightarrow \Sh(((Y,M_Y)/\calS^\sharp)_\CRIS)
\]
such that $(f_\CRIS^\ast\calF)(U',T',i',M_{T'},\gamma')=\calF(U',T',i',M_{(T',f)},\gamma')$ (see Construction~\ref{construction:strict log structure for functoriality} for $M_{(T',f)}$).
Moreover, $f^\ast_\CRIS$ is exact, and $f^\ast_\CRIS\calO_{Y/\calS}=\calO_{X/\calS}$. 
\end{prop}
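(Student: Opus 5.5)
The plan is to reduce to the relative statement, Proposition~\ref{prop:functoriality of relative crystalline topoi}, level by level, and then assemble the levels using the colimit description of $((Y,M_Y)/\calS^\sharp)_\CRIS$. We first note that the PD-compatibility condition in the definition of $f$-PD morphisms and Construction~\ref{construction:strict log structure for functoriality} only involve $M_Y$, $M_{T'}$ and the PD-structures. Hence for $(U',T')\in((X,M_X)/S_n^\sharp)_\CRIS$ the tuple $(U',T',i',M_{(T',f)},\gamma')$ is an object of $((Y,M_Y)/S_n^\sharp)_\CRIS$, with the same $n$. Consequently the pullback formula in the statement defines a presheaf on $((X,M_X)/\calS^\sharp)_\CRIS$. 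Since every cover of $(U',T')$ lives in a single level $n$, this presheaf is a sheaf, because it restricts on each level to the relative $f_\CRIS^\ast$ for $S_n^\sharp$, which sends sheaves to sheaves.

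Next we construct $f_{\CRIS,\ast}$. We set
\[
f_{\CRIS,\ast}(\calF')(U,T)=\Hom(f^\ast(U,T),\calF'),
\]
where $f^\ast(U,T)$ is the sheaf of $f$-PD morphisms relative to $\calS^\sharp$. For $(U,T)$ at level $n$, any $f$-PD morphism from $(U',T')$ forces $T'$ to lie over $S_n$, so $f^\ast(U,T)$ is supported on levels $\le n$. It coincides with the sheaf considered in the relative case, extended by the empty set elsewhere. As in Proposition~\ref{prop:functoriality of relative crystalline topoi}, we apply \cite[Prop.~5.7]{berthelot-ogus-book} to the triple $C=((Y,M_Y)/\calS^\sharp)_\CRIS$, $C'=((X,M_X)/\calS^\sharp)_\CRIS$, $\varphi=f^\ast(-)$. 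This gives an adjoint pair on presheaves in which the left adjoint is a colimit over $f$-PD morphisms out of $(U',T')$.

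Lemma~\ref{lem:f-PH morphism} still provides an initial object, namely $(U',T',M_{(T',f)})$, because its proof is insensitive to $n$. So the colimit collapses to the stated formula. It follows that $f^\ast_\CRIS$ preserves sheaves and commutes with finite limits and colimits, and that $f_{\CRIS,\ast}$ preserves sheaves; the latter is checked exactly as in the relative case. Together these give a morphism of topoi. Uniqueness holds because a morphism of topoi is determined by its inverse image functor. Exactness on abelian sheaves, and the identity $f^\ast_\CRIS\calO_{Y/\calS}=\calO_{X/\calS}$, follow directly from the pointwise formula, since both sides evaluate to $\Gamma(T',\calO_{T'})$.

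The main obstacle is bookkeeping rather than any substantial difficulty: one has to make sure that the colimit site has the fiber products used in Lemma~\ref{lem:nonempty finite limit in abs log cristalline site}. These exist after passing to a common level $n$, since the transition functors between levels are fully faithful inclusions. Given that, the arguments in the relative case carry over verbatim.
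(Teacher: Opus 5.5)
Your proposal is correct and matches the paper's proof, which simply states that the proof of Proposition~\ref{prop:functoriality of relative crystalline topoi} works verbatim: apply \cite[Prop.~5.7]{berthelot-ogus-book} to the two colimit sites and the sheaves of $f$-PD morphisms, then collapse the colimit using the initial object of Lemma~\ref{lem:f-PH morphism}, exactly as you do. Your level-by-level bookkeeping elaborates that ``verbatim'' claim without harm; note only that level-$n$ fiber products remain fiber products in the colimit site because any object mapping to a level-$n$ object is itself killed by $p^n$, not merely because the transition functors are fully faithful.
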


\begin{proof}
The proof of Proposition~\ref{prop:functoriality of relative crystalline topoi} works verbatim.
\end{proof}

Consider the morphism of ringed topoi
\[
f_\CRIS=(f^\ast_\CRIS,f_{\CRIS,\ast})\colon(\Sh(((X,M_X)/\calS^\sharp)_\CRIS),\calO_{X/\calS})\rightarrow (\Sh(((Y,M_Y)/\calS^\sharp)_\CRIS),\calO_{Y/\calS}).
\]
For a sheaf $\calF$ of $\calO_{X/\calS}$-modules, we can describe $Rf_{\CRIS,\ast}\calF\in D^+(((Y,M_Y)/\calS^\sharp)_\CRIS,\calO_{Y/\calS})$ using the big relative crystalline site as follows.

Take $(U,T)\in ((Y,M_Y)/\calS^\sharp)_\CRIS$. Set $(f^{-1}(U),M_{f^{-1}(U)})\coloneqq (X,M_X)\times_{(Y,M_Y)}(U,M_U)$ where the fiber product is taken in the category of integral and quasi-coherent log schemes.  Let $T^\sharp$ denote the log PD-scheme $(T,M_T,\calJ_T+\calJ_\calS\calO_T,\tilde{\gamma})$ where $\tilde{\gamma}$ is the unique PD-structure on $\calJ_T+\calJ_\calS\calO_T$ extending $\gamma$ and $\gamma_\calS$.

\begin{lem} \label{lem:log-cris-higher-direct-image}
With the notation as above,  we continue to write $\calF$ for the pullback to $((f^{-1}(U),M_{f^{-1}(U)})/T^\sharp)_\CRIS$.
\begin{enumerate}
 \item We have
\[
(f_{\CRIS,\ast}\calF)_{(U,T)}=\pi_{f^{-1}(U)/T,\ast}\calF,
\]
where $\pi_{f^{-1}(U)/T}\colon \Sh(((f^{-1}(U),M_{f^{-1}(U)})/T^\sharp)_\CRIS,\calO_{f^{-1}(U)/T})\rightarrow\Sh(T_\et,\calO_T)$ denotes the morphism of ringed topoi in Definition~\ref{def:projection to etale site of base}.
 \item There are natural quasi-isomorphisms
\[
(Rf_{\CRIS,\ast}\calF)_{(U,T)}=R\pi_{f^{-1}(U)/T,\ast}\calF
\]
and
\[
R\Gamma((U,T), Rf_{\CRIS,\ast}\calF)\cong R\Gamma(((f^{-1}(U),M_{f^{-1}(U)})/T^\sharp)_\CRIS,\calF).
\]
\end{enumerate}
\end{lem}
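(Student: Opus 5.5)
The plan is to transport the argument of Lemma~\ref{lem:evalutation of crystalline pushforward via projection} from the relative site over $\Sigma^\sharp$ to the site over $\calS^\sharp$. The only new ingredient is that an object $(U,T)$ of $((Y,M_Y)/\calS^\sharp)_\CRIS$ lies over some $S_n^\sharp$ with $p^n\calO_T=0$. The enlarged PD-structure on $\calJ_T+\calJ_\calS\calO_T$ therefore makes $T^\sharp$ a log PD-scheme in which $p$ is nilpotent. Consequently $((f^{-1}(U),M_{f^{-1}(U)})/T^\sharp)_\CRIS$ is an honest relative big crystalline site as in Definition~\ref{def:big relative logarithmic crystalline site}, and the pullback of $\calF$ to it makes sense.

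For (1), first evaluate on sections. By the construction of $f_\CRIS$ in Proposition~\ref{prop:functoriality of absolute crystalline topoi}, which is formally the same as Proposition~\ref{prop:functoriality of relative crystalline topoi}, we have $\Gamma((U,T),f_{\CRIS,\ast}\calF)=\Hom(f^\ast(U,T),\calF)$. Here $f^\ast(U,T)$ is the sheaf of $f$-PD morphisms to $(U,T)$.

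Next, identify $f^\ast(U,T)$, viewed as an object of the topos, with the localization whose objects are $(U',T')\in((X,M_X)/\calS^\sharp)_\CRIS$ together with an $f$-PD morphism to $(U,T)$. Giving such an $f$-PD morphism is the same as specifying two things: a $T^\sharp$-structure on $(T',M_{T'})$ compatible with the PD-structures, and a factorization of $U'\to X$ through $f^{-1}(U)$. This uses that $f^{-1}(U)=X\times_YU$ carries the pullback log structure, and that compatibility with $\gamma_\calS$ and $\gamma$ is exactly compatibility with $\tilde\gamma$. So the localized site $((X,M_X)/\calS^\sharp)_\CRIS/f^\ast(U,T)$ is equivalent to $((f^{-1}(U),M_{f^{-1}(U)})/T^\sharp)_\CRIS$, compatibly with coverings, and $\calF$ restricts to the pullback. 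This is the argument of \cite[p.~318]{Berthelot-book}. It gives $\Gamma((U,T),f_{\CRIS,\ast}\calF)=\Gamma(((f^{-1}(U),M_{f^{-1}(U)})/T^\sharp)_\CRIS,\calF)$.

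Now replace $(U,T)$ by $(U\times_TT',T')$ for $T'\in T_\et$. These objects have the same base $T'^\sharp$ and their preimages are $f^{-1}(U)\times_TT'$, so we get a natural identification of \'etale sheaves on $T_\et$. By the definition of $\pi_{f^{-1}(U)/T,\ast}$ through $u_{f^{-1}(U)/T}$ and the structure map to $T$, this identification is $(f_{\CRIS,\ast}\calF)_{(U,T)}=\pi_{f^{-1}(U)/T,\ast}\calF$. The step I expect to need care is this identification of the localization, specifically checking the log-structure part. I would handle it with Construction~\ref{construction:strict log structure for functoriality} and Lemma~\ref{lem:f-PH morphism}, which say that an $f$-PD morphism from $(U',T')$ factors through $(U',T',M_{(T',f)})$. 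That reduces it to a strict map into $(T,M_T)$.

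For (2), the key point is that $f_\CRIS^\ast$ is exact (Proposition~\ref{prop:functoriality of absolute crystalline topoi}), so $f_{\CRIS,\ast}$ preserves injectives. The functor $\calK\mapsto\calK_{(U,T)}$ is exact and preserves injectives, since it is a localization followed by an exact pushforward, as in the proof of Proposition~\ref{prop:j functor for colimit of sites}. Restriction to the localized topos also preserves injectives. Taking an injective resolution of $\calF$ and applying (1) termwise then yields $(Rf_{\CRIS,\ast}\calF)_{(U,T)}\cong R\pi_{f^{-1}(U)/T,\ast}\calF$ and the $R\Gamma$ statement.
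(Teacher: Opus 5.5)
Your proposal is correct and is essentially the paper's argument, which just repeats the proof of Lemma~\ref{lem:evalutation of crystalline pushforward via projection}: write $\Gamma((U,T),f_{\CRIS,\ast}\calF)=\Hom(f^\ast(U,T),\calF)$, identify this with cohomology of $((f^{-1}(U),M_{f^{-1}(U)})/T^\sharp)_\CRIS$ as in \cite[p.~318]{Berthelot-book}, vary over $T_\et$, and for (2) use exactness of $f_\CRIS^\ast$ together with the fact that restriction to the localized topos preserves injectives (the paper leaves this implicit; you state it). One small wording fix: when you vary over $T'\in T_\et$, the two sides are a priori crystalline sites over $T'^\sharp$ and over $T^\sharp$ respectively, not over the same base; they agree because $T'\rightarrow T$ is \'etale and the PD-thickenings are nil (as $p$ is nilpotent), so maps to $T'$ lift uniquely.
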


\begin{proof}
The proof of Lemma~\ref{lem:evalutation of crystalline pushforward via projection} works verbatim.
\end{proof}

Let us list the $p$-adic variants of the theorems we have proved in the previous sections. For the rest of the section, we further assume that $M_\calS$ is \emph{fine}.

\begin{thm}\label{thm:absolute boundedness of crystalline pushforward}
Assume that $Y$ is quasi-compact and $f$ is quasi-separated of finite type of fine log schemes. Then there exists an integer $r$ such that $R^qf_{\CRIS,\ast}\calE=0$ for every $q\geq r$ and every quasi-coherent $\calO_{X/\calS}$-module $\calE$ on $((X,M_X)/\calS^\sharp)_\CRIS$.
\end{thm}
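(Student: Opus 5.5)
The plan is to reduce Theorem~\ref{thm:absolute boundedness of crystalline pushforward} to its relative counterpart, Proposition~\ref{prop:vanishing of crystal under projection}, exactly as Theorem~\ref{thm:boundedness of crystalline pushforward} was deduced from it.

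Since a sheaf of $\calO_{Y/\calS}$-modules vanishes once all its evaluations $(\cdot)_{(U,T)}$ on $T_\et$ vanish, it suffices to treat the evaluations. By Lemma~\ref{lem:log-cris-higher-direct-image}(2), for every $(U,T)\in((Y,M_Y)/\calS^\sharp)_\CRIS$ we have
\[
(R^qf_{\CRIS,\ast}\calE)_{(U,T)}=R^q\pi_{f^{-1}(U)/T,\ast}\calE,
\]
where the right-hand side is computed on the relative site $((f^{-1}(U),M_{f^{-1}(U)})/T^\sharp)_\CRIS$. Here $T$ is an honest scheme over some $S_n^\sharp$, and $p$ is nilpotent on it. The restriction of $\calE$ to this site is a quasi-coherent $\calO_{f^{-1}(U)/T}$-module, since restriction along $f_\CRIS^\ast$ and localization preserve quasi-coherence; this follows from the exactness of pullback and Proposition~\ref{prop:quasi-coherent crystals}.

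The main step is to obtain an integer $r$ that is uniform in $(U,T)$ and in $n$. For this I would redo the reduction made at the start of the proof of Proposition~\ref{prop:vanishing of crystal under projection}. First, as there, reduce by a finite Zariski cover of $Y$ (using \cite[Prop.~V.3.1.4]{Berthelot-book}) to the case where $Y$ is affine. Working \'etale locally on $T$, we may also take $T$ affine with $U\to Y$ \'etale; the set-up then fits Proposition~\ref{prop:vanishing of crystal under projection} with base $\Sigma^\sharp=S_{n}^\sharp$. The key observation is that the integer produced there, namely
\[
r=\max_{0\le l\le c} c_l+c+(c+1)\dim(Z_0/\Sigma),
\]
depends only on data attached to $X\to Y$. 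The embedding $X_0\hookrightarrow Z_0$ of Construction~\ref{construction:embedding system} can be chosen once over $S_{n_0}$, using a chart for $M_\calS$, which is fine by assumption. Its base changes then serve for every $S_n^\sharp$ with $n\ge n_0$ and every $T^\sharp$. The numbers $c$, $c_l$ and the relative dimension do not change under these base changes. Hence the same $r$ works uniformly.

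The remaining point I expect to need care is that the constructions of \S~\ref{sec:Boundedness of log crystalline cohomology} only use that $T^\sharp$ is a fine affine log PD-scheme over the base with $p$ nilpotent, so no fixed $n$ is required. This holds here: each object $(U,T)$ lies in $((Y,M_Y)/S_n^\sharp)_\CRIS$ for some $n$, and $M_T$ is fine by Remark~\ref{rem:fine log structure in absolute log crystalline site}. With this in place, Proposition~\ref{prop:vanishing of crystal under projection} gives $R^q\pi_{f^{-1}(U)/T,\ast}\calE=0$ for $q\ge r$, which proves the theorem.
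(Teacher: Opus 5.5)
Your argument is essentially the paper's: the paper applies Theorem~\ref{thm:boundedness of crystalline pushforward} for each $S_n^\sharp$, observes that the integer $r=\max_{0\le l\le c}c_l+c+(c+1)\dim(Z_0/\Sigma)$ coming from Construction~\ref{construction:embedding system} can be chosen independent of $n$, and concludes via Lemma~\ref{lem:log-cris-higher-direct-image}, which is what you spell out in more detail. Two small corrections are needed. First, for an object $(U,T)$ of the big site the map $U\to Y$ is arbitrary, and localizing on $T$ does not make it \'etale; this is harmless because the proof of Proposition~\ref{prop:vanishing of crystal under projection} only uses that $U$ is affine ($X_{U,l}$, $\calX_{U,l}^\circ$ and $V_l\times_YU$ are base changes along $U\to Y$, so the same $c$ and $c_l$ still bound things). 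Second, $Z_0$ should be formed over $\calS$, or from the same presentation and chart over every $S_n$, rather than over $S_{n_0}$, since one cannot base change from $S_{n_0}$ up to $S_n$.
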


\begin{proof}
By Theorem~\ref{thm:boundedness of crystalline pushforward}, the statement is true for the morphism of ringed topoi
\[
(\Sh((X,M_{X})/S_n^\sharp)_\CRIS,\calO_{X/S_n})\xrightarrow{f_\CRIS} (\Sh((Y,M_{Y})/S_n^\sharp)_\CRIS,\calO_{Y/S_n})
\]
for each $n\geq n_0$. In fact, the proof therein (especially, Construction~\ref{construction:embedding system}) shows that the integer $r$ can be chosen independent of $r$, which is what we need to prove thanks to Lemma~\ref{lem:log-cris-higher-direct-image}.
\end{proof}

\begin{thm}\label{thm:crystal property of higher direct image}
Assume that $f\colon (X,M_X)\rightarrow (Y,M_Y)$ is smooth and integral of fine log schemes and that $f\colon X\rightarrow Y$ is qcqs. Then for every flat quasi-coherent $\calO_{X/\calS}$-module $\calE$ on $((X,M_X)/\calS^\sharp)_\CRIS$, the higher direct image 
\[
Rf_{\CRIS,\ast}\calE\in D(((Y,M_Y)/\calS^\sharp)_{\CRIS},\calO_{Y/\calS})
\]
is a crystal in the derived category of $\calO_{Y/\calS}$-modules in the following sense:
for every morphism $g\colon (U',T')\rightarrow (U,T)$ in $((Y,M_Y)/\calS^\sharp)_\CRIS$, the natural morphism
\[
Lg^\ast_{\et}((Rf_{\CRIS,\ast}\calE)_{(U,T)})=\calO_{T'}\otimes_{g^{-1}_\et\calO_T}^\mathbf{L}g^{-1}_\et(Rf_{\CRIS,\ast}\calE)_{(U,T)}\rightarrow (Rf_{\CRIS,\ast}\calE)_{(U',T')}
\]
is a quasi-isomorphism compatible with the compositions $(U'',T'')\rightarrow(U',T')\rightarrow (U,T)$.
\end{thm}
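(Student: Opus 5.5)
The plan is to reduce to the relative base change theorem at a finite level $S_n^\sharp$, using Lemma~\ref{lem:log-cris-higher-direct-image} to identify evaluations of $Rf_{\CRIS,\ast}\calE$ with relative crystalline cohomology.

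Fix $g\colon (U',T')\rightarrow (U,T)$ in $((Y,M_Y)/\calS^\sharp)_\CRIS$. Both objects live over some common $S_n^\sharp$, so we choose $n$ large enough that $T$ and $T'$ are $S_n^\sharp$-schemes. By Lemma~\ref{lem:log-cris-higher-direct-image}(2), the map in question is identified with
\[
Lg^\ast_\et R\pi_{f^{-1}(U)/T,\ast}\calE\rightarrow R\pi_{f^{-1}(U')/T',\ast}\calE .
\]
We first check that this identification is compatible with the natural base change map. This comes down to a formal adjunction argument, as in the discussion after Theorem~\ref{thm:crystalline base change for projection}. Here we use that the crystalline pullback along $f^{-1}(U')\rightarrow f^{-1}(U)$ (over $T'^\sharp\rightarrow T^\sharp$) applied to $\calE|_{f^{-1}(U)}$ is $\calE|_{f^{-1}(U')}$, which follows from the description of pullbacks in Proposition~\ref{prop:functoriality of relative crystalline topoi}.

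We then apply Theorem~\ref{thm:crystalline base change for projection} with $\Sigma^\sharp=T^\sharp$, $\Sigma'^\sharp=T'^\sharp$, $\Sigma_0=U$, $\Sigma_0'=U'$ and $X=f^{-1}(U)$. The hypotheses need to be verified one by one.
\begin{enumerate}
\item $U\hookrightarrow T$ is an exact closed immersion defined by the sub PD-ideal $\calJ_T\subset \calJ_T+\calJ_\calS\calO_T$, and similarly for $U'$.
\item $M_T$ is fine by Remark~\ref{rem:fine log structure in absolute log crystalline site}, since $M_Y$ is fine.
\item $f^{-1}(U)\rightarrow U$ is smooth, integral and qcqs, being a base change of $f$.
\item The square $f^{-1}(U')=f^{-1}(U)\times_U U'$ is Cartesian in fine log schemes, because $U'\rightarrow U$ is strict.
\item $p$ is nilpotent on $T$.
\item $\calE|_{f^{-1}(U)}$ is flat and quasi-coherent.
\end{enumerate}
Theorem~\ref{thm:crystalline base change for projection} requires $\Sigma$ quasi-compact. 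The statement is étale local on $T'$ and $T$, so we may first restrict to affine $T$ and to affine étale pieces of $T'$, using that the evaluations form sheaves on $T_\et$.

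Compatibility with compositions $(U'',T'')\rightarrow (U',T')\rightarrow (U,T)$ follows from the functoriality of the adjunction maps, since each base change map is obtained from the transition maps of the complex $Rf_{\CRIS,\ast}\calE$.

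The main obstacle is the first step: checking that the abstract map $Lg^\ast_\et((Rf_{\CRIS,\ast}\calE)_{(U,T)})\rightarrow (Rf_{\CRIS,\ast}\calE)_{(U',T')}$ really coincides with the base change morphism \eqref{eq:crystalline base change} under the identifications of Lemma~\ref{lem:log-cris-higher-direct-image}. I would handle this by checking it on injective resolutions, using that $f_\CRIS^\ast$ and localization preserve injectives. A smaller subtlety is that the fiber-product log structure on $T'$ in Theorem~\ref{thm:crystalline base change for projection} should be taken as $M_{T'}$ itself; this is legitimate because $g$ is strict.
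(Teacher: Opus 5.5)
Your proposal is correct and is essentially the paper's proof. After reducing to affine $T$ and $T'$, the paper identifies both evaluations with $R\pi$ on the relative sites over $T^\sharp$ and $T'^\sharp$ via Lemma~\ref{lem:log-cris-higher-direct-image} and invokes Theorem~\ref{thm:crystalline base change for projection}; your hypothesis checks (the sub PD-ideal $\calJ_T\subset\calJ_T+\calJ_\calS\calO_T$, fineness of $M_T$ via Remark~\ref{rem:fine log structure in absolute log crystalline site}, strictness of $U'\rightarrow U$ making the square Cartesian) and the compatibility of the identification with the base change map just spell out what the paper leaves implicit.
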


\begin{proof}
To show the statement, one may replace $T'$ by affine opens. Hence we may assume that both $T$ and $T'$ are affine. Then the assertion is reduced to Theorem~\ref{thm:crystalline base change for projection}, thanks to Lemma~\ref{lem:log-cris-higher-direct-image}.
\end{proof}

\begin{thm}\label{thm:absolute crystalline base change for crystalline pushforward}
Suppose that we have the following Cartesian diagram
\[
\xymatrix{
(X',M_{X'})\ar[r]^-{g'} \ar[d]_-{f'}& (X,M_X)\ar[d]_-f\\
(Y',M_{Y'})\ar[r]^-g & (Y,M_Y).\\
}
\]
Assume that $f\colon (X,M_X)\rightarrow (Y,M_Y)$ is smooth and integral of fine log schemes and that $f\colon X\rightarrow Y$ is qcqs.
Then for every flat quasi-coherent $\calO_{X/\calS}$-module $\calE$ on $((X,M_X)/\calS^\sharp)_\CRIS$, the canonical morphism
\[
g^\ast_\CRIS Rf_{\CRIS,\ast}\calE\rightarrow Rf'_{\CRIS,\ast}g'^\ast_\CRIS\calE
\]
is a quasi-isomorphism.
\end{thm}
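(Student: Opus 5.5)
The plan is to imitate the proof of Theorem~\ref{thm:crystalline base change for crystalline pushforward}, with Lemma~\ref{lem:log-cris-higher-direct-image} used in place of Lemma~\ref{lem:evalutation of crystalline pushforward via projection}. Since $g_\CRIS^\ast$ is computed by evaluating at objects carrying the log structure of Construction~\ref{construction:strict log structure for functoriality}, it is exact (Proposition~\ref{prop:functoriality of absolute crystalline topoi}). It therefore suffices to show that, for every $(U',T')\in((Y',M_{Y'})/\calS^\sharp)_\CRIS$, the evaluation
\[
(g^\ast_\CRIS Rf_{\CRIS,\ast}\calE)_{(U',T')}\rightarrow (Rf'_{\CRIS,\ast}g'^\ast_\CRIS\calE)_{(U',T')}
\]
is a quasi-isomorphism in $D(T'_\et,\calO_{T'})$. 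The question is local on $T'$, so we may take $T'$ affine.

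Write $(U,T)\coloneqq(U',T',M_{(T',g)})\in((Y,M_Y)/\calS^\sharp)_\CRIS$. By construction $T=T'$ as schemes, and $T$ is an object of $((Y,M_Y)/S_n^\sharp)_\CRIS$ for some $n\geq n_0$. Lemma~\ref{lem:log-cris-higher-direct-image}(2) identifies the source with $R\pi_{f^{-1}(U)/T,\ast}\calE$ and the target with $R\pi_{f'^{-1}(U')/T',\ast}(g'^\ast_\CRIS\calE)$. Here both relative sites are taken over the log PD-schemes $T^\sharp$ and $T'^\sharp$. These are honest log PD-schemes killed by $p^n$, so we are back in the setting of \S~\ref{sec:Base change}.

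The key diagram has three parts:
\begin{itemize}
\item the Cartesian square $f'^{-1}(U')\rightarrow f^{-1}(U)$ over $U'\rightarrow U$;
\item the exact closed immersions $U\hookrightarrow T$ and $U'\hookrightarrow T'$;
\item the morphism $T'^\sharp\rightarrow T^\sharp$, whose underlying scheme map is the identity.
\end{itemize}
The upper square is Cartesian because the original square is Cartesian and $U'\rightarrow U$ is the identity on schemes. The morphism $(f^{-1}(U),M)\rightarrow(U,M_U)$ is smooth and integral with qcqs underlying map, being a base change of $f$. The log structure $M_T$ is fine by Remark~\ref{rem:fine log structure in absolute log crystalline site}, since $M_Y$ is fine. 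We may therefore apply Theorem~\ref{thm:crystalline base change for projection}, with $\Sigma=T$ and $\Sigma_0=U$, to the flat quasi-coherent crystal $\calE|_{f^{-1}(U)}$. Because $g_\et$ is the identity on $T_\et$, the $Lg_\et^\ast$ in that theorem is trivial, and it gives exactly the required quasi-isomorphism.

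The main obstacle is compatibility. One must check that the canonical map of the present statement, built from adjunctions on the big sites over $\calS^\sharp$, agrees after evaluation with the map \eqref{eq:crystalline base change}, whose relative sites are over $T^\sharp$ and $T'^\sharp$. Theorem~\ref{thm:crystalline base change for projection} also requires $M_\Sigma$ fine for the base, here $M_T$. The first point I would handle by unwinding the identification in Lemma~\ref{lem:log-cris-higher-direct-image} via $f$-PD morphisms and Lemma~\ref{lem:f-PH morphism}, as was done implicitly in Theorem~\ref{thm:crystalline base change for crystalline pushforward}. The second point is covered by the fineness remark above; only $M_{T'}$ needs to be integral and quasi-coherent, which it is.
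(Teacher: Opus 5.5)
Your proposal is correct and follows the paper's argument. The paper likewise evaluates at affine objects of $((Y',M_{Y'})/\calS^\sharp)_\CRIS$, identifies both sides via Lemma~\ref{lem:log-cris-higher-direct-image}, and concludes by Theorem~\ref{thm:crystalline base change for projection}, exactly as in the proof of Theorem~\ref{thm:crystalline base change for crystalline pushforward}. Your remarks on the fineness of $M_T$ (via Remark~\ref{rem:fine log structure in absolute log crystalline site}) and on the compatibility of the canonical maps spell out points the paper leaves implicit.
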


\begin{proof}
It suffices to show that the morphism in question is a quasi-isomorphism after evaluating at each affine $(U,T)\in ((X',M_{X'})/\calS^\sharp)_\CRIS$, which follows from Theorem~\ref{thm:crystalline base change for projection} and Lemma~\ref{lem:log-cris-higher-direct-image}.
\end{proof}

\begin{thm}\label{thm:absolute crystalline derived pushforward of perfect complexes}
Assume that $f\colon (X,M_X)\rightarrow (Y,M_Y)$ is smooth and integral of fine log schemes and that $f\colon X\rightarrow Y$ is proper.
For every finite locally free $\calO_{X/\calS}$-module $\calE$, the derived pushforward $Rf_{\CRIS,\ast}\calE$ is a perfect complex of $\calO_{Y/\calS}$-modules.
\end{thm}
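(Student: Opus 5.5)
The plan is to reduce to the relative statement, Theorem~\ref{thm:crystalline derived pushforward of perfect complexes}, by evaluating at objects of the site. Perfectness of a complex of $\calO_{Y/\calS}$-modules is local on the site. By Theorem~\ref{thm:crystal property of higher direct image}, $Rf_{\CRIS,\ast}\calE$ is already a crystal in the derived category, since a finite locally free $\calE$ is flat. So it suffices to show that for each affine $(U,T)\in((Y,M_Y)/\calS^\sharp)_\CRIS$ the evaluation $(Rf_{\CRIS,\ast}\calE)_{(U,T)}\in D(T_\et,\calO_T)$ is a perfect complex. Once this holds, the crystal property identifies the restriction of $Rf_{\CRIS,\ast}\calE$ to the localized site at $(U,T)$ with the pullback of a perfect complex on $T$, hence with a perfect complex locally. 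I would spell this out as in the proof of Theorem~\ref{thm:crystalline derived pushforward of perfect complexes}, where the same reduction is made.

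Fix an affine $(U,T)$. By definition of the colimit site, $(U,T)$ lies in $((Y,M_Y)/S_n^\sharp)_\CRIS$ for some $n\ge n_0$. By Lemma~\ref{lem:log-cris-higher-direct-image}(2),
\[
(Rf_{\CRIS,\ast}\calE)_{(U,T)}\cong R\pi_{f^{-1}(U)/T,\ast}\calE,
\]
where the right side is computed on $((f^{-1}(U),M_{f^{-1}(U)})/T^\sharp)_\CRIS$. The log PD-scheme $T^\sharp$ has $p$ nilpotent. Its log structure is fine by Remark~\ref{rem:fine log structure in absolute log crystalline site}, since $M_Y$ is fine. The same identification holds for the relative site over $S_n^\sharp$ by Lemma~\ref{lem:evalutation of crystalline pushforward via projection}(2). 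Hence
\[
(Rf_{\CRIS,\ast}\calE)_{(U,T)}\cong (Rf_{S_n,\CRIS,\ast}\,i_n^\ast\calE)_{(U,T)}.
\]
The restriction $i_n^\ast\calE$ is again a finite locally free $\calO_{X/S_n}$-module. Alternatively, one can apply Proposition~\ref{prop:crystalline derived pushforward of perfect complex in characteristic p} directly after the mod $p$ reduction, as in the proof of Theorem~\ref{thm:crystalline derived pushforward of perfect complexes}. Either way, Theorem~\ref{thm:crystalline derived pushforward of perfect complexes}, applied over the fine log PD-scheme $S_n^\sharp$ (fine because $M_\calS$ is fine), gives that this evaluation is perfect.

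The main obstacle is bookkeeping rather than new mathematics. The base $T^\sharp$ must satisfy the hypotheses of the relative theorem: its PD-ideal is $\calJ_T+\calJ_\calS\calO_T$ and its log structure is fine. The pullback of $\calE$ to $((f^{-1}(U))/T^\sharp)_\CRIS$ must agree with the pullback via $S_n^\sharp$, which follows from Proposition~\ref{prop:functoriality of absolute crystalline topoi}, because the pullback formula is the same on objects. Finally, the local perfectness on each affine $(U,T)$ must glue to perfectness on the whole site. For this gluing I would invoke the crystal property from Theorem~\ref{thm:crystal property of higher direct image}, together with the fact that the affine objects generate the topos (Definition~\ref{def:big relative logarithmic crystalline site}).
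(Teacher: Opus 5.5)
Your proposal is correct and is essentially the paper's argument. The paper proves this by combining Lemma~\ref{lem:log-cris-higher-direct-image} with Theorem~\ref{thm:crystalline derived pushforward of perfect complexes} over $S_n^\sharp$, whose proof already contains the reduction to affine evaluations via the crystal property, and your extra bookkeeping (fineness of $S_n^\sharp$ and $T^\sharp$, agreement of the pullbacks of $\calE$, the local-to-global step) is exactly what the paper leaves implicit.
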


\begin{proof}
This follows from Theorem~\ref{thm:crystalline derived pushforward of perfect complexes} and Lemma~\ref{lem:log-cris-higher-direct-image}.
\end{proof}

\begin{rem}
With the notation as in Remark~\ref{rem:absolute small crystalline site} and Theorem~\ref{thm:absolute crystalline derived pushforward of perfect complexes}, we have a canonical quasi-isomorphism
\[
r_{Y/\calS}^\ast Rf_{\CRIS,\ast}\calE \cong Rf_{\cris,\ast}r_{X/\calS}^\ast \calE,
\]
where $Rf_{\cris,\ast}$ is induced by a morphism of topoi given in \cite[\S~1.5, Cor; \S~1.12]{Beilinson-crystalline-period-map}. This is proved as in \cite[III.4.2.2]{Berthelot-book} by noting $r_{Y/\calS}^\ast=p_{Y/\calS,\ast}=Rp_{Y/\calS,\ast}$.
\end{rem}

\section{Frobenius isogeny property}\label{sec:Frobenius isogeny property}

Finally, we discuss the Frobenius isogeny property.
Let $(Y,M_Y)$ be a smooth and integral fine log scheme over $\F_p$. Let $D((Y,M_Y)_\CRIS)$ denote the derived category of $\calO_{Y/\Z_p}$-modules and write $D_\perf((Y,M_Y)_\CRIS)$ for the strictly full triangulated subcategory of perfect complexes of $\calO_{Y/\Z_p}$-modules.
Define $D_\perf((Y,M_Y)_\CRIS)_\Q$ to be the isogeny category of $D_\perf((Y,M_Y)_\CRIS)$: the objects are exactly those of $D_\perf((Y,M_Y)_\CRIS)$ (but often written as $\calK_\Q$ for distinction) and the morphisms are defined as
\[
\Hom_{D_\perf((Y,M_Y)_\CRIS)_\Q}(\calK_\Q,\calL_\Q)\coloneqq \Hom_{D_\perf((Y,M_Y)_\CRIS)}(\calK,\calL)\otimes_\Z\Q.
\]
Note that a morphism $f_\Q\colon \calK_\Q\rightarrow \calL_\Q$ in $D_\perf((Y,M_Y)_\CRIS)_\Q$ is an isomorphism if and only if it is so after restricting to an \'etale surjection $Y'\rightarrow Y$: to see this, we may assume that $f_\Q$ comes from $f\in \Hom(\calK,\calL)$ and take a distinguished triangle of the form $\calK\xrightarrow{f}\calL\rightarrow\calM$. Then $f_\Q$ is an isomorphism if and only if $\calM_\Q\cong 0$, or $p^r=\id_\calM=0$ for some $r\in\N$, and the formation commutes with \'etale pullbacks. The assertion now follows easily from these observations. 

Since $(Y,M_Y)$ is a log scheme over $\F_p$, we have the absolute $p$th Frobenius $F_Y\colon (Y,M_Y)\rightarrow (Y,M_Y)$. The morphism of topoi $F_{Y,\CRIS}\colon \Sh((Y,M_Y)_\CRIS)\rightarrow \Sh((Y,M_Y)_\CRIS)$ gives rise to an endomorphism 
\[
F_{Y,\CRIS}^\ast\colon D_\perf((Y,M_Y)_\CRIS)_\Q\rightarrow D_\perf((Y,M_Y)_\CRIS)_\Q
\]
(note $LF_{Y,\CRIS}^\ast=F_{Y,\CRIS}^\ast$ as we work on the big crystalline site).

\begin{defn}\label{def:F-isocrystals}
An \emph{$F$-isocrystal in perfect complexes} on $(Y,M_Y)_\CRIS$ is a pair $(\calK_\Q,\varphi_{\calK_\Q})$ where $\calK_\Q\in D_\perf((Y,M_Y)_\CRIS)_\Q$ and $\varphi_{\calK_\Q}\colon F_{Y,\CRIS}^\ast\calK_\Q\rightarrow \calK_\Q$ is an isomorphism in $D_\perf((Y,M_Y)_\CRIS)_\Q$. 
Similarly, an \emph{$F$-isocrystal structure} on a locally free $\calO_{Y/\Z_p}$-module $\calE$ is an element $\varphi_{\calE_\Q}$ in $\Hom(F_{Y,\CRIS}^\ast\calE_\Q,\calE_\Q)\coloneqq \Hom_{\calO_{Y/\Z_p}}(F_{Y,\CRIS}^\ast\calE,\calE)\otimes_\Z\Q$ that is an isomorphism in the obvious sense. We simply call the pair $(\calE,\varphi_{\calE_\Q})$ a \emph{finite locally free $F$-crystal} on $(Y,M_Y)_\CRIS$.
\end{defn}

\begin{rem}\label{rem:Beilonson's non-degenerate perfect F-crystals}
Assume that $Y$ is quasi-compact. Let $(\calK_\Q,\varphi_{\calK_\Q})$ be an $F$-isocrystal in perfect complexes  on $(Y,M_Y)_\CRIS$ and take $\calK\in D_\perf((Y,M_Y)_\CRIS)$. Then $\varphi_{\calK_\Q}$ can be written as  $\varphi_{\calK_\Q}=p^{r}\varphi_\calK$ for some $r\in\Z$ and a morphism $\varphi_\calK\colon F_{Y,\CRIS}^\ast\calK\rightarrow \calK$ in $D_\perf((Y,M_Y)_\CRIS)$.
A pair $(\calK,\varphi_\calK\colon F_{Y,\CRIS}^\ast\calK\rightarrow \calK)$ such that $(\varphi_\calK)_\Q$ is an isomorphism in $D_\perf((Y,M_Y)_\CRIS)_\Q$ is called a \emph{non-degenerate perfect $F$-crystal}.
Similarly, we define the notion of \emph{non-degenerate finite locally free $F$-crystals}.
\end{rem}

Let $f\colon (X,M_X)\rightarrow (Y,M_Y)$ be a morphism of fine log schemes over $\F_p$. Consider $(X',M_{X'})\coloneqq (Y,M_Y)\times_{F_Y,(Y,M_Y)}(X,M_X)$; it sits in the following commutative diagram:
\begin{equation}\label{eq:relative Frobenius}
\xymatrix{
(X,M_X) \ar[r]^-{F_{X/Y}}\ar[dr]_-{f} \ar@/^2pc/[rr]^-{F_X} & (X',M_{X'})\ar[d]_-{f'}\ar[r]^g \ar@{}[dr] | {\square} &(X,M_X)\ar[d]^-{f}\\
& (Y,M_Y) \ar[r]^-{F_Y} & (Y,M_Y).
}
\end{equation}
Note that if $f$ is integral, then $X'=Y\times_{F_Y,Y}X$ as schemes.
We say that $f$ is \emph{of Cartier type} if $f$ is integral and $F_{X/Y}$ is exact; for example, the mod $p$ reduction of the structure morphism of a semistable formal scheme is of Cartier type (cf.~\cite[Def.~4.8]{Kato-log}, Remark~\ref{rem:Frobenius isogeny property} below).

\begin{thm}\label{thm:Frobenius isogeny property}
Let $f\colon (X,M_X)\rightarrow (Y,M_Y)$ be smooth of Cartier type between fine log schemes over $\F_p$ such that $f\colon X\rightarrow Y$ is proper and $Y$ is quasi-compact.
Assume moreover that $(Y,M_Y)$ is smooth and integral over a perfect field $k$ of characteristic $p$ with trivial log structure. 
If $(\calE,\varphi_{\calE_\Q})$ is a finite locally free $F$-crystal on $(X,M_X)_\CRIS$, then $(Rf_{\CRIS,\ast}\calE_\Q,Rf_{\CRIS,\ast}\varphi_{\calE_\Q})$ is an $F$-isocrystal in perfect complexes on $(Y,M_Y)_\CRIS$ (see the proof below for the precise definition).
Moreover, $Rf_{\CRIS,\ast}$ preserves the non-degeneracy property.
\end{thm}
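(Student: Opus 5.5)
The plan is to first define the Frobenius structure, then prove it is an isogeny by reducing to the structure sheaf, where the result of Hyodo--Kato applies.

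\textbf{Defining the morphism.} By Theorem~\ref{thm:absolute crystalline derived pushforward of perfect complexes}, $\calK\coloneqq Rf_{\CRIS,\ast}\calE$ lies in $D_\perf((Y,M_Y)_\CRIS)$. Using the relative Frobenius diagram \eqref{eq:relative Frobenius}, where the square is Cartesian and $f$ is smooth and integral, Theorem~\ref{thm:absolute crystalline base change for crystalline pushforward} gives a quasi-isomorphism
\[
F_{Y,\CRIS}^\ast Rf_{\CRIS,\ast}\calE\cong Rf'_{\CRIS,\ast}g_\CRIS^\ast\calE .
\]
Adjunction along $F_{X/Y}$ gives
\[
Rf'_{\CRIS,\ast}g^\ast_\CRIS\calE\rightarrow Rf'_{\CRIS,\ast}RF_{X/Y,\CRIS,\ast}F_{X/Y,\CRIS}^\ast g^\ast_\CRIS\calE=Rf_{\CRIS,\ast}F_{X,\CRIS}^\ast\calE .
\]
Composing with $Rf_{\CRIS,\ast}$ of a representative $\varphi_\calE$ of $\varphi_{\calE_\Q}$ (multiplied by a power of $p$) defines $Rf_{\CRIS,\ast}\varphi_{\calE_\Q}\colon F_{Y,\CRIS}^\ast\calK_\Q\rightarrow\calK_\Q$. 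The step that remains is to show this map is an isomorphism in the isogeny category.

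\textbf{Reducing to $\calE=\calO_{X/\Z_p}$.} Since $\varphi_{\calE_\Q}$ is an isogeny, the second map above is the only one in question. It is an isomorphism up to isogeny iff, after base change, the linearized Frobenius $F_{X/Y}$ induces an isogeny on crystalline pushforwards. As remarked before Definition~\ref{def:F-isocrystals}, the isomorphism property can be checked étale locally on $Y$, and it can be checked after evaluation at objects of the site. So I will work étale locally on $Y$ and use that $(Y,M_Y)$ is smooth over $k$. Étale locally, $Y$ lifts to a smooth formal scheme $\calY$ over $W(k)$ with a Frobenius lift. Then $\calY$ is a $p$-adic log PD-formal scheme in the site (Construction~\ref{construction: eval of isocrystal at log PD-formal scheme}). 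Perfect crystals are determined by their values on $\calY$ together with the stratification, so it suffices to check that the map is an isogeny after evaluating at $(Y,\calY)$, via Proposition~\ref{prop:limit formula for evaluation at log PD-formal scheme}.

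On $\calY$, the value is $R\Gamma$ of the relative crystalline cohomology of $X$ over $\calY$ with coefficients in $\calE$. To reduce to $\calE=\calO$, I would use a dévissage. One option is the Frobenius pullback $F^\ast\calE$, which exists since $\varphi_\calE$ realizes $\calE$ up to isogeny as $F_X^\ast\calE$. The other is a cleaner choice: by the projection formula, $F_{X/Y}$ on $\calE$-coefficients factors as the base map tensored with $\varphi_\calE$. Concretely, the key claim is that the linearized Frobenius
\[
F_{X/Y}^\ast\colon Rf'_{\CRIS,\ast}\calO_{X'/\Z_p}\rightarrow Rf_{\CRIS,\ast}\calO_{X/\Z_p}
\]
is an isogeny; this is the Hyodo--Kato theorem \cite{hyodo-kato}, which applies because $f$ is of Cartier type. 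It then remains to transfer the isogeny to locally free coefficients. Reducing mod $p$, with quasi-inverse up to $p^N$ given by a Verschiebung or Cartier-inverse argument, is natural for the trivial coefficient. For general $\calE$, I would instead use that the cokernel of $\varphi_\calE$ is killed by $p^a$, and compare $R\Gamma(\calE)$ and $R\Gamma(F^\ast\calE)$ via the Cartier-type de Rham comparison with coefficients.

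\textbf{Main obstacle.} I expect the hardest step to be exactly this passage from $\calO$ to $\calE$. If direct dévissage fails, I would prove the isogeny via Berthelot--Ogus's "Frobenius pullback is an isogeny on crystalline cohomology" argument for $F$-crystals. That argument uses the $F$-crystal structure to exhibit $\calE$ up to isogeny as $F^{n\ast}$ of something, and proceeds by induction on the filtration $p^i\calE$ using the conjugate spectral sequence for Cartier-type morphisms. Non-degeneracy preservation then follows immediately, since $Rf_{\CRIS,\ast}\varphi_\calE$ is an integral morphism whose rationalization is the isomorphism just established.
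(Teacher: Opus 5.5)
Your skeleton is the paper's. You define the Frobenius on $Rf_{\CRIS,\ast}\calE$ by base change along $F_Y$, the unit of adjunction for $F_{X/Y}$, and $Rf_{\CRIS,\ast}\varphi_\calE$. You note that only the unit map $\beta\colon Rf'_{\CRIS,\ast}\calE'\to Rf'_{\CRIS,\ast}RF_{X/Y,\CRIS,\ast}F_{X/Y,\CRIS}^\ast\calE'$ (with $\calE'=g_\CRIS^\ast\calE$) must be shown to be an isogeny. You then localize on $Y$, evaluate at a lift $(Y,\calY)$, and invoke Hyodo--Kato. (No Frobenius lift on $\calY$ is needed; the paper only uses that $(Y,\calY)$ is weakly final and flat over $\Z_p$.) However, the step you yourself call the main obstacle, passing from $\calO$ to $\calE$, is left unresolved, and the formulations you give for it would not work as stated.

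The missing observation is that $\beta$ involves no Frobenius structure on $\calE$ at all. So your claim that the projection formula factors the map as ``the base map tensored with $\varphi_\calE$'' is incorrect. Because $\calE'$ is finite locally free, the projection formula gives a quasi-isomorphism $\calE'\otimes RF_{X/Y,\CRIS,\ast}\calO_{X/\Z_p}\cong RF_{X/Y,\CRIS,\ast}F_{X/Y,\CRIS}^\ast\calE'$. Under it, the unit for $\calE'$ becomes $\id_{\calE'}\otimes u$, where $u\colon\calO_{X'/\Z_p}\to RF_{X/Y,\CRIS,\ast}\calO_{X/\Z_p}$ is the unit for the structure sheaf. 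Hence $\beta=Rf'_{\CRIS,\ast}(\id_{\calE'}\otimes u)$, and one only needs $u$ to be an isogeny on $X'$, i.e.\ \emph{before} pushing forward along $f'$.

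This is also why your ``key claim'' is the wrong statement. An isogeny $Rf'_{\CRIS,\ast}\calO_{X'/\Z_p}\to Rf_{\CRIS,\ast}\calO_{X/\Z_p}$ lives on $Y$ and cannot be tensored with $\calE'$, which lives on $X'$. The input the paper uses is Hyodo--Kato's Prop.~2.24, which is a local statement on $X'_\et$. For the $\Z_p$-flat system $T_n=Y_n$, the map of projective systems $Ru_{X'/T_\bullet,\ast}\calO_{X'/T_\bullet}\to Ru_{X/T_\bullet,\ast}\calO_{X/T_\bullet}$ has a quasi-inverse up to $p^r$.

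Once this reduction is made, none of the following are needed: the d\'evissage along $\operatorname{coker}\varphi_\calE$, rewriting $\calE$ up to isogeny as a Frobenius pullback, or a Berthelot--Ogus induction over $p^i\calE$ via the conjugate spectral sequence. As sketched, they are not an argument. A Cartier-type comparison with coefficients is available for $p$-curvature-zero connections such as the Frobenius-descended one on $F_{X/Y}^\ast\calE'$, not for an arbitrary crystal. Exploiting it would mean re-proving Hyodo--Kato with coefficients rather than reducing to it.
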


\begin{proof}
Let us start with the definition of $Rf_{\CRIS,\ast}\varphi_{\calE_\Q}$.
Write $\varphi_{\calE_\Q}=p^{r}\varphi_\calE$ for some $r\in\Z$ and a morphism $\varphi_\calE\colon F_{X,\CRIS}^\ast\calE\rightarrow \calE$ of $\calO_{X/\Z_p}$-modules. We already know from Theorem~\ref{thm:absolute crystalline derived pushforward of perfect complexes} that $Rf_{\CRIS,\ast}\calE$ is a perfect complex of $\calO_{Y/\Z_p}$-modules.

Name the morphisms as in \eqref{eq:relative Frobenius}.
Set $\calE'\coloneqq g_{\CRIS}^\ast \calE$.
Consider the composite $\alpha$ of the maps
\begin{equation}\label{eq:Frobenius pullback and higher direct image}
\xymatrix{
F_{Y,\CRIS}^\ast Rf_{\CRIS,\ast}\calE \ar[r]^-\cong\ar@{-->}[rd]_-{\alpha}
&Rf'_{\CRIS,\ast}\calE'\ar[r]_-{Rf'_{\CRIS,\ast}\mathrm{adj}}^-{\beta}
&Rf'_{\CRIS,\ast}RF_{X/Y,\CRIS,\ast}F_{X/Y,\CRIS}^\ast\calE'
\ar[d]^-\cong\\
& Rf_{\CRIS,\ast}\calE
&
Rf_{\CRIS,\ast}F_{X,\CRIS}^\ast\calE\ar[l]_-{Rf_{\CRIS,\ast}\varphi_\calE},
}
\end{equation}
where the horizontal quasi-isomorphism comes from Theorem~\ref{thm:absolute crystalline base change for crystalline pushforward} and the vertical one is given by the composite of right derived functors induced from the composite of morphisms of topoi.
The morphism $Rf_{\CRIS,\ast}\varphi_{\calE_\Q}\colon F_{Y,\CRIS}^\ast Rf_{\CRIS,\ast}\calE_\Q\rightarrow Rf_{\CRIS,\ast}\calE_\Q$ in the theorem is defined to be $p^r\alpha$; it is easy to check that this morphism is independent of the choice of $(\varphi_\calE,r)$.
By construction, for the first statement, it remains to show that $\beta_\Q$ is an isomorphism.

To study $\beta$, recall $\calO_{X/\Z_p}=F_{X/Y,\CRIS}^\ast\calO_{X'/\Z_p}=LF_{X/Y,\CRIS}^\ast\calO_{X'/\Z_p}$ and consider the diagram
\[
\xymatrix{
\calE' \ar[r]^-{\mathrm{adj}}\ar@{=}[d]
&RF_{X/Y,\CRIS,\ast}(F_{X/Y,\CRIS}^\ast\calE'\otimes F_{X/Y,\CRIS}^\ast\calO_{X'/\Z_p})\\
\calE'\otimes \calO_{X'/\Z_p}\ar[r]^-{\id\otimes\mathrm{adj}} &\calE'\otimes RF_{X/Y,\CRIS,\ast} F_{X/Y,\CRIS}^\ast\calO_{X'/\Z_p}\ar[u]_-\gamma^-\cong,
}
\]
where $\gamma$ is the canonical map and a quasi-isomorphism by the projection formula, and the above diagram is commutative by construction (see \cite[0943]{stacks-project} for example). Since $\beta=Rf'_{\CRIS,\ast}(\gamma\circ \id\otimes\mathrm{adj})$, it remains to prove that the canonical morphism given by adjunction
\[
\calO_{X'/\Z_p}\rightarrow RF_{X/Y,\CRIS,\ast}F_{X/Y,\CRIS}^\ast\calO_{X'/\Z_p}=RF_{X/Y,\CRIS,\ast}\calO_{X/\Z_p}
\]
is an isogeny (i.e., an isomorphism in $D_\perf((X,M_X)_\CRIS)_\Q$), or equivalently, its cone $\calL$ becomes zero in the isogeny category. Since $Y$ is quasi-compact, one may check this \'etale locally on $Y$. 

So we may assume that $Y$ is affine. Using smoothness and \cite[Prop.~3.14(1)]{Kato-log}, one can inductively find, for each $n$, an affine smooth and integral fine log scheme $(Y_n,M_{Y_n})$ over $W_n(k)$ such that $(Y_1,M_{Y_1})=(Y,M_Y)$ and $(Y_{n+1},M_{Y_{n+1}})\times_{W_{n+1}(k)}W_n(k)=(Y_n,M_{Y_n})$, where $W_n(k)$ is equipped with trivial log structure. Note that $(Y,Y_n)\in (Y,M_Y)_\CRIS$ and $Y_n$ is flat over $W_n(k)$. Write $Y_n=\Spec R_n$ and set $R=\varprojlim_n R_n$. Then $\calY=\Spf R$ is a $p$-adic formal scheme equipped with fine log structure $M_\calY$ such that $(\calY,M_\calY)\times_{\Spf W(k)}\Spec W_n(k)=(Y_n,M_{Y_n})$. As in Construction~\ref{construction: eval of isocrystal at log PD-formal scheme}, we regard $(Y,\calY)$ as a $p$-adic log PD-formal scheme in $(Y,M_Y)_\CRIS$. It follows from Example~\ref{eg:absolute crystalline site over k} and \cite[Prop. IV.3.1.4.2]{Ogus-log} that every affine $(U,T)\in (Y,M_Y)_\CRIS$ admits a morphism 
to $(Y,\calY)$. In particular, we know from Theorem~\ref{thm:crystal property of higher direct image} and (a slight generalization of) Proposition~\ref{prop:limit formula for evaluation at log PD-formal scheme} that, to show $\calL_\Q=0$, it is enough to prove that 
\[
R\Gamma((Y,\calY),\calL)=R\varprojlim R\Gamma((Y,Y_n),\calL)\in D(R)
\]
is zero in $D(R)_\Q$. Now this and the second assertion of the theorem follow from Proposition~\ref{prop:hyodo-kato-relative-Frobenius} below of Hyodo and Kato, Lemma~\ref{lem:log-cris-higher-direct-image}(2), and \cite[06Z0]{stacks-project}.
\end{proof}

\begin{prop} \label{prop:hyodo-kato-relative-Frobenius}
Let $f\colon (X,M_X)\rightarrow (Y,M_Y)$ be a smooth morphism of Cartier type between fine log schemes over $\F_p$.
Assume that there are fine log schemes $(T_n,M_{T_n})$ with exact closed immersions
\[
(Y,M_Y)\hookrightarrow (T_1,M_{T_1})\hookrightarrow (T_2,M_{T_2})\hookrightarrow\cdots
\]
and a PD-structure on the ideal of $Y$ in $T_n$ such that
\begin{enumerate}
 \item $T_n\rightarrow T_{n+1}$ is a PD-morphism, and
 \item $T_n$ is flat over $\Z/p^n$ with $T_n\xrightarrow{\cong}T_{n+1}\otimes\Z/p^n$.
\end{enumerate}
Assume also that $r\coloneqq \max_x \rank_x \omega^1_{(X,M_X)/(Y,M_Y)}$ exists (e.g.~$X$ is quasi-compact). Then for the natural map of projective systems 
\[
\varphi\colon Ru_{X'/T_\bullet,\ast}\calO_{X'/T_\bullet}\rightarrow Ru_{X/T_\bullet,\ast}\calO_{X/T_\bullet}
\]
(under the identification of $X_\et\cong X'_\et$),
there exists a morphism of projective systems 
\[
\psi\colon Ru_{X/T_\bullet,\ast}\calO_{X/T_\bullet}\rightarrow Ru_{X'/T_\bullet,\ast}\calO_{X'/T_\bullet}
\]
such that $\varphi\circ \psi=p^r$ and $\psi\circ \varphi=p^r$.
\end{prop}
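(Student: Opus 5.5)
This is the Hyodo--Kato relative Frobenius result. I will follow their argument, which is modelled on the Berthelot--Ogus proof of the Frobenius isogeny in the non-log case, and in particular use the Cartier isomorphism. That isomorphism is available precisely because $f$ is of Cartier type.

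The first step reduces to a local de Rham computation. The question is étale local on $X$, provided the local constructions are canonical up to homotopy, so we may assume $X$ is affine. In that case we lift compatibly: using \cite[Prop.~3.14(1)]{Kato-log} and the flatness of $T_n$, we choose fine log schemes $(Z_n,M_{Z_n})$ that are smooth and integral over $(T_n,M_{T_n})$ and satisfy $Z_{n+1}\otimes\Z/p^n\cong Z_n$. Then Theorem~\ref{thm:cohomology of crystal in terms of log de Rham complex} identifies $Ru_{X/T_n,\ast}\calO$ with $\omega^\bullet_{Z_n/T_n}$ and $Ru_{X'/T_n,\ast}\calO$ with $\omega^\bullet_{Z'_n/T_n}$. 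Here $Z'_n$ is a lift of $X'$, which we take to be the base change of $Z_n$ along a lift of Frobenius on the $T_n$, after adjusting locally. In general no such lift exists on the $T_n$, so instead we work with the PD-envelopes $D_n$ of $X'$ inside $Z'_n\times Z_n$ (via $F$). Under these identifications the map $\varphi$ becomes the pullback $\Phi^\ast$ along a local Frobenius lift $\Phi\colon Z_n\to Z'_n$. Such a lift exists étale locally because $Z'_n$ is smooth and $Z_n$ is affine.

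The second step is the key estimate. Let $\Phi$ lift the relative Frobenius $F_{X/Y}$. Then $\Phi^\ast$ sends $\omega^i_{Z'_n}$ into $p^i\omega^i_{Z_n}$. Indeed, for a local unit $u$ of the log structure we have $\Phi^\ast u=u^p(1+pv)$, so $\Phi^\ast d\log u\in p\,\omega^1$. Hence $\Phi^\ast/p^i$ is defined in each degree. Its reduction modulo $p$ is the inverse Cartier operator $C^{-1}\colon\omega^i_{X'/Y}\xrightarrow{\cong}\mathcal H^i(F_{X/Y,\ast}\omega^\bullet_{X/Y})$, which is an isomorphism by the log Cartier isomorphism \cite[Thm.~4.12]{Kato-log}. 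This is where the Cartier type hypothesis is used.

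The third step constructs $\psi$, following \cite[§8]{berthelot-ogus-book}. We use the filtration by the subcomplexes $p^{\bullet}$ (the Mazur/Ogus "$L\eta_p$" type comparison). The estimate above shows that $\Phi^\ast$ factors through a quasi-isomorphism
\[
\omega^\bullet_{Z'_n}\xrightarrow{\ \sim\ }\eta_p\,\omega^\bullet_{Z_n}\subset \omega^\bullet_{Z_n}.
\]
Here $\eta_p$ denotes the subcomplex with $p^i(\cdot)$ in degree $i$ and $d^{-1}(p^{i+1})$ conditions. This is the de Rham–Witt/Mazur argument: one checks it modulo $p$ via Cartier and then uses flatness over $\Z/p^n$ together with induction on $n$. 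Since $\omega^i=0$ for $i>r$, multiplication by $p^r$ on $\omega^\bullet_{Z_n}$ factors through $\eta_p\omega^\bullet_{Z_n}$, with the map $x\mapsto p^{r-i}x$ in degree $i$. Composing with the inverse of the quasi-isomorphism gives $\psi$, and both composites are $p^r$ by construction.

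The main obstacle is independence of choices and gluing. The local $\psi$ depends on $Z_n$ and $\Phi$, so it must be made canonical in the derived category as a projective system. My first approach is to avoid choices by working on the crystalline side: I would define $\psi$ via the quasi-isomorphism $Ru_{X'/T_\bullet,\ast}\calO\cong L\eta_p Ru_{X/T_\bullet,\ast}\calO$, which is canonical because $\varphi$ itself is canonical, and then take $\psi$ to be the composite with the canonical map $p^r\colon R\to L\eta_pR$, which exists for complexes with amplitude at most $r$. The remaining work is to verify compatibility with $n$ and the bound on cohomological amplitude. Both should follow from the local description together with Čech/simplicial descent as in the proof of Proposition~\ref{prop:vanishing of crystal under projection}.
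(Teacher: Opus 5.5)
The paper does not reprove this statement. It cites \cite[Prop.~2.24]{hyodo-kato} and only notes that the argument transfers to the big site via Theorem~\ref{thm:cohomology of crystal in terms of log de Rham complex}. Equivalently, $Ru_{X/T_n,\ast}\calO_{X/T_n}$ depends only on the restriction to the small site (Remark~\ref{rem:u-pushforward depends only on restriction to small site}). Reconstructing Hyodo--Kato's argument is a legitimate alternative, but your sketch has genuine gaps.

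First, the key estimate in your second step is false in the generality of the statement. The hypotheses allow the PD-ideal $\calJ$ of $Y$ in $T_n$ to be strictly larger than $(p)$, i.e.\ $Y\subsetneq T_1$; this is exactly the semistable case of Remark~\ref{rem:Frobenius isogeny property}. Then the ideal of $X$ in a smooth lift $Z_n$ is $\calJ\calO_{Z_n}$, and an arbitrary lift $\Phi$ of $F_{X/Y}$ only satisfies $\Phi^\ast u=u^p(1+w)$ with $w\in\calJ\calO_{Z_n}$. For example, take $Y=\Spec\F_p$, $T_n=\Spec \Z/p^n\langle s\rangle$, $X=\A^1_{\F_p}$ and $\Phi^\ast t'=t^p+st$. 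Then $\Phi^\ast dt'=(pt^{p-1}+s)\,dt\notin p\,\omega^1_{Z_n/T_n}$. Likewise, $\omega^\bullet_{Z_n/T_n}\otimes\F_p$ is $\omega^\bullet_{Z_1/T_1}$, not $\omega^\bullet_{X/Y}$, so you apply the Cartier isomorphism to the wrong pair.

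This part can be repaired. Use that $F_{T_1}$ factors as $T_1\to Y\hookrightarrow T_1$, since $x^p=p!\,x^{[p]}=0$ on the PD-ideal. Take $Z'_n$ lifting $Z_1\times_{T_1,F_{T_1}}T_1$, which is a lift of $X'$, and take $\Phi$ lifting the relative Frobenius $F_{Z_1/T_1}$. Then apply the Cartier isomorphism to $Z_1/T_1$, which is of Cartier type because $\overline{M}$ is unchanged along these nil-immersions.

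Second, and more seriously, your third step fails at a finite level, and the globalization is missing. Since $p$ is nilpotent on $T_n$, the map $\omega^\bullet_{Z'_n}\to\eta_p\,\omega^\bullet_{Z_n}$ is \emph{not} a quasi-isomorphism. Take $X=\A^1_{\F_p}$, $T_n=\Spec\Z/p^n$ and $\Phi^\ast t'=t^p$. The class of $t^{p^n}$ lies in $H^0(\eta_p\,\omega^\bullet_{Z_n})=H^0(\omega^\bullet_{Z_n})$, but its only preimage $t'^{p^{n-1}}$ is not closed. Nor can you argue ``check mod $p$ via Cartier, then induct on $n$'', because $\eta_p$ does not commute with $-\otimes^{\mathbf{L}}\Z/p^m$. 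A Mazur--Ogus type statement needs $p$-torsion-free objects, for instance after $R\varprojlim_n$, which is where flatness of the $T_n$ enters. Even then you would still have to produce a morphism of projective systems.

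Finally, your canonical definition of $\psi$ is only asserted. $L\eta_p$ is not a triangulated functor, and local chain-level factorizations of $\varphi$ through $\eta_p$ do not glue in the derived category. The canonicity of $\varphi$ does not by itself give a canonical factorization $Ru_{X'/T_\bullet,\ast}\calO\to L\eta_p Ru_{X/T_\bullet,\ast}\calO$. The map $p^r\colon K\to L\eta_pK$ also needs a $p$-torsion-free representative concentrated in degrees $[0,r]$. So the existence of $\psi$ as a morphism of projective systems, which is the whole content of the proposition, is not established.
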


\begin{proof}
This is \cite[Prop.~2.24]{hyodo-kato}. Note that the small crystalline site is used in \textit{loc. cit.}, but the same proof works for the big crystalline site by using Theorem~\ref{thm:cohomology of crystal in terms of log de Rham complex}. 
\end{proof}

\begin{rem}
With the set-up and notation as in Theorem~\ref{thm:Frobenius isogeny property} and its proof, assume further that $Y$ is affine as in the last paragraph of the proof. In this case, we can give a bound of the Frobenius height of the crystalline pushforward. Namely, the proof shows that if $\operatorname{Cone}(\varphi_\calK)\otimes^\mathbf{L}_{\Z_p}\Z_p/p^s=0$, then $\operatorname{Cone}(\alpha)\otimes^\mathbf{L}_{\Z_p}\Z_p/p^{s+r}=0$ where $r=\max_x \rank_x \omega^1_{(X,M_X)/(Y,M_Y)}$. 
\end{rem}

\begin{rem}\label{rem:Frobenius isogeny property}\hfill
\begin{enumerate}
 \item 
In Theorem~\ref{thm:Frobenius isogeny property}, we assume that $Y$ is smooth and integral over a perfect field $k$. As the proof shows, this can be relaxed to the condition that $(Y,M_Y)_\CRIS$ admits \'etale locally a weakly final ind-object that is flat over $\Z_p$; namely, \'etale locally, $(Y,M_Y)$ admits a chain of exact closed immersions 
\[
(Y,M_Y)\hookrightarrow (T_1,M_{T_1})\hookrightarrow (T_2,M_{T_2})\hookrightarrow\cdots
\]
satisfying the assumptions of Proposition~\ref{prop:hyodo-kato-relative-Frobenius} and the condition that every $(U,T)\in (Y,M_Y)_\CRIS$ admits, \'etale locally, a morphism to $(T_n,M_{T_n})$ for some $n$.

For example, this is the case when $Y$ is the mod $p$ reduction of a semistable formal scheme. More precisely, let $k$ be a perfect field of characteristic $p$ and $K$ a finite totally ramified extension of $W(k)[p^{-1}]$. Then Theorem~\ref{thm:Frobenius isogeny property} also holds if $(Y,M_Y)$ is the mod $p$ reduction of a semistable $p$-adic formal scheme $\calY$ over $\calO_K$ equipped with log structure associated to $\calO_\calY\cap (\calO_\calY[p^{-1}])^\times$. To see this, recall that, by definition, $\calY$ is \'etale locally of the form $\Spf R$ where $R$ is a $p$-completely \'etale over
\[
R_0\coloneqq\calO_K\langle t_1,\ldots,t_m,t_{m+1}^{\pm 1},\ldots,t_{d}^{\pm 1}\rangle/(t_1\cdots t_m-\pi)
\]
for a uniformizer $\pi$ of $\calO_K$. Let $T_n\coloneqq \Spec S/p^n$ where $S$ is defined as in \cite[Ex.~B.25]{du-liu-moon-shimizu-purity-F-crystal} (when $R=R_0$, it is defined as the $p$-adic completion of the log PD-envelope of $\calO_K\langle t_1,\ldots,t_m,t_{m+1}^{\pm 1},\ldots,t_{d}^{\pm 1}\rangle\rightarrow R/p$).
Then $\{T_n\}_n$ (with the induced log structure as in \textit{loc.~cit.}) satisfies the weakly finial condition by \cite[Lem.~B.24]{du-liu-moon-shimizu-purity-F-crystal}, and it is easy to check that $S$ is $p$-torsion free and thus flat over $\Z_p$.
 \item In Theorem~\ref{thm:Frobenius isogeny property}, the assumption that $\calE$ is a finite locally free $\calO_{X/Z_p}$-module is used to deduce that (a)  $Rf_{\CRIS,\ast}\calE$ is a perfect (Theorem~\ref{thm:absolute crystalline derived pushforward of perfect complexes}) and (b) the top left horizontal map in \eqref{eq:Frobenius pullback and higher direct image} is a quasi-isomorphism (Theorem~\ref{thm:absolute crystalline base change for crystalline pushforward}). One obtains the Frobenius isogeny property for a quasi-coherent $\calO_{X/Z_p}$-module with an $F$-isocrystal structure provided that (a) and (b) hold.
\end{enumerate}
\end{rem}

\bibliographystyle{amsalpha}
\bibliography{library}

\providecommand{\bysame}{\leavevmode\hbox to3em{\hrulefill}\thinspace}
\providecommand{\MR}{\relax\ifhmode\unskip\space\fi MR }
\providecommand{\MRhref}[2]{%
  \href{http://www.ams.org/mathscinet-getitem?mr=#1}{#2}
}
\providecommand{\href}[2]{#2}
\begin{thebibliography}{DLMS26}

\bibitem[Bei13]{Beilinson-crystalline-period-map}
A.~Beilinson, \emph{On the crystalline period map}, Camb. J. Math. \textbf{1}
  (2013), no.~1, 1--51. \MR{3272051}

\bibitem[Ber74]{Berthelot-book}
Pierre Berthelot, \emph{Cohomologie cristalline des sch\'{e}mas de
  caract\'{e}ristique {$p>0$}}, Lecture Notes in Mathematics, Vol. 407,
  Springer-Verlag, Berlin-New York, 1974. \MR{0384804}

\bibitem[BMS18]{bhatt-morrow-scholze-integralpadic}
Bhargav Bhatt, Matthew Morrow, and Peter Scholze, \emph{Integral {$p$}-adic
  {H}odge theory}, Publ. {M}ath. {I}nst. {H}autes \'{E}tudes {S}ci.
  \textbf{128} (2018), 219--397.

\bibitem[BO78]{berthelot-ogus-book}
Pierre Berthelot and Arthur Ogus, \emph{Notes on crystalline cohomology},
  Princeton University Press, Princeton, NJ; University of Tokyo Press, Tokyo,
  1978. \MR{491705}

\bibitem[BS22]{bhatt-scholze-prismaticcohom}
Bhargav Bhatt and Peter Scholze, \emph{Prisms and prismatic cohomology}, Ann.
  of {M}ath. (2) \textbf{196} (2022), 1135--1275.

\bibitem[DJ95]{deJong-dieudonnemodule}
Aise~Johan De~Jong, \emph{Crystalline {D}ieudonn\'{e} module theory via formal
  and rigid geometry}, Publ. {M}ath. {I}nst. {H}autes \'{E}tudes {S}ci.
  \textbf{82} (1995), 5--96.

\bibitem[DLMS26]{du-liu-moon-shimizu-purity-F-crystal}
Heng Du, Tong Liu, Yong~Suk Moon, and Koji Shimizu, \emph{Log prismatic
  {$F$}-crystals and purity}, 2026, arXiv:2404.19603v2.

\bibitem[Fal99]{Faltings-very-ramified}
Gerd Faltings, \emph{Integral crystalline cohomology over very ramified
  valuation rings}, J. Amer. Math. Soc. \textbf{12} (1999), no.~1, 117--144.
  \MR{1618483}

\bibitem[Fal02]{faltings-almostetale}
\bysame, \emph{Almost \'{e}tale extensions}, Ast\'{e}risque (2002), no.~279,
  185--270, Cohomologies $p$-adiques et applications arithm\'{e}tiques, II.
  \MR{1922831}

\bibitem[Gro68]{Grothendieck-crystals-deRhamCohomology}
A.~Grothendieck, \emph{Crystals and the de {R}ham cohomology of schemes}, Dix
  expos\'es sur la cohomologie des sch\'emas, Adv. Stud. Pure Math., vol.~3,
  North-Holland, Amsterdam, 1968, Notes by I. Coates and O. Jussila,
  pp.~306--358. \MR{269663}

\bibitem[HK94]{hyodo-kato}
Osamu Hyodo and Kazuya Kato, \emph{Semi-stable reduction and crystalline
  cohomology with logarithmic poles}, no. 223, 1994, P\'eriodes $p$-adiques
  (Bures-sur-Yvette, 1988), pp.~221--268. \MR{1293974}

\bibitem[Kat89]{Kato-log}
Kazuya Kato, \emph{Logarithmic structures of {F}ontaine-{I}llusie}, Algebraic
  analysis, geometry, and number theory ({B}altimore, {MD}, 1988), Johns
  Hopkins Univ. Press, Baltimore, MD, 1989, pp.~191--224. \MR{1463703}

\bibitem[Kos22]{koshikawa}
Teruhisa Koshikawa, \emph{Logarithmic prismatic cohomology {I}}, 2022,
  arXiv:2007.14037.

\bibitem[NS08]{nakkajima-shiho}
Yukiyoshi Nakkajima and Atsushi Shiho, \emph{Weight filtrations on log
  crystalline cohomologies of families of open smooth varieties}, Lecture Notes
  in Mathematics, vol. 1959, Springer-Verlag, Berlin, 2008. \MR{2450600}

\bibitem[Ogu94]{ogus-griffiths}
Arthur Ogus, \emph{{$F$}-crystals, {G}riffiths transversality, and the {H}odge
  decomposition}, Ast\'{e}risque (1994), no.~221, ii+183. \MR{1280543}

\bibitem[Ogu04]{Ogus-Higgs}
\bysame, \emph{Higgs cohomology, {$p$}-curvature, and the {C}artier
  isomorphism}, Compos. Math. \textbf{140} (2004), no.~1, 145--164.
  \MR{2004127}

\bibitem[Ogu18]{Ogus-log}
\bysame, \emph{Lectures on logarithmic algebraic geometry}, Cambridge Studies
  in Advanced Mathematics, vol. 178, Cambridge University Press, Cambridge,
  2018. \MR{3838359}

\bibitem[Ols07]{Olsson-crystallinecohomology}
Martin~C. Olsson, \emph{Crystalline cohomology of algebraic stacks and
  {H}yodo-{K}ato cohomology}, Ast\'erisque (2007), no.~316, 412. \MR{2451400}

\bibitem[SGA72]{SGA4-2}
\emph{Th\'eorie des topos et cohomologie \'etale des sch\'emas. {T}ome 2},
  Lecture Notes in Mathematics, vol. Vol. 270, Springer-Verlag, Berlin-New
  York, 1972, S\'eminaire de G\'eom\'etrie Alg\'ebrique du Bois-Marie
  1963--1964 (SGA 4), Dirig\'e{} par M. Artin, A. Grothendieck et J. L.
  Verdier. Avec la collaboration de N. Bourbaki, P. Deligne et B. Saint-Donat.
  \MR{354653}

\bibitem[Shi00]{Shiho-I}
Atsushi Shiho, \emph{Crystalline fundamental groups. {I}. {I}socrystals on log
  crystalline site and log convergent site}, J. Math. Sci. Univ. Tokyo
  \textbf{7} (2000), no.~4, 509--656. \MR{1800845}

\bibitem[Shi02]{Shiho-II}
\bysame, \emph{Crystalline fundamental groups. {II}. {L}og convergent
  cohomology and rigid cohomology}, J. Math. Sci. Univ. Tokyo \textbf{9}
  (2002), no.~1, 1--163. \MR{1889223}

\bibitem[{Sta}21]{stacks-project}
The {Stacks project authors}, \emph{The stacks project},
  \url{https://stacks.math.columbia.edu}, 2021.

\bibitem[Tsu99]{Tsuji-PoincareDuality}
Takeshi Tsuji, \emph{Poincar\'e{} duality for logarithmic crystalline
  cohomology}, Compositio Math. \textbf{118} (1999), no.~1, 11--41.
  \MR{1705975}

\end{thebibliography}

\end{document}